\newtheorem{theorem}{Theorem}[section]
\newtheorem{lem}[theorem]{Lemma}
\newtheorem{cor}[theorem]{Corollary}
\newtheorem{prop}[theorem]{Proposition}
\theoremstyle{definition}
\newtheorem{defi}[theorem]{Definition}
\newtheorem{example}[theorem]{Example}
\theoremstyle{remark}
\newtheorem{rem}[theorem]{Remark}
\newcommand{\BC}{\mathbb{C}}            
\newcommand{\BZ}{\mathbb{Z}}             
\newcommand{\BN}{\mathbb{N}}            
\newcommand{\dstirling}[2]{\genfrac{[}{]}{0pt}{0}{#1}{#2}}   
\newcommand{\Gaff}{\hat{\mathfrak{g}}}        
\newcommand{\Glie}{\mathfrak{g}}             
\newcommand{\Hlie}{\mathfrak{h}}          
\newcommand{\qaf}{U_q(L\mathfrak{g})}    
\newcommand{\Borel}{U_q(\mathfrak{b})}   
\newcommand{\lBorel}{U_q(\mathfrak{c})}  
\newcommand{\CU}{\mathcal{U}}            
\newcommand{\BCU}{\mathcal{U}^{\mathrm{ab}}}   
\newcommand{\BY}{Y^{\mathrm{ab}}}    
\newcommand{\CR}{\mathcal{R}}    
\newcommand{\barR}{\overline{\mathcal{R}}}    
\newcommand{\stimes}{\ \widetilde{\otimes}\ }       
\newcommand{\BQ}{\mathbf{Q}}                
\newcommand{\BP}{\mathbf{P}}            
\newcommand{\wt}{\mathrm{wt}}         
\newcommand{\CL}{\mathcal{L}}       
\newcommand{\Be}{\mathbf{e}}         
\newcommand{\Bf}{\mathbf{f}}  
\newcommand{\Bn}{\mathbf{n}}
\newcommand{\Bm}{\mathbf{m}}
\newcommand{\Bp}{\mathbf{p}}
\title{Theta series for quantum loop algebras and Yangians}
\author{Huafeng Zhang}
\address{Univ. Lille, CNRS, UMR 8524 - Laboratoire Paul Painlev\'{e}, F-59000 Lille, France}
\email{huafeng.zhang@univ-lille.fr}
\begin{document}
\begin{abstract}
We introduce and study a family of power series, which we call {\it Theta series}, whose coefficients are in the tensor square of a quantum loop algebra. They arise from a coproduct factorization of the T-series of Frenkel--Hernandez, which are leading terms of transfer matrices of certain infinite-dimensional irreducible modules over the upper Borel subalgebra in the category $\mathcal{O}$ of Hernandez--Jimbo. We prove that each weight component of a Theta series is polynomial. As applications, we establish a decomposition formula and a polynomiality result for R-matrices between an irreducible module and a finite-dimensional irreducible module in category $\mathcal{O}$.

We extend T-series and Theta series to Yangians by solving difference equations determined by the truncation series of Gerasimov--Kharchev--Lebedev--Oblezin. We prove polynomiality of Theta series by interpreting them as associators for triple tensor product modules over shifted Yangians.

\bigskip 

\noindent {\bf 2020 Mathematics Subject Classification:} 20G42 (16T25, 81R50).

\noindent {\bf Keywords:} Quantum groups, Yang--Baxter equation, tensor product. 
\end{abstract}

\maketitle
\tableofcontents

\section{Introduction}
Fix $\Glie$ to be a complex finite-dimensional simple Lie algebra and $q$ to be a nonzero complex number which is not a root of unity. Consider the quantum loop algebra $\qaf$ and the Yangian $Y(\Glie)$. These are Hopf algebra deformations of the universal enveloping algebras of the loop algebra $\Glie[z,z^{-1}]$ and of the current algebra $\Glie[z]$ respectively. In this paper, we establish a polynomiality property for the coproduct of these Hopf algebras and deduce from it various polynomiality results at the level of representations.

Our main results are parallel for these two algebras, but their proofs are of different nature and require auxiliary algebras.  In the loop case, $\qaf$ being a quantum group in the sense of Drinfeld--Jimbo, the key tools are the upper Borel subalgebra $\Borel$ and universal R-matrix $\CR(z)$. In the current case, we make central use of the so-called shifted Yangians and their shifted coproduct.

\medskip

Shifted Yangians are a family of algebras $Y_{\mu}(\Glie)$ parameterized by integral coweights $\mu$ of the underlying simple Lie algebra $\Glie$. They appeared first for type A in the work of Brundan--Kleshchev \cite{BK} on
the representation theory of finite $W$-algebras.
For general types they were introduced by Kamnitzer--Webster--Weekes--Yacobi \cite{KWWY} in their study of affine Grassmannian slices 
and by Braverman--Finkelberg--Nakajima \cite{BFN} in their study of Coulomb branches of 3d $\mathcal{N} = 4$ supersymmetric quiver gauge 
theories. Shifted quantum affine algebras $\CU_{\mu}(\Gaff)$ were defined by Finkelberg--Tsymbaliuk \cite{FT} in the study of K-theoretic Coulomb branches and again depend on a coweight $\mu$.

The defining relations of the shifted Yangian $Y_{\mu}(\Glie)$ and shifted quantum affine algebra $\CU_{\mu}(\Gaff)$ are the same as those of the ordinary Yangian and  quantum loop algebra in Drinfeld's new realization \cite{Dr}, except that the initial conditions for the Drinfeld--Cartan generators are modified according to the coweight $\mu$. In particular, $Y_0(\Glie) = Y(\Glie)$ and $\CU_0(\Gaff)$ is a central extension of $\qaf$. 

Finkelberg--Kamnitzer--Pham--Rybnikov--Weekes \cite{coproduct} extended the coproduct of the ordinary Yangian to a family of algebra homomorphisms, called shifted coproduct
$$\Delta_{\mu,\nu}: Y_{\mu+\nu}(\Glie) \longrightarrow Y_{\mu}(\Glie) \otimes Y_{\nu}(\Glie) \quad \textrm{for $\mu$ and $\nu$ coweights.} $$
Such a shifted coproduct is known to exist in type A for shifted quantum affine algebras by Finkelberg--Tsymbaliuk \cite{FT} and in other types it remains conjectural.

In \cite{BK,KTWWY} a category $\mathcal{O}^{sh}$ was defined as the sum over all coweights of category $\mathcal{O}_{\mu}$ of $Y_{\mu}(\Glie)$-modules. Its irreducible modules are classified by their highest weights, which are tuples of ratios of monic polynomials in $z$, one for each Dynkin node of $\Glie$. The shifted coproduct equips category $\mathcal{O}^{sh}$ with a tensor product bifunctor.  

For representations of shifted quantum affine algebras category $\hat{\mathcal{O}}^{sh}$ is defined in the same way. By embedding the upper Borel subalgebra $\Borel$ in antidominantly shifted quantum affine algebras, Hernandez \cite{H} obtained close relations between category $\hat{\mathcal{O}}^{sh}$ and the category $\mathcal{O}$ of $\Borel$-modules introduced by Hernandez--Jimbo \cite{HJ}. Irreducible modules in both categories have the same highest weight parametrization: tuples of ratios of polynomials in $z$ with nonzero constant term.

Of particular importance in these categories are {\it negative prefundamental modules}, whose highest weights are tuples of inverses of polynomials. They are limits of finite-dimensional modules \cite{HJ,Z2} and admit geometric realizations \cite{VV}.

\medskip

\noindent {\bf Motivation: associators for shifted Yangians.} 
Let $M$ be a $Y_{\mu}(\Glie)$-module, $K$ be a $Y_{\zeta}(\Glie)$-module and $N$ be a $Y_{\nu}(\Glie)$-module, not necessarily in category $\mathcal{O}^{sh}$. The shifted coproduct being non co-associative, it equips the same triple tensor product with two different $Y_{\mu+\zeta+\nu}(\Glie)$-module structures: $(M \otimes K) \otimes N$ and $M \otimes (K \otimes N)$. We are interested in module isomorphisms between them, called associators. 

As our main example, assume $K$ one-dimensional and $N$ negative prefundamental. 
In \cite{HZ} for any highest weight irreducible module $M$ over a shifted Yangian we constructed nonzero module morphisms, called R-matrices 
$$ \check{R}_{M,K}: M \otimes K \longrightarrow K \otimes M,\quad \check{R}_{M,N}: M \otimes N \longrightarrow N \otimes M. $$ 
We aim to produce a new R-matrix between $M$ and $N\otimes K$. The reason is that $N \otimes K$ can often be defined over the ordinary Yangian but this is seldom the case for $N$ and $K$. One way is to complete the following chain of module morphisms:
\begin{gather*}  
 \xymatrixcolsep{5pc} \xymatrix{
M \otimes (N \otimes K) \ar@{-->}[r]^?  & (M \otimes N) \otimes K \ar[r]^{\check{R}_{M,N} \otimes \mathrm{Id}_K} &  (N\otimes M) \otimes K  \ar@{-->}[d]^? \\
 (N \otimes K) \otimes M   &     N \otimes (K \otimes M)  \ar@{-->}[l]_?        & N \otimes (M \otimes K) \ar[l]_{\mathrm{Id}_N \otimes \check{R}_{M,K}}
 } 
\end{gather*}
The three morphisms represented by dashed arrows are examples of associators.

\medskip

\noindent {\bf Motivation: R-matrices for asymptotic representations.} 
Let us take $M$ to be a finite-dimensional module over the quantum loop algebra. On the space $M[[z]]$ there is a mutually commuting family of operators, called transfer matrices, defined by properly evaluating the universal R-matrix $\CR(z)$ of the quantum loop algebra:
$$ (\mathrm{trace}_N \otimes \mathrm{Id})(\CR_{N, M}(z)) \quad \textrm{for finite-dimensional $\qaf$-modules $N$}.  $$
This is a quantum integrable model generalizing the XXZ-model. It was conjectured by Frenkel--Reshetikhin \cite{FR1} that the eigenvalues of the above transfer matrices are described in terms of certain polynomials, generalizing Baxter polynomials of the XXZ-model. Frenkel--Hernandez \cite{FH} solved this conjecture by realizing Baxter polynomials as eigenvalues of transfer matrices associated to {\it positive prefundamental representations} of $\Borel$ in category $\mathcal{O}$, namely, irreducible modules whose highest weights are tuples of polynomials. We note that these are honest representations of the upper Borel subalgebra and they do not extend to the full quantum loop algebra.

The above transfer matrix construction applied to the universal R-matrix of the Yangian \cite{Dr1} defines a quantum integrable model generalizing the XXX-model. However, there is no analog of Borel subalgebras for the Yangian. It is desirable to have a characterization of Baxter polynomials using representations of the quantum loop algebra and of the Yangian directly. Such was the motivation of introducing the so-called asymptotic representations for $\qaf$ in \cite{Z1} and for $Y(\Glie)$ in \cite{Z2}.

In some particular cases, mostly for $\Glie$ of type A, the asymptotic representations were shown \cite{FZ,Z3} to play the same role as the positive prefundamental representations did in \cite{FH}. We expect this to be true in general. One important step is to establish polynomiality results for the universal R-matrix evaluated at an asymptotic representation and a finite-dimensional representation.

\medskip

To present the main results of the paper, we need the notion of deformed module $M_z$ associated to a module $M$ over a shifted Yangian or over a shifted quantum affine algebra, with $z$ the formal spectral parameter coming from the $\BC$-action on the shifted Yangian or the $\BC^{\times}$-action on the shifted quantum affine algebra. Its underlying space is polynomial, $M[z]$ in the current case and $M[z,z^{-1}]$ in the loop case. 

\medskip

\noindent {\bf Trivial modules.} Call a module over a shifted Yangian or a shifted quantum affine algebra {\it trivial} if the Drinfeld generators $x^{\pm}$ act as zero. The key examples include one-dimensional modules and their deformed modules. As our first observation:
\begin{itemize}
\item[(i)] Theorem \ref{thm: trivial associativity Yangian}. Let $M, K$ and $N$ be modules over three shifted Yangians. If either $M$ or $N$ is trivial, then the identity map is a module isomorphism from $(M \otimes K)\otimes N$ to $M \otimes (K\otimes N)$.
\end{itemize}
While the shifted coproduct is unavailable for shifted quantum affine algebras, there is the Drinfeld formal coproduct \cite{FT}. Given a module $M$ over $\CU_{\mu}(\Gaff)$ and another module $N$ over $\CU_{\nu}(\Gaff)$, assuming one of them is trivial, via the formal coproduct we can equip $M \otimes N$ with a module structure over $\CU_{\mu+\nu}(\Gaff)$.

It turns out that the asymptotic representations in \cite{Z1, Z2} are tensor products of negative prefundamental modules with one-dimensional modules. The proofs in both cases are similar, based on explicit realizations of tensor products with trivial modules.

\medskip

\noindent {\bf T-series.} Given a one-dimensional module $K$ over a shifted Yangian and a coweight $\mu$, by solving formally the additive difference equation for one-dimensional R-matrices \cite{HZ}, we obtain an invertible series $T_K^{\mu}(z)$ in $z$ with coefficients in the Drinfeld--Cartan subalgebra of $Y_{\mu}(\Glie)$. It satisfies an intertwining property; recall that shifted Yangians are graded by the root lattice of the simple Lie algebra $\Glie$.
\begin{itemize}
\item[(ii)] Theorem \ref{thm: Yangian T-series}. For $M$ a graded $Y_{\mu}(\Glie)$-module, the action of $T_K^{\mu}(z)$ on $M$ induces a module morphism from $K_z \otimes M$ to a suitable completion of $M \otimes K_z$.
\end{itemize}
If $M$ is an irreducible module in category $\mathcal{O}^{sh}$, then after normalization $T_K^{\mu}(z)$ recovers the R-matrix in our previous work \cite{HZ}. For $\mu = 0$, the T-series can be seen as a universal version of the abelianized transfer operators of Gautam--Wendlandt \cite{GW}. 

For shifted quantum affine algebras, the formal solution of the analogous difference equation was given by Hernandez \cite{H}. Upon an identification of Drinfeld--Cartan subalgebras for shifted quantum affine algebras and the quantum loop algebra, it is the T-series of Frenkel--Hernandez \cite{FH} defined as a limit of the transfer matrix of a positive prefundamental representation over the upper Borel subalgebra. 

\medskip

\noindent {\bf Theta series.} Given a one-dimensional module $K$ over a shifted Yangian and two coweights $\mu$ and $\nu$, as the central construction of the paper, we factorize the shifted coproduct of T-series in a particular form (Definition \ref{def: Theta series Yangian}):
\begin{equation*} 
\Delta_{\mu,\nu}(T_K^{\mu+\nu}(z)) = (1 \otimes T_K^{\nu}(z)) \times \Theta_K^{\mu,\nu}(z) \times (T_K^{\mu}(z) \otimes 1).
\end{equation*}
The middle term at the right-hand side is what we call {\it Theta series}; it is a formal series with coefficients in $Y_{\mu}(\Glie) \otimes Y_{\nu}(\Glie)$. Let $\BQ_+$ be positive root cone of $\Glie$ which is defined as the $\BN$-span of the simple roots. As a main result of the paper:
\begin{itemize}
\item[(iii)] Theorem \ref{thm: Yangian poly Theta}. The Theta series $\Theta_K^{\mu,\nu}(z)$ is an infinite sum, over $\beta \in \BQ_+$, of {\it polynomials} in $z$ with coefficients in $Y_{\mu}(\Glie)_{-\beta} \otimes Y_{\nu}(\Glie)_{\beta}$.
\end{itemize}
It is a consequence of the intertwining property below applied to regular representations:
\begin{itemize}
\item[(iv)] Theorem \ref{thm:Yangian associator}. Let $M$ and $N$ be graded modules over the shifted Yangians $Y_{\mu}(\Glie)$ and $Y_{\nu}(\Glie)$ respectively. The action of $\Theta_K^{\mu,\nu}(z)$ on $M \otimes N$ induces a module morphism from $(M \otimes K_z) \otimes N$ to a suitable completion of $M \otimes (K_z \otimes N)$.
\end{itemize}
In turn (iii) implies that if both modules are in $\mathcal{O}^{sh}$ then the Theta series restricts to a module isomorphism of ordinary triple tensor products, namely, an associator.

For $K$ a one-dimensional module over a shifted quantum affine algebra, we perform the same factorization on the Drinfeld--Jimbo coproduct $\Delta(T_K(z))$ of the ordinary T-series. The resulting Theta series $\Theta_K(z)$ is a power series in $z$ with coefficients in the tensor square of the quantum loop algebra. Parallel to the Yangian situation (iii):
\begin{itemize}
\item[(v)] Theorem \ref{thm: polynomiality Theta}. The Theta series $\Theta_K(z)$ is an infinite sum, over $\beta \in \BQ_+$, of polynomials in $z$ with coefficients in $\qaf_{-\beta} \otimes \qaf_{\beta}$.
\end{itemize}
The proof of (iii) does not work for (v), because the triple tensor products of (iv) make no sense for shifted quantum affine algebras. Instead, we derive (v) from a stronger polynomiality property of the positive prefundamental representations of the upper Borel subalgebra \cite{FH,FJMM} and the quasi-triangularity of the quantum loop algebra.

\medskip

\noindent {\bf Decomposition of R-matrices.} Given two irreducible modules $M$ and $N$ over shifted Yangians such that $M$ is finite-dimensional and $N$ is negative prefundamental, by \cite{HZ} we have a natural R-matrix $\check{R}_{M,N}(z)$ from $M_z \otimes N$ to $N\otimes M_z$. We are able to obtain a new R-matrix after tensoring $N$ with the deformed module $K_w$ of a one-dimensional module. This tensor product module includes many examples of representations of the ordinary Yangian, in contrast $N$ is rarely such a representation.
\begin{itemize}
\item[(vi)] Theorem \ref{thm: T Theta R Yangian}. There is a module morphism from $M_z \otimes (N \otimes K_w)$ to $(N\otimes K_w) \otimes M_z$ which decomposes into the known R-matrix $\check{R}_{M,N}(z)$, a T-series acting on $M$ and a Theta series acting on $M\otimes N$.
\end{itemize}
The proof of (vi) combines the module morphisms of (i), (ii) and (iv).

Let $N$ and $K$ be modules over shifted quantum affine algebras such that $K$ is one-dimensional and both $N$ and $N\otimes K_w$ can be restricted to the upper Borel subalgebra. We have a similar decomposition formula for evaluations of the universal R-matrix:
\begin{itemize}
\item[(vii)]  Theorem \ref{thm: decomposition R-matrices}. For $M$ be a finite-dimensional $\qaf$-module, the evaluation of the universal R-matrix $\CR_{N\otimes K_w,M}(z)$ decomposes into a simpler evaluation $\CR_{N,M}(z)$, a T-series acting on $M$ and a Theta series acting on $N\otimes M$.
\end{itemize}
In constract to (vi), to prove (vii) we make crucial use of Damiani's factorization formula of the universal R-matrix of the quantum loop algebra \cite{Damiani}.

\medskip

\noindent {\bf Polynomiality of R-matrices.} 
Concerning the evaluation of the universal R-matrix for irreducible modules in category $\mathcal{O}$, we have a general polynomiality result:
\begin{itemize}
\item[(viii)] Theorem \ref{thm: poly R Borel}. Let $N$ be an irreducible $\Borel$-module in category $\mathcal{O}$ and $M$ a finite-dimensional irreducible $\qaf$-module. Divided by explicit constant series both $\CR_{N, M}(z)$ and its inverse stabilize the polynomial space $N\otimes M[z]$.
\end{itemize}
Previously the polynomiality of $\CR_{N,M}(z)$ was known for $N$ positive prefundamental, that is, whose highest weight is a tuple of polynomials \cite{FH,FJMM}. 

\medskip

\noindent {\bf Perspectives.} In the Yangian case, (vi) gives an intertwining operator for tensor products of an asymptotic representation and a finite-dimensional irreducible representation. Inspired by its counterpart (vii),  it is a first question to identify such an intertwining operator with an evaluation of the universal R-matrix. When this is the case, we should be able to adapt the construction of Baxter operators \cite{FZ,Z3} to Yangian integrable models and obtain generalized Baxter polynomials for these models. 

Let $V$ and $W$ be irreducible modules over the Yangian of rational highest weights such that $V$ is finite-dimensional. Upon identification of intertwining operators with evaluations of the universal R-matrix, the decomposition formula of (vi) suggests that after renormalizations the denominator of the R-matrix $\CR_{V,W}(z)$, as a vector-valued rational function of the spectral parameter $z$, divides the denominator of a T-series (depending on $W$) acting on the finite-dimensional module $V$. This has been observed by Gautam--Wendlandt \cite[\S 7.9]{GW}. Indeed, they conjectured furthermore that for special choices of $V$ and $W$ the two denominators should coincide. 

In the quantum loop algebra case, we expect our polynomiality result (viii) to have applications in the conjecture of Hernandez \cite{H} on
highest weight classification of irreducible representations of truncated shifted quantum affine algebras, and in the conjecture of Frenkel--Hernandez \cite{FH0} on polynomiality of transfer matrices associated to more general representations in category $\mathcal{O}$. 
The two different R-matrices in (viii) might be related to each other by a duality functor of category $\mathcal{O}$ constructed recently by Pinet \cite{Pinet}. Hernandez \cite{H1} developed an approach to R-matrices of category $\mathcal{O}$ using algebraic versions of Maulik--Okounkov
stable maps. It is interesting to interact our Theta series with the uni-triangularity property of algebraic stable maps.

 The similarities of definitions and properties of Theta series should be best explained in the framework of Gautam--Toledano Laredo relating the Yangian and the quantum loop algebra at the level of algebras \cite{GTL0} and at the level of representations \cite{GTL1}. One main obstruction at the moment is the non-compatibility of the functor of \cite{GTL2} with the Drinfeld-Jimbo coproduct. 

At last, we ask if there is a compact formula for Theta series. For $\Glie = sl_2$, the Yangian Theta series is the exponential of the constant $x_{1,0}^- \otimes x_{1,0}^+$, and the quantum loop Theta series is a $q$-exponential of the polynomial $x_{1,0}^- \otimes x_{1,-1}^+z$. In higher ranks, we expect both Theta series to be ordered products of exponentials and $q$-exponentials, as in the case of the universal R-matrix of the finite type quantum group $U_q(\Glie)$.

\medskip

The main body of the paper is divided into two independent parts, Sections \ref{sec: Yangian}--\ref{sec: R Yangian} on shifted Yangians and Sections \ref{sec: quantum}--\ref{sec: type A} on the quantum loop algebra. In the two appendices we include several technical results that we find interesting on their own right.

Section \ref{sec: Yangian} collects basic properties of shifted Yangians, highest weight representations and polynomial R-matrices constructed in a previous work \cite{HZ}. 

In Section \ref{sec: trivial} we prove associativity for triple tensor product modules over shifted Yangians in which the first or the third tensor factor is a trivial module. 

In Section \ref{sec: T Yangian} we first construct S-series by solving a universal difference equation; it is an invertible power series with coefficients in a shifted Yangian. Then by a dressing procedure we obtain T-operators acting on graded representations of shifted Yangians, which are recognized as R-matrices for one-dimensional modules. 

In Section \ref{sec: Theta Yangian} by factorizing the shifted coproduct of S-series we define Theta series. We prove that Theta series induce associators for completed triple tensor products and deduce from it the polynomiality of weight components of Theta series. We also show that Theta series are multiplicative with respect to one-dimensional modules.

In Section \ref{sec: R Yangian} we establish polynomiality of T-operators for a large class of representations of shifted Yangians and prove a decomposition formula for R-matrices of tensor products with one-dimensional modules. As an application, we compute a particular diagonal entry of R-matrices.

Sections \ref{sec: quantum}--\ref{sec: Borel} review well-known facts on shifted quantum affine algebras, the upper Borel subalgebra and the universal R-matrix, including in particular the root vectors and the monodromy matrix construction. 

In Section \ref{sec: Theta} we define Theta series for the quantum loop algebra by factorizing the coproduct of T-series of Frenkel--Hernandez. We express Theta series in terms of entries of monodromy matrices associated to positive prefundamental representations in category $\mathcal{O}$ and establish polynomiality for the monodromy matrices.

In Section \ref{sec: R dec} we prove a decomposition formula for R-matrices of tensor products with one-dimensional modules over shifted quantum affine algebras.

In Section \ref{sec: R poly} we prove polynomiality results for R-matrices of irreducible modules over the upper Borel subalgebra in category $\mathcal{O}$.

In Section \ref{sec: type A} for $\Glie$ of type A we extend the associator construction of shifted Yangians to shifted quantum affine algebras.

In Appendix \ref{app: asym}  we identify asymptotic representations with tensor products of irreducible modules with one-dimensional modules.

In Appendix \ref{app: coefficients} we prove that the coefficients of Theta series for shifted Yangians lie in the subalgebra generated by $x^- \otimes x^+$.


\medskip

\noindent {\bf Acknowledgments:} It is a pleasure to thank Giovanni Felder, David Hernandez and Sasha Tsymbaliuk for valuable discussions and correspondences. The author is also grateful to the anonymous referees for helpful comments and suggestions. The author acknowledges support from the Labex CEMPI (ANR-11-LABX-0007-01).


\section{Generalities on shifted Yangians} \label{sec: Yangian}
In this section we review basic properties of shifted Yangians and their representation theory from \cite{HZ}. The ground field is $\BC$, and  $\BN := \BZ_{\geq 0}$.

Fix $\Glie$ a finite-dimensional simple Lie algebra. Let $\Hlie$ be a Cartan subalgebra of $\Glie$, and $I := \{1,2,\cdots,r\}$ be the set of Dynkin nodes. The dual Cartan subalgebra $\Hlie^*$ admits a basis of {\it simple roots} $(\alpha_i)_{i \in I}$ and a non-degenerate invariant symmetric bilinear form $(,): \Hlie^* \times \Hlie^* \longrightarrow \BC$ normalized in such a way that the $d_i := \frac{(\alpha_i,\alpha_i)}{2}$ for $i \in I$ are coprime positive integers in $\{1,2,3\}$. We have the Cartan matrix $(c_{ij})_{i,j\in I}$ and the symmetrized Cartan matrices $(b_{ij})_{i,j\in I}$ and $(d_{ij})_{i,j\in I}$ whose entries are half integers defined by
$$c_{ij} := \frac{2(\alpha_i,\alpha_j)}{(\alpha_i,\alpha_i)} \in \BZ,\quad b_{ij} := (\alpha_i,\alpha_j) \in \BZ,\quad d_{ij} := \frac{(\alpha_i,\alpha_j)}{2} \in \frac{1}{2} \BZ. $$

Let $(\varpi_i^{\vee})_{i \in I}$ be the basis of $\Hlie$ dual to the basis $(\alpha_i)_{i\in I}$ of $\Hlie^*$ with respect to the natural pairing $\langle, \rangle: \Hlie \times \Hlie^* \longrightarrow \BC$; the $\varpi_i^{\vee}$ are called {\it fundamental coweights}. We shall need the coweight lattice, the root lattice and its subsets 
$$\BP^{\vee} := \bigoplus_{i\in I} \BZ \varpi_i^{\vee} \subset \Hlie, \quad  \BQ := \bigoplus_{i\in I} \BZ \alpha_i \subset \Hlie^*,\quad \BQ_+ := \sum_{i\in I} \BN \alpha_i,\quad \BQ_- := -\BQ_+.  $$
By a coweight we mean an element $\mu$ of the coweight lattice $\BP^{\vee}$, namely, a $\BZ$-linear combination $\sum_{i\in I} n_i \varpi_i^{\vee}$ of the fundamental coweights. Notice that $n_i = \langle \mu, \alpha_i \rangle$. Call $\mu$ {\it dominant} if all the $n_i$ are non-negative integers. Call $\mu$ {\it antidominant} if $-\mu$ is dominant.
\subsection{Shifted Yangians}
For $\mu = \sum_{i\in I} n_i \varpi_i^{\vee}$ a coweight, the {\it shifted Yangian} $Y_{\mu}(\Glie)$ is the associative algebra defined by generators 
$$ x_{i,n}^{\pm},\quad \xi_{i,p} \quad \mathrm{for}\ (i, n, p) \in I \times \BN \times \BZ$$
subject to the following relations \cite{KWWY, BFN}: 
 \begin{gather*}
[\xi_{i,p}, \xi_{j,q}] = 0, \quad [x_{i,m}^+,x_{j,n}^-] = \delta_{ij} \xi_{i,m+n},  \\
[\xi_{i,p+1}, x_{j,n}^{\pm}] - [\xi_{i,p}, x_{j,n+1}^{\pm}] = \pm d_{ij} (\xi_{i,p}x_{j,n}^{\pm} + x_{j,n}^{\pm} \xi_{i,p}), \\
[x_{i,m+1}^{\pm}, x_{j,n}^{\pm}] - [x_{i,m}^{\pm}, x_{j,n+1}^{\pm}] = \pm d_{ij} (x_{i,m}^{\pm}x_{j,n}^{\pm} + x_{j,n}^{\pm} x_{i,m}^{\pm}),  \\
\mathrm{ad}_{x_{i,0}^{\pm}}^{1-c_{ij}} (x_{j,0}^{\pm}) = 0 \quad \mathrm{if}\ i \neq j,  \\
\xi_{i,-n_i - 1} = 1,\quad \xi_{i,p} = 0 \quad \mathrm{for}\ p < -n_i - 1. 
\end{gather*}
Here $\mathrm{ad}_x(y) := xy - yx$. Define the generating series for $i \in I$:
 \begin{equation*} 
x_i^{\pm}(z) :=  \sum_{n\in \BN} x_{i,n}^{\pm} z^{-n-1},\quad \xi_i(z) :=  \sum_{p\in \BZ} \xi_{i,p} z^{-p-1},\quad \overline{\xi}_i(z) := z^{-n_i} \xi_i(z).
\end{equation*}
These are Laurent series in $Y_{\mu}(\Glie)((z^{-1}))$, with leading terms $x_{i,0}^{\pm} z^{-1},\ z^{n_i}$ and 1.

The shifted Yangian $Y_{\mu}(\Glie)$ admits a $\BQ$-grading, called its {\it weight grading}, defined by declaring the weights of the generators $x_{i,n}^+, x_{i,n}^-$ and $\xi_{i,p}$ to be $\alpha_i, -\alpha_i$ and $0$. Alternatively, for $\beta \in \BQ$, an element $x \in Y_{\mu}(\Glie)$ is of weight $\beta$ if and only if $[\xi_{i,-n_i}, x] = (\alpha_i,\beta) x$ for all $i \in I$.
Let $Y_{\mu}(\Glie)_{\beta}$ denote the subspace of elements of weight $\beta$. 

We have a $\BC[w]$-algebra automorphism $\tau_w$ of $Y_{\mu}(\Glie)[w]$ defined by
\begin{gather}  \label{rel: spectral shift formal Yangian}
    \tau_w(X_p) = \sum_{n\in \BN} \binom{p}{n}  X_{p-n} w^n \quad \mathrm{for}\ X \in \{x_i^{\pm}, \xi_i\}\ \mathrm{and}\ p \in \BZ.
\end{gather}
Here it is understood that $x_{i,p}^{\pm} = 0$ for $p < 0$. Evaluating $w$ at complex numbers, we get a one-parameter family of algebra automorphisms $\tau_a$ of $Y_{\mu}(\Glie)$ satisfying $\tau_a \circ \tau_b = \tau_{a+b}$ and $\tau_0 = \mathrm{Id}$ for $a, b \in \BC$. We refer to $\tau_w$ and $\tau_a$ as {\it spectral parameter automorphisms}. In terms of generating series we have $\tau_w(X(z)) = X(z-w)$ for $X \in \{x_i^{\pm}, \xi_i\}$. 

\subsection{GKLO series}
In the shifted Yangian $Y_{\mu}(\Glie)$ we have five subalgebras:
\begin{gather*}
Y_{\mu}^+(\Glie) = \langle x_{i,n}^+\rangle_{(i,n)\in I\times \BN},\quad Y_{\mu}^0(\Glie) = \langle \xi_{i,p}\rangle_{(i,p)\in I \times \BZ},\quad Y_{\mu}^-(\Glie) =  \langle x_{i,n}^-\rangle_{(i,n)\in I\times \BN}, \\
Y_{\mu}^{\geq}(\Glie) =  \langle x_{i,n}^+, \xi_{i,p}\rangle_{(i,n,p)\in I\times \BN \times \BZ},\quad Y_{\mu}^{\leq}(\Glie) = \langle x_{i,n}^-, \xi_{i,p}\rangle_{(i,n,p)\in I\times \BN \times \BZ}.
\end{gather*}
(The subalgebras $Y^+, Y^-, Y^0, Y^{\leq}, Y^{\geq}$ here correspond to $Y^>, Y^<, Y^=, Y^-, Y^+$ in \cite{HZ}. We make this modification to match the notations of shifted quantum affine algebras studied later.)
The weight grading and the algebra homomorphism $\tau_w$ restrict to these five subalgebras.
By definition the subalgebra $Y^0_{\mu}(\Glie)$ is commutative. By the PBW theorem \cite[\S 3]{coproduct}, multiplication induces an isomorphism of vector spaces
$$ Y_{\mu}^-(\Glie) \otimes Y_{\mu}^0(\Glie) \otimes Y_{\mu}^+(\Glie) \longrightarrow Y_{\mu}(\Glie), \quad a \otimes b \otimes c \mapsto abc. $$
Furthermore, the defining relations of $Y_{\mu}(\Glie)$ induce a presentation for these subalgebras; for $Y_{\mu}^{\pm}(\Glie)$ one needs the stronger form of the Serre relations in \cite[Definition 3.1]{coproduct}. As a consequence, for $\nu$ another coweight we have canonical identifications of algebras 
$$ Y_{\mu}^{\pm}(\Glie) \cong Y_{\nu}^{\pm}(\Glie),\quad x_{i,n}^{\pm} \mapsto x_{i,n}^{\pm}. $$

\begin{defi}\cite[Lemma 2.1]{GKLO}  \label{defi: GKLO series}
The Gerasimov--Kharchev--Lebedev--Oblezin series $A_i(z)$ for $i \in I$, also called GKLO series, are power series in $z^{-1}$ of leading term 1 with coefficients in the commutative subalgebra $Y_{\mu}^0(\Glie)$ uniquely determined by:
$$ \overline{\xi}_i(z) =   \frac{1}{A_i(z)A_i(z-d_i)} \prod_{j: c_{ji} < 0} \prod_{t=1}^{-c_{ji}} A_j(z-d_{ij}-t d_j). $$
Introduce the GKLO elements $a_{i,m} \in Y_{\mu}^0(\Glie)$ for $(i, m) \in I \times \BN$ so that 
$$A_i(z) = \exp(\sum_{m\geq 0}d_i a_{i,m}  z^{-m-1}) \in 1 + z^{-1} Y_{\mu}^0(\Glie)[[z^{-1}]]. $$
\end{defi}

As in \cite[\S 2.6]{GTL0}, for $i \in I$ let $\sigma_i^+$ denote the algebra endomorphism of $Y_{\mu}^+(\Glie)$ sending $x_{j,n}^+$ to $x_{j,n+\delta_{ij}}^+$. The algebra endomorphism $\sigma_i^-$ of $Y_{\mu}^-(\Glie)$ is defined similarly. Then in the shifted Yangian $Y_{\mu}(\Glie)$ we have the following commutation relations \cite{GKLO}:
\begin{equation} \label{comm: A x}
\begin{split}
A_i(z) x_{j,n}^- A_i(z)^{-1} = \frac{z-\sigma_i^- + d_i \delta_{ij}}{z-\sigma_i^-}  (x_{j,n}^-),    \\
A_i(z)^{-1} x_{j,n}^+ A_i(z) = \frac{z-\sigma_i^+ + d_i \delta_{ij}}{z-\sigma_i^+}  (x_{j,n}^+). 
\end{split}
\end{equation}
At the right-hand side, we take the Taylor expansion of the rational function $\frac{z-\sigma_i^{\pm}+d_i\delta_{ij}}{z-\sigma_i^{\pm}}$ at $z = \infty$ to get a power series in $z^{-1}$ whose coefficients are polynomials in $\sigma_i^{\pm}$ and act on $Y_{\mu}^{\pm}(\Glie)$ as linear operators. In terms of GKLO elements Eq.\eqref{comm: A x} becomes
\begin{equation}  \label{comm: a x}
\begin{split} 
[a_{i,m}, x_{j,n}^-] &= \delta_{ij}  \frac{(\sigma_i^-)^{m+1} - (\sigma_i^--d_i)^{m+1}}{(m+1)d_i} (x_{j,n}^-),  \\
[x_{j,n}^+, a_{i,m}] &=\delta_{ij}  \frac{(\sigma_i^+)^{m+1} - (\sigma_i^+-d_i)^{m+1}}{(m+1)d_i} (x_{j,n}^+). 
\end{split}
\end{equation}
In particular, $[a_{i,0}, x_{j,n}^{\pm}] = \mp \delta_{ij} x_{j,n}^{\pm}$.

\subsection{Shifted homomorphisms and coproduct}
The zero-shifted Yangian $Y_0(\Glie)$ is the ordinary Yangian $Y(\Glie)$ with deformation parameter $\hbar = 1$. It is a Hopf algebra whose coproduct has been determined in \cite{GNW}. The following theorem, due to Finkelberg--Kamnitzer--Pham--Rybnikov--Weekes, extends the ordinary coproduct to arbitrary shifted Yangians in a compatible way.

\begin{theorem}\cite[Corollary 3.16, Theorem 4.12, Proposition 4.14]{coproduct}  \label{thm: coproduct Yangian}
\begin{itemize}
\item[(i)] For $\epsilon, \eta$ antidominant coweights, the following assignments define an injective algebra morphism $\iota_{\epsilon,\eta}^{\mu}: Y_{\mu}(\Glie) \longrightarrow Y_{\mu+\epsilon+\eta}(\Glie)$, called {\it shift homomorphism}:
\begin{gather*} 
x_{i,n}^+ \mapsto x_{i,n-\langle\epsilon,\alpha_i\rangle}^+, \quad x_{i,n}^- \mapsto x_{i,n-\langle\eta,\alpha_i\rangle}^-,\quad \xi_{i,p} \mapsto \xi_{i,p-\langle\epsilon+\eta,\alpha_i\rangle}.
\end{gather*}
\item[(ii)] There exists a unique family of algebra homomorphisms $$\Delta_{\mu,\nu}: Y_{\mu+\nu}(\Glie) \longrightarrow Y_{\mu}(\Glie) \otimes Y_{\nu}(\Glie)$$ 
for all coweights $\mu, \nu$ such that $\Delta_{0,0}$ is the coproduct of the ordinary Yangian, 
\begin{gather*} 
\Delta_{\mu,\nu}(x_{i,n}^+) = x_{i,n}^+ \otimes 1\ \mathrm{if}\ n < - \langle \mu,\alpha_i\rangle,\quad \Delta_{\mu,\nu}(x_{i,n}^-) = 1 \otimes x_{i,n}^- \ \mathrm{if}\ n < -\langle \nu,\alpha_i\rangle, 
\end{gather*}
and the following diagrams commute for $\epsilon$ and $\eta$ antidominant:
\begin{gather} 
 \xymatrixcolsep{6pc} \xymatrix{
Y_{\mu+\nu}(\Glie) \ar[d]^{\iota_{\epsilon,\eta}^{\mu+\nu}} \ar[r]^{\Delta_{\mu,\nu}} & Y_{\mu}(\Glie) \otimes Y_{\nu}(\Glie) \ar[d]^{\iota_{\epsilon,0}^{\mu} \otimes \iota_{0,\eta}^{\nu}} \\
Y_{\mu+\nu+\epsilon+\eta}(\Glie) \ar[r]^{\Delta_{\mu+\epsilon,\nu+\eta}}          & Y_{\mu+\epsilon}(\Glie) \otimes Y_{\nu+\eta}(\Glie), }   \label{rel: coproduct vs shift} \\
  \xymatrixcolsep{6pc} \xymatrix{
Y_{\mu+\epsilon+\nu}(\Glie) \ar[d]^{\Delta_{\mu,\epsilon+\nu}} \ar[r]^{\Delta_{\mu+\epsilon,\nu}} & Y_{\mu+\epsilon}(\Glie) \otimes Y_{\nu}(\Glie) \ar[d]^{\Delta_{\mu,\epsilon} \otimes \mathrm{Id}} \\
Y_{\mu}(\Glie) \otimes Y_{\epsilon+\nu}(\Glie) \ar[r]^{\mathrm{Id} \otimes \Delta_{\epsilon,\nu}}          & Y_{\mu}(\Glie) \otimes Y_{\epsilon}(\Glie) \otimes Y_{\nu}(\Glie). }  \label{rel: coproduct coass}
\end{gather}
\end{itemize}
\end{theorem}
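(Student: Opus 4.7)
First I would handle part (i) by verifying that the stated assignments preserve each family of defining relations of $Y_\mu(\Glie)$. The bracket and difference relations among the $x_i^\pm$ and $\xi_i$ depend only on differences of indices, so they are invariant under the uniform index translations $n \mapsto n-\langle\epsilon,\alpha_i\rangle$ and $n\mapsto n-\langle\eta,\alpha_i\rangle$. The Serre relations involve only the $x_{j,0}^\pm$, which are sent to elements with non-negative indices $-\langle\epsilon,\alpha_j\rangle$ and $-\langle\eta,\alpha_j\rangle$ thanks to antidominance; their preservation follows from the strong Serre presentation of $Y_\mu^\pm$ given in \cite[Def.~3.1]{coproduct}. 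The initial conditions translate correctly because $\langle\mu+\epsilon+\eta,\alpha_i\rangle=\langle\mu,\alpha_i\rangle+\langle\epsilon+\eta,\alpha_i\rangle$, so $\xi_{i,-\langle\mu,\alpha_i\rangle-1}=1$ and its vanishing below this index map exactly onto the initial conditions of the target. Injectivity then follows from the PBW decomposition $Y_\mu \cong Y_\mu^-\otimes Y_\mu^0\otimes Y_\mu^+$ together with the canonical identifications between the $Y_\mu^\pm$ subalgebras across different shifts.

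For part (ii), my plan is to first establish uniqueness and then construct $\Delta_{\mu,\nu}$. Uniqueness is nearly immediate: the stated initial conditions pin down $\Delta_{\mu,\nu}(x_{i,n}^\pm)$ in the small range $n<-\langle\mu,\alpha_i\rangle$ or $n<-\langle\nu,\alpha_i\rangle$; compatibility \eqref{rel: coproduct vs shift} with shift propagates the formulas to arbitrary $n$, by choosing $\epsilon,\eta$ antidominant large enough so that the shifted indices fall into the already-determined range; and the bracket $[x_{i,m}^+,x_{j,n}^-]=\delta_{ij}\xi_{i,m+n}$ then forces $\Delta_{\mu,\nu}(\xi_{i,p})$. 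For existence, the starting point is the ordinary coproduct $\Delta_{0,0}$ on $Y_0(\Glie)=Y(\Glie)$ described in \cite{GNW}. The main step is to produce $\Delta_{\mu,\nu}$ in the antidominant case by writing explicit formulas on Drinfeld generators, exploiting the fact that the initial conditions $\xi_{i,-\langle\mu+\nu,\alpha_i\rangle-1}=1$ truncate otherwise-infinite sums coming from the Drinfeld formal coproduct. I would then reduce an arbitrary pair $(\mu,\nu)$ to the antidominant case by picking antidominant $\epsilon,\eta$ with $\mu+\epsilon$ and $\nu+\eta$ both antidominant, verifying that $\Delta_{\mu+\epsilon,\nu+\eta}$ sends the image of $\iota^{\mu+\nu}_{\epsilon,\eta}$ inside the image of $\iota^\mu_{\epsilon,0}\otimes\iota^\nu_{0,\eta}$, and defining $\Delta_{\mu,\nu}$ by descent along these injections. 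This automatically yields \eqref{rel: coproduct vs shift}; the coassociativity \eqref{rel: coproduct coass} is then a consequence of uniqueness, since both compositions satisfy the characterizing properties of a three-fold coproduct $Y_{\mu+\epsilon+\nu}\to Y_\mu\otimes Y_\epsilon\otimes Y_\nu$.

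The main obstacle is the existence in the antidominant case: one must verify that the explicit formulas extending the Drinfeld formal coproduct actually define an algebra homomorphism. The preservation of the cubic Serre relations is the thorniest point and is precisely where the strong Serre presentation of \cite[Def.~3.1]{coproduct} is essential, since the classical Serre relations alone are insufficient to close the argument. A secondary difficulty lies in showing that $\Delta_{\mu+\epsilon,\nu+\eta}$ indeed restricts correctly along the shifted subalgebras during the descent step, which requires careful tracking of how the vanishing of $\xi_{i,p}$ for $p<-\langle\mu+\nu,\alpha_i\rangle-1$ interacts with the iterated coproduct formulas on the $\xi$'s and with the bracket relations $[x^+,x^-]$.
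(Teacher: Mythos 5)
You should first be aware that the paper does not prove this statement: Theorem \ref{thm: coproduct Yangian} is quoted from Finkelberg--Kamnitzer--Pham--Rybnikov--Weekes \cite[Corollary 3.16, Theorem 4.12, Proposition 4.14]{coproduct}, so there is no in-paper argument to compare with, and your sketch is in effect a reconstruction of the strategy of \cite{coproduct}. At that level your plan is the right one: relation-checking plus PBW for part (i), construction of $\Delta_{\mu,\nu}$ in the antidominant case followed by descent along shift homomorphisms for part (ii). But the hard core of part (ii) --- verifying that the explicit assignments define an algebra homomorphism, in particular the mixed $[x^+,x^-]$ and Serre relations --- is exactly the content of \cite[Theorem 4.12]{coproduct} and is left as a black box in your proposal; likewise, in (i) the restriction of $\iota_{\epsilon,\eta}^{\mu}$ to $Y_{\mu}^{\pm}(\Glie)$ is the index-shift map, not the canonical identification $Y_{\mu}^{\pm}(\Glie)\cong Y_{\nu}^{\pm}(\Glie)$, so injectivity genuinely requires the PBW theorem for shifted Yangians (shift sends PBW monomials to distinct PBW monomials), which is what \cite{coproduct} proves.

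There is one step in your argument that fails as written: the uniqueness propagation. You claim the initial conditions pin down $\Delta_{\mu,\nu}(x_{i,n}^{\pm})$ for small $n$ and that \eqref{rel: coproduct vs shift} ``propagates the formulas to arbitrary $n$, by choosing $\epsilon,\eta$ antidominant large enough so that the shifted indices fall into the already-determined range.'' This cannot work: $\iota_{\epsilon,\eta}^{\mu+\nu}$ sends $x_{i,n}^+$ to $x_{i,n-\langle\epsilon,\alpha_i\rangle}^+$ while the threshold in the target algebra moves from $-\langle\mu,\alpha_i\rangle$ to $-\langle\mu+\epsilon,\alpha_i\rangle$, and $n-\langle\epsilon,\alpha_i\rangle<-\langle\mu+\epsilon,\alpha_i\rangle$ is equivalent to $n<-\langle\mu,\alpha_i\rangle$; so no choice of $\epsilon$ brings a new mode into the determined range. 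The mechanism that actually forces the remaining values is the normalization $\Delta_{0,0}=\Delta$, fed through \eqref{rel: coproduct vs shift} applied with $(0,0)$ in the role of $(\mu,\nu)$ and the antidominant pair $(\mu,\nu)$ in the role of $(\epsilon,\eta)$: for $\mu,\nu$ antidominant, every $x_{i,n}^+$ with $n\ge-\langle\mu,\alpha_i\rangle$, every $x_{i,n}^-$ with $n\ge-\langle\nu,\alpha_i\rangle$ and every nonzero $\xi_{i,p}$ lies in the image of $\iota_{\mu,\nu}^{0}$, where $\Delta_{\mu,\nu}$ is pinned down by $\Delta_{0,0}$, and the finitely many generators outside this image are exactly those covered by the stated initial conditions; the case of general $\mu,\nu$ then follows from the injectivity of $\iota_{\epsilon,0}^{\mu}\otimes\iota_{0,\eta}^{\nu}$. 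Without invoking $\Delta_{0,0}$ through the image of the shift homomorphism, nothing in your argument determines $\Delta_{\mu,\nu}(x_{i,n}^+)$ for $n\ge-\langle\mu,\alpha_i\rangle$. Relatedly, your closing claim that \eqref{rel: coproduct coass} ``is a consequence of uniqueness, since both compositions satisfy the characterizing properties of a three-fold coproduct'' is unjustified: the theorem characterizes only the two-fold maps, and there is no threefold uniqueness statement to appeal to; this compatibility must be proved (it is \cite[Proposition 4.14]{coproduct}, and the zigzag argument of the paper's Theorem \ref{thm: trivial associativity Yangian} shows the kind of reduction that is actually needed).
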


The shift homomorphism sends the series $\overline{\xi}_i(z), A_i(z) \in Y_{\mu}(\Glie)[[z^{-1}]]$ to the corresponding series $\overline{\xi}_i(z), A_i(z) \in Y_{\mu+\epsilon+\eta}(\Glie)[[z^{-1}]]$. 

Recall $\BQ_+ = \sum_{i\in I} \BN \alpha_i$ and set $\BQ_> := \BQ_+ \setminus \{0\}$.
\begin{lem}\cite[Lemma 2.5]{HZ} \label{lem: Yangian coproduct estimation}
For $i \in I,\ n \in \BN$ and $p \in \BZ$, we have:
\begin{gather*}
\Delta_{\mu,\nu}(x_{i,n}^+) \equiv  x_{i,n}^+ \otimes 1 + \sum_{m\in \BN} \xi_{i,n-m-1} \otimes x_{i,m}^+  \ \mathrm{mod}.\sum_{\beta \in \BQ_>, t \in \BZ} Y_{\mu}^-(\Glie)_{-\beta} \xi_{i,t} \otimes Y_{\nu}^+(\Glie)_{\beta+\alpha_i},  \\
\Delta_{\mu,\nu}(x_{i,n}^-) \equiv 1 \otimes x_{i,n}^- + \sum_{m\in \BN} x_{i,m}^- \otimes \xi_{i,n-m-1}  \ \mathrm{mod}. \sum_{\beta \in \BQ_>, t \in \BZ}  Y_{\mu}^-(\Glie)_{-\beta-\alpha_i} \otimes \xi_{i,t} Y_{\nu}^+(\Glie)_{\beta}, \\
\Delta_{\mu,\nu}(\xi_{i,p}) \equiv \sum_{t\in \BZ} \xi_{i,t} \otimes \xi_{i,p-t-1} \ \mathrm{mod}. \sum_{\beta \in \BQ_>} Y_{\mu}^{\leq}(\Glie)_{-\beta} \otimes Y_{\nu}^{\geq}(\Glie)_{\beta}, \\
\Delta_{\mu,\nu}(\xi_{i,-\langle\mu+\nu,\alpha_i\rangle}) = \xi_{i,-\langle\mu,\alpha_i\rangle} \otimes 1 + 1 \otimes \xi_{i,-\langle\nu,\alpha_i\rangle}.
\end{gather*}
\end{lem}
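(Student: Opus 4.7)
My plan is to first pin down the sharp identity $\Delta_{\mu,\nu}(\xi_{i,-\langle\mu+\nu,\alpha_i\rangle}) = \xi_{i,-\langle\mu,\alpha_i\rangle}\otimes 1 + 1\otimes \xi_{i,-\langle\nu,\alpha_i\rangle}$, and then bootstrap the three modular congruences by induction, using the defining relations of the shifted Yangian together with the boundary conditions of Theorem \ref{thm: coproduct Yangian}(ii). Throughout I reduce to the ordinary Yangian $Y_0(\Glie)$ whenever possible, by exploiting the compatibility square \eqref{rel: coproduct vs shift} and the injectivity of the shift homomorphisms.

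For the sharp identity I first handle the antidominant case. When $\mu$ and $\nu$ are antidominant the shift $\iota_{\mu,\nu}^0: Y_0(\Glie)\to Y_{\mu+\nu}(\Glie)$ sends $\xi_{i,0}$ to $\xi_{i,-\langle\mu+\nu,\alpha_i\rangle}$, and since $\xi_{i,0}$ is primitive under the Hopf-algebra coproduct $\Delta_{0,0}$ of the ordinary Yangian, the square \eqref{rel: coproduct vs shift} applied with the bottom row at $(0,0)$ and shifts $(\epsilon,\eta)=(\mu,\nu)$ transports the primitive formula to $Y_{\mu+\nu}(\Glie)$. For general coweights $\mu,\nu$ I pick antidominant $\epsilon,\eta$ with $\mu+\epsilon$ and $\nu+\eta$ both antidominant, and read the same square backwards: the equality $(\iota_{\epsilon,0}^\mu\otimes\iota_{0,\eta}^\nu)\circ\Delta_{\mu,\nu} = \Delta_{\mu+\epsilon,\nu+\eta}\circ\iota_{\epsilon,\eta}^{\mu+\nu}$ evaluated on $\xi_{i,-\langle\mu+\nu,\alpha_i\rangle}$ determines $\Delta_{\mu,\nu}$ on that element, by injectivity of the tensor product of shift homomorphisms.

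For the congruences on $x_{i,n}^\pm$ and $\xi_{i,p}$ I proceed by a joint induction on $n$ and $p$. The base case $n<-\langle\mu,\alpha_i\rangle$ for $x_{i,n}^+$ is immediate: Theorem \ref{thm: coproduct Yangian}(ii) gives $\Delta_{\mu,\nu}(x_{i,n}^+)=x_{i,n}^+\otimes 1$, and every term $\xi_{i,n-m-1}\otimes x_{i,m}^+$ in the claimed sum vanishes since $n-m-1<-\langle\mu,\alpha_i\rangle-1$. For the step $n\geq -\langle\mu,\alpha_i\rangle$ I solve the defining relation
\[ [\xi_{i,p+1}, x_{i,n}^+] - [\xi_{i,p}, x_{i,n+1}^+] = d_i(\xi_{i,p}x_{i,n}^+ + x_{i,n}^+\xi_{i,p}) \]
with a suitably chosen $p$ near $-\langle\mu+\nu,\alpha_i\rangle$ for $x_{i,n+1}^+$ as a combination of commutators involving small-index $\xi_{i,q}$'s and $x_{i,n}^+$; applying $\Delta_{\mu,\nu}$ and substituting the sharp identity together with the inductive formulas yields the congruence. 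The case of $x_{i,n}^-$ is symmetric. For the congruence on $\xi_{i,p}$, I use $\xi_{i,p}=[x_{i,0}^+,x_{i,p}^-]$ for $p\geq 0$ (and the shift trick of Step 1 to handle negative $p$), apply $\Delta_{\mu,\nu}$, and verify that the diagonal part produces $\sum_t\xi_{i,t}\otimes\xi_{i,p-t-1}$ while the remaining cross-terms land in $\sum_{\beta>0}Y_\mu^{\leq}(\Glie)_{-\beta}\otimes Y_\nu^{\geq}(\Glie)_\beta$ by the PBW decomposition.

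The hard part is the bookkeeping in this joint induction: one must verify that the error subspace $\sum_{\beta>0,t}Y_\mu^-(\Glie)_{-\beta}\xi_{i,t}\otimes Y_\nu^+(\Glie)_{\beta+\alpha_i}$ (and its analogues) is stable under left and right multiplication by Drinfeld--Cartan elements and under commutation with $\Delta_{\mu,\nu}(\xi_{i,q})$. This closure follows from the PBW decomposition $Y_\mu^-\otimes Y_\mu^0\otimes Y_\mu^+$ and the $\BQ$-grading, but it requires care because multiplying a PBW monomial by a Cartan element can rearrange the $\xi$-factors into new $\xi$'s of shifted indices, and the commutator of the error subspace with the full $\Delta_{\mu,\nu}(\xi_i(z))$ must be shown to stay within the subspace at each inductive step.
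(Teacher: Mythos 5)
The paper itself does not prove this lemma: it quotes it from \cite[Lemma 2.5]{HZ} and states that the (slightly stronger) version here "is essentially given in [Knight]", so you are attempting a self-contained proof. Your treatment of the exact identity for $\xi_{i,-\langle\mu+\nu,\alpha_i\rangle}$ is fine: transporting the primitive element $\xi_{i,0}$ of $Y_0(\Glie)$ through the square \eqref{rel: coproduct vs shift} and then using injectivity of $\iota_{\epsilon,0}^{\mu}\otimes\iota_{0,\eta}^{\nu}$ is exactly what is needed. Reducing the three congruences to the ordinary/antidominant case via the same square is also a sound opening, although for the pull-back direction you must additionally check that the preimage under $\iota_{\epsilon,0}^{\mu}\otimes\iota_{0,\eta}^{\nu}$ of the error space is the error space; this can be done by comparing PBW monomials, since shift homomorphisms send generators to generators of the same triangular type and color, but you only invoke this mechanism for the exact identity, not for the congruences.

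The genuine gap is in the induction step on $n$. Raising the mode in $Y_{\mu+\nu}(\Glie)$ uses $x^+_{i,n+1}=\frac{1}{2d_i}\bigl([\xi_{i,-c+1},x^+_{i,n}]-d_i(\xi_{i,-c}x^+_{i,n}+x^+_{i,n}\xi_{i,-c})\bigr)$ with $c=\langle\mu+\nu,\alpha_i\rangle$, so you must apply $\Delta_{\mu,\nu}$ to the low Cartan mode $\xi_{i,-c+1}$; the only control your joint induction provides on it is the third congruence, whose error lives in $\sum_{\beta\in\BQ_>}Y_{\mu}^{\leq}(\Glie)_{-\beta}\otimes Y_{\nu}^{\geq}(\Glie)_{\beta}$. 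That is too weak: commuting such an error term against $x^+_{i,n}\otimes 1$ or $\xi_{i,n-m-1}\otimes x^+_{i,m}$ produces second tensor factors in $Y_{\nu}^{\geq}(\Glie)$ (Cartan-contaminated) and first factors containing several Cartan factors, or Cartan factors of colors $j\neq i$, none of which lie in the fine error space $\sum_{\beta,t}Y_{\mu}^-(\Glie)_{-\beta}\xi_{i,t}\otimes Y_{\nu}^+(\Glie)_{\beta+\alpha_i}$ you must propagate. What the step actually requires is a much sharper input for this one Cartan mode — in $Y_0(\Glie)$ the classical formula $\Delta(\xi_{i,1})=\xi_{i,1}\otimes 1+1\otimes\xi_{i,1}+\xi_{i,0}\otimes\xi_{i,0}-\sum_{\gamma\in\Phi}(\alpha_i,\gamma)\,x^-_{\gamma,0}\otimes x^+_{\gamma,0}$, whose off-diagonal part is Cartan-free — followed by a genuine multi-term computation showing that (a) the diagonal contributions, after cancellation between the $\xi_{i,0}\otimes\xi_{i,0}$ term and the anticommutator term, reassemble into $x^+_{i,n+1}\otimes 1+\sum_m\xi_{i,n-m}\otimes x^+_{i,m}$, and (b) the interaction of the $x^-_{\gamma}\otimes x^+_{\gamma}$ terms with the inductively known error stays of the shape $Y^-\xi_{i,t}\otimes Y^+$ (a single color-$i$ Cartan factor, movable to the right using the $[\xi_{i,\bullet},x^-_{j,\bullet}]$ relations). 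Your assertion that this closure "follows from the PBW decomposition and the $\BQ$-grading" is not correct as stated: PBW and weight considerations do not confine the second factor to the Cartan-free subalgebra $Y^+_{\nu}(\Glie)$, nor the first factor to the form $Y^-_{\mu}(\Glie)_{-\beta}\xi_{i,t}$. This bookkeeping is precisely the content of Knight's computation that the paper cites in place of a proof, and it is the missing core of your argument.
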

While the above lemma is slightly stronger than \cite[Lemma 2.5]{HZ}, its proof is essentially given in \cite{Knight}.

\subsection{Representations of shifted Yangians}  \label{ss: rep shifted Yangians}
Fix $\mu = \sum_{i\in I} n_i \varpi_i^{\vee}$ to be a coweight. Let $V$ be a module over $Y_{\mu}(\Glie)$. For $\beta \in \Hlie^*$, define
$$ V_{\beta} := \{v \in V\ |\ \xi_{i,-n_i} v = ( \alpha_i, \beta ) v \quad \mathrm{for}\ i \in I \}. $$
If $V_{\beta}$ is nonzero, then $\beta$ is called a weight of $V$ and $V_{\beta}$ is the weight space of weight $\beta$. Notice that $Y_{\mu}(\Glie)_{\gamma} V_{\beta} \subset V_{\gamma+\beta}$ for $\gamma \in \BQ$.

Call $V$ {\it weight graded} if $V$ is a direct sum of the weight spaces. 
Call $V$ {\it top graded} if it is weight graded and there exists $\beta_0 \in \Hlie^*$ such that
 $$ \textrm{$\dim V_{\beta_0} = 1$ and $V_{\beta} \neq \{0\}$ only if $\beta_0 - \beta \in \BQ_+.$}$$
  In this case $\beta_0$ is uniquely determined by the module structure on $V$. Call $\beta_0$ the top weight and $V_{\beta_0}$  the top weight space. Bottom graded modules are defined in the similar way by replacing the condition $\beta_0-\beta \in  \BQ_+$ with $\beta-\beta_0 \in \BQ_+$.
  
An $I$-tuple $\Bf = (\Bf_i(z))_{i\in I} \in \BC((z^{-1}))^I$ of Laurent series in $z^{-1}$ is called {\it $\ell$-weight of coweight $\mu$} if for $i\in I$ the leading term of $\Bf_i(z)$ is $z^{n_i}$; let $\CL_{\mu}$ denote the set of $\ell$-weights of coweight $\mu$. 
To such an $I$-tuple we attach the {\it Verma module} $M(\Bf)$, the $Y_{\mu}(\Glie)$-module generated by $\omega$ subject to relations \cite[\S 3.3]{KTWWY}
$$ \xi_i(z) \omega = \Bf_i(z) \omega,\quad x_i^+(z) \omega = 0 \quad \mathrm{for}\ i \in I. $$
It has a unique irreducible quotient, denoted by $L(\Bf)$. Call a $Y_{\mu}(\Glie)$-module $V$ {\it highest $\ell$-weight} if it is a nonzero quotient of a Verma module $M(\Bf)$; necessarily, $V$ is top graded.
The $\ell$-weight $\Bf$ is called the highest $\ell$-weight of $V$ and it is uniquely determined by the module structure. Any nonzero vector satisfying the above two equations is called a highest $\ell$-weight vector of $\ell$-weight $\Bf$. Replacing the second equation $x_i^+(z) \omega = 0$ with $x_i^-(z) \omega = 0$, we obtain the {\it lowest $\ell$-weight} Verma module, its irreducible quotient, and lowest $\ell$-weight modules. These are bottom graded modules.

An $\ell$-weight $\Bf \in \CL_{\mu}$ is called {\it rational} if for $i \in I$
the series $\Bf_i(z)$ is the Laurent expansion at $z = \infty$ of a rational function. It is {\it polynomial} if each $\Bf_i(z)$ is a monic polynomial of degree $n_i$, which forces the coweight $\mu$ to be dominant. 

\begin{prop} \cite[Theorem 3.12, Lemma 4.1]{HZ} 
Let $\Bf\in \CL_{\mu}$. Then $\Bf$ is rational if and only if all weight spaces of the irreducible $Y_{\mu}(\Glie)$-module $L(\Bf)$ are finite-dimensional. $\Bf$ is polynomial if and only if $L(\Bf)$ is one-dimensional.
\end{prop}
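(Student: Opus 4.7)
The plan is to treat the two equivalences separately, with the commutator identity $[x_{i,m}^+, x_{j,n}^-] = \delta_{ij}\xi_{i,m+n}$ acting as the central tool in both cases, and the one-dimensionality characterization being essentially computational while one direction of the rationality characterization requires a construction.

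For the polynomiality statement, I would first prove ``$\Bf$ polynomial $\Rightarrow$ $L(\Bf)$ one-dimensional'' by exhibiting a one-dimensional $Y_{\mu}(\Glie)$-module on $\BC\omega$: set $x_{i,n}^{\pm}$ to zero and $\xi_{i,p}$ to the coefficient of $z^{-p-1}$ in $\Bf_i(z)$. Since $\Bf_i(z)$ is a monic polynomial of degree $n_i$, this coefficient vanishes for $p \geq 0$, which makes $[x_{i,m}^+, x_{j,n}^-] = \delta_{ij}\xi_{i,m+n}$ trivially consistent; every other Drinfeld relation vanishes term by term because each surviving term contains some $x^{\pm}$. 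This module is of highest $\ell$-weight $\Bf$, so must be $L(\Bf)$. Conversely, if $L(\Bf) = \BC\omega$, then $x_{i,n}^{\pm}\omega = 0$ on weight grounds, so $\xi_{i,m+n}\omega = [x_{i,m}^+, x_{i,n}^-]\omega = 0$ for all $m, n \geq 0$; together with the initial conditions $\xi_{i,-n_i-1} = 1$ and $\xi_{i,p} = 0$ for $p < -n_i - 1$, this pins $\Bf_i(z)$ down to a monic polynomial of degree $n_i$ (and forces $\mu$ to be dominant).

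For the rationality claim, the direction ``finite-dim weight spaces $\Rightarrow$ $\Bf$ rational'' follows from $x_{i,m}^+ x_{i,n}^-\omega = \Bf_{i,m+n}\omega$, where $\Bf_{i,p}$ denotes the coefficient of $z^{-p-1}$ in $\Bf_i(z)$: if $\dim L(\Bf)_{\beta_0 - \alpha_i} < \infty$, the vectors $\{x_{i,n}^-\omega\}_{n\in \BN}$ admit a nontrivial linear relation $\sum_{n=0}^M c_n x_{i,n}^-\omega = 0$, and applying $x_{i,m}^+$ converts this into a linear recursion $\sum_n c_n \Bf_{i,m+n} = 0$ on the scalar sequence $\{\Bf_{i,p}\}_{p\geq 0}$, which is exactly the condition that $\Bf_i(z)$ is the expansion at $z = \infty$ of a rational function. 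The main obstacle is the reverse implication: rational $\Bf$ must force finite-dim weight spaces. I would prove this by constructing $L(\Bf)$ as a subquotient of a tensor product of already-known modules with finite-dim weight spaces, built via the shifted coproducts of Theorem \ref{thm: coproduct Yangian}. Concretely, factor each $\Bf_i(z) = \prod_k (z - a_{i,k})^{m_{i,k}}$ with $\sum_k m_{i,k} = n_i$, decompose the coweight as $\mu = \sum_{i,k} m_{i,k}\varpi_i^{\vee}$, and realize each elementary factor as the highest $\ell$-weight of a building-block $Y_{m_{i,k}\varpi_i^{\vee}}(\Glie)$-module with finite-dim weight spaces: a one-dimensional module when $m_{i,k} > 0$, and a deformation of a negative prefundamental module (or the pullback of a finite-dim ordinary Yangian module along a shift homomorphism) when $m_{i,k} < 0$. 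Iterating $\Delta_{\mu',\mu''}$ and invoking Lemma \ref{lem: Yangian coproduct estimation} to identify the $\ell$-weight, the tensor product of highest-weight vectors is a highest $\ell$-weight vector of $\ell$-weight $\Bf$ inside a $Y_{\mu}(\Glie)$-module whose weight spaces are finite sums of finite-dim pieces; $L(\Bf)$ arises as the irreducible quotient of the cyclic submodule it generates and so inherits finite-dimensionality of weight spaces.
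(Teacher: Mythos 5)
The paper itself does not prove this proposition: it is quoted from \cite{HZ}, so the only comparison available is with the proof there. Your outline in fact follows the same route as \cite{HZ}: the explicit one-dimensional module for a polynomial $\ell$-weight, the commutator identity $x_{i,m}^+x_{i,n}^-\omega=\xi_{i,m+n}\omega$ turning a linear dependence in a finite-dimensional weight space into a linear recursion for the coefficients of $\Bf_i(z)$ (hence rationality), and the realization of a general rational $\Bf$ as the highest $\ell$-weight of a tensor product of building blocks via the shifted coproduct. The two ``easy'' implications (polynomial $\Leftrightarrow$ one-dimensional, and finite-dimensional weight spaces $\Rightarrow$ rational) are correct as you wrote them.

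The gap is in the remaining direction, rational $\Rightarrow$ finite-dimensional weight spaces, where all the difficulty is concentrated in the factors with $m_{i,k}<0$: you need a module over an antidominantly shifted Yangian with highest $\ell$-weight $(z-a_{i,k})^{m_{i,k}}$ and finite-dimensional weight spaces, i.e.\ (tensor powers of spectral shifts of) negative prefundamental modules. But that is exactly the statement being proved in the special case $\Bf=\Psi_{i,a}^{-1}$, so invoking these as ``already-known modules with finite-dimensional weight spaces'' is circular unless you supply an independent construction — in \cite{HZ,Z2} this is the Hernandez--Jimbo-type inductive limit of finite-dimensional modules recalled in Appendix \ref{app: asym}, and it is the real content of the theorem. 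Moreover your proposed fallback, ``the pullback of a finite-dimensional ordinary Yangian module along a shift homomorphism,'' cannot produce these modules: by Theorem \ref{thm: coproduct Yangian}(i) the shift homomorphism maps $Y_{\mu}(\Glie)$ into the \emph{more} antidominantly shifted algebra $Y_{\mu+\epsilon+\eta}(\Glie)$, so pulling back a $Y_0(\Glie)$-module only yields modules over dominantly shifted Yangians $Y_{-\epsilon-\eta}(\Glie)$, never over $Y_{m\varpi_i^{\vee}}(\Glie)$ with $m<0$. Once the limit construction of negative prefundamental modules is granted, the rest of your tensor-product argument (all factors top graded, the tensor product of highest $\ell$-weight vectors is highest $\ell$-weight of the product $\ell$-weight by Lemma \ref{lem: Yangian coproduct estimation}, and finite-dimensionality of weight spaces passes to the subquotient $L(\Bf)$) does go through.
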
  

\begin{defi}  \label{defi: deformed module Yangian}
Let $V$ be a $Y_{\mu}(\Glie)$-module. The polynomial space $V[w]$  being a module over the algebra $Y_{\mu}(\Glie)[w]$ by scalar extension, its pullback along the algebra homomorphism $\tau_w: Y_{\mu}(\Glie) \longrightarrow Y_{\mu}(\Glie)[w]$ of Eq.\eqref{rel: spectral shift formal Yangian} defines a $Y_{\mu}(\Glie)$-module, denoted by $V_w$ and called {\it deformed module}. We refer to $w$ as {\it spectral parameter}.
\end{defi}
Evaluating $w$ at complex numbers gives a one-parameter family of representations $V_a := \tau_a^*V$, for $a \in \BC$, on the same underlying space as $V_0 = V$.  If $V$ is of highest $\ell$-weight $\Bf$, then $V_a$ is of highest $\ell$-weight $(\Bf_i(z-a))_{i\in I}$. By abuse of language, let $\tau_a(\Bf)$ denote the $\ell$-weight $(\Bf_i(z-a))_{i\in I}$.

\begin{example}  \label{example: deformed one-dim Yangian}
Let $\Bp$ be a polynomial $\ell$-weight of coweight $\zeta$. The deformed module $L(\Bp)_w$ associated to the one-dimensional $Y_{\zeta}(\Glie)$-module $L(\Bp)$ is the pullback of the regular representation of $\BC[w]$ along the algebra homomorphism 
$$\pi_{\Bp,w}: Y_{\zeta}(\Glie) \longrightarrow \BC[w],\qquad x_i^{\pm}(z) \mapsto 0,\quad \xi_i(z) \mapsto \Bp_i(z-w)  \quad \mathrm{for}\ i \in I.  $$
 For $j \in I$ and $a \in \BC$, define the {\it prefundamental $\ell$-weight} $\Psi_{j,a}$ to be the polynomial $\ell$-weight of coweight $\varpi_j^{\vee}$ whose $i$th component is $(z-a)^{\delta_{ij}}$ for $i\in I$. 
\end{example}

Let $\mu$ and $\nu$ be coweights. Recall from Theorem \ref{thm: coproduct Yangian} the algebra homomorphism $\Delta_{\mu,\nu}$ from $Y_{\mu+\nu}(\Glie)$ to $Y_{\mu}(\Glie) \otimes Y_{\nu}(\Glie)$. If $M$ and $N$ are modules over $Y_{\mu}(\Glie)$ and $Y_{\nu}(\Glie)$ respectively, then $M \otimes N$ is naturally a module over $Y_{\mu+\nu}(\Glie)$ and by Lemma \ref{lem: Yangian coproduct estimation}:
$$M_{\alpha} \otimes N_{\beta} \subset (M \otimes N)_{\alpha+\beta}\quad\text{ for }\alpha, \beta \in \Hlie^*.$$ 
So, a tensor product of weight modules is still weight graded. For $\Be \in \CL_{\mu}$ and $\Bf \in \CL_{\nu}$, the component-wise multiplication gives an $\ell$-weight $\Be \Bf \in \CL_{\mu+\nu}$. If  $v_1 \in M$ and $v_2 \in N$ are highest $\ell$-weight vectors of $\ell$-weights $\Be$ and $\Bf$ respectively, then $v_1 \otimes v_2 \in  M  \otimes N$ is a highest $\ell$-weight vector of $\ell$-weight $\Be\Bf$. 

\begin{theorem} \cite[Theorem 5.2]{HZ} \label{thm: poly R Yangian} 
Let $M$ and $N$ be highest $\ell$-weight irreducible modules over $Y_{\mu}(\Glie)$ and $Y_{\nu}(\Glie)$ respectively such that the inverse of the highest $\ell$-weight of $N$ is polynomial. Then we have a unique  $Y_{\mu+\nu}(\Glie)$-module morphism 
$$ \check{R}_{M,N}(w): M_w \otimes N \longrightarrow N \otimes M_w $$
which is $\BC[w]$-linear and preserves tensor products of highest $\ell$-weight vectors.
\end{theorem}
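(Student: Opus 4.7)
The plan is to match highest $\ell$-weight vectors on both sides, extend by cyclicity for uniqueness, and leverage the polynomial-inverse hypothesis on $N$ to obtain both existence and $\BC[w]$-polynomiality. As a first step, using the congruences of Lemma \ref{lem: Yangian coproduct estimation} applied to $\Delta_{\mu,\nu}$ on the source $M_w \otimes N$ and to $\Delta_{\nu,\mu}$ on the target $N \otimes M_w$, together with Definition \ref{defi: deformed module Yangian} for the spectral shift on $M$, I would check that $v_M \otimes v_N$ and $v_N \otimes v_M$ are both highest $\ell$-weight vectors over $Y_{\mu+\nu}(\Glie)$ of the same $\ell$-weight $(\Be_i(z-w)\Bf_i(z))_{i\in I}$, where $\Be$ and $\Bf$ denote the highest $\ell$-weights of $M$ and $N$.

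For uniqueness, I would verify that $M_w \otimes N$ is cyclic over $Y_{\mu+\nu}^{\leq}(\Glie)[w]$ from $v_M \otimes v_N$. The congruence $\Delta_{\mu,\nu}(x_{i,n}^-) \equiv 1 \otimes x_{i,n}^- + \sum_m x_{i,m}^- \otimes \xi_{i,n-m-1}$ from Lemma \ref{lem: Yangian coproduct estimation}, modulo strictly lower weight terms, allows me to extract both $v_M \otimes x_{i,n}^- v_N$ and $x_{i,m}^- v_M \otimes v_N$ by induction on weight. Any $\BC[w]$-linear $Y_{\mu+\nu}(\Glie)$-morphism $M_w \otimes N \to N \otimes M_w$ sending $v_M \otimes v_N$ to $v_N \otimes v_M$ is therefore unique.

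For existence, my strategy is to first construct an abstract $Y_{\mu+\nu}(\Glie)[w]$-morphism into a suitable weight-wise completion $N \,\widehat{\otimes}\, M_w$ of $N \otimes M_w$, via the universal property of the Verma module $M(\tau_w(\Be)\Bf)$: both $M_w \otimes N$ and the completion carry highest $\ell$-weight vectors of this $\ell$-weight, so the assignment $v_M \otimes v_N \mapsto v_N \otimes v_M$ extends uniquely at the level of formal weight-space completions. The main obstacle is then to prove that this morphism actually takes values in the uncompleted polynomial subspace $N \otimes M[w]$, and this is exactly where the hypothesis $\Bf^{-1}$ polynomial enters. Writing $\Bf_i(z) = q_i(z)^{-1}$ for monic polynomials $q_i$ and tracking the $w$-dependence of matrix coefficients through the defining relations and Eq.\eqref{comm: A x}, I would expect the entries of $\check{R}_{M,N}(w)$ to appear as rational functions in $w$ whose potential poles are controlled by $q_i(z)$ evaluated at shifted arguments, so polynomiality of the $q_i$ forces the necessary cancellations. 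An alternative route, perhaps cleaner, is to realize $L(\Bf)$ as an inverse limit of finite-dimensional irreducibles $L(\Bf\Bp)$ with polynomial shifts $\Bp$, where the R-matrix is known to exist and be $\BC[w]$-polynomial by standard arguments on finite-dimensional modules over the ordinary Yangian, and then pass to the limit; the polynomial-inverse condition is precisely what makes this approximation available.
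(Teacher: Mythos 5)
First, a point of orientation: the paper does not prove this statement at all — Theorem \ref{thm: poly R Yangian} is imported verbatim from \cite{HZ} (Theorem 5.2 there) — so your proposal has to stand on its own, and it does not: both load-bearing steps are asserted rather than proved. For uniqueness you claim that $M_w\otimes N$ is generated by $v_M\otimes v_N$ via a weight induction that ``extracts'' $v_M\otimes x_{i,n}^-v_N$ and $x_{i,m}^-v_M\otimes v_N$ separately. Applying Lemma \ref{lem: Yangian coproduct estimation} to the top vector gives, for each $n$, only the single combination $v_M\otimes x_{i,n}^-v_N+\sum_{m\in\BN}c_{n,m}\,\tau_w(x_{i,m}^-)v_M\otimes v_N$ with scalars $c_{n,m}$ read off from the (non-polynomial) series $\Bf_i(z)$; the error terms vanish on $v_N$, but there is no linear-algebra reason these combinations span the weight space, and you cannot take coefficients of powers of $w$ inside a $Y_{\mu+\nu}(\Glie)[w]$-submodule. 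Cyclicity of $M_w\otimes N$ (together with a dual cocyclicity of $N\otimes M_w$) is precisely the hard theorem of \cite{HZ} on which the R-matrix rests, and it is where the hypothesis that the highest $\ell$-weight of $N$ has polynomial inverse does its real work; it is not a two-line induction.

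The existence step has a more basic flaw: the universal property of the Verma module $M(\tau_w(\Be)\Bf)$ produces morphisms \emph{out of} the Verma module, not out of $M_w\otimes N$. Knowing that $M_w\otimes N$ and a weight-wise completion of $N\otimes M_w$ both contain highest $\ell$-weight vectors of the same $\ell$-weight yields no morphism between them; one must show that the annihilator of $v_M\otimes v_N$ kills $v_N\otimes v_M$, and completing the target changes nothing about that kernel condition, so the ``abstract morphism into the completion'' does not exist for the reason you give (the completed intertwiners in this paper, Theorems \ref{thm: Yangian T-series} and \ref{thm:Yangian associator}, are built from explicit series, never from such an abstract argument). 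Your alternative route through limits of finite-dimensional modules is closer in spirit to how such R-matrices are genuinely produced, but as sketched it is not available either: the approximating modules $W_n$ of Appendix \ref{app: asym} live over the ordinary Yangian while $N$ lives over an antidominantly shifted one (the limit is a direct limit that changes the shift), $M$ is an arbitrary highest $\ell$-weight module over $Y_{\mu}(\Glie)$ rather than a finite-dimensional module over $Y_0(\Glie)$, so no ``standard'' polynomiality applies to $M_w\otimes W_n$, and the compatibility of the R-matrices with the transition maps $F_{m,n}$, plus the uniformity in $w$ needed to pass to the limit, is exactly the nontrivial content that would have to be established. In short, you identify the correct normalization and the correct role of the hypothesis, but the cyclicity/cocyclicity input and the existence mechanism — the actual substance of \cite{HZ} — are missing.
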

One of the goals of this paper is to produce R-matrices $\check{R}_{M,N}(w)$ where $N$ is a tensor product module $L(\Bn^{-1}) \otimes L(\Bm)$ with $\Bm$ and $\Bn$ polynomial. When $\Bm = \Psi_{i,y}$ and $\Bn = \Psi_{i,0}$, such a tensor product module is identified with an asymptotic representation over the ordinary Yangian in \cite[Definition 16]{Z2}; see Eq.\eqref{asym: Yangian}.

\section{Trivial associativity}  \label{sec: trivial}
Let $M, K$ and $N$ be modules over three shifted Yangians $Y_{\mu}(\Glie), Y_{\zeta}(\Glie)$ and $Y_{\nu}(\Glie)$. On the triple tensor product $M \otimes K \otimes N$ there are two $Y_{\mu+\zeta+\nu}(\Glie)$-module structures:
\begin{align*}
    (M \otimes K) \otimes N \quad \mathrm{via}\ (\Delta_{\mu,\zeta} \otimes \mathrm{Id})\Delta_{\mu+\zeta,\nu}; \\
    M \otimes (K \otimes N) \quad \mathrm{via}\ (\mathrm{Id} \otimes \Delta_{\zeta, \nu})\Delta_{\mu, \zeta+\nu}. 
\end{align*}
In general, one has to distinguish the two module structures because the diagram \eqref{rel: coproduct coass} fails to commute for a general coweight $\zeta$. 
By {\it trivial associativity} we mean that the two structures coincide, namely, the identity map $(M \otimes K) \otimes N \longrightarrow M \otimes (K \otimes N)$ is a module isomorphism. This is the case when $K$ is a module over an antidominantly shifted Yangian, in view of the commutative diagram \eqref{rel: coproduct coass}.

The main result of this section is trivial associativity for $M$ or $N$ belonging to a particular family of modules without restriction on the middle module $K$.  

\begin{defi} \label{defi: trivial modules Yangians}
For $\mu$ a coweight, let $\BY_{\mu}(\Glie)$ denote the quotient of $Y_{\mu}(\Glie)$ by the two-sided ideal generated by the $x_{i,m}^{\pm}$ for $i \in I$ and $m \in \BN$. Let $\pi_{\mu}: Y_{\mu}(\Glie) \longrightarrow \BY_{\mu}(\Glie)$ be the quotient map. Call a $Y_{\mu}(\Glie)$-module {\it trivial} if it factorizes through $\pi_{\mu}$.
\end{defi}

By abuse of language, let $\xi_i(z)$ denote the image of $\xi_i(z)$ by $\pi_{\mu}$. The spectral parameter automorphism of Eq.\eqref{rel: spectral shift formal Yangian} descends to $\BY_{\mu}(\Glie)$, still denoted by $\tau_w$.

Given two coweights $\mu$ and $\nu$, let us define 
$$ F_{\mu,\nu} := (\pi_{\mu} \otimes \mathrm{Id}) \Delta_{\mu,\nu}, \quad G_{\mu,\nu} := (\mathrm{Id} \otimes \pi_{\nu}) \Delta_{\mu,\nu}  $$
as algebra homomorphisms from $Y_{\mu+\nu}(\Glie)$ to $\BY_{\mu}(\Glie) \otimes Y_{\nu}(\Glie)$ and to $Y_{\mu}(\Glie) \otimes \BY_{\nu}(\Glie)$ respectively. To describe their actions on the generating series of $Y_{\mu+\nu}(\Glie)$, following \cite[\S 2.3]{HZ}, for a formal power series $f(z) = \sum_{n\in \BZ} f_n z^{-n-1}$ with coefficients in a vector space, we define its {\it principal part} to be the power series $\sum_{n\in \BN} f_n z^{-n-1}$, denoted by $\langle f(z)\rangle_+$. As a consequence of Lemma \ref{lem: Yangian coproduct estimation} we have (see also \cite[(4.25)]{HZ}):
\begin{equation}  \label{equ: F G Yangian}
\begin{split}
    F_{\mu,\nu}:  x_i^+(z) \mapsto  \langle \xi_i(z) \otimes x_i^+(z)\rangle_+, \quad x_i^-(z) \mapsto 1 \otimes x_i^-(z),\quad \xi_i(z) \mapsto \xi_i(z) \otimes \xi_i(z); \\
    G_{\mu,\nu}:  x_i^+(z) \mapsto x_i^+(z) \otimes 1, \quad x_i^-(z) \mapsto \langle x_i^-(z) \otimes \xi_i(z) \rangle_+,\quad \xi_i(z) \mapsto \xi_i(z) \otimes \xi_i(z).
    \end{split}
\end{equation}

\begin{lem}  
    Let $a \in \BC$ and $\mu, \nu, \epsilon, \eta$ be coweights such that $\epsilon$ and $\eta$ are antidominant. Then we have the following identities of algebra homomorphisms:
    \begin{gather}
        (\mathrm{Id} \otimes \iota^{\nu}_{\epsilon,\eta}) F_{\mu,\nu} = F_{\mu,\epsilon+\eta+\nu} \iota_{\epsilon, \eta}^{\mu+\nu}, \quad (\iota^{\mu}_{\epsilon,\eta} \otimes \mathrm{Id}) G_{\mu,\nu} = G_{\mu+\epsilon+\eta,\nu} \iota_{\epsilon,\eta}^{\mu+\nu}; \label{F shift} \\
        (\tau_a \otimes \tau_a) F_{\mu,\nu} = F_{\mu,\nu} \tau_a, \quad (\tau_a \otimes \tau_a) G_{\mu,\nu} = G_{\mu,\nu} \tau_a.  \label{F G spectral Yangian} 
    \end{gather}
\end{lem}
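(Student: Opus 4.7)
The plan is to check each of the claimed identities on the generating series $x_i^+(z)$, $x_i^-(z)$, $\xi_i(z)$ for $i \in I$, using the formulas \eqref{equ: F G Yangian} for the action of $F_{\mu,\nu}$ and $G_{\mu,\nu}$. Both sides are algebra homomorphisms out of $Y_{\mu+\nu}(\Glie)$, so this suffices. A preliminary observation about the quotient $\BY_\mu(\Glie)$ drives the whole argument: the Drinfeld relation $[x_{i,0}^+, x_{i,s}^-] = \xi_{i,s}$ becomes $0 = \xi_{i,s}$ once $x^{\pm}$ are killed, so $\xi_{i,p} = 0$ in $\BY_\mu(\Glie)$ for every $p \geq 0$. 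Combined with $\xi_{i,-n_i-1} = 1$ and $\xi_{i,p} = 0$ for $p < -n_i - 1$, this shows the image of $\xi_i(z)$ in $\BY_\mu(\Glie)$ is a polynomial in $z$ of degree $n_i = \langle \mu, \alpha_i \rangle$ with leading coefficient $1$. Polynomiality is what turns the formal truncation $\langle \xi_i(z) \otimes x_i^+(z)\rangle_+$ into a genuine finite sum and underlies what follows.

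For the first identity of \eqref{F shift}, on $x_i^-(z)$ the two sides both equal $1 \otimes \iota_{\epsilon,\eta}^\nu(x_i^-(z)|_{Y_\nu})$ since $F$ sends $x_i^-(z) \mapsto 1 \otimes x_i^-(z)$. On $\xi_i(z)$, multiplicativity $F(\xi_i(z)) = \xi_i(z) \otimes \xi_i(z)$ combined with the identity $\iota_{\epsilon,\eta}^\nu(\xi_i(z)|_{Y_\nu}) = z^{-\langle\epsilon+\eta,\alpha_i\rangle}\,\xi_i(z)|_{Y_{\nu+\epsilon+\eta}}$ (and similarly for $\iota_{\epsilon,\eta}^{\mu+\nu}$) makes both sides agree. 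The decisive case is $x_i^+(z)$: by polynomiality,
\[ F_{\mu,\nu}(x_{i,n}^+) = \sum_{m = n}^{n+n_i} \xi_{i,n-1-m} \otimes x_{i,m}^+, \]
and $\mathrm{Id} \otimes \iota_{\epsilon,\eta}^\nu$ transforms this into $\sum_{m=n}^{n+n_i} \xi_{i,n-1-m} \otimes x_{i,m+k_i}^+$ with $k_i = -\langle \epsilon,\alpha_i\rangle \geq 0$. On the other side, $\iota_{\epsilon,\eta}^{\mu+\nu}(x_{i,n}^+) = x_{i,n+k_i}^+$ and
\[ F_{\mu,\nu+\epsilon+\eta}(x_{i,n+k_i}^+) = \sum_{m'=n+k_i}^{n+n_i+k_i} \xi_{i,n+k_i-1-m'} \otimes x_{i,m'}^+, \]
and the substitution $m' = m + k_i$ matches the two expressions. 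The identity for $G_{\mu,\nu}$ is established by the symmetric computation with $x_i^-(z)$ playing the decisive role.

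For the spectral identities \eqref{F G spectral Yangian}, I would first establish the stronger compatibility $(\tau_a \otimes \tau_a)\Delta_{\mu,\nu} = \Delta_{\mu,\nu}\tau_a$. Since by \eqref{rel: spectral shift formal Yangian} the spectral shift $\tau_a$ is the substitution $z \mapsto z - a$ in the generating series, and Lemma \ref{lem: Yangian coproduct estimation} describes $\Delta_{\mu,\nu}$ entirely through those series, this compatibility follows term-by-term on generators. Postcomposing with $\pi_\mu \otimes \mathrm{Id}$ (resp.\ $\mathrm{Id} \otimes \pi_\nu$), and noting that $\tau_a$ descends to $\BY_\mu(\Glie)$ because its defining ideal is $\tau_a$-stable, yields the stated identities for $F_{\mu,\nu}$ and $G_{\mu,\nu}$. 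The main obstacle throughout is the $x_i^+(z)$ bookkeeping in \eqref{F shift}: one must confirm that coefficients of $\xi_i(z)|_{\BY_\mu(\Glie)}$ outside the polynomial range $[-n_i-1, -1]$ do not contribute, which is precisely the content of the structural observation at the start. Once that is in hand, the calculation reduces to a finite reindexing.
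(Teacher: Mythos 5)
Your treatment of \eqref{F shift} is correct and is essentially the paper's own argument: both check the identity on the generating series via \eqref{equ: F G Yangian}, with the key input being that the image of $\xi_i(z)$ in $\BY_{\mu}(\Glie)$ is a polynomial (you make this explicit with the index bookkeeping $m'=m+k_i$, where the paper instead invokes the truncation identity $\langle g(z)\langle h(z)f(z)\rangle_+\rangle_+=\langle g(z)h(z)f(z)\rangle_+$ for polynomial $g,h$; these are the same computation).

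For \eqref{F G spectral Yangian}, however, there is a genuine gap. You propose to first prove the stronger statement $(\tau_a\otimes\tau_a)\Delta_{\mu,\nu}=\Delta_{\mu,\nu}\tau_a$ and then compose with $\pi_{\mu}\otimes\mathrm{Id}$. This stronger statement is not established by your argument and is in fact flagged as open in the paper (see the remark immediately after the lemma: it is unclear whether \eqref{F G spectral Yangian} holds with $F_{\mu,\nu}$ replaced by $\Delta_{\mu,\nu}$). Your justification -- that Lemma \ref{lem: Yangian coproduct estimation} ``describes $\Delta_{\mu,\nu}$ entirely through those series'' -- is not valid: that lemma only determines $\Delta_{\mu,\nu}$ on generators \emph{modulo} uncontrolled correction terms in $\sum_{\beta\in\BQ_>}Y_{\mu}(\Glie)_{-\beta}\otimes Y_{\nu}(\Glie)_{\beta}$, so no term-by-term verification of $\tau_a$-equivariance of the full shifted coproduct can be extracted from it. (One also cannot run a uniqueness argument \`a la Theorem \ref{thm: coproduct Yangian}(ii), since $\tau_a$ does not commute with the shift homomorphisms $\iota^{\mu}_{\epsilon,\eta}$: on generating series $\iota$ multiplies by $z^{-b_i}$ and truncates, while $\tau_a$ substitutes $z\mapsto z-a$, and these operations do not commute.) The fix stays entirely within your own framework: verify $(\tau_a\otimes\tau_a)F_{\mu,\nu}=F_{\mu,\nu}\tau_a$ directly from \eqref{equ: F G Yangian}, exactly as you did for \eqref{F shift}. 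The cases $x_i^-(z)$ and $\xi_i(z)$ are immediate since $\tau_a(X(z))=X(z-a)$ by \eqref{rel: spectral shift formal Yangian}; for $x_i^+(z)$ one must check that applying $\tau_a\otimes\tau_a$ coefficient-wise to $\langle\xi_i(z)\otimes x_i^+(z)\rangle_+$ equals the principal part of $\xi_i(z-a)\otimes x_i^+(z-a)$, which holds because the first factor is a polynomial in $\BY_{\mu}(\Glie)[z]$ (shifting a polynomial creates no negative powers of $z$, and shifting a power series in $z^{-1}$ creates no nonnegative ones, so truncation commutes with the shift). Your observation that $\tau_a$ descends to $\BY_{\mu}(\Glie)$ is correct and is indeed needed for this direct check.
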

\begin{proof}
We only prove the first half of Eq.\eqref{F shift}, as the idea works for the others. Write $\epsilon = \sum_{i\in I} b_i \varpi_i^{\vee}$ and $\eta = \sum_{i\in I} c_i \varpi_i^{\vee}$. Then $\iota_{\epsilon,\eta}^{\nu}$  sends the generating series of $Y_{\nu}(\Glie)$ to 
$$ x_i^+(z) \mapsto \langle z^{-b_i} x_i^+(z)\rangle_+, \quad x_i^-(z) \mapsto \langle z^{-c_i}x_i^-(z)\rangle_+, \quad \xi_i(z) \mapsto z^{-b_i-c_i} \xi_i(z). $$
Evaluate both sides of the first half of Eq.\eqref{F shift} at $x_i^+(z)$:
\begin{align*}
(\mathrm{Id} \otimes \iota^{\nu}_{\epsilon,\eta}) F_{\mu,\nu}: x_i^+(z) & \mapsto  \langle \xi_i(z) \otimes x_i^+(z)\rangle_+ \mapsto  \langle\xi_i(z) \otimes \langle z^{-b_i} x_i^+(z)\rangle_+ \rangle_+, \\
F_{\mu,\epsilon+\eta+\nu} \iota_{\epsilon,\eta}^{\mu+\nu}: x_i^+(z) & \mapsto \langle z^{-b_i} x_i^+(z) \rangle_+ \mapsto \langle z^{-b_i} \langle \xi_i(z) \otimes x_i^+(z)\rangle_+ \rangle_+.
\end{align*}
Notice that $\xi_i(z)$ at the first tensor factor is a polynomial with coefficients in $\BY_{\mu}(\Glie)$, and $b_i, c_i$ are negative integers. The two sides coincide because of the equation 
$$\langle g(z) \langle h(z) f(z)\rangle_+\rangle_+ = \langle g(z) h(z) f(z)\rangle_+ $$
where $f(z)$ is a formal power series and $g(z)$ and $h(z)$ are polynomials with coefficients in an algebra. Similarly both sides evaluated at $x_i^-(z)$ and $\xi_i(z)$ coincide.
\end{proof}

It is unclear to us whether Eq.\eqref{F G spectral Yangian} remains true if $F_{\mu,\nu}$ is replaced by $\Delta_{\mu,\nu}$. We arrive at the main result of this section, whose proof is close in spirit to \cite[Proposition 4.14]{coproduct}.
\begin{theorem}  \label{thm: trivial associativity Yangian}
Given $\mu, \zeta$ and $\nu$ coweights, we have as algebra homomorphisms from $Y_{\mu+\zeta+\nu}(\Glie)$ to $Y_{\mu}(\Glie)\otimes Y_{\zeta}(\Glie) \otimes \BY_{\nu}(\Glie)$ and to $\BY_{\mu}(\Glie) \otimes Y_{\zeta}(\Glie) \otimes Y_{\nu}(\Glie)$ respectively, 
\begin{equation*}  
\begin{split}
(\pi_{\mu} \otimes \mathrm{Id} \otimes \mathrm{Id}) (\mathrm{Id} \otimes \Delta_{\zeta,\nu})  \Delta_{\mu,\zeta+\nu} = (\pi_{\mu} \otimes \mathrm{Id} \otimes \mathrm{Id}) (\Delta_{\mu,\zeta} \otimes \mathrm{Id}) \Delta_{\mu+\zeta,\nu},   \\
(\mathrm{Id} \otimes \mathrm{Id} \otimes \pi_{\nu}) (\mathrm{Id} \otimes \Delta_{\zeta,\nu})  \Delta_{\mu,\zeta+\nu} = (\mathrm{Id} \otimes \mathrm{Id} \otimes \pi_{\nu}) (\Delta_{\mu,\zeta} \otimes \mathrm{Id}) \Delta_{\mu+\zeta,\nu}. 
\end{split}
\end{equation*}
\end{theorem}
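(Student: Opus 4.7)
The plan is a \emph{shift-descent} argument: first verify both identities when $\zeta$ is antidominant, then reduce the general case to this by means of injective shift homomorphisms. I focus on the first identity; the second follows by the mirror argument. Setting
$$\Phi_L^{(\zeta)} := (\mathrm{Id}\otimes \Delta_{\zeta,\nu})\, F_{\mu,\zeta+\nu},\qquad \Phi_R^{(\zeta)} := (F_{\mu,\zeta}\otimes \mathrm{Id})\, \Delta_{\mu+\zeta,\nu},$$
the first identity amounts to $\Phi_L^{(\zeta)} = \Phi_R^{(\zeta)}$ as algebra homomorphisms $Y_{\mu+\zeta+\nu}(\Glie)\to \BY_\mu(\Glie)\otimes Y_\zeta(\Glie)\otimes Y_\nu(\Glie)$. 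When $\zeta$ is antidominant, diagram \eqref{rel: coproduct coass} commutes and the equality holds already before applying $\pi_\mu$.

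For general $\zeta$, pick an antidominant $\zeta_0$ such that $\zeta+\zeta_0$ is also antidominant; such a $\zeta_0$ always exists by taking it sufficiently negative. The shift homomorphisms $\iota^\zeta_{\zeta_0,0}\colon Y_\zeta(\Glie)\hookrightarrow Y_{\zeta+\zeta_0}(\Glie)$ and $\iota^{\mu+\zeta+\nu}_{\zeta_0,0}\colon Y_{\mu+\zeta+\nu}(\Glie)\hookrightarrow Y_{\mu+\zeta+\zeta_0+\nu}(\Glie)$ are injective by Theorem~\ref{thm: coproduct Yangian}(i). I then establish the intertwining identities
\begin{align*}
(\mathrm{Id}\otimes \iota^\zeta_{\zeta_0,0}\otimes \mathrm{Id})\,\Phi_L^{(\zeta)} &= \Phi_L^{(\zeta+\zeta_0)}\, \iota^{\mu+\zeta+\nu}_{\zeta_0,0},\\
(\mathrm{Id}\otimes \iota^\zeta_{\zeta_0,0}\otimes \mathrm{Id})\,\Phi_R^{(\zeta)} &= \Phi_R^{(\zeta+\zeta_0)}\, \iota^{\mu+\zeta+\nu}_{\zeta_0,0}.
\end{align*}
The first is obtained by splicing Eq.\eqref{rel: coproduct vs shift} applied to the pair $(\zeta,\nu)$ (converting $\Delta_{\zeta,\nu}$ into $\Delta_{\zeta+\zeta_0,\nu}$) with Eq.\eqref{F shift} applied to the pair $(\mu,\zeta+\nu)$ (converting $F_{\mu,\zeta+\nu}$ into $F_{\mu,\zeta+\zeta_0+\nu}$), both with shift data $(\zeta_0,0)$. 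The second intertwiner is analogous, swapping the roles of the two compatibility diagrams: apply Eq.\eqref{F shift} to $(\mu,\zeta)$ and Eq.\eqref{rel: coproduct vs shift} to $(\mu+\zeta,\nu)$.

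Since $\zeta+\zeta_0$ is antidominant, the base case gives $\Phi_L^{(\zeta+\zeta_0)}=\Phi_R^{(\zeta+\zeta_0)}$, so the intertwining identities force the equality $(\mathrm{Id}\otimes \iota^\zeta_{\zeta_0,0}\otimes \mathrm{Id})\,\Phi_L^{(\zeta)}=(\mathrm{Id}\otimes \iota^\zeta_{\zeta_0,0}\otimes \mathrm{Id})\,\Phi_R^{(\zeta)}$. Working over the field $\BC$, the tensor product of the injection $\iota^\zeta_{\zeta_0,0}$ with identities is again injective, yielding $\Phi_L^{(\zeta)}=\Phi_R^{(\zeta)}$. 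The second identity of the theorem follows by the mirror argument with $G_{\mu,\nu}$ in place of $F_{\mu,\nu}$ and the shift $\iota^\zeta_{0,\zeta_0}$ in place of $\iota^\zeta_{\zeta_0,0}$. The main obstacle lies in the bookkeeping of the second step: one must keep careful track of which tensor factor a given shift acts on and how each coweight splits under \eqref{rel: coproduct vs shift}, but once laid out, each intertwiner is a direct consequence of the two compatibility diagrams.
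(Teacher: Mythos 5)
Your proposal is correct and is essentially the paper's own argument: the paper likewise composes with $\mathrm{Id}\otimes\iota^{\zeta}\otimes\mathrm{Id}$ for an antidominant shift making the middle coweight antidominant, pushes the shift through using \eqref{F shift} and \eqref{rel: coproduct vs shift}, invokes the restricted coassociativity \eqref{rel: coproduct coass}, and concludes by injectivity of the shift homomorphism. The only differences are cosmetic: you package the computation as two intertwining identities for $\Phi_L,\Phi_R$ and treat the $\pi_{\mu}$-identity first (with the mirror for $\pi_{\nu}$), whereas the paper writes out the chain directly for the $\pi_{\nu}$-identity.
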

\begin{proof}
We shall prove the second equation, as the first one is parallel. Choose an antidominant coweight $\eta$ such that $\zeta + \eta$ is antidominant and compose the left-hand side of the second equation by $\mathrm{Id} \otimes \iota_{0,\eta}^{\zeta} \otimes \mathrm{Id}$. We get the following identities of algebra homomorphisms (the underline indicates which expression is being altered next)
\begin{alignat*}{2}
&\qquad (\mathrm{Id} \otimes \iota_{0,\eta}^{\zeta} \otimes \mathrm{Id}) (\mathrm{Id} \otimes \mathrm{Id} \otimes \pi_{\nu}) (\mathrm{Id} \otimes \Delta_{\zeta,\nu})  \Delta_{\mu,\zeta+\nu} && \\
&=  (\mathrm{Id} \otimes (\iota_{0,\eta}^{\zeta} \otimes \pi_{\nu}) \Delta_{\zeta,\nu})\Delta_{\mu,\zeta+\nu} = (\mathrm{Id} \otimes \underline{(\iota_{0,\eta}^{\zeta} \otimes \mathrm{Id})  G_{\zeta,\nu}})\Delta_{\mu,\zeta+\nu}  &\qquad& \mathrm{Eq.}\eqref{F shift} \\
&= (\mathrm{Id} \otimes G_{\zeta+\eta,\nu} \iota_{0,\eta}^{\zeta+\nu}) \Delta_{\mu,\zeta+\nu} 
= (\mathrm{Id} \otimes G_{\zeta+\eta,\nu}) \underline{(\mathrm{Id} \otimes  \iota_{0,\eta}^{\zeta+\nu}) \Delta_{\mu,\zeta+\nu}} &&\mathrm{Eq.}\eqref{rel: coproduct vs shift}  \\
&= (\mathrm{Id} \otimes G_{\zeta+\eta,\nu}) \Delta_{\mu,\zeta+\eta+\nu} \iota_{0,\eta}^{\mu+\zeta+\nu} &&  \\
&=   (\mathrm{Id} \otimes \mathrm{Id} \otimes \pi_{\nu}) \underline{(\mathrm{Id} \otimes \Delta_{\zeta+\eta,\nu})  \Delta_{\mu,\zeta+\eta+\nu}} \iota_{0,\eta}^{\mu+\zeta+\nu} && \mathrm{Eq.}\eqref{rel: coproduct coass}  \\
&= (\mathrm{Id} \otimes \mathrm{Id} \otimes \pi_{\nu}) (\Delta_{\mu,\zeta+\eta} \otimes \mathrm{Id}) \Delta_{\mu+\zeta+\eta,\nu} \iota_{0,\eta}^{\mu+\zeta+\nu} &&    \\
&=(\Delta_{\mu,\zeta+\eta} \otimes \mathrm{Id}) \underline{G_{\mu+\zeta+\eta,\nu} \iota_{0,\eta}^{\mu+\zeta+\nu}} && \mathrm{Eq.}\eqref{F shift} \\
&= (\Delta_{\mu,\zeta+\eta} \otimes \mathrm{Id})  (\iota_{0,\eta}^{\mu+\zeta} \otimes \mathrm{Id}) G_{\mu+\zeta,\nu}  = (\underline{\Delta_{\mu,\zeta+\eta} \iota_{0,\eta}^{\mu+\zeta}} \otimes \mathrm{Id})G_{\mu+\zeta,\nu}  &&\mathrm{Eq.}\eqref{rel: coproduct vs shift} \\
&= ((\mathrm{Id} \otimes \iota_{0,\eta}^{\zeta}) \Delta_{\mu,\zeta} \otimes \mathrm{Id})  G_{\mu+\zeta,\nu}  && \\
&= (\mathrm{Id} \otimes \iota_{0,\eta}^{\zeta} \otimes \mathrm{Id}) (\mathrm{Id} \otimes \mathrm{Id} \otimes \pi_{\nu}) (\Delta_{\mu,\zeta} \otimes \mathrm{Id}) \Delta_{\mu+\zeta,\nu} &&
\end{alignat*}
which is precisely the right-hand side of the second equation composed by $(\mathrm{Id} \otimes \iota_{0,\eta}^{\zeta} \otimes \mathrm{Id})$. Conclude from the injectivity of $\iota_{0,\eta}^{\zeta}$.  
\end{proof}
A reformulation of the theorem is the following trivial associativity: for any triple $(M, K, N)$ of modules over three shifted Yangians, if $M$ or $N$ is a trivial module, then the identity map defines a module isomorphism $(M \otimes K) \otimes N \longrightarrow M \otimes (K \otimes N)$.
\begin{example}  \label{example: tensor prefund Yangian}
Let $\Bp$ be a polynomial $\ell$-weight of coweight $\zeta$. Recall from Example \ref{example: deformed one-dim Yangian} the algebra homomorphism $\pi_{\Bp,w}: Y_{\zeta}(\Glie) \longrightarrow \BC[w]$. Given a coweight $\mu$, we have two algebra homomorphisms $F_{\Bp,w}^{\mu}$ and $G_{\Bp,w}^{\mu}$, both from $Y_{\mu}(\Glie)$ to $Y_{\mu-\zeta}(\Glie)[w]$, defined by:
\begin{equation*}  
F_{\Bp,w}^{\mu} := (\pi_{\Bp,w} \otimes \mathrm{Id})  \Delta_{\zeta,\mu-\zeta}, \quad G_{\Bp,w}^{\mu} := (\mathrm{Id} \otimes \pi_{\Bp,w})  \Delta_{\mu-\zeta,\zeta}.
\end{equation*}
Since $\pi_{\Bp,w}$ defines the trivial module $L(\Bp)_w$, from Eq.\eqref{equ: F G Yangian} we get :
\begin{align*}
F_{\Bp,w}^{\mu}: &\ x_i^+(z) \mapsto \langle \Bp_i(z-w) x_i^+(z)\rangle_+,\quad x_i^-(z) \mapsto x_i^-(z),\quad \xi_i(z) \mapsto \Bp_i(z-w)\xi_i(z), \\
G_{\Bp,w}^{\mu}: &\ x_i^+(z) \mapsto x_i^+(z),\quad x_i^-(z) \mapsto \langle \Bp_i(z-w)x_i^-(z)\rangle_+, \quad \xi_i(z) \mapsto \Bp_i(z-w) \xi_i(z).
\end{align*}
Let $V$ be a $Y_{\mu-\zeta}(\Glie)$-module. View $V[w]$ as a module over  $Y_{\mu-\zeta}(\Glie)[w]$ by scalar extension. Then tensor products with $L(\Bp)_w$ are pullback modules:
$$ L(\Bp)_w \otimes V = (F_{\Bp,w}^{\mu})^* (V[w]), \quad V \otimes L(\Bp)_w = (G_{\Bp,w}^{\mu})^* (V[w]). $$
Recall the deformed module $V_z$ from Definition \ref{defi: deformed module Yangian} with spectral parameter $z$. On the same underlying space $V[z,w]$ we have identifications of modules:
$$L(\Bp)_w \otimes V_z = (L(\Bp)_{w-z}\otimes V)_z, \quad V_z \otimes L(\Bp)_w  = (V \otimes L(\Bp)_{w-z})_z. $$
This follows from Eq.\eqref{F G spectral Yangian} by viewing $a$ as a formal variable.
\end{example}

\section{S-series and T-operators}  \label{sec: T Yangian}
In this section we construct module morphisms $L(\Bp)_w \otimes V \longrightarrow V \otimes L(\Bp)_w$, for $\Bp$ a polynomial $\ell$-weight and $V$ a graded module, by solving in the shifted Yangian an additive difference equation determined by the GKLO series. 

For $G(z) \in 1 + z^{-1} \BC[[z^{-1}]]$ a power series and $x$ an element in an associative algebra $\mathcal{A}$, take the logarithm $\log G(z) \in z^{-1} \BC[[z^{-1}]]$ and define the power series
$$ G(z)^x := \exp(x \log G(z)) \in 1 + z^{-1} \mathcal{A}[[z^{-1}]].  $$
As an example, taking $G(z) = \frac{z}{z+c}$ with $c \in \BC$, we have
$$ \left(\frac{z}{z+c}\right)^x = \exp (x \sum_{k>0} \frac{(-c)^k}{k}z^{-k} ).  $$
Recall from  Definition \ref{defi: GKLO series} the GKLO series $A_i(z)$ and elements $a_{i,m}$.
\begin{prop}  \label{prop: S-series}
Fix a coweight $\mu$ and $i \in I$. There exists a unique power series $S_i(z) \in 1 + z^{-1} Y_{\mu}^0(\Glie)[[z^{-1}]]$ which solves the additive difference equation
\begin{equation}
S_i(z+d_i) = S_i(z) \times A_i(z) \times \left(\frac{z}{z+d_i}\right)^{a_{i,0}}. \label{difference: S A}
\end{equation}
Furthermore, the following relations hold in $Y_{\mu}(\Glie)[[z^{-1}]]$ for $(j,n) \in I \times \BN$:
\begin{equation}   \label{comm: S x}
\begin{split}
S_i(z) x_{j,n}^- S_i(z)^{-1} &= (1-\sigma_i^- z^{-1})^{\delta_{ij}} (x_{j,n}^-),    \\
S_i(z)^{-1} x_{j,n}^+ S_i(z) &= (1- \sigma_i^+ z^{-1})^{\delta_{ij}} (x_{j,n}^+). 
\end{split}
\end{equation}
\end{prop}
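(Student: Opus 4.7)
The plan is to solve the difference equation \eqref{difference: S A} recursively in $Y_\mu^0(\Glie)[[z^{-1}]]$ and then verify the commutation relations via a "same recursion, same initial value" argument. Since $Y_\mu^0(\Glie)$ is commutative, I would write $S_i(z) = \exp s_i(z)$ with $s_i(z) = \sum_{k \ge 1} s_{i,k} z^{-k}$, turning \eqref{difference: S A} into the additive equation
\[
s_i(z+d_i) - s_i(z) \;=\; \log A_i(z) + a_{i,0}\,\log\tfrac{z}{z+d_i}.
\]
Expanding $(z+d_i)^{-k} - z^{-k} = -kd_i\,z^{-k-1} + O(z^{-k-2})$, the coefficient of $z^{-N}$ on the left-hand side is a triangular linear expression in $s_{i,1},\dots,s_{i,N-1}$ whose leading coefficient $-(N-1)d_i$ is nonzero for $N \ge 2$. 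So the $s_{i,k}$ are determined recursively once we know the $z^{-1}$ coefficient of the right-hand side vanishes; a direct expansion using $\log A_i(z) = d_i a_{i,0}\,z^{-1} + O(z^{-2})$ and $\log(z/(z+d_i)) = -d_i z^{-1} + O(z^{-2})$ shows the two $z^{-1}$ contributions cancel, giving existence and uniqueness of $S_i(z)$.

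For the commutation relations I would first dispose of the case $j \ne i$: Eq.\eqref{comm: a x} gives $[a_{i,m}, x_{j,n}^{\pm}] = 0$ whenever $j \ne i$, so $S_i(z)$, being a power series in the $a_{i,m}$ with $\BC$-coefficients, commutes with $x_{j,n}^{\pm}$, matching the right-hand side $(1 - \sigma_i^{\pm} z^{-1})^0 = 1$. For the diagonal case, set $L_n(z) := S_i(z)\,x_{i,n}^-\,S_i(z)^{-1}$. Substituting \eqref{difference: S A} and using $[a_{i,0}, x_{i,n}^-] = x_{i,n}^-$ to push the factor $(z/(z+d_i))^{a_{i,0}}$ past $x_{i,n}^-$ at the cost of a scalar $z/(z+d_i)$, together with Eq.\eqref{comm: A x}, yields
\[
L_n(z+d_i) \;=\; \frac{z}{z+d_i}\cdot\frac{z-\sigma_i^- + d_i}{z - \sigma_i^-}\bigl(L_n(z)\bigr),
\]
where $\sigma_i^-$ acts on the family $\{L_m(z)\}_m$ by $\sigma_i^-(L_m) := L_{m+1}$; this compatibility reflects the $\BC[\sigma_i^-]$-linearity of conjugation by $S_i(z)$ on the linear span of the $x_{i,m}^-$.

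Setting $R_n(z) := (1 - \sigma_i^- z^{-1})(x_{i,n}^-) = x_{i,n}^- - z^{-1} x_{i,n+1}^-$, the rational-function identity
\[
\frac{z}{z+d_i}\cdot\frac{z - u + d_i}{z - u}\cdot\Bigl(1 - \frac{u}{z}\Bigr) \;=\; 1 - \frac{u}{z+d_i}
\]
with $u$ standing for $\sigma_i^-$ shows that $R_n(z)$ satisfies the same recursion as $L_n(z)$. Since both series start with constant term $x_{i,n}^-$ and the recursion, when expanded in $z^{-1}$, determines each higher coefficient uniquely from the preceding ones, I conclude $L_n(z) = R_n(z)$. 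The $x^+$ relation follows by the mirror argument applied to $S_i(z)^{-1}\,x_{i,n}^+\,S_i(z)$: the sign flip $[a_{i,0}, x_{i,n}^+] = -x_{i,n}^+$ and the inversion of the $A_i$-conjugation side in Eq.\eqref{comm: A x} combine so that the same recursion, and hence the same rational identity, closes the argument. The step I expect to require the most care is the bookkeeping that conjugation by $S_i(z)$ commutes with the endomorphism $\sigma_i^{\pm}$ on the span of the root generators; this relies on $\sigma_i^{\pm}$ being an algebra endomorphism of $Y_\mu^{\pm}(\Glie)$ and on $S_i(z)$ being assembled from the $a_{i,m} \in Y_\mu^0(\Glie)$ via universal formulas that are invariant under the shift action.
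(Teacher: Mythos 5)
Your proposal is correct and follows essentially the same route as the paper: solve for $\log S_i(z)$ by the triangular recursion in $z^{-1}$ (after checking the $z^{-1}$-coefficient of the right-hand side vanishes), dispose of $j\neq i$ via $[a_{i,m},x_{j,n}^{\pm}]=0$, and for $j=i$ conjugate the difference equation \eqref{difference: S A} using $[a_{i,0},x_{i,n}^{\mp}]=\pm x_{i,n}^{\mp}$ and Eq.\eqref{comm: A x} to obtain a difference equation that $1-\sigma_i^{\mp}z^{-1}$ also solves, concluding by the same uniqueness argument. The only cosmetic difference is that the paper packages the conjugation as a single universal series $g(z)\in 1+z^{-1}\BC[\sigma_i^-][[z^{-1}]]$ applied to $x_{i,n}^-$, while you track the family $L_n(z)$ with the index-shift action of $\sigma_i^-$; these are the same argument in substance.
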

\begin{proof}
We shall solve Eq.\eqref{difference: S A} and prove the first half of \eqref{comm: S x}, as the second half is parallel. Write $S_i(z) = \exp(\sum_{m\geq 0} s_mz^{-m-1})$. Then Eq.\eqref{difference: S A} is equivalent to the following equation in $Y_{\mu}^0(\Glie)[[z^{-1}]]$ with variables $s_m$ for $m\in \BN$:
$$ \sum_{m\geq 0} s_m ((z+d_i)^{-m-1} - z^{-m-1}) = \sum_{m\geq 0} (d_ia_{i,m} + \frac{(-d_i)^{m+1}}{m+1} a_{i,0})z^{-m-1}. $$
At both sides the coefficient of $z^{-1}$ is zero. For $m> 0$, the coefficient of $z^{-m-1}$ at the left-hand side is $-m d_i s_{m-1}$ plus a linear combination of $s_k$ for $0\leq k \leq m-2$, and at the right-hand side it is the constant $d_ia_{i,m} + \frac{(-d_i)^{m+1}}{m+1} a_{i,0}$.  We obtain therefore a triangular system of linear equations in the variables $s_m$ which has a unique solution. For $m \in \BN$ fixed, $s_m$ is a $\BC$-linear combination of the $a_{i,k}$ for $0\leq k \leq m+1$, and in view of Eq.\eqref{comm: a x} there exists a polynomial $c_m(t) \in \BC[t]$ such that 
$$ [s_m, x_{j,n}^-] = \delta_{ij} c_m(\sigma_i^-)(x_{j,n}^-)\quad \mathrm{for}\ (j,n) \in I \times \BN. $$
Therefore $S_i(z)$ commutes with the $x_{j,n}^-$ for $j \neq i$ and
\begin{align*}
S_i(z) x_{i,n}^- S_i(z)^{-1} = g(z) (x_{i,n}^-) \quad \mathrm{where} \\
g(z) := \exp(\sum_{m\geq 0} c_m(\sigma_i^-) z^{-m-1}) \in 1 + z^{-1} \BC[\sigma_i^-][[z^{-1}]]. 
\end{align*}
Notice from the commutation relation $[a_{i,0}, x_{i,n}^-] =  x_{i,n}^-$ that
\begin{align*}
\left(\frac{z}{z+d_i}\right)^{a_{i,0}} x_{i,n}^-  = \frac{z}{z+d_i} x_{i,n}^- \left(\frac{z}{z+d_i}\right)^{a_{i,0}}. 
\end{align*}
From Eq.\eqref{comm: A x} we get that the conjugations of the $x_{i,n}^-$ by both sides of Eq.\eqref{difference: S A} give another difference equation in $1 + z^{-1}\BC[\sigma_i^-][[z^{-1}]]$:
\begin{align*}
g(z+d_i) &= g(z) \times \frac{z-\sigma_i^-+d_i}{z-\sigma_i^-} \times \frac{z}{z+d_i}.
\end{align*}
Standard uniqueness arguments as in the case of the difference equation \eqref{difference: S A} indicate that $g(z) = 1 - \sigma_i^- z^{-1}$. This proves the first half of Eq.\eqref{comm: S x}.
\end{proof}
\begin{rem}  
Our difference equation \eqref{difference: S A} at the level of algebras is close to the difference equation \cite[Eq.(5.35)]{HZ} at the level of representations. In the present situation we have to introduce an additional factor at the right-hand side in order to guarantee the existence of a solution. Combining the difference equation \eqref{difference: S A} with Definition \ref{defi: GKLO series} we express the Drinfeld--Cartan series in terms of S-series in the same way as in the last paragraph of the proof of \cite[Proposition 5.8]{HZ}:
\begin{gather} \label{xi S}
\overline{\xi}_i(z) = \prod_{j \in I}  \frac{S_j(z-d_{ij})}{S_j(z+d_{ij})}  \left(\frac{z-d_{ij}}{z+d_{ij}}\right)^{a_{j,0}}.   
\end{gather}
\end{rem}

For $\mu$ a coweight and $\Bp = \Psi_{i_1,a_1} \Psi_{i_2,a_2} \cdots \Psi_{i_n,a_n}$ a polynomial $\ell$-weight, define the S-series $S_{\Bp}(w) \in 1 + w^{-1} Y_{\mu}^0(\Glie)[[w^{-1}]]$ to be the power series
\begin{equation}   \label{def: S series one-dim Yangian}
S_{\Bp}(w) := S_{i_1}(w+a_1) S_{i_2}(w+a_2) \cdots S_{i_n}(w+a_n).
\end{equation} 
As a consequence of Eq.\eqref{comm: S x}, we have in $Y_{\mu}(\Glie)[[z^{-1},w^{-1}]]$:
\begin{equation}   \label{comm: S x general}
\begin{split}
S_{\Bp}(w) x_i^-(z)   &= \frac{1}{\Bp_i(-w)} \langle \Bp_i(z-w) x_i^-(z) \rangle_+S_{\Bp}(w),    \\
x_i^+(z) S_{\Bp}(w) &= \frac{1}{\Bp_i(-w)} S_{\Bp}(w) \langle \Bp_i(z-w) x_i^+(z) \rangle_+.
\end{split}
\end{equation}
Here $\langle \rangle_+$ always means the principal part with respect to $z$ as defined before Eq.\eqref{equ: F G Yangian}. Indeed, by repeatedly applying the first formula of Eq.\eqref{comm: S x} we have
$$ S_{\Bp}(w) x_i^-(z) S_{\Bp}(w)^{-1} = \frac{\Bp_i(\sigma_i^--w)}{\Bp_i(-w)}(x_i^-(z)). $$
The first formula of Eq.\eqref{comm: S x general} follows using that $(\sigma_i^-)^n (x_i^-(z)) = \langle z^n x_i^-(z)\rangle_+$ for $n \in \BN$. The second formula is proved in the same way.

\begin{defi} \label{defi: root graded Yangian} Let $V$ be a module over the shifted Yangian $Y_{\mu}(\Glie)$.

(i) Call $V$ {\it root graded} if it is a $\BQ$-graded module over the $\BQ$-graded algebra $Y_{\mu}(\Glie)$. Namely, it is a direct sum of vector subspaces $V = \oplus_{\beta \in \BQ} V_{(\beta)}$ such that $Y_{\mu}(\Glie)_{\gamma} V_{(\beta)} \subset V_{(\gamma+\beta)}$ for $\beta, \gamma \in \BQ$. It is {\it positive root graded} if $V = \oplus_{\beta\in \BQ_+} V_{(\beta)}$, and  {\it negative root graded} if $V = \oplus_{\beta\in \BQ_-} V_{(\beta)}$.

(ii) Let $V$ be a root graded module. Define $V^w$ to be the subspace of $V((w^{-1}))$ linearly spanned by the $V_{(\beta)}((w^{-1}))$ for $\beta \in \BQ$.
For $\Bp = (\Bp_i(z))_{i\in I}$ a polynomial $\ell$-weight, define the linear automorphism $D_{\Bp,w}^V$ of $V^w$ by 
$$ D_{\Bp,w}^V (g(w)) := g(w) \times \prod_{i\in I}\Bp_i(-w)^{-\langle \varpi_i^{\vee}, \beta \rangle} \qquad \textrm{for $\beta \in \BQ$ and $g(w) \in V_{(\beta)}((w^{-1}))$}. $$
\end{defi}  
Notice that $V^w$ is intermediate between $V[w] \subset V((w^{-1}))$. It is acted upon by the subalgebras $Y_{\mu}(\Glie)[w]$ and $Y_{\mu}(\Glie)[[w^{-1}]]$ of $Y_{\mu}(\Glie)((w^{-1}))$ but not by the full algebra.
The linear automorphism $D_{\Bp,w}^V$ of $V^w$  does not extend to $V((w^{-1}))$ in general. 
\begin{example}   \label{example: motivation root graded Yangian}
(i) If $V$ is top graded of top weight $\beta_0 \in \Hlie^*$, then it is negative root graded by setting $V_{(\beta)}$ to be the weight space $V_{\beta+\beta_0}$. If $V$ is bottom graded of bottom weight $\beta_0$, then it is positive root graded by setting $V_{(\beta)} := V_{\beta+\beta_0}$. If $V$ is a trivial module in the sense of Definition \ref{defi: trivial modules Yangians}, then it is root graded by setting $V = V_{(0)}$.

(ii) The regular representation of $Y_{\mu}(\Glie)$ on itself is not weight graded as a module but root graded, by setting $Y_{\mu}(\Glie)_{(\beta)} := Y_{\mu}(\Glie)_{\beta}$. 

(iii) Let $M$ be a root graded $Y_{\mu}(\Glie)$-module and $N$ be a root graded $Y_{\nu}(\Glie)$-module. The $Y_{\mu+\nu}(\Glie)$-module $M \otimes N$ is root graded by setting $(M\otimes N)_{(\beta)}$ to be the direct sum of the $M_{(\gamma)} \otimes N_{(\beta-\gamma)}$ for $\gamma \in \BQ$. We have $(M \otimes N)^w \supset M^w \otimes N$. However, the linear automorphism $D_{\Bp,w}^M \otimes \mathrm{Id}_N$ of $M^w \otimes N$ does not extend to $(M \otimes N)^w$ in general.
\end{example}
\begin{prop}
Finite-dimensional modules over shifted Yangians are root graded.
\end{prop}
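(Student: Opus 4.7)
My plan is to decompose $V$ into joint generalized eigenspaces for the commuting Cartan operators $\xi_{i,-n_i}$, where $\mu = \sum_{i\in I} n_i \varpi_i^\vee$, and then repackage the indexing into a $\BQ$-grading. Since $V$ is finite-dimensional and these operators pairwise commute inside $Y^0_\mu(\Glie)$, linear algebra yields the decomposition
$$V = \bigoplus_{\lambda \in \Lambda} V_\lambda^{\mathrm{gen}}, \qquad V_\lambda^{\mathrm{gen}} := \{v \in V \mid (\xi_{i,-n_i} - \lambda_i)^{\dim V} v = 0 \text{ for all } i \in I\},$$
indexed by the finite set $\Lambda \subset \BC^I$ of tuples of joint generalized eigenvalues that appear.

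The next step is a shifting property: for $\gamma \in \BQ$ and $x \in Y_\mu(\Glie)_\gamma$, one has $x \cdot V_\lambda^{\mathrm{gen}} \subset V_{\lambda + \hat\gamma}^{\mathrm{gen}}$, where $\hat\gamma := ((\alpha_i, \gamma))_{i \in I}$. This follows from the characterization of weight-$\gamma$ elements recalled in Section \ref{sec: Yangian}, namely $[\xi_{i,-n_i}, x] = (\alpha_i, \gamma) x$, which rewrites as $\xi_{i,-n_i}\, x = x(\xi_{i,-n_i} + (\alpha_i,\gamma))$ and gives inductively $(\xi_{i,-n_i} - \lambda_i - (\alpha_i, \gamma))^N x = x (\xi_{i,-n_i} - \lambda_i)^N$ for all $N \geq 0$. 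Non-degeneracy of the bilinear form on $\Hlie^*$, equivalently invertibility of the symmetrized Cartan matrix $(b_{ij})$, makes the map $\gamma \mapsto \hat\gamma$ injective, so $\BQ$ acts freely on $\BC^I$ by translation.

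To produce the $\BQ$-grading I would partition $\Lambda$ into $\BQ$-translation orbits, choose a representative $\lambda^{(c)}$ in each orbit $c$, and define
$$V_{(\beta)} := \bigoplus_c V_{\lambda^{(c)} + \hat\beta}^{\mathrm{gen}} \qquad (\beta \in \BQ),$$
with the convention $V_\nu^{\mathrm{gen}} = 0$ when $\nu \notin \Lambda$. Then $V = \bigoplus_{\beta \in \BQ} V_{(\beta)}$, and the shifting property yields $Y_\mu(\Glie)_\gamma V_{(\beta)} \subset V_{(\beta+\gamma)}$, exhibiting the root grading of Definition \ref{defi: root graded Yangian}(i). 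The argument presents no substantive obstacle; the only nuance is that the $\xi_{i,-n_i}$ need not act semisimply on $V$, so one cannot in general refine this to a genuine weight grading in the sense of Section \ref{ss: rep shifted Yangians}, and although the orbit representatives are chosen non-canonically, Definition \ref{defi: root graded Yangian}(i) only requires the existence of such a decomposition.
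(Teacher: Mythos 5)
Your proof is correct and follows essentially the same route as the paper: decompose $V$ into generalized weight spaces for the commuting elements $\xi_{i,-n_i}$, use the commutation relation $[\xi_{i,-n_i},x]=(\alpha_i,\gamma)x$ to see that graded elements shift these spaces, split into $\BQ$-translation classes and choose base points. The only cosmetic difference is that the paper indexes generalized weight spaces by $\gamma\in\Hlie^*$ (equivalently, by the tuples $((\alpha_i,\gamma))_{i\in I}$ you use) and phrases the orbit decomposition as a direct sum of submodules $V_1,\dots,V_m$ supported on single $\BQ$-cosets.
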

\begin{proof}
Let $V$ be a finite-dimensional module over $Y_{\mu}(\Glie)$. As a module over the commutative subalgebra of $Y_{\mu}(\Glie)$ generated by the $\xi_{i,-\langle \mu, \alpha_i\rangle}$ for $i \in I$, it is a direct sum of {\it generalized weight spaces} $V^{\gamma}$ for $\gamma \in \Hlie^*$ defined by (see also \cite[(5.1)]{BK})
$$ V^{\gamma} := \{v \in V\ |\ \forall i \in I, \exists k \in \BN\ \textrm{such that } (\xi_{i,-\langle\mu,\alpha_i\rangle}) - (\alpha_i, \gamma))^k v = 0 \}. $$ 
We have $Y_{\mu}(\Glie)_{\alpha} V^{\gamma} = V^{\gamma + \alpha}$ for $\alpha \in \BQ$. Let $V^{\langle \gamma\rangle}$ denote the direct sum of subspaces $V^{\gamma+\beta}$ for $\beta \in \BQ$. Since $V$ is finite dimensional, it is a direct sum of finitely many subspaces $V^{\langle \gamma_1\rangle}, V^{\langle \gamma_2\rangle}, \cdots, V^{\langle \gamma_m\rangle}$. For $\beta \in \BQ$ let us set 
$$ V_{(\beta)} := V^{\gamma_1+\beta} \oplus V^{\gamma_2+\beta} \oplus \cdots \oplus V^{\gamma_m + \beta}. $$
This equips $V$ with a root grading. 
\end{proof}

Let $V$ be a root graded module over $Y_{\mu}(\Glie)$ and $\Bp$ be a polynomial $\ell$-weight of coweight $\zeta$. As in Example \ref{example: tensor prefund Yangian} we view $V^w$ as a $Y_{\mu}(\Glie)[w]$-module by scalar extension and take its pullback along the algebra homomorphism $F_{\Bp,w}^{\mu+\zeta}: Y_{\mu+\zeta}(\Glie) \longrightarrow Y_{\mu}(\Glie)[w]$. The resulting $Y_{\mu+\zeta}(\Glie)$-module, denoted by $L(\Bp)_w \tilde{\otimes} V$, is a completion of the ordinary tensor product module $L(\Bp)_w \otimes V$. Similarly the $Y_{\mu+\zeta}(\Glie)$-module $V \tilde{\otimes} L(\Bp)_w$ is defined via $G_{\Bp,w}^{\mu+\zeta}$ as a completion of $V \otimes L(\Bp)_w$. Let the S-series $S_{\Bp}(w) \in Y_{\mu}(\Glie)[[w^{-1}]]$ of Eq.\eqref{def: S series one-dim Yangian} act on $V^w$ as a linear automorphism. We arrive at the main result of this section.
\begin{theorem} \label{thm: Yangian T-series}
Let $\Bp$ be a polynomial $\ell$-weight of coweight $\zeta$ and $V$ be a root graded $Y_{\mu}(\Glie)$-module. Then the composition $D_{\Bp,w}^V \circ S_{\Bp}(w)$ induces a linear automorphism of the space $V^w$, denoted by $T_{\Bp}^V(w)$, which is a $Y_{\mu+\zeta}(\Glie)$-module isomorphism
$$ T_{\Bp}^V(w): L(\Bp)_w \tilde{\otimes} V \longrightarrow V \tilde{\otimes} L(\Bp)_w. $$
\end{theorem}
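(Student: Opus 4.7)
The plan is to verify the intertwining relation
\[
T_{\Bp}^V(w) \cdot F_{\Bp,w}^{\mu+\zeta}(X) = G_{\Bp,w}^{\mu+\zeta}(X) \cdot T_{\Bp}^V(w)
\]
on each Drinfeld generating series $X \in \{x_i^{\pm}(z),\, \xi_i(z)\}$, using the explicit form of $F$ and $G$ recalled in Example \ref{example: tensor prefund Yangian} together with the commutation identities \eqref{comm: S x general}. First, I would check that $T_{\Bp}^V(w)$ is a well-defined linear automorphism of $V^w$: since $S_{\Bp}(w)$ has coefficients in $Y_{\mu}^0(\Glie) = Y_{\mu}(\Glie)_0$, it preserves the decomposition $V^w = \bigoplus_{\beta \in \BQ} V_{(\beta)}((w^{-1}))$ and is invertible with leading term $1$, while $D_{\Bp,w}^V$ acts on $V_{(\beta)}((w^{-1}))$ by an invertible element of $\BC((w^{-1}))$.

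For $X = x_i^+(z)$ the identity to establish is $T_{\Bp}^V(w) \langle \Bp_i(z-w) x_i^+(z)\rangle_+ = x_i^+(z) T_{\Bp}^V(w)$. I would rewrite the second line of \eqref{comm: S x general} in the form
\[
S_{\Bp}(w)\, \langle \Bp_i(z-w) x_i^+(z)\rangle_+ = \Bp_i(-w)\, x_i^+(z)\, S_{\Bp}(w),
\]
and apply both sides to $v \in V_{(\beta)}((w^{-1}))$. The resulting vector lies in $V_{(\beta+\alpha_i)}((w^{-1}))$, so prepending $D_{\Bp,w}^V$ multiplies the left-hand side by $\prod_j \Bp_j(-w)^{-\langle \varpi_j^\vee, \beta+\alpha_i\rangle}$, whereas on the right $x_i^+(z) T_{\Bp}^V(w) v$ produces the factor $\prod_j \Bp_j(-w)^{-\langle \varpi_j^\vee, \beta\rangle}$ coming from $D_{\Bp,w}^V$. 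The remaining discrepancy $\Bp_i(-w) \prod_j \Bp_j(-w)^{-\langle \varpi_j^\vee, \alpha_i\rangle}$ equals $1$ since $\langle \varpi_j^\vee, \alpha_i\rangle = \delta_{ji}$; this cancellation is precisely the arithmetic reason for introducing the dressing $D_{\Bp,w}^V$. A symmetric calculation based on the first line of \eqref{comm: S x general} handles $X = x_i^-(z)$.

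For $X = \xi_i(z)$ both $F$ and $G$ yield the weight-zero operator $\Bp_i(z-w) \xi_i(z)$, and $T_{\Bp}^V(w)$ commutes with it: $\xi_i(z)$ and $S_{\Bp}(w)$ belong to the commutative subalgebra $Y_{\mu}^0(\Glie)$, while $D_{\Bp,w}^V$ commutes with every root-preserving operator on $V^w$. The main subtlety, rather than a genuine obstacle, is that the two module structures live on the completion $V^w$, strictly larger than $V[w]$: the series $F(x_i^+(z))$ and $G(x_i^-(z))$ only make sense after the principal-part truncation in $z$, and the scaling $D_{\Bp,w}^V$ does not extend to all of $V((w^{-1}))$. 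The root grading hypothesis on $V$ is exactly what makes $D_{\Bp,w}^V$ homogeneous on each $V_{(\beta)}((w^{-1}))$ so that it can interact cleanly with the algebraic identity \eqref{comm: S x general}; this explains why the hypothesis appears in the statement.
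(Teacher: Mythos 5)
Your proposal is correct and follows essentially the same route as the paper: reduce to the intertwining relations between the $F_{\Bp,w}^{\mu+\zeta}$- and $G_{\Bp,w}^{\mu+\zeta}$-pullback structures of Example \ref{example: tensor prefund Yangian}, then verify them on the generating series using Eq.~\eqref{comm: S x general}, with the weight-dependent scaling $D_{\Bp,w}^V$ supplying exactly the factor $\Bp_i(-w)^{\pm 1}$ (via $\langle\varpi_j^{\vee},\alpha_i\rangle=\delta_{ji}$) that cancels the denominator in those commutation relations. The cancellation mechanism you isolate is precisely the one the paper uses.
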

\begin{proof}
By Example \ref{example: tensor prefund Yangian}, it suffices to prove the following intertwining property
 \begin{gather}
  \quad \begin{cases}
  T_{\Bp}^V(w) \circ x_i^-(z) = \langle \Bp_i(z-w) x_i^-(z) \rangle_+ \circ T_{\Bp}^V(w),  \\
  x_i^+(z) \circ T_{\Bp}^V(w) = T_{\Bp}^V(w) \circ \langle \Bp_i(z-w) x_i^+(z) \rangle_+, \\
  T_{\Bp}^V(w) \circ \xi_i(z) = \xi_i(z) \circ T_{\Bp}^V(w),
  \end{cases}  \label{intertwining T} 
\end{gather}
as equations in $(\mathrm{End} V^w)((z^{-1}))$. Here the $x_i^{\pm}(z)$ and $\xi_i(z)$ are viewed as generating series of $Y_{\mu}(\Glie)$ whose action on $V^w$ is induced from the $Y_{\mu}(\Glie)$-module structure on $V$. Let $i \in I$ and $n \in \BN$.
If $v \in V_{(\beta)}$, then $x_{i,n}^- v \in V_{(\beta-\alpha_i)}$ and
$$ D_{\Bp,w}^V (x_{i,n}^-v) = \Bp_i(-w)  x_{j,n}^- D_{\Bp,w}^V(v). $$
Namely, $D_{\Bp,w}^V \circ x_{i,n}^- = \Bp_i(-w)  x_{i,n}^-\circ D_{\Bp,w}^V$. The factor $\Bp_i(-w)$ cancels with the denominator in the commutation relation between $S_{\Bp}(w)$ and $x_i^-(z)$ in Eq.\eqref{comm: S x general}. We get the desired $Tx^-$ relation. The $x^+T$ and $T\xi$ relations are proved similarly. 
\end{proof}

\begin{rem} 
To simplify notations let us write $T_{\Psi_{i,0}}^V(w)$ as $T_i^V(w)$ for $i \in I$.

(i) The $T_{\Bp}^V(w)$ for all polynomial $\ell$-weights $\Bp$ form a commuting family of linear automorphisms of $V^w$, and each factorization $\Bp = \Psi_{i_1, a_1} \Psi_{i_2, a_2} \cdots \Psi_{i_n,a_n}$ into prefundamental $\ell$-weights induces a decomposition of T-operators: 
$$ T_{\Bp}^V(w) = T_{i_1}^V(w+a_1) \circ T_{i_2}^V(w+a_2)  \circ \cdots \circ T_{i_n}^V(w+a_n) \in \mathrm{Aut}(V^w). $$

(ii) We view the T-operator $T_i^V(w)$ as $S_i(w) \times (-w)^{-\varpi_i^{\vee}}$ where the factor $(-w)^{-\varpi_i^{\vee}}$ is well-defined for root graded modules; this is similar to evaluations of the universal R-matrix for the quantum loop algebra in Proposition \ref{prop: R-matrix general}. The intertwining property \eqref{intertwining T} for $\Bp = \Psi_{i,0}$ has been solved in \cite[Proposition 5.8]{HZ} for $V$ of highest $\ell$-weight and in \cite[\S 4.3]{GW} for $V$ weight graded by finite-dimensional weight spaces. Both solutions depend uniquely on the module structure. In comparison, our T-operator depends also on the additional root grading but can be applied to more general modules. 

(iii) Let $V$ be a weight graded module over $Y_{\mu}(\Glie)$ whose weights belong to $\BQ$. Then $V$ is root graded by setting $V_{(\alpha)} = V_{\alpha}$. Since $a_{i,0}$ acts on $V_{\alpha}$ as the scalar $-\langle \varpi_i^{\vee}, \alpha\rangle$, we obtain from Eq.\eqref{xi S} the following equation as automorphisms of $V^w$:
\begin{align}  \label{xi T Yangian}
\overline{\xi}_i(w) = \prod_{j \in I}  \frac{T_j^V(w-d_{ij})}{T_j^V(w+d_{ij})}.
\end{align}
\end{rem}

\section{Theta series and intertwining property}  \label{sec: Theta Yangian}
In this section we introduce what we call Theta series $\Theta_{\Bp}(w)$, depending on a polynomial $\ell$-weight $\Bp$, in the tensor product of two shifted Yangians. Their evaluation at a tensor product module $M \otimes N$ is interpreted as a module isomorphism $(M \otimes L(\Bp)_w) \otimes N \longrightarrow M \otimes (L(\Bp)_w \otimes N)$, commonly called {\it associators}. We establish various properties of Theta series, notably, the polynomiality.

\subsection{Definition of Theta series} 
In this subsection we define Theta series from the S-series.
Since we deal with several coweights, we add a superscript $\mu$ to the auxiliary series $A_i(z), S_{\Bp}(z)$ to indicate that their coefficients belong to the shifted Yangian $Y_{\mu}(\Glie)$. For example, the shifted homomorphism of Theorem \ref{thm: coproduct Yangian} preserves GKLO series:
$$ \iota^{\mu}_{\epsilon,\eta}(A_i^{\mu}(z)) = A_i^{\mu+\epsilon+\eta}(z) \quad \mathrm{for}\ i \in I. $$
\begin{lem}  \label{lem: coproduct S}
For two coweights $\mu, \nu$ and a polynomial $\ell$-weight $\Bp$, we have
$$ \Delta_{\mu,\nu}(S_{\Bp}^{\mu+\nu}(z)) \equiv S_{\Bp}^{\mu}(z) \otimes S_{\Bp}^{\nu}(z) \ \mathrm{mod}.\ \sum_{\beta \in \BQ_>} Y_{\mu}^{\leq}(\Glie)_{-\beta} \otimes Y_{\nu}^{\geq}(\Glie)_{\beta}. $$
\end{lem}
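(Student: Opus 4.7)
Let $I := \sum_{\beta \in \BQ_>} Y_\mu^\leq(\Glie)_{-\beta} \otimes Y_\nu^\geq(\Glie)_\beta$ be the subspace modulo which we work. Two preliminary observations govern the bookkeeping. First, by the weight grading, $(Y_\mu^0(\Glie) \otimes Y_\nu^0(\Glie)) \cap I = \{0\}$. Second, $I$ is stable under both left and right multiplication by $Y_\mu^0(\Glie) \otimes Y_\nu^0(\Glie)$, since $Y^0$ preserves each of $Y^\leq, Y^\geq$ and the weight grading. Combined with Lemma \ref{lem: Yangian coproduct estimation}, reduction modulo $I$ gives a well-defined algebra homomorphism $\bar{\Delta}: Y_{\mu+\nu}^0(\Glie) \longrightarrow Y_\mu^0(\Glie) \otimes Y_\nu^0(\Glie)$ with
\[
\bar{\Delta}(\overline{\xi}_i^{\mu+\nu}(z)) = \overline{\xi}_i^\mu(z)\,\overline{\xi}_i^\nu(z) \quad \text{in } (Y_\mu^0(\Glie) \otimes Y_\nu^0(\Glie))[[z^{-1}]].
\]

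Because the two tensor factors commute, the product $A_i^\mu(z) A_i^\nu(z)$ satisfies the GKLO defining relation of Definition \ref{defi: GKLO series} with $\overline{\xi}_i(z)$ replaced by $\overline{\xi}_i^\mu(z)\overline{\xi}_i^\nu(z)$. Applying $\bar{\Delta}$ to that same relation, so does $\bar{\Delta}(A_i^{\mu+\nu}(z))$. The triangular recursion that establishes uniqueness in Definition \ref{defi: GKLO series} then yields
\[
\Delta_{\mu,\nu}(A_i^{\mu+\nu}(z)) \equiv A_i^\mu(z) A_i^\nu(z) \ \mathrm{mod}.\ I[[z^{-1}]].
\]
Taking the $z^{-1}$-coefficient of the logarithm of the GKLO relation yields the linear expression $\xi_{i,-\langle\mu+\nu,\alpha_i\rangle}^{\mu+\nu} = -\sum_j b_{ij} a_{j,0}^{\mu+\nu}$ in $Y_{\mu+\nu}^0(\Glie)$. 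Combining with the exact coproduct formula for the leading Cartan coefficient in Lemma \ref{lem: Yangian coproduct estimation} and the invertibility of the symmetrized Cartan matrix $(b_{ij})$, this forces the exact equality $\Delta_{\mu,\nu}(a_{i,0}^{\mu+\nu}) = a_{i,0}^\mu \otimes 1 + 1 \otimes a_{i,0}^\nu$, whence
\[
\Delta_{\mu,\nu}\bigl((z/(z+d_i))^{a_{i,0}^{\mu+\nu}}\bigr) = (z/(z+d_i))^{a_{i,0}^\mu}\,(z/(z+d_i))^{a_{i,0}^\nu}.
\]

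Now apply $\Delta_{\mu,\nu}$ to the difference equation \eqref{difference: S A} defining $S_i^{\mu+\nu}(z)$. The two identities above, together with the commutativity of the two tensor factors, show that both $\Delta_{\mu,\nu}(S_i^{\mu+\nu}(z))$ and $S_i^\mu(z) S_i^\nu(z)$ satisfy the same difference equation modulo $I[[z^{-1}]]$, and both begin with constant term $1$. The triangular recursion in the proof of Proposition \ref{prop: S-series} transfers to the commutative setting modulo $I$ and gives the case $\Bp = \Psi_{i,0}$. For a general $\Bp = \Psi_{i_1,a_1} \cdots \Psi_{i_n,a_n}$, definition \eqref{def: S series one-dim Yangian} expresses $S_\Bp^{\mu+\nu}(w)$ as a product of spectrally shifted single-node S-series; applying $\Delta_{\mu,\nu}$, iterating the single-node congruence factor by factor (permitted by the bimodule stability of $I$), and rearranging the commuting factors of $Y_\mu^0$ and $Y_\nu^0$ yields the desired congruence.

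The main obstacle is the bookkeeping modulo the subspace $I$: verifying that $I$ is a $Y^0_\mu \otimes Y^0_\nu$-bimodule and transferring the uniqueness statements for GKLO series and for S-series from the ambient algebras to the quotient. Both points reduce to the weight grading and the PBW triangular decomposition $Y = Y^- \cdot Y^0 \cdot Y^+$.
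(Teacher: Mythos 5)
Your argument is correct, but it propagates the congruence by a different mechanism than the paper. The paper's proof linearizes everything: it writes $S_{\Bp}(z)=\exp(\sum_m s_{\Bp,m}z^{-m-1})$ and $\overline{\xi}_i(z)=\exp(\sum_m t_{i,m}z^{-m-1})$, observes that each $s_{\Bp,m}$ is a $\BC$-linear combination of the GKLO elements $a_{i,n}$ (from the triangular system in the proof of Proposition \ref{prop: S-series}) and that the $a_{i,n}$ are linear combinations of the $t_{i,m}$ (by taking logarithms of the relation in Definition \ref{defi: GKLO series}), so that quasi-primitivity of $\overline{\xi}_i(z)$ modulo $I$ from Lemma \ref{lem: Yangian coproduct estimation} transfers linearly to the $s_{\Bp,m}$ and is then exponentiated. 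You instead keep the series intact: you build the reduced coproduct $\bar{\Delta}\colon Y^0_{\mu+\nu}\to Y^0_{\mu}\otimes Y^0_{\nu}$, push the GKLO relation and the difference equation \eqref{difference: S A} through it, and conclude by uniqueness of their solutions with leading term $1$ in the commutative target, which works because a series in $1+z^{-1}(Y^0_{\mu}\otimes Y^0_{\nu})[[z^{-1}]]$ invariant under $z\mapsto z+d_i$ is $1$. What your route buys is that the mod-$I$ bookkeeping, which the paper leaves implicit in the exponentiation step, is made explicit (the well-definedness of $\bar{\Delta}$ is exactly what is needed there too), plus the exact, not merely mod-$I$, primitivity $\Delta_{\mu,\nu}(a_{i,0}^{\mu+\nu})=a_{i,0}^{\mu}\otimes 1+1\otimes a_{i,0}^{\nu}$ via the invertibility of $(b_{ij})$ and the last identity of Lemma \ref{lem: Yangian coproduct estimation}; what the paper's route buys is brevity, since linearity of the congruence makes any multiplicative stability of $I$ invisible until the very last step. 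One small precision: when you multiply congruences (both in verifying that $\bar{\Delta}$ is an algebra homomorphism and in the factor-by-factor iteration for general $\Bp$), bimodule stability of $I$ over $Y^0_{\mu}\otimes Y^0_{\nu}$ is not quite enough; you also need $I\cdot I\subset I$, which holds because $Y^{\leq}_{\mu}$ and $Y^{\geq}_{\nu}$ are $\BQ$-graded subalgebras and $\BQ_>$ is closed under addition — the same kind of check you already invoke, so the argument stands as written.
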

\begin{proof}
By Eq.\eqref{def: S series one-dim Yangian}, if we write $S_{\Bp}(z) = \exp(\sum_{m\in \BN} s_{\Bp,m} z^{-m-1})$, then  it suffices to show the quasi-primitivity property $\Delta_{\mu,\nu}(s_{\Bp,m}) \equiv s_{\Bp,m} \otimes 1 + 1 \otimes s_{\Bp,m}$. In view of the proof of Proposition \ref{prop: S-series}, each $s_{\Bp,m}$ is a linear combination of the GKLO elements $a_{i,n}$. We are reduced to quasi-primitivity for the GKLO elements. 

Let us write $\overline{\xi}_i(z) = \exp(\sum_{m\in \BN} t_{i,m} z^{-m-1})$ for $i \in I$. By taking logarithms at both sides of the first equation of Definition \ref{defi: GKLO series} we get that the GKLO elements are linear combinations of the $t_{i,m}$. The quasi-primitivity for the GKLO elements is then a direct consequence of $\Delta_{\mu,\nu}(\overline{\xi}_i(z)) \equiv \overline{\xi}_i(z) \otimes \overline{\xi}_i(z)$ in Lemma \ref{lem: Yangian coproduct estimation}.
\end{proof}

\begin{defi}\label{def: Theta series Yangian}
Let $\mu, \nu$ be coweights and $\Bp$ be a polynomial $\ell$-weight. Define the {\it Omega series} $\Omega_{\Bp}^{\mu,\nu}(w)$ to be the power series in $1 + w^{-1}(Y_{\mu}^{\leq}(\Glie) \otimes Y_{\nu}^{\geq}(\Glie))[[w^{-1}]]$ by
\begin{equation*}  
\Delta_{\mu,\nu}(S_{\Bp}^{\mu+\nu}(w)) = (1\otimes S_{\Bp}^{\nu}(w)) \times \Omega_{\Bp}^{\mu,\nu}(w) \times (S_{\Bp}^{\mu}(w) \otimes 1).
\end{equation*}
For $\beta \in \BQ_+$, let $\Omega_{\Bp,\beta}^{\mu,\nu}(w) \in (Y_{\mu}^{\leq}(\Glie)_{-\beta} \otimes Y_{\nu}^{\geq}(\Glie)_{\beta})[[w^{-1}]]$ be the $\beta$-component of $\Omega_{\Bp}^{\mu,\nu}(w)$ and define $\Theta_{\Bp,\beta}^{\mu,\nu}(w)$ to be the Laurent series 
$$ \Theta_{\Bp,\beta}^{\mu,\nu}(w) := \Omega_{\Bp,\beta}^{\mu,\nu}(w) \times \prod_{i\in I} \Bp_i(-w)^{\langle\varpi_i^{\vee}, \beta \rangle}\in (Y_{\mu}^{\leq}(\Glie)_{-\beta} \otimes Y_{\nu}^{\geq}(\Glie)_{\beta})((w^{-1})). $$
Define the {\it Theta series} $\Theta_{\Bp}^{\mu,\nu}(w)$ to be the formal sum
$$ \Theta_{\Bp}^{\mu,\nu}(w) := \sum_{\beta\in \BQ_+} \Theta_{\Bp,\beta}^{\mu,\nu}(w) \in \prod_{\beta\in \BQ_+} (Y_{\mu}^{\leq}(\Glie)_{-\beta} \otimes Y_{\nu}^{\geq}(\Glie)_{\beta})((w^{-1})). $$
\end{defi}
It follows  that $\Omega_{\Bp,0}^{\mu,\nu}(w) = 1$ and $\Omega_{\Bp}^{\mu,\nu}(w)$ is the sum over $\beta \in \BQ_+$ of the $\Omega_{\Bp,\beta}^{\mu,\nu}(w)$.

\begin{rem}
Definition \ref{def: Theta series Yangian} makes sense for the Drinfeld--Cartan series $\xi_i(w)$:
$$ \Delta_{\mu,\nu}(\xi_i(w)) = (1\otimes \xi_i(w)) \times \sum_{\beta \in \BQ_+} \Xi_{i,\beta}^{\mu,\nu}(w) \times (\xi_i(w) \otimes 1)   $$
where $\Xi_{i,\beta}^{\mu,\nu}(w)$ is a power series in $w^{-1}$ with coefficients in $Y_{\mu}^{\leq}(\Glie)_{-\beta} \otimes Y_{\nu}^{\geq}(\Glie)_{\beta}$ and $\Xi_{i,0}^{\mu,\nu}(w) = 1$. As a reformulation of \cite[Lemma 7.1]{HZ} we have
$$ \Xi_{i,\alpha_j}^{\mu,\nu}(w) = -b_{ij} x_j^-(w+d_{ij}) \otimes x_j^+(w+d_{ij}). $$
Therefore the power series $\Xi_{i,\beta}^{\mu,\nu}(w)$ is in general not polynomial. In contrast, the Laurent series $\Theta_{\Bp,\beta}^{\mu,\nu}(w)$ is always polynomial by Theorem \ref{thm: Yangian poly Theta}.
\end{rem}

\subsection{Theta series as module isomorphisms} 
In this subsection we interpret Theta series as associators for completed triple tensor product modules. 

\begin{defi} \label{defi: second completion tensor product}
Let $M$ be a $Y_{\mu}(\Glie)$-module and $N$ be a $Y_{\nu}(\Glie)$-module, both root graded in the sense of Definition \ref{defi: root graded Yangian}. Define the completed tensor product to be
$$ M \otimes_w N := \sum_{\alpha,\beta\in\BQ} \left( \prod_{\gamma \in \BQ_+} (M_{(\alpha-\gamma)} \otimes N_{(\beta+\gamma)})((w^{-1})) \right) \subset \prod_{\alpha,\beta \in \BQ} (M_{(\alpha)} \otimes N_{(\beta)})((w^{-1})).$$
\end{defi}

The completed tensor product $Y_{\mu}(\Glie) \otimes_w Y_{\nu}(\Glie)$ associated to regular representations is an algebra that contains $Y_{\mu}(\Glie) \otimes Y_{\nu}(\Glie)[w]$ as a subalgebra. In the above definition, $M \otimes_w N$ is naturally a module over $Y_{\mu}(\Glie) \otimes_w Y_{\nu}(\Glie)$, and it is a completion, simultaneously at the level of algebras and at the level of representations, of $M\otimes N[w]$ viewed as a $Y_{\mu}(\Glie) \otimes Y_{\nu}(\Glie)[w]$-module by scalar extension.

Notice that the Theta series $\Theta_{\Bp}^{\mu,\nu}(w)$ of Definition \ref{def: Theta series Yangian} belongs to $Y_{\mu}(\Glie) \otimes_w Y_{\nu}(\Glie)$. 

\begin{rem}    \label{rem: comparison two completions}
 Compared to the completion $(M\otimes N)^w$ of $M\otimes N[w]$ in Example \ref{example: motivation root graded Yangian}(iii),  the new completion $M \otimes_w N$ has the advantage that it still contains $M^w \otimes N$ and the linear automorphism $D_{\Bp,w}^M \otimes \mathrm{Id}_N$ of $M^w \otimes N$ extends to $M \otimes_w N$; similar statements hold for $M \otimes N^w$. 
\end{rem}

Let $M$ and $N$ be root graded modules over the shifted Yangians $Y_{\mu}(\Glie)$ and $Y_{\nu}(\Glie)$ respectively and let $\Bp$ be a polynomial $\ell$-weight of coweight $\zeta$. Both $M \otimes N[w]$ and $M\otimes_w N$ are modules over $Y_{\mu}(\Glie) \otimes Y_{\nu}(\Glie)[w]$ by scalar extension, with the latter being a completion of the former. By Example \ref{example: tensor prefund Yangian}, the pullback of $M\otimes N[w]$ along the following algebra homomorphism gives $Y_{\mu+\zeta+\nu}(\Glie)$-module $(M\otimes L(\Bp)_w) \otimes N$:
 $$ (G^{\mu+\zeta}_{\Bp,w} \otimes \mathrm{Id}) \Delta_{\mu+\zeta, \nu}: Y_{\mu+\nu+\zeta}(\Glie) \longrightarrow Y_{\mu}(\Glie) \otimes Y_{\nu}(\Glie)[w]. $$ 
The pullback of $M \otimes_w N$ along the same algebra homomorphism is a module structure over $Y_{\mu+\nu+\zeta}(\Glie)$, denoted by $\overline{(M \otimes L(\Bp)_w) \otimes N}$ to mean a completion of the triple tensor product modules $(M \otimes L(\Bp)_w) \otimes N$. Similarly, on the same underlying space $M \otimes_w N$ the completions of the other triple tensor product modules are defined:
\begin{gather*}
\overline{M \otimes (L(\Bp)_w \otimes N)}, \quad \overline{M \otimes (N \otimes L(\Bp)_w)}, \quad \overline{(M\otimes N) \otimes L(\Bp)_w}, \\
\overline{(L(\Bp)_w \otimes M) \otimes N}, \quad \overline{L(\Bp)_w \otimes (M \otimes N)}.
\end{gather*}
Note that the completed triple tensor product module $\overline{(M\otimes N) \otimes L(\Bp)_w}$ here and the one $(M \otimes N) \tilde{\otimes} L(\Bp)_w$ defined before Theorem \ref{thm: Yangian T-series} are in general two different completions of the module $(M \otimes N) \otimes L(\Bp)_w$.

\begin{theorem}  \label{thm:Yangian associator}
Let $M$ and $N$ be root graded modules over shifted Yangians $Y_{\mu}(\Glie)$ and $Y_{\nu}(\Glie)$ respectively and let $\Bp$ be a polynomial $\ell$-weight of coweight $\zeta$. The action of $\Theta_{\Bp}^{\mu,\nu}(w) \in Y_{\mu}(\Glie) \otimes_w Y_{\nu}(\Glie)$ on $M \otimes_w N$ defines an isomorphism of $Y_{\mu+\zeta+\nu}(\Glie)$-modules
$$ \Theta_{\Bp}^{\mu,\nu}(w):  \overline{(M \otimes L(\Bp)_w) \otimes N} \longrightarrow \overline{M \otimes (L(\Bp)_w \otimes N)}.   $$
\end{theorem}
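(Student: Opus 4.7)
The strategy is to realize the action of $\Theta_{\Bp}^{\mu,\nu}(w)$ as the composition of three module isomorphisms. Write $K:= L(\Bp)_w$. Since $K$ is one-dimensional, it is a trivial module in the sense of Definition \ref{defi: trivial modules Yangians}, so Theorem \ref{thm: trivial associativity Yangian} yields the module identifications $(K\otimes M)\otimes N = K\otimes (M\otimes N)$ and $(M\otimes N)\otimes K = M\otimes (N\otimes K)$ over $Y_{\mu+\zeta+\nu}(\Glie)$. Combined with the T-operator isomorphisms from Theorem \ref{thm: Yangian T-series} applied to $M$, $N$ and $M\otimes N$ (the last being root graded by Example \ref{example: motivation root graded Yangian}(iii)), this produces the chain
\begin{multline*}
\overline{(M\otimes K)\otimes N} \xrightarrow{(T_{\Bp}^M(w))^{-1}\otimes \mathrm{Id}_N} \overline{(K\otimes M)\otimes N} = \overline{K\otimes (M\otimes N)} \xrightarrow{T_{\Bp}^{M\otimes N}(w)} \\
\overline{(M\otimes N)\otimes K} = \overline{M\otimes (N\otimes K)} \xrightarrow{\mathrm{Id}_M\otimes (T_{\Bp}^N(w))^{-1}} \overline{M\otimes (K\otimes N)}
\end{multline*}
of $Y_{\mu+\zeta+\nu}(\Glie)$-module isomorphisms; the theorem reduces to identifying this composition with the action of $\Theta_{\Bp}^{\mu,\nu}(w)$.

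\textbf{Identification of the composition.} I would write each T-operator as $D\circ S$ per Theorem \ref{thm: Yangian T-series} and expand the middle one using the defining factorization
\begin{equation*}
\Delta_{\mu,\nu}(S_{\Bp}^{\mu+\nu}(w)) = (1\otimes S_{\Bp}^{\nu}(w))\,\Omega_{\Bp}^{\mu,\nu}(w)\,(S_{\Bp}^{\mu}(w)\otimes 1)
\end{equation*}
of Definition \ref{def: Theta series Yangian}. The factor $S_{\Bp}^{\mu}(w)\otimes 1$ on the left cancels the $S_{\Bp}^{\mu}(w)^{-1}\otimes 1$ introduced by $(T_{\Bp}^M(w))^{-1}\otimes \mathrm{Id}_N$. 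On the right, using the multiplicativity $D_{\Bp,w}^{M\otimes N} = D_{\Bp,w}^M\otimes D_{\Bp,w}^N$ on weight components and the commutation of $D_{\Bp,w}^N$ with $S_{\Bp}^{\nu}(w)$ (both preserve the $N$-weight grading), the factor $1\otimes S_{\Bp}^{\nu}(w)$ is moved past $D_{\Bp,w}^{M\otimes N}$ and annihilated by the $1\otimes S_{\Bp}^{\nu}(w)^{-1}$ coming from $\mathrm{Id}_M\otimes (T_{\Bp}^N(w))^{-1}$. What remains is the conjugation $(D_{\Bp,w}^M\otimes \mathrm{Id}_N)\circ \Omega_{\Bp}^{\mu,\nu}(w)\circ (D_{\Bp,w}^M\otimes \mathrm{Id}_N)^{-1}$. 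Since $\Omega_{\Bp,\beta}^{\mu,\nu}(w)$ sends $M_{(\alpha)}\otimes N_{(\gamma)}$ into $M_{(\alpha-\beta)}\otimes N_{(\gamma+\beta)}$, this conjugation multiplies each $\beta$-component by $\prod_{i\in I}\Bp_i(-w)^{\langle \varpi_i^{\vee},\beta\rangle}$, which by Definition \ref{def: Theta series Yangian} is exactly the scalar converting $\Omega_{\Bp,\beta}^{\mu,\nu}(w)$ into $\Theta_{\Bp,\beta}^{\mu,\nu}(w)$; summing over $\beta\in\BQ_+$ recovers $\Theta_{\Bp}^{\mu,\nu}(w)$.

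\textbf{Main obstacle.} The principal technical difficulty lies not in the algebra but in reconciling the smaller completion $V^w$ on which the T-operators of Theorem \ref{thm: Yangian T-series} are naturally defined with the larger completion $M\otimes_w N$ of Definition \ref{defi: second completion tensor product} appearing in the theorem's statement. As anticipated in Remark \ref{rem: comparison two completions}, the partial dressing $D_{\Bp,w}^M\otimes \mathrm{Id}_N$ and the one-sided series $S_{\Bp}^{\mu}(w)\otimes 1$ and $1\otimes S_{\Bp}^{\nu}(w)$ do extend to operators on $M\otimes_w N$, although they do not extend to the earlier completion $(M\otimes N)^w$. The positivity of the weight shift $\beta\in\BQ_+$ in $\Omega_{\Bp,\beta}^{\mu,\nu}(w)$, which is precisely the phenomenon that the completion in Definition \ref{defi: second completion tensor product} is engineered to accommodate, then guarantees that each intermediate object in the chain lies in $M\otimes_w N$ and that the infinite sum over $\beta$ converges formally. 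Once this bookkeeping of completions is in place, the chain above is a well-defined sequence of module isomorphisms, and the algebraic identification of the preceding paragraph concludes the proof.
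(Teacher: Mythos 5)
Your proposal is correct and follows essentially the same route as the paper: your chain $(\mathrm{Id}_M\otimes (T_{\Bp}^N(w))^{-1})\circ T_{\Bp}^{M\otimes N}(w)\circ ((T_{\Bp}^M(w))^{-1}\otimes \mathrm{Id}_N)$, glued by the trivial-associativity identifications, is exactly the composite $B^{-1}\circ A\circ C^{-1}$ in the paper's proof, and your cancellation of the one-sided $S$-factors plus the conjugation of $\Omega_{\Bp}^{\mu,\nu}(w)$ by $D_{\Bp}^M\otimes \mathrm{Id}_N$ reproduces the paper's computation, including the correct handling of the passage from the completion $V^w$ to $M\otimes_w N$.
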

\begin{proof}
By Remark \ref{rem: comparison two completions}, the linear automorphisms $D_{\Bp,w}^M \otimes \mathrm{Id}_N$ of $M^w \otimes N$ and $\mathrm{Id}_M \otimes D_{\Bp,w}^N$ of $M \otimes N^w$ extend to mutually commuting linear automorphisms of $M \otimes_w N$, denoted by $D_{\Bp}^M \otimes_w \mathrm{Id}_N$ and $\mathrm{Id}_M \otimes_w D_{\Bp}^N$ respectively. Set 
$$D_{\Bp} := (\mathrm{Id}_M \otimes_w D_{\Bp}^N) \circ (D_{\Bp}^M \otimes_w \mathrm{Id}_N) \in \mathrm{Aut}(M \otimes_w N).$$ 
From the definition of root grading we have the following identities of linear endomorphisms of $M \otimes_w N$ for $g(w) \in (Y_{\mu}(\Glie)_{\alpha} \otimes Y_{\nu}(\Glie)_{\beta})((w^{-1}))$:
\begin{align*}
(D_{\Bp}^M \otimes_w \mathrm{Id}_N) \circ g(w) &= \prod_{i\in I}\Bp_i(-w)^{-\langle \varpi_i^{\vee}, \alpha\rangle} \times g(w) \circ (D_{\Bp}^M \otimes_w \mathrm{Id}_N), \\
(\mathrm{Id}_M \otimes_w D_{\Bp}^N) \circ g(w) &=  \prod_{i\in I}\Bp_i(-w)^{-\langle \varpi_i^{\vee}, \beta\rangle} \times g(w)\circ (\mathrm{Id}_M \otimes_w D_{\Bp}^N), \\
D_{\Bp} \circ g(w) &= \prod_{i\in I}\Bp_i(-w)^{-\langle \varpi_i^{\vee},\alpha+ \beta\rangle}  \times g(w)\circ D_{\Bp}.
\end{align*}
Setting $g(w)$ to be $\Omega_{\Bp,\beta}^{\mu,\nu}(w)$ and then to be $\Omega_{\Bp}^{\mu,\nu}(w)$ we have by Definition \ref{def: Theta series Yangian}
$$(D_{\Bp}^M \otimes_w \mathrm{Id}_N) \circ \Omega_{\Bp}^{\mu,\nu}(w) \circ (D_{\Bp}^M \otimes_w \mathrm{Id}_N)^{-1} = \Theta_{\Bp}^{\mu,\nu}(w) \in \mathrm{Aut}(M \otimes_w N). $$

By definition $S_{\Bp}^{\mu}(w) \otimes 1,\ 1 \otimes S_{\Bp}^{\nu}(w)$ and $\Delta_{\mu,\nu}(S_{\Bp}^{\mu+\nu}(w))$ are power series in $w^{-1}$ with coefficients in $\sum_{\beta\in \BQ_+} Y_{\mu}(\Glie)_{-\beta} \otimes Y_{\nu}(\Glie)_{\beta}$. So they are elements of the completed tensor product $Y_{\mu}(\Glie) \otimes_w Y_{\nu}(\Glie)$ and act on $M \otimes_w N$.
Adapting the proof of Theorem \ref{thm: Yangian T-series} to the present situation, we obtain that the following linear automorphisms of $M\otimes_w N$ are $Y_{\mu+\nu+\zeta}(\Glie)$-module isomorphisms:
\begin{align*}
&A= D_{\Bp} \circ\Delta_{\mu,\nu}(S_{\Bp}^{\mu+\nu}(w)): \overline{L(\Bp)_w \otimes (M\otimes N)} \longrightarrow \overline{(M\otimes N) \otimes L(\Bp)_w}, \\
& B= (\mathrm{Id}_M \otimes_w D_{\Bp}^N)\circ (1\otimes S_{\Bp}^{\nu}(w)):  \overline{M\otimes (L(\Bp)_w \otimes N)} \longrightarrow  \overline{M\otimes (N \otimes L(\Bp)_w)},  \\
&C= (D_{\Bp}^M \otimes_w \mathrm{Id}_N)\circ (S_{\Bp}^{\mu}(w) \otimes 1): \overline{(L(\Bp)_w \otimes M) \otimes N} \longrightarrow \overline{(M \otimes L(\Bp)_w) \otimes N}.
\end{align*}
Since $L(\Bp)_w$ is trivial, by Theorem \ref{thm: trivial associativity Yangian} identity maps are module isomorphisms
$$ \overline{(L(\Bp)_w \otimes M) \otimes N} = \overline{L(\Bp)_w \otimes (M \otimes N)}, \quad \overline{M \otimes (N \otimes L(\Bp)_w)} = \overline{(M\otimes N) \otimes L(\Bp)_w}.  $$

In summary, we have the following diagram of $Y_{\mu+\nu+\zeta}(\Glie)$-module isomorphisms:
\begin{gather*} 
 \xymatrixcolsep{5pc} \xymatrix{
\overline{L(\Bp)_w \otimes (M\otimes N)}  \ar[d]_{\mathrm{Id} } \ar[r]^A &  \overline{(M\otimes N) \otimes L(\Bp)_w}  \ar[r]^{\mathrm{Id}} & \overline{M \otimes (N \otimes L(\Bp)_w)}   \\
\overline{(L(\Bp)_w \otimes M) \otimes N} \ar[r]^C  &  \overline{(M \otimes L(\Bp)_w) \otimes N}        & \overline{M\otimes (L(\Bp)_w \otimes N)} \ar[u]_B  } 
\end{gather*}
The composite map $B^{-1} \circ A \circ C^{-1} \in \mathrm{Aut}(M\otimes_w N)$ defines a $Y_{\mu+\nu+\zeta}(\Glie)$-module isomorphism from $\overline{(M \otimes L(\Bp)_w) \otimes N}$ to $\overline{M \otimes (L(\Bp)_w \otimes N)}$. Based on the factorization of $\Delta_{\mu,\nu}(S_{\Bp}^{\mu+\nu}(w))$ in Definition \ref{def: Theta series Yangian} we have:
\begin{align*}
B^{-1} \circ A \circ C^{-1} &= (1\otimes S_{\Bp}^{\nu}(w)^{-1}) \circ (\mathrm{Id}_M \otimes_w D_{\Bp}^N)^{-1} \circ  D_{\Bp} \circ \Delta_{\mu,\nu}(S_{\Bp}^{\mu+\nu}(w)) \\
&\quad \circ (S_{\Bp}^{\mu}(w)^{-1} \otimes 1) \circ (D_{\Bp}^M \otimes_w \mathrm{Id}_N)^{-1} \\
&= (1\otimes S_{\Bp}^{\nu}(w)^{-1})\circ (\mathrm{Id}_M \otimes_w D_{\Bp}^N)^{-1} \circ (\mathrm{Id}_M \otimes_w D_{\Bp}^N) \circ (D_{\Bp}^M \otimes_w \mathrm{Id}_N) \\
&\quad  \circ  (1\otimes S_{\Bp}^{\nu}(w)) \Omega_{\Bp}^{\mu,\nu}(w) (S_{\Bp}^{\mu}(w) \otimes 1)  \circ (S_{\Bp}^{\mu}(w)^{-1} \otimes 1)\circ (D_{\Bp}^M \otimes_w \mathrm{Id}_N)^{-1} \\
&= (D_{\Bp}^M \otimes_w \mathrm{Id}_N) \circ \Omega_{\Bp}^{\mu,\nu}(w) \circ (D_{\Bp}^M \otimes_w \mathrm{Id}_N)^{-1} = \Theta_{\Bp}^{\mu,\nu}(w).
\end{align*}
This proves that $\Theta_{\Bp}^{\mu,\nu}(w)$ gives the desired module isomorphism.
\end{proof}
If both modules $M$ and $N$ are positive root graded or both modules are negative root graded, then the two completions coincide
$$ (M\otimes N)^w = M \otimes_w N = \sum_{\alpha,\beta\in \BQ} (M_{(\alpha)} \otimes N_{(\beta)}) ((w^{-1})). $$
The proof of Theorem \ref{thm:Yangian associator}  gives a factorization of T-operators in $\mathrm{Aut}(M \otimes_w N)$:
\begin{equation}  \label{rel: factorization T Yangian}
T_{\Bp}^{M\otimes N}(w) = (\mathrm{Id}_M \otimes T_{\Bp}^N(w)) \circ \Theta_{\Bp}^{\mu,\nu}(w) \circ (T_{\Bp}^M(w) \otimes \mathrm{Id}).
\end{equation}

\subsection{Intertwining property and polynomiality}
In this subsection we establish polynomiality for the Theta series, as a consequence of an intertwining property. 

The inverse $\Theta_{\Bp}^{\mu,\nu}(w)^{-1}$ of a Theta series is well-defined in the completed algebra $Y_{\mu}(\Glie) \otimes_w Y_{\nu}(\Glie)$ because of the initial condition $\Theta_{\Bp,0}^{\mu,\nu}(w) = 1$. As in Definition \ref{def: Theta series Yangian} let $\widetilde{\Theta}_{\Bp,\beta}^{\mu,\nu}(w) \in (Y_{\mu}^{\leq}(\Glie)_{-\beta} \otimes Y_{\nu}^{\geq}(\Glie)_{\beta})((w^{-1}))$ denote its $\beta$-component for $\beta \in \BQ_+$.

\begin{theorem}  \label{thm: Yangian poly Theta}
Let $\mu, \nu$ be coweights and $\Bp$ be a polynomial $\ell$-weight of coweight $\zeta$. Then for $\beta \in \BQ_+$ the Laurent series $\Theta_{\Bp,\beta}^{\mu,\nu}(w)$ and $\widetilde{\Theta}_{\Bp,\beta}^{\mu,\nu}(w)$ are polynomials in $w$ whose degrees are bounded by $\langle\zeta, \beta \rangle$.
\end{theorem}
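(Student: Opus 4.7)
The strategy is to deduce polynomiality from the intertwining property of Theorem \ref{thm:Yangian associator} applied to regular representations, as flagged in the introduction. Take $M = Y_\mu(\Glie)$ and $N = Y_\nu(\Glie)$ as root graded regular left modules (Example \ref{example: motivation root graded Yangian}(ii)). Then $M \otimes_w N$ becomes the completed algebra $Y_\mu(\Glie) \otimes_w Y_\nu(\Glie)$ inside which $\Theta_\Bp^{\mu,\nu}(w)$ lives, and the associator property of Theorem \ref{thm:Yangian associator} specializes to the universal intertwining identity
\begin{equation*}
(\mathrm{Id} \otimes F^{\zeta+\nu}_{\Bp,w})\Delta_{\mu,\zeta+\nu}(y) \cdot \Theta_\Bp^{\mu,\nu}(w) = \Theta_\Bp^{\mu,\nu}(w) \cdot (G^{\mu+\zeta}_{\Bp,w} \otimes \mathrm{Id})\Delta_{\mu+\zeta,\nu}(y)
\end{equation*}
for every $y \in Y_{\mu+\zeta+\nu}(\Glie)$.

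The next step is to specialize $y = x_j^-(z)$ and expand both sides using Eq.\eqref{equ: F G Yangian} and Lemma \ref{lem: Yangian coproduct estimation}. The essential discrepancy between the two sides comes from $G$ replacing $x_j^-(z)$ by $\langle \Bp_j(z-w) x_j^-(z)\rangle_+$, while $F$ leaves $x_j^-(z)$ untouched; the subleading mixed terms $\sum_m x_{j,m}^- \otimes \xi_{j,\cdot-m-1}$ and the higher corrections $Y^-_{-\beta-\alpha_j} \otimes \xi_j Y^+_{\beta}$ contribute rational factors in $w$ controlled by the $\xi$-series and $\Bp_j(z-w)$. Writing $\Omega_\Bp^{\mu,\nu}(w) = \sum_{\beta \in \BQ_+} \Omega_{\Bp,\beta}^{\mu,\nu}(w)$ with $\Omega_{\Bp,\beta}^{\mu,\nu}(w) \in (Y^{\leq}_{\mu,-\beta} \otimes Y^{\geq}_{\nu,\beta})[[w^{-1}]]$ and projecting the intertwining identity onto the $Y^{\leq}_{\mu,-\beta-\alpha_j} \otimes Y^{\geq}_{\nu,\beta+\alpha_j}$-isotypic component, I would extract a recursion that expresses $\Omega_{\Bp,\beta+\alpha_j}^{\mu,\nu}(w)$ in terms of the $\Omega_{\Bp,\gamma}^{\mu,\nu}(w)$ with $\gamma < \beta+\alpha_j$, with coefficients that are rational in $w$ and whose poles are concentrated at the zeros of $\Bp_j(-w)$.

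I would then run an induction on the height of $\beta \in \BQ_+$. The base $\beta = 0$ gives $\Omega_{\Bp,0}^{\mu,\nu} = 1$, hence $\Theta_{\Bp,0}^{\mu,\nu} = 1$, a polynomial of degree zero. For the inductive step, multiplying the recursion by the compensating factor $\prod_i \Bp_i(-w)^{\langle \varpi_i^\vee,\beta+\alpha_j\rangle}$ clears all denominators, since the only $w$-poles introduced by $F$ and $G$ come from inverting the polynomials $\Bp_i(-w)$, while the $\xi_j(z)$ factors are $w$-independent. The degree increment from $\beta$ to $\beta+\alpha_j$ is $\deg \Bp_j(-w) = \langle \zeta, \alpha_j\rangle$, which combined with the inductive bound $\langle \zeta, \beta\rangle$ yields the required bound $\langle \zeta, \beta + \alpha_j\rangle$. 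For $\widetilde{\Theta}_{\Bp,\beta}^{\mu,\nu}(w)$, i.e.\ the $\beta$-component of $\Theta_\Bp^{\mu,\nu}(w)^{-1}$, inverting the module isomorphism of Theorem \ref{thm:Yangian associator} supplies a dual intertwining identity to which the same induction applies after specializing $y = x_j^+(z)$ and using the second half of Eq.\eqref{equ: F G Yangian}.

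The main obstacle will be Step 2, namely extracting the recursion cleanly from the quasi-primitive corrections in Lemma \ref{lem: Yangian coproduct estimation}. One must verify that, after clearing denominators by $\prod_i \Bp_i(-w)^{\langle \varpi_i^\vee,\beta+\alpha_j\rangle}$, the contributions from $\sum_m x_{j,m}^- \otimes \xi_{j,\cdot-m-1}$ and from the higher error terms in $Y^-_{-\beta-\alpha_j} \otimes \xi_j Y^+_\beta$ do not introduce any residual $w$-dependence beyond the claimed degree. This amounts to a careful weight-by-weight book-keeping together with a precise use of the principal-part convention $\langle \cdot \rangle_+$, and is the technical heart of the argument.
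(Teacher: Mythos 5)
Your overall strategy --- deriving a universal intertwining identity by applying Theorem \ref{thm:Yangian associator} to regular representations and then arguing weight by weight with a height induction --- is indeed the paper's starting point (Step 1 of its proof). But the heart of your argument has a genuine gap: the intertwining identity specialized at $y=x_j^-(z)$ (or at any single generator) does \emph{not} produce a recursion expressing $\Omega_{\Bp,\beta+\alpha_j}^{\mu,\nu}(w)$ in terms of lower components. When you project onto a fixed weight component, the top term enters only through a commutator, schematically $[\,1\otimes x_j^-(z),\,\Theta_{\Bp,\beta}^{\mu,\nu}(w)\,]=(\text{lower components with $w$-polynomial coefficients})$, so the identity determines $\Theta_{\Bp,\beta}^{\mu,\nu}(w)$ only up to the joint centralizer of the elements being commuted with; the paper itself stresses that its relation \eqref{rel: Theta i x i} yields polynomiality and uniqueness but \emph{not} existence, i.e.\ it is not a recursion. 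Consequently, to kill the coefficients of $w^n$ for $n>\langle\zeta,\beta\rangle$ or $n<0$ you must prove that an element of $Y_{\mu}(\Glie)_{-\beta}\otimes Y_{\nu}(\Glie)_{\beta}$ annihilated by all the relevant adjoint operators vanishes. Your proposal never addresses this, and ``clearing denominators'' cannot substitute for it --- all the more since $F_{\Bp,w}$ and $G_{\Bp,w}$ are polynomial in $w$ and introduce no poles at all; the only rescaling by $\prod_i\Bp_i(-w)^{\langle\varpi_i^{\vee},\beta\rangle}$ is the one already built into Definition \ref{def: Theta series Yangian}.

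This kernel-vanishing step is also precisely why the paper does not run the induction for general $\mu,\nu$: it specializes the identity at $x=x_{i,0}^+$ with $\mu=\nu=0$, uses the commutative diagram \eqref{rel: coproduct vs shift} to see that $\Delta_{0,\zeta}(x_{i,0}^+)$ is primitive, and then shows that the joint kernel of the maps $\mathrm{ad}_{x_{i,0}^+}$ on $Y_0(\Glie)_{-\beta}$, $\beta\in\BQ_>$, is zero by invoking the integrable adjoint $\Glie$-action on the \emph{ordinary} Yangian together with positive-definiteness of the symmetrized Cartan matrix; polynomiality for arbitrary $\mu,\nu$ is then transported through the injective shift homomorphisms via \eqref{rel: shift Theta}. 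In a general shifted Yangian the corresponding integrability is not available, so your plan to argue directly in $Y_{\mu}(\Glie)\otimes Y_{\nu}(\Glie)$ with $y=x_j^-(z)$ stalls exactly where the argument becomes nontrivial. Finally, your treatment of $\widetilde{\Theta}_{\Bp,\beta}^{\mu,\nu}(w)$ via a dual intertwining identity is unnecessary: since $\Theta_{\Bp,0}^{\mu,\nu}(w)=1$, each $\widetilde{\Theta}_{\Bp,\beta}^{\mu,\nu}(w)$ is a finite $\BZ$-linear combination of products $\Theta_{\Bp,\gamma_1}^{\mu,\nu}(w)\cdots\Theta_{\Bp,\gamma_s}^{\mu,\nu}(w)$ with $\gamma_1+\cdots+\gamma_s=\beta$, which gives its polynomiality and the degree bound $\langle\zeta,\beta\rangle$ immediately once the bound for $\Theta$ is known.
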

It follows from Definition \ref{def: Theta series Yangian} that for $\Bp = \Psi_{i,0}$ and $\beta \in \BQ_+$ the Omega series $\Omega_{\Bp,\beta}^{\mu,\nu}(w)$ is a polynomial in $w^{-1}$ of degree bounded by $\langle \varpi_i^{\vee}, \beta \rangle$.
\begin{proof}
By definition we have 
$$ \sum_{\beta, \gamma \in \BQ_+} \widetilde{\Theta}_{\Bp,\beta}^{\mu,\nu}(w)  \Theta_{\Bp,\gamma}^{\mu,\nu}(w) = 1. $$
So $\widetilde{\Theta}_{\Bp,\beta}^{\mu,\nu}(w)$ is a $\BZ$-linear combination of the monomials $\Theta_{\Bp,\gamma_1}^{\mu,\nu}(w) \Theta_{\Bp,\gamma_2}^{\mu,\nu}(w) \cdots \Theta_{\Bp,\gamma_s}^{\mu,\nu}(w)$ where $s \geq 0$ and $\gamma_1, \gamma_2, \cdots, \gamma_s \in \BQ_+$ are such that $\beta = \gamma_1+\gamma_2+\cdots+\gamma_s$. If each $\Theta_{\Bp,\gamma_j}^{\mu,\nu}(w)$ is a polynomial of degree bounded by $\langle \zeta, \gamma_j\rangle$, each such monomial and $\widetilde{\Theta}_{\Bp,\beta}^{\mu,\nu}(w)$ are polynomials of degree bounded by $\langle \zeta, \gamma_1+\gamma_2+\cdots+\gamma_s\rangle = \langle \zeta, \beta\rangle$. 

It suffices to establish polynomiality of Theta series. This is done in several steps.

\medskip

\noindent {\bf Step 1: intertwining property.}
In Theorem \ref{thm:Yangian associator} let us take $M$ and $N$ to be regular representations of shifted Yangians. Then both completed triple tensor product modules have $Y_{\mu}(\Glie) \otimes_w Y_{\nu}(\Glie)$ as the common underlying space and  an element $x$ of $Y_{\mu+\zeta+\nu}(\Glie)$ acts on these two modules as left multiplications by
$$(G_{\Bp,w}^{\mu+\zeta} \otimes \mathrm{Id}) \circ \Delta_{\mu+\zeta, \nu}(x), \quad (\mathrm{Id} \otimes F_{\Bp,w}^{\zeta+\nu})  \circ \Delta_{\mu,\zeta+ \nu}(x) $$
respectively. As a consequence of module morphism, we have the following {\it intertwining property} in the algebra $Y_{\mu}(\Glie) \otimes_w Y_{\nu}(\Glie)$ for $x \in Y_{\mu+\zeta+\nu}(\Glie)$:
\begin{equation}  \label{intertwining Theta}
 \Theta_{\Bp}^{\mu,\nu}(w) \times (G_{\Bp,w}^{\mu+\zeta} \otimes \mathrm{Id}) \circ \Delta_{\mu+\zeta, \nu}(x) = (\mathrm{Id} \otimes F_{\Bp,w}^{\zeta+\nu})  \circ \Delta_{\mu,\zeta+ \nu}(x) \times \Theta_{\Bp}^{\mu,\nu}(w).
\end{equation}

\medskip

\noindent {\bf Step 2: case $\mu = \nu = 0$.} We specialize Eq.\eqref{intertwining Theta} to the case $\mu = \nu = 0$ and $x = x_{i,0}^+ \in Y_{\zeta}(\Glie)$ for $i \in I$. For the left-hand side, by Lemma \ref{lem: Yangian coproduct estimation} we can write
$$  \Delta_{\zeta,0}(x_{i,0}^+) = x_{i,0}^+ \otimes 1 + \sum_{\gamma \in \mathcal{Q}_i, v\in \mathcal{C}_{\gamma}} v \otimes \varphi_{\gamma}(v). $$
Here  $\mathcal{Q}_i$ is a finite subset of $\BQ_+$ depending on $i$ and for each $\gamma \in \mathcal{Q}_i$ we have a map $\varphi_{\gamma}$ from a finite subset $\mathcal{C}_{\gamma}$ of $\sum_{t\in \BZ}  Y_{\zeta}^-(\Glie)_{-\gamma} \xi_{i,t}$ to $Y_0^+(\Glie)_{\gamma+\alpha_i}$. For the right-hand side, in the commutative diagram \eqref{rel: coproduct vs shift} take $(\mu,\nu,\epsilon,\eta)$ to be $(0,\zeta,0,-\zeta)$ and put $x_{i,0}^+$ at the top-left corner. Then the element at the bottom-left corner is $x_{i,0}^+$, a primitive element of the Hopf algebra $Y_0(\Glie)$. So the element at the top-right corner is
\begin{align*}
\Delta_{0,\zeta}(x_{i,0}^+) = x_{i,0}^+ \otimes 1 + 1 \otimes x_{i,0}^+.
\end{align*}
From Example \ref{example: tensor prefund Yangian} observe that $G_{\Bp,w}^{\zeta}(x_{i,0}^+) = x_{i,0}^+$, each $G_{\Bp,w}^{\zeta}(v)$ for $v \in \mathcal{C}_{\gamma}$ is a polynomial in $w$ of degree bounded by $\langle \zeta, \gamma+\alpha_i\rangle$, and $F_{\Bp,w}^{\zeta}(x_{i,0}^+)$ is a polynomial in $w$ of degree $\langle \zeta, \alpha_i\rangle$. Projecting Eq.\eqref{intertwining Theta} to the component $(Y_0(\Glie)_{\alpha_i-\beta} \otimes Y_0(\Glie)_{\beta})((w^{-1}))$ for $\beta \in \BQ_+$ we obtain the following commutation relation
\begin{equation}   \label{rel: Theta i x i}
\begin{split}
[x_{i,0}^+ \otimes 1, \Theta_{\Bp, \beta}^{0,0}(w)] &=  \sum_{\gamma \in \mathcal{Q}_i, v\in \mathcal{C}_{\gamma}} \Theta_{\Bp, \beta-\gamma-\alpha_i}^{0,0}(w)  (G_{\Bp,w}^{\zeta}(v) \otimes \varphi_{\gamma}(v))  \\
&\qquad - (1 \otimes F_{\Bp,w}^{\zeta}(x_{i,0}^+)) \Theta_{\Bp,\beta-\alpha_i}^{0,0}(w). 
\end{split}
\end{equation}
By convention $\Theta_{\Bp,\beta}^{0,0}(w) := 0$ if $\beta \notin \BQ_+$. The above summation is finite.

\medskip

\noindent {\bf Step 3: polynomiality for $\Theta_{\Bp,\beta}^{0,0}(w)$.} We prove that the Laurent series $\Theta_{\Bp,\beta}^{0,0}(w)$ is a polynomial in $w$ of degree bounded by $\langle \zeta, \beta\rangle$ by induction on the height of $\beta$ defined as $h(\beta) := \langle \sum_{i\in I} \varpi_i^{\vee}, \beta \rangle \in \BN$, namely the sum of the coefficients of $\beta$ as an $\BN$-linear combination of the simple roots. 

The initial case $h(\beta) = 0$ follows from $\Theta_{\Bp,0}^{0,0}(w) = 1 = \Omega_{\Bp,0}^{0,0}(w)$. 

Suppose $h(\beta) > 0$.  By induction hypothesis, $\Theta_{\Bp,\beta-\gamma-\alpha_i}^{0,0}(w)$ is a polynomial in $w$ of degree bounded by $\langle \zeta, \beta-\gamma-\alpha_i\rangle$ for $\gamma \in \BQ_+$. In view of the degree estimations of the polynomials $G_{\Bp,w}^{\zeta}(v)$ and $F_{\Bp,w}^{\zeta}(x_{i,0}^+)$ at Step 2,  each term at the right-hand side of Eq.\eqref{rel: Theta i x i} is a polynomial in $w$ of degree bounded by $\langle \zeta, \beta\rangle$. For $n \in \BZ$, let $f_n \in Y_0(\Glie)_{-\beta} \otimes Y_0(\Glie)_{\beta}$ be the coefficient of $w^n$ in the Laurent series $\Theta_{\Bp,\beta}^{0,0}(w)$. Then 
$$ [x_{i,0}^+ \otimes 1, f_n] = 0 \quad \textrm{for $i \in I$ and ($n > \langle \zeta, \beta\rangle$ or $n < 0$)}. $$
It suffices to show that the joint kernel of the linear maps $\mathrm{ad}_{x_{i,0}^+}: Y_0(\Glie)_{-\beta} \longrightarrow Y_0(\Glie)$ for $i \in I$ is zero, so that $f_n = 0$ for such $n$. Indeed, the adjoint representation of $\Glie$ on the ordinary Yangian $Y_0(\Glie)$ is integrable. If $v \neq 0$ belongs to the joint kernel, then $v$ must be a vector of highest weight $-\beta$, which forces $-\beta$ to be a dominant weight. Together with $\beta \in \BQ_+$ and the fact that the symmetrized Cartan matrix $(b_{ij})_{i,j\in I}$ is positive definite, we must have $\beta = 0$, contradicting $h(\beta) > 0$. 

\medskip

\noindent {\bf Step 4: polynomiality for $\Theta_{\Bp,\beta}^{\mu,\nu}(w)$.} We claim that
\begin{equation} \label{rel: shift Theta}
(\iota_{\epsilon,0}^{\mu} \otimes \iota_{0,\eta}^{\nu})(\Theta_{\Bp,\beta}^{\mu,\nu}(w)) = \Theta_{\Bp,\beta}^{\mu+\epsilon,\nu+\eta}(w)\quad \textrm{for $\epsilon$ and $\eta$ antidominant}. 
\end{equation}
It suffices to prove the identity with the $\Theta_{\Bp,\beta}(w)$ replaced by $\Omega_{\Bp}(w)$. 
In the commutative diagram \eqref{rel: coproduct vs shift}, put $S_{\Bp}^{\mu+\nu}(w)$ at the top-left corner so that at the top-right corner we have $\Delta_{\mu,\nu}(S_{\Bp}^{\mu+\nu}(w))$. From the proof of Lemma \ref{lem: coproduct S} and the fact that shifted homomorphisms preserve GKLO series we obtain that shifted homomorphisms also preserve S-series. This implies that the series at the bottom-right corner is $\Delta_{\mu+\epsilon,\nu+\eta}(S_{\Bp}^{\mu+\nu+\epsilon+\eta}(w))$. Comparing the factorizations of $\Delta(S_{\Bp}(w))$ at the bottom-right corner and at the top-right corner gives the desired identity for Omega series.  

Choose antidominant coweights $\epsilon$ and $\eta$ such that $\mu + \epsilon$ and $\nu + \eta$ are antidominant. Then the polynomiality of $\Theta_{\Bp,\beta}^{\mu+\epsilon,\nu+\eta}(w)$ follows from that of $\Theta_{\Bp,\beta}^{0,0}(w)$ and
$$ \Theta_{\Bp,\beta}^{\mu+\epsilon,\nu+\eta}(w) = (\iota_{\mu+\epsilon,0}^0 \otimes \iota_{0,\nu+\eta}^0)(\Theta_{\Bp,\beta}^{0,0}(w)). $$
This together with Eq.\eqref{rel: shift Theta} in turn implies the polynomiality of $\Theta_{\Bp,\beta}^{\mu,\nu}(w)$ because the shifted homomorphisms $\iota_{\epsilon,0}^{\mu}$ and $\iota_{0,\eta}^{\nu}$ are injective.
\end{proof}

Step 2 of the above proof is similar to \cite[\S\S 4.2, 4.5]{GTLW} which deduces uniqueness, existence and rationality of the triangular part of the universal R-matrix for the ordinary Yangian from an intertwining property. In our case Eq.\eqref{rel: Theta i x i} implies polynomiality and uniqueness of Theta series, but not the existence. 

\begin{example}  \label{example: Yangian Theta sl2}
Fix $\Bp = \Psi_{j,0}$ and $\beta = n \alpha_j$ with $n \in \BZ_{>0}$. For $i \in I$ and $\gamma \in \BQ_+$, in order that $n\alpha_j - \gamma - \alpha_i \in \BQ_+$ we must have $j = i$ and $\gamma \in \BN \alpha_j$. The following coproduct estimation can be deduced from the proof of \cite[Theorem 4.12]{coproduct}:
$$\Delta_{\varpi_j^{\vee},0}(x_{j,0}^+) \equiv x_{j,0}^+ \otimes 1 +  \xi_{j,-1} \otimes x_{j,0}^+ + 1 \otimes x_{j,1}^+ \ \mathrm{mod}. \sum_{\gamma \notin \BN \alpha_j} Y_{\varpi_j^{\vee}}(\Glie) \otimes Y_0(\Glie)_{\gamma}. $$
The recursion formula \eqref{rel: Theta i x i} becomes 
$$ [x_{i,0}^+ \otimes 1, \Theta_{\Bp,n\alpha_j}^{0,0}(w)] = \delta_{ij} [\Theta_{\Bp,(n-1)\alpha_j}^{0,0}(w), 1 \otimes (x_{j,1}^+ - w x_{j,0}^+)] + \delta_{ij} \Theta_{\Bp,(n-1)\alpha_j}^{0,0}(w) (\xi_{j,0} \otimes x_{j,0}^+). $$
It has a unique solution in $Y_0(\Glie)_{-n\alpha_j} \otimes Y_0(\Glie)_{n\alpha_j}[w]$, namely 
$$ \Theta_{\Bp,n\alpha_j}^{0,0}(w) = \frac{1}{n!}  (x_{j,0}^- \otimes x_{j,0}^+)^n. $$
The same formula holds for $\Theta_{\Bp,n\alpha_j}^{\mu,\nu}(w)$. In the rank-one case $\Glie = sl_2$,  we have the following coproduct formula in the Yangian $Y_0(sl_2)$, which is simpler than the coproduct of $\xi_1(z)$ in \cite[Definition 2.24]{Molev} and \cite[(6.9)]{coproduct}.
$$ \Delta(S_1(z)) = (1\otimes S_1(z)) \exp(-z^{-1} x_{1,0}^- \otimes x_{1,0}^+) (S_1(z) \otimes 1). $$
\end{example}

\begin{cor}  \label{cor: nontrivial associator Yangian}
Let $M$ be a $Y_{\mu}(\Glie)$-module, $N$ be a $Y_{\nu}(\Glie)$-module, and $\Bp$ be a polynomial $\ell$-weight of coweight $\zeta$. If either $M$ is positive root graded or $N$ is negative root graded, then the action of $\Theta_{\Bp}^{\mu,\nu}(w)$ on $M \otimes_w N$ restricts to a module isomorphism 
$$\Theta_{\Bp}^{\mu,\nu}(w):  (M \otimes L(\Bp)_w) \otimes N \longrightarrow M \otimes (L(\Bp)_w \otimes N).  $$
\end{cor}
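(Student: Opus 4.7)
The plan is to combine the module-isomorphism statement at the completed level (Theorem~\ref{thm:Yangian associator}) with the polynomiality of weight components (Theorem~\ref{thm: Yangian poly Theta}), and then use a finiteness argument dictated by the root-grading hypothesis to show that the Theta-series action actually stabilizes the uncompleted polynomial space $(M\otimes N)[w]$.

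First I would identify underlying spaces: by Example~\ref{example: tensor prefund Yangian}, both $(M\otimes L(\Bp)_w)\otimes N$ and $M\otimes (L(\Bp)_w\otimes N)$ have underlying vector space $(M\otimes N)[w]$, and this sits inside the completion $M\otimes_w N$ in the obvious way. Theorem~\ref{thm:Yangian associator} already gives a $Y_{\mu+\zeta+\nu}(\Glie)$-module isomorphism $\overline{(M\otimes L(\Bp)_w)\otimes N}\longrightarrow \overline{M\otimes (L(\Bp)_w\otimes N)}$ implemented by the action of $\Theta_{\Bp}^{\mu,\nu}(w)$. Once one verifies that this action and its inverse both preserve the subspace $(M\otimes N)[w]$, the desired restricted module isomorphism follows automatically, since the intertwining property passes to subspaces.

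The key step is the finiteness argument. Take a decomposable vector $v\otimes v' \in M_{(\alpha_1)}\otimes N_{(\alpha_2)}$. By Theorem~\ref{thm: Yangian poly Theta}, each weight component $\Theta_{\Bp,\beta}^{\mu,\nu}(w)$ lies in $(Y_{\mu}^{\leq}(\Glie)_{-\beta}\otimes Y_{\nu}^{\geq}(\Glie)_{\beta})[w]$, and it sends $v\otimes v'$ into $(M_{(\alpha_1-\beta)}\otimes N_{(\alpha_2+\beta)})[w]$ as a polynomial in $w$. Under the hypothesis that $M$ is positive root graded, $M_{(\alpha_1-\beta)}=0$ unless $\alpha_1-\beta\in \BQ_+$, which forces $\beta\leq \alpha_1$ and hence leaves only finitely many $\beta\in \BQ_+$. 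Under the alternative hypothesis that $N$ is negative root graded, $N_{(\alpha_2+\beta)}=0$ unless $\alpha_2+\beta\in \BQ_-$, again leaving only finitely many $\beta$. In either case, $\sum_{\beta\in \BQ_+}\Theta_{\Bp,\beta}^{\mu,\nu}(w)(v\otimes v')$ is a \emph{finite} sum of polynomials in $w$ with coefficients in $M\otimes N$, hence lies in $(M\otimes N)[w]$. By $\BC[w]$-linearity and the decomposition of a general vector of $(M\otimes N)[w]$ into finitely many root-graded components, the whole subspace is preserved.

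For the inverse, one applies exactly the same argument to the components $\widetilde{\Theta}_{\Bp,\beta}^{\mu,\nu}(w)$ of $\Theta_{\Bp}^{\mu,\nu}(w)^{-1}$, which are also polynomials in $w$ by Theorem~\ref{thm: Yangian poly Theta}. This gives a $\BC[w]$-linear map $(M\otimes N)[w]\to (M\otimes N)[w]$ that is inverse to the restriction of $\Theta_{\Bp}^{\mu,\nu}(w)$, so we obtain the claimed module isomorphism. The only substantive ingredient beyond Theorems \ref{thm:Yangian associator} and \ref{thm: Yangian poly Theta} is the finiteness step above; I do not anticipate any genuine obstacle, but one should be careful to separate the two sign cases cleanly and to note that the argument never requires $M$ and $N$ to be simultaneously one-sided graded.
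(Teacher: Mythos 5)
Your proposal is correct and follows essentially the same route as the paper: the one-sided root grading forces all but finitely many weight components $\Theta_{\Bp,\beta}^{\mu,\nu}(w)$ (and $\widetilde{\Theta}_{\Bp,\beta}^{\mu,\nu}(w)$) to annihilate a given vector, polynomiality from Theorem~\ref{thm: Yangian poly Theta} then shows the action and its inverse stabilize $(M\otimes N)[w]$, and the module-morphism property is inherited from Theorem~\ref{thm:Yangian associator} (equivalently the intertwining property~\eqref{intertwining Theta}). You simply spell out the finiteness step in both cases more explicitly than the paper does; no changes are needed.
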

\begin{proof}
Given $v_1 \in M$ and $v_2 \in N$ we have for all but finitely many $\beta \in \BQ_+$:
 $$\Theta_{\Bp,\beta}^{\mu,\nu}(w) (v_1 \otimes v_2) = 0 = \widetilde{\Theta}_{\Bp,\beta}^{\mu,\nu}(w)(v_1 \otimes v_2).$$
 Both $\Theta_{\Bp}^{\mu,\nu}(w) (v_1 \otimes v_2)$ and $\Theta_{\Bp}^{\mu,\nu}(w)^{-1}(v_1 \otimes v_2)$ are polynomial in $w$. So $\Theta_{\Bp}^{\mu,\nu}(w)$ defines a $\BC[w]$-linear automorphism of $M \otimes N[w]$. That the linear map is a module morphism follows directly from Theorem \ref{thm:Yangian associator} or the intertwining property \eqref{intertwining Theta}.
\end{proof}
\subsection{Multiplicativity of Theta series} In this subsection we factorize the Theta series $\Theta_{\Bp}^{\mu,\nu}(z)$ with respect to the polynomial $\ell$-weight $\Bp$.  

\begin{prop}   \label{prop: multip}
Let $\mu, \nu$ be coweights and $\Bm, \Bn$ be polynomial $\ell$-weight.
\begin{itemize}
    \item[(i)] We have an algebra homomorphism $\psi_{\Bm,w}^{\mu,+}: Y_{\mu}^{\geq}(\Glie) \longrightarrow Y_{\mu}^{\geq}(\Glie)[w]$ defined by
$$ x_i^+(z) \mapsto \langle \Bm_i(z-w) x_i^+(z) \rangle_+,\quad \xi_i(z) \mapsto \xi_i(z). $$
     \item[(ii)] We have an algebra homomorphism $\psi_{\Bm,w}^{\mu,-}: Y_{\mu}^{\leq}(\Glie) \longrightarrow Y_{\mu}^{\leq}(\Glie)[w]$ defined by
$$ x_i^-(z) \mapsto \langle \Bm_i(z-w) x_i^-(z) \rangle_+,\quad \xi_i(z) \mapsto \xi_i(z). $$
     \item[(iii)] We have the following multiplicativity of Theta series: 
     $$ \Theta_{\Bm\Bn}^{\mu,\nu}(w) = (\mathrm{Id} \otimes \psi^{\nu,+}_{\Bn,w})(\Theta_{\Bm}^{\mu,\nu}(w)) \times  (\psi^{\mu,-}_{\Bm,w} \otimes \mathrm{Id})(\Theta_{\Bn}^{\mu,\nu}(w)).  $$
\end{itemize}
\end{prop}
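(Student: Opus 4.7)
The plan is to prove (i), (ii) by reducing to a single prefundamental $\ell$-weight and then composing, and to prove (iii) by applying $\Delta_{\mu,\nu}$ to the elementary factorization $S^{\mu}_{\Bm\Bn}(w) = S^{\mu}_{\Bm}(w) S^{\mu}_{\Bn}(w)$ and carefully tracking the scalar shifts generated by $S$-series conjugation.

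For (i), composition reduces the problem to the case $\Bm = \Psi_{i_0, a_0}$, where the assignment reads $x_{i_0, n}^+ \mapsto x_{i_0, n+1}^+ - (w+a_0) x_{i_0, n}^+$ on the affected generator while all other $x_{j, n}^+$ and all $\xi_{i, p}$ are fixed. I would verify directly that this preserves each defining relation of $Y_{\mu}^{\geq}(\Glie)$: the $\xi$-relations and initial conditions are untouched; the $[\xi, x^+]$-relation in generating series form is preserved because the shift endomorphism $\sigma_{i_0}^+$ commutes with $[\xi_i(z), -]$ and $(w+a_0)$ is a central scalar; the quadratic $[x^+, x^+]$ relation is shift-invariant; and the Serre relation $\mathrm{ad}_{x_{i_0,1}^+-(w+a_0)x_{i_0,0}^+}^{1-c_{i_0 j}}(x_{j,0}^+)=0$ expands as a polynomial in $(w+a_0)$ whose coefficients are sums over orderings of $\mathrm{ad}_{x_{i_0, m_1}^+} \cdots \mathrm{ad}_{x_{i_0, m_{1-c_{i_0 j}}}^+}(x_{j, 0}^+)$ with $m_\ell \in \{0, 1\}$, each proportional by a multinomial factor to a symmetrized higher-level Serre relation that vanishes by the strong form of \cite[Definition 3.1]{coproduct}. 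Part (ii) is analogous, using $\sigma_i^-$ in place of $\sigma_i^+$.

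For (iii), the factorization $S^{\mu}_{\Bm\Bn}(w) = S^{\mu}_{\Bm}(w) S^{\mu}_{\Bn}(w)$ follows from Eq.~\eqref{def: S series one-dim Yangian} together with the commutativity of $Y_{\mu}^0(\Glie)$. Applying $\Delta_{\mu,\nu}$ and Definition~\ref{def: Theta series Yangian} to both sides, and using that $(S^{\mu}_{\Bm}(w) \otimes 1)$ and $(1 \otimes S^{\nu}_{\Bn}(w))$ commute (being in opposite tensor slots) while $S^{\mu}_{\Bm}$ and $S^{\mu}_{\Bn}$ commute inside $Y_{\mu}^0(\Glie)$, one rearranges the resulting identity to
\begin{equation*}
\Omega_{\Bm\Bn}^{\mu,\nu}(w) = \bigl[(1 \otimes S^{\nu}_{\Bn}(w))^{-1} \Omega_{\Bm}^{\mu,\nu}(w) (1 \otimes S^{\nu}_{\Bn}(w))\bigr] \cdot \bigl[(S^{\mu}_{\Bm}(w) \otimes 1) \Omega_{\Bn}^{\mu,\nu}(w) (S^{\mu}_{\Bm}(w) \otimes 1)^{-1}\bigr].
\end{equation*}
Setting $\chi_{\Bp}(\beta) := \prod_{i \in I} \Bp_i(-w)^{-\langle \varpi_i^{\vee}, \beta \rangle}$, the commutation relations \eqref{comm: S x general} yield $S^{\nu}_{\Bn}(w)^{-1} b\, S^{\nu}_{\Bn}(w) = \chi_{\Bn}(\beta)\, \psi^{\nu,+}_{\Bn, w}(b)$ for $b \in Y_{\nu}^{\geq}(\Glie)_{\beta}$, and dually $S^{\mu}_{\Bm}(w)\, a\, S^{\mu}_{\Bm}(w)^{-1} = \chi_{\Bm}(\gamma)\, \psi^{\mu,-}_{\Bm, w}(a)$ for $a \in Y_{\mu}^{\leq}(\Glie)_{-\gamma}$. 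Substituting and extracting the weight-$\alpha$ component gives
\begin{equation*}
\Omega_{\Bm\Bn, \alpha}^{\mu,\nu}(w) = \sum_{\beta + \gamma = \alpha} \chi_{\Bn}(\beta)\, \chi_{\Bm}(\gamma) \, (\mathrm{Id} \otimes \psi^{\nu,+}_{\Bn,w})(\Omega_{\Bm, \beta}^{\mu,\nu}(w)) \cdot (\psi^{\mu,-}_{\Bm,w} \otimes \mathrm{Id})(\Omega_{\Bn, \gamma}^{\mu,\nu}(w)).
\end{equation*}

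The final step is to convert each $\Omega$ to the corresponding $\Theta$ via $\Theta_{\Bp, \beta} = \chi_{\Bp}(-\beta)\, \Omega_{\Bp, \beta}$ from Definition~\ref{def: Theta series Yangian}, invoking the multiplicativity $\chi_{\Bm\Bn}(-\alpha) = \chi_{\Bm}(-\alpha)\chi_{\Bn}(-\alpha)$ and the identities $\chi_{\Bm}(-\alpha)\chi_{\Bm}(\gamma) = \chi_{\Bm}(-\beta)$, $\chi_{\Bn}(-\alpha)\chi_{\Bn}(\beta) = \chi_{\Bn}(-\gamma)$ valid for $\alpha = \beta + \gamma$. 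The scalar factors redistribute so that each $\Omega$ on the right-hand side absorbs its $\chi$-correction and becomes the corresponding $\Theta$, yielding the stated identity after summing over $\alpha \in \BQ_+$. The only real obstacle is this scalar-factor bookkeeping: the shift by $\chi_{\Bp}(-\beta)$ in the definition of $\Theta$ is calibrated exactly to absorb the scalars produced by $S$-series conjugation, so the identity emerges cleanly once that match is noticed.
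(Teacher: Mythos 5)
Your argument is correct. Part (iii) is essentially the paper's own proof: you factorize $\Delta_{\mu,\nu}(S^{\mu+\nu}_{\Bm\Bn}(w))=\Delta(S_{\Bm})\Delta(S_{\Bn})$, use that $(S^{\mu}_{\Bm}\otimes 1)$ commutes with $(1\otimes S^{\nu}_{\Bn})$ and that the $S$-series multiply in $Y^0$, and read off $\Omega^{\mu,\nu}_{\Bm\Bn}(w)=(\mathrm{Id}\otimes\mathrm{Ad}_{S^{\nu}_{\Bn}(w)}^{-1})(\Omega^{\mu,\nu}_{\Bm}(w))\,(\mathrm{Ad}_{S^{\mu}_{\Bm}(w)}\otimes\mathrm{Id})(\Omega^{\mu,\nu}_{\Bn}(w))$ from the defining factorization; your explicit bookkeeping with the character $\chi_{\Bp}(\beta)$ just spells out the sentence in the paper that the scalar factors $\Bm_i(-w)^{-1}$, $\Bn_i(-w)^{-1}$ arising from Eq.\eqref{comm: S x general} are exactly absorbed in the passage from $\Omega$ to $\Theta$, and your identities $\chi_{\Bm}(-\alpha)\chi_{\Bm}(\gamma)=\chi_{\Bm}(-\beta)$, $\chi_{\Bn}(-\alpha)\chi_{\Bn}(\beta)=\chi_{\Bn}(-\gamma)$ are the correct calibration.

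Where you genuinely diverge is in (i)--(ii). The paper obtains $\psi^{\mu,\pm}_{\Bm,w}$ with no relation-checking at all: it applies Theorem \ref{thm: Yangian T-series} to the regular representation, so that (up to the grading character) $\psi^{\mu,\pm}_{\Bm,w}$ is conjugation by the invertible series $S^{\mu}_{\Bm}(w)$, hence automatically an algebra map, and Eq.\eqref{comm: S x general} shows it preserves $Y^{\gtrless}_{\mu}(\Glie)$ and is polynomial in $w$. You instead verify the defining relations of $Y^{\geq}_{\mu}(\Glie)$ directly after reducing to $\Bm=\Psi_{i_0,a_0}$; this is valid given the presentation of the half subalgebras asserted in Section \ref{sec: Yangian} (PBW plus the strong Serre relations of \cite[Definition 3.1]{coproduct}), and your grouping of the expansion of $\mathrm{ad}_{x^+_{i_0,1}-(w+a_0)x^+_{i_0,0}}^{1-c_{i_0j}}(x^+_{j,0})$ by multisets of modes, each summing to a multinomial multiple of a symmetrized Serre relation, is exactly right. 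Two small things you should make explicit: the Serre case where the substituted generator is the innermost one ($j=i_0$, $i\neq i_0$), which is handled the same way; and the reduction-by-composition step, which silently uses the nested principal-part identity $\langle g(z)\langle h(z)f(z)\rangle_+\rangle_+=\langle g(z)h(z)f(z)\rangle_+$ for polynomials $g,h$ to see that composing the prefundamental maps gives $x^+_i(z)\mapsto\langle\Bm_i(z-w)x^+_i(z)\rangle_+$. The trade-off is clear: your route is self-contained and explicit but leans on knowing the presentation (including its strong Serre form); the paper's route is shorter and bypasses the presentation entirely by exploiting that the maps are twisted conjugations by $S^{\mu}_{\Bm}(w)$.
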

\begin{proof}
For (i)--(ii), apply Theorem \ref{thm: Yangian T-series} to the regular representation $V = Y_{\mu}(\Glie)$. We have a module morphism $T_{\Bm}^V(w): L(\Bm)_w \tilde{\otimes} V \longrightarrow V \tilde{\otimes} L(\Bm)_w$. The first and third relations of Eq.\eqref{intertwining T} imply that $T_{\Bm}^V(w)$ sends $Y_{\mu}^{\leq}(\Glie)$ to $Y_{\mu}^{\leq}(\Glie)[w]$ restricts to the desired algebra homomorphism $\psi_{\Bm,w}^{\mu,-}$. Similarly $T_{\Bm}^V(w)^{-1}$ restricts to $\psi_{\Bm,w}^{\mu,+}$.

For (iii), let $\mathrm{Ad}_{S_{\Bm}^{\mu}(w)}$ denote the conjugation on the algebra $Y_{\mu}(\Glie)[[w^{-1}]]$ by the invertible power series $S_{\Bm}^{\mu}(w)$. 
By Eq.\eqref{def: S series one-dim Yangian} and Definition \ref{def: Theta series Yangian} we have 
\begin{align*}
&\Delta_{\mu,\nu}(S_{\Bm\Bn}^{\mu+\nu}(w)) = \Delta_{\mu,\nu}(S_{\Bm}^{\mu+\nu}(w) S_{\Bn}^{\mu+\nu}(w)) = \Delta_{\mu,\nu}(S_{\Bm}^{\mu+\nu}(w)) \Delta_{\mu,\nu}(S_{\Bn}^{\mu+\nu}(w)) \\
&= (1\otimes S_{\Bm}^{\nu}(w)) \Omega_{\Bm}^{\mu,\nu}(w) (S_{\Bm}^{\mu}(w) \otimes 1) (1\otimes S_{\Bn}^{\nu}(w)) \Omega_{\Bn}^{\mu,\nu}(w) (S_{\Bn}^{\mu}(w) \otimes 1) \\
&= (1\otimes S_{\Bm\Bn}^{\nu}(w)) \times (\mathrm{Id} \otimes \mathrm{Ad}_{S_{\Bn}^{\nu}(w)}^{-1})(\Omega_{\Bm}^{\mu,\nu}(w))  (\mathrm{Ad}_{S_{\Bm}^{\mu}(w)} \otimes \mathrm{Id})(\Omega_{\Bn}^{\mu,\nu}(w)) \times  (S_{\Bm\Bn}^{\mu}(w) \otimes 1).
\end{align*}
By uniqueness we obtain the factorization formula for Omega series:
$$ \Omega_{\Bm\Bn}^{\mu,\nu}(w) = (\mathrm{Id} \otimes \mathrm{Ad}_{S_{\Bn}^{\nu}(w)}^{-1})(\Omega_{\Bm}^{\mu,\nu}(w)) \times (\mathrm{Ad}_{S_{\Bm}^{\mu}(w)} \otimes \mathrm{Id})(\Omega_{\Bn}^{\mu,\nu}(w)).  $$
From Eqs.\eqref{comm: S x general}--\eqref{intertwining T} observe that $\psi_{\Bm,w}^{\mu,\pm}$ are related to $\mathrm{Ad}_{S_{\Bm}^{\mu}(w)}^{\mp 1}$ as follows:
\begin{align*}
& \mathrm{Ad}_{S_{\Bm}^{\mu}(w)}^{-1}(x_{i,m}^+) = \frac{1}{\Bm_i(-w)} \psi_{\Bm,w}^{\mu,+}(x_{i,m}^+),\quad \mathrm{Ad}_{S_{\Bm}^{\mu}(w)}^{-1}(\xi_{i,p}) = \psi_{\Bm,w}^{\mu,+}(\xi_{i,p}), \\
& \mathrm{Ad}_{S_{\Bm}^{\mu}(w)}(x_{i,m}^-) = \frac{1}{\Bm_i(-w)}\psi_{\Bm,w}^{\mu,-}(x_{i,m}^-),\quad \mathrm{Ad}_{S_{\Bm}^{\mu}(w)}(\xi_{i,p}) = \psi_{\Bm,w}^{\mu,-}(\xi_{i,p}).
\end{align*}
These additional factors contribute to the passage from Omega series to Theta series in Definition \ref{def: Theta series Yangian}.
\end{proof}
The computation of Theta series is reduced to the prefundamental case $\Bp = \Psi_{i,a}$.
\subsection{Coefficients of Theta series}
For $\mu$ a coweight, recall the subalgebra $Y_{\mu}^{\pm}(\Glie)$ of the shifted Yangian $Y_{\mu}(\Glie)$ generated by the $x_{i,m}^{\pm}$ for $i \in I$ and $m \in \BN$. 
\begin{prop} \label{prop: purity Yangian} 
Let $\mu, \nu$ be coweights, $\beta \in \BQ_+$ and
 $\Bp$ be a polynomial $\ell$-weight. The polynomial $\Theta_{\Bp,\beta}^{\mu,\nu}(z)$  has coefficients in $Y_{\mu}^-(\Glie) \otimes Y_{\nu}^+(\Glie)$.
\end{prop}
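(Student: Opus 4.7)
The plan is to reduce purity to the zero-shift case $\mu = \nu = 0$ and deduce it there from the twistor identity \eqref{equ: twistor Yangian} of Gautam--Toledano Laredo--Wendlandt, combined with Lemma \ref{lem: purity}.

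First I would reduce to $\mu = \nu = 0$. By Eq.\eqref{rel: shift Theta} the shift homomorphisms $\iota_{\epsilon,0}^{0}$ and $\iota_{0,\eta}^{0}$ carry $\Theta_{\Bp,\beta}^{0,0}(w)$ to $\Theta_{\Bp,\beta}^{\epsilon,\eta}(w)$ for antidominant $\epsilon, \eta$; since these injections send $Y_{0}^{\pm}(\Glie)$ into $Y_{\epsilon}^{\pm}(\Glie)$ respectively, purity for $(0,0)$ propagates to purity for every antidominant pair $(\epsilon,\eta)$. For general $\mu, \nu$, I choose antidominant $\epsilon,\eta$ such that $\mu+\epsilon, \nu+\eta$ are still antidominant, obtain purity of $\Theta_{\Bp,\beta}^{\mu+\epsilon,\nu+\eta}(w)$ by the previous step, and pull back through the injective map $\iota_{\epsilon,0}^{\mu} \otimes \iota_{0,\eta}^{\nu}$, which restricts to isomorphisms $Y_{\mu}^{\pm}(\Glie) \cong Y_{\mu+\epsilon}^{\pm}(\Glie)$.

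Second, for $\mu = \nu = 0$ I substitute the factorization $\Delta(S_{\Bp}(z)) = (1 \otimes S_{\Bp}(z))\, \Omega_{\Bp}^{0,0}(z)\, (S_{\Bp}(z) \otimes 1)$ from Definition \ref{def: Theta series Yangian} into the twistor identity \eqref{equ: twistor Yangian}, solve for $(\tau_w \otimes \mathrm{Id})(\Omega_{\Bp}^{0,0}(z))$, split $\tau_w(S_{\Bp}(z)) \otimes S_{\Bp}(z) = (\tau_w(S_{\Bp}(z)) \otimes 1)(1 \otimes S_{\Bp}(z))$ and use that these two factors commute, obtaining
\begin{equation*}
(\tau_w \otimes \mathrm{Id})(\Omega_{\Bp}^{0,0}(z)) = \Phi_1(z,w) \cdot \Phi_2(z,w),
\end{equation*}
where $\Phi_1(z,w) := (1 \otimes S_{\Bp}(z))^{-1} \CR^-(w)^{-1} (1 \otimes S_{\Bp}(z))$ and $\Phi_2(z,w) := (\tau_w(S_{\Bp}(z)) \otimes 1) \CR^-(w) (\tau_w(S_{\Bp}(z))^{-1} \otimes 1)$. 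Since each $\gamma$-component $\CR_{\gamma}^-(w)$ lies in $(Y_0^-(\Glie)_{-\gamma} \otimes Y_0^+(\Glie)_{\gamma})[[w^{-1}]]$, Lemma \ref{lem: purity}(i) shows that $\Phi_1$ still has coefficients in $Y_0^-(\Glie) \otimes Y_0^+(\Glie)$, and Lemma \ref{lem: purity}(ii) shows the same for $\Phi_2$ (the polynomial-in-$w$ perturbations introduced by the conjugation only alter the $w$-dependence, not the $Y_0^-\otimes Y_0^+$ location of the coefficients). Consequently the product $\Phi_1\cdot\Phi_2$ has every weight component in $Y_0^-(\Glie)_{-\beta} \otimes Y_0^+(\Glie)_{\beta}$ as well.

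Third, I match the two sides. By Eq.\eqref{rel: spectral shift formal Yangian} the left-hand side $(\tau_w \otimes \mathrm{Id})(\Omega_{\Bp,\beta}^{0,0}(z))$ is polynomial in $w$, and its $w^0$-coefficient (obtained by specializing $w=0$, where $\tau_0 = \mathrm{Id}$) equals $\Omega_{\Bp,\beta}^{0,0}(z)$; the equality with $\Phi_1\cdot\Phi_2$ forces each $w$-coefficient of the left-hand side to lie in $Y_0^-(\Glie)_{-\beta} \otimes Y_0^+(\Glie)_{\beta}$, hence so does $\Omega_{\Bp,\beta}^{0,0}(z)$. Multiplying by the scalar polynomial $\prod_{i\in I} \Bp_i(-z)^{\langle \varpi_i^{\vee}, \beta\rangle}$ preserves this subalgebra, yielding the desired purity of $\Theta_{\Bp,\beta}^{0,0}(z)$.

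The main obstacle will be the bookkeeping around completions: the identity \eqref{equ: twistor Yangian} mixes polynomial-in-$w$ data on the left with series-in-$w^{-1}$ data on the right, and the conjugations of $\CR^-(w)$ by $\tau_w(S_{\Bp}(z))^{\pm 1}$ introduce polynomial-in-$w$ perturbations by Lemma \ref{lem: purity}(ii) that must be tracked with care so that the $\gamma$-graded analysis and the extraction of the $w^0$-coefficient both remain rigorous inside $Y_0^-(\Glie) \otimes_w Y_0^+(\Glie)$.
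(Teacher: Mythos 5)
Your proposal is correct and follows essentially the same route as the paper: reduce to $\mu=\nu=0$ via the shift homomorphisms, insert the factorization of Definition \ref{def: Theta series Yangian} into the twistor identity \eqref{equ: twistor Yangian}, and identify $(\tau_w\otimes\mathrm{Id})(\Omega_{\Bp}(z))$ with the product of the two conjugated factors (your $\Phi_1,\Phi_2$ are exactly the paper's $f(z,w), g(z,w)$), whose purity is supplied by Lemma \ref{lem: purity}. The only cosmetic difference is the last step, where you extract the $w^0$-coefficient (evaluate at $w=0$) while the paper invokes the injectivity of $\tau_w$; these are interchangeable.
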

This result is inspired by its counterpart for the quantum loop algebra in Theorem \ref{thm: polynomiality Theta}. Its proof is deferred to Appendix \ref{app: coefficients} as it requires a technical result of Gautam, Toledano Laredo and Wendlandt \cite{GTLW} on the lower triangular part of the universal R-matrix of the ordinary Yangian. It is not needed in the rest of the paper.  
\section{Decomposition of R-matrices} \label{sec: R Yangian}
In this section, we construct nonzero module morphisms $M \otimes N \longrightarrow N \otimes M$, commonly called {\it R-matrices}, in terms of the R-matrices of Theorem \ref{thm: poly R Yangian}, the T-operators of Theorem \ref{thm: Yangian T-series} and the Theta series of Definition \ref{def: Theta series Yangian}. In particular, $M$ and $N$ can be modules over the ordinary Yangian. As an application, we compute a particular diagonal entry of the R-matrix of Theorem \ref{thm: poly R Yangian}.

\subsection{Polynomiality of T-operators}
In this subsection we establish polynomiality for the T-operator $T_{\Bp}^V(w)$ of Theorem \ref{thm: Yangian T-series} for tensor products of highest $\ell$-weight modules over various shifted Yangians.

\begin{defi}
Let $V$ be a $Y_{\mu}(\Glie)$-module either top graded or bottom graded. Equip it with the negative root grading or the positive root grading as in Example \ref{example: motivation root graded Yangian}(i). For $\Bp$ be a polynomial $\ell$-weight, the eigenvalue of the $\BC((w^{-1}))$-linear automorphism $T_{\Bp}^V(w)$ of $V^w$ associated to the one-dimensional invariant subspace $V_{(0)}((w^{-1}))$ is denoted by $f_{\Bp}^V(w)$ if $V$ is top graded, and denoted by $g_{\Bp}^V(w)$ if $V$ is bottom graded.
\end{defi}
If $V$ is at the same time top graded and bottom graded, then to compute $f_{\Bp}^V(w)$ we use the T-operator arising from the negative root grading and to compute $g_{\Bp}^V(w)$ we use another T-operator arising from the positive root grading. These two T-operators differ by an overall scalar factor in $\BC((w^{-1}))^{\times}$.

A parenthesized tensor product of highest (respectively lowest) $\ell$-weight modules over various shifted Yangians is top (respectively bottom) graded.
\begin{prop}  \label{prop: poly T Yangian}
Let $\Bp \in \CL_{\zeta}$ be a polynomial $\ell$-weight and $V$ be a $Y_{\mu}(\Glie)$-module.
\begin{itemize}
\item[(i)] If $V$ is a tensor product of highest $\ell$-weight modules, then the normalized operator $f_{\Bp}^V(w)^{-1} T_{\Bp}^V(w)$ maps each nonzero vector of $V_{(-\beta)}$, for $\beta \in \BQ_+$, to a polynomial in $w$ of degree $\langle \zeta,\beta\rangle$. Its restriction to the subspace $V[w]$ is a $Y_{\mu+\zeta}(\Glie)$-module morphism denoted by
$$ \overline{T}_{\Bp}^V(w): L(\Bp)_w \otimes V \longrightarrow V \otimes L(\Bp)_w, $$
and the linear operator $\overline{T}_{\Bp}^V(w-z)$ on $V[z,w]$ is a $Y_{\mu+\zeta}(\Glie)$-module morphism
$$ \overline{T}_{\Bp}^V(w-z): L(\Bp)_w \otimes V_z \longrightarrow V_z \otimes L(\Bp)_w. $$
\item[(ii)] If $V$ is a tensor product of lowest $\ell$-weight modules, then the normalized operator $g_{\Bp}^V(w) T_{\Bp}^V(w)^{-1}$ maps each nonzero vector of $V_{(\beta)}$, for $\beta \in \BQ_+$, to a polynomial in $w$ of degree $\langle \zeta,\beta\rangle$. Its restriction to the subspace $V[w]$ is a $Y_{\mu+\zeta}(\Glie)$-module morphism denoted by
$$ \widetilde{T}_{\Bp}^V(w): V \otimes L(\Bp)_w \longrightarrow L(\Bp)_w \otimes V, $$
and the linear operator $ \widetilde{T}_{\Bp}^V(w-z)$ on $V[z,w]$ is a module morphism
$$  \widetilde{T}_{\Bp}^V(w-z): V_z \otimes L(\Bp)_w \longrightarrow L(\Bp)_w \otimes V_z.  $$
\end{itemize}
\end{prop}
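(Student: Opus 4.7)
The plan is induction on the number of tensor factors in $V$. For the base case in (i), take $V = L(\Bf)$ highest $\ell$-weight with top vector $v_0$, so each $v \in V_{(-\beta)}$ is a $\BC$-linear combination of PBW monomials $v_J = x^-_{j_1,n_1}\cdots x^-_{j_k,n_k} v_0$ with $\sum_l \alpha_{j_l} = \beta$. Iterating the first half of Eq.\eqref{comm: S x general} pushes $S_{\Bp}(w)$ past each $x^-$-factor at the cost of a denominator $\Bp_{j_l}(-w)^{-1}$, while replacing $x^-_{j_l,n_l}$ by the coefficient $Y_l$ of $z^{-n_l-1}$ in $\langle \Bp_{j_l}(z-w) x^-_{j_l}(z)\rangle_+$; this $Y_l$ is a polynomial in $w$ of degree $n_{j_l} = \langle\zeta,\alpha_{j_l}\rangle$ with leading coefficient $(-1)^{n_{j_l}} x^-_{j_l,n_l}$. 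On $V_{(-\beta)}$ the operator $D^V_{\Bp,w}$ multiplies by $\prod_i \Bp_i(-w)^{\langle\varpi_i^{\vee},\beta\rangle} = \prod_l \Bp_{j_l}(-w)$, cancelling the accumulated denominators, and $S_{\Bp}(w) v_0 = f^V_{\Bp}(w) v_0$. Hence $\overline{T}^V_{\Bp}(w) v_J := f^V_{\Bp}(w)^{-1} T^V_{\Bp}(w) v_J = Y_1 \cdots Y_k v_0$ is a polynomial in $w$ of degree $\langle\zeta,\beta\rangle$ whose top coefficient is $(-1)^{\langle\zeta,\beta\rangle} v_J$; summing over the PBW expansion of $v$ gives the exact degree claim for every nonzero $v \in V_{(-\beta)}$.

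For the inductive step, write $V = M \otimes N$ with $M$ highest $\ell$-weight over $Y_{\mu_1}(\Glie)$ and $N$ a shorter tensor product over $Y_{\mu_2}(\Glie)$, and apply the factorization \eqref{rel: factorization T Yangian}. Since the $\gamma>0$ components of the Theta series land in $M_{(-\gamma)} \otimes N_{(\gamma)} = \{0\}$ on the joint top weight vector $v_0^M \otimes v_0^N$ (because positive-weight elements of $Y^{\geq}_{\mu_2}(\Glie)$ kill a highest $\ell$-weight vector), Theta acts as the identity there, giving $f^V_{\Bp}(w) = f^M_{\Bp}(w) f^N_{\Bp}(w)$ and the normalized identity
\[
\overline{T}^V_{\Bp}(w) = (\mathrm{Id}_M \otimes \overline{T}^N_{\Bp}(w)) \circ \Theta^{\mu_1,\mu_2}_{\Bp}(w) \circ (\overline{T}^M_{\Bp}(w) \otimes \mathrm{Id}_N).
\]
By induction the two outer T-factors take each weight component to a polynomial in $w$ of the correct degree, and by Theorem \ref{thm: Yangian poly Theta} each $\gamma$-piece of Theta is a polynomial of degree at most $\langle\zeta,\gamma\rangle$; composing gives the upper bound $\langle\zeta,\beta\rangle$ on any $v \in V_{(-\beta)}$. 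To promote this to equality, decompose $v = \sum v^{(\alpha_M, \alpha_N)} \in \bigoplus_{\alpha_M+\alpha_N=\beta} M_{(-\alpha_M)}\otimes N_{(-\alpha_N)}$ and choose a nonzero summand $v^{(\beta_M,\beta_N)}$ with $\beta_M$ minimal in the root partial order. A $\gamma>0$ piece of Theta sends $M_{(-\alpha_M)} \otimes N_{(-\alpha_N)}$ to $M_{(-\alpha_M-\gamma)} \otimes N_{(-\alpha_N+\gamma)}$, so the $(\beta_M,\beta_N)$-block of $\overline{T}^V_{\Bp}(w) v$ can only receive contributions from source indices $(\beta_M-\gamma, \beta_N+\gamma)$ with $\gamma>0$, and these vanish by minimality. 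Only the $\gamma=0$ piece (the identity) survives in this block, and the base case supplies a nonzero top coefficient $(-1)^{\langle\zeta,\beta\rangle} v^{(\beta_M,\beta_N)}$.

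The module morphism property follows directly from the intertwining equations \eqref{intertwining T}, which are unaffected by the scalar rescaling by $f^V_{\Bp}(w)$ and which precisely encode the actions on $L(\Bp)_w \otimes V$ and $V \otimes L(\Bp)_w$ described by $F^{\mu+\zeta}_{\Bp,w}$ and $G^{\mu+\zeta}_{\Bp,w}$ in Example \ref{example: tensor prefund Yangian}. For the deformed version, each generating series of $Y_{\mu}(\Glie)$ acts on $V_z$ as the corresponding series on $V$ evaluated at $z'-z$ by Definition \ref{defi: deformed module Yangian} together with Eq.\eqref{rel: spectral shift formal Yangian}; substituting $w\mapsto w-z$ in the intertwining equations for $\overline{T}^V_{\Bp}(w)$ therefore yields the analogous identities for $\overline{T}^V_{\Bp}(w-z)$ on $V_z$. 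Part (ii) runs in perfect parallel by applying the same argument to $T^V_{\Bp}(w)^{-1}$, using the second half of Eq.\eqref{comm: S x general} to push $S_{\Bp}(w)^{-1}$ through products of $x^+$-generators acting on a lowest $\ell$-weight vector of each tensor factor. The main obstacle is upgrading the upper bound on the degree to an equality: Theorem \ref{thm: Yangian poly Theta} delivers only an upper bound that can be far from sharp, as Example \ref{example: Yangian Theta sl2} illustrates, so one cannot read off the top-degree coefficient from the Theta factor alone; the minimality-in-weight argument is what isolates the $\gamma=0$ contribution and guarantees non-cancellation.
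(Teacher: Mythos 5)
Your proposal is correct and follows essentially the same route as the paper: the base case pushes $S_{\Bp}(w)$ through the $x^-$-generators via Eq.\eqref{comm: S x general} with the $D_{\Bp,w}^V$-factor cancelling the denominators (so the top $w$-coefficient is $(-1)^{\langle\zeta,\beta\rangle}$ times the input vector), and the inductive step uses the factorization \eqref{rel: factorization T Yangian}, the vanishing of the positive-weight Theta components on the top weight space of $N$ to get $f_{\Bp}^V=f_{\Bp}^Mf_{\Bp}^N$, the degree bounds of Theorem \ref{thm: Yangian poly Theta}, and the identifications of Example \ref{example: tensor prefund Yangian} for the module-morphism and deformed statements. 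Your explicit minimal-weight-block argument for non-cancellation at top degree simply spells out a point the paper's proof treats more tersely.
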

\begin{proof}
(i) Let $\beta_0$ be the top weight of $V$ and $t$ be the number of tensor factors in $V$.  We prove polynomiality of $\overline{T}_{\Bp}^V(z)$ by induction on $n$.

Assume $t = 1$. Choose a nonzero vector $v_0$ of $V_{\beta_0}$. Then $V_{\beta_0-\beta}$ is spanned by the $x_{j_1,m_1}^- x_{j_2,m_2}^- \cdots x_{j_s,m_s}^- v_0$ where $s \in \BN$ and $(j_t, m_t) \in I \times \BN$ for $1\leq t \leq s$ such that
$$ \beta = \alpha_{j_1} + \alpha_{j_2} + \cdots + \alpha_{j_s}. $$
Write $\zeta = \sum_{i\in I} n_i \varpi_i^{\vee}$. Then for $i \in I$ we have a factorization
$$\Bp_i(z) =  (z-a_1)(z-a_2) \cdots (z-a_{n_i}).  $$
 Based on Eq.\eqref{intertwining T} and the definition of $\overline{T}_{\Bp}^V(w)$ we have $\overline{T}_{\Bp}^V(w) v_0 = v_0$ and
$$  \overline{T}_{\Bp}^V(w)\circ x_{i,m}^- = \prod_{k=1}^{n_i} (\sigma_i^- - w - a_k) (x_{i,m}^-)\circ \overline{T}_{\Bp}^V(w). $$ 
At the right-hand side, the first factor is a polynomial of $w$ with coefficients in $Y_{\mu}(\Glie)$ whose dominant term is $(-w)^{n_i} x_{i,m}^-$.
 We obtain as in the proof of \cite[Proposition 5.7(ii)]{HZ} that for $0\neq v \in V_{\beta_0-\beta}$, the Laurent series $\overline{T}_{\Bp}^V(w)v \in V_{\beta_0-\beta}((w^{-1}))$ is a polynomial in $w$ whose dominant term is
 $$ (-w)^{n_{j_1} + n_{j_2} + \cdots + n_{j_s}} v =  (-w)^{\langle \zeta,\beta\rangle} v.  $$ 

Assume $t > 1$. Then $V = M \otimes N$ where $M$ and $N$ are modules over shifted Yangians $Y_{\nu}(\Glie)$ and $Y_{\eta}(\Glie)$ respectively and each module is a tensor product of less than $n$ highest $\ell$-weight modules. The modules $M$ and $N$ being top graded, let $\beta_1$ and $\beta_2$ denote their top weights respectively and equip them with the root grading as in Example \ref{example: motivation root graded Yangian}(i). Then $\beta_0 = \beta_1 + \beta_2$. The induction hypothesis applied to $M$ and $N$, we have polynomiality for the operators 
$\overline{T}_{\Bp}^M(w)$ and $\overline{T}_{\Bp}^N(w)$.

Since $M$ and $N$ are positive root graded, Eq.\eqref{rel: factorization T Yangian} holds true. Applying Eq.\eqref{rel: factorization T Yangian} to $M_{\beta_1} \otimes N_{\beta_2}$ and noticing that the second tensor factor of $\Theta_{\Bp,\beta}^{\nu,\eta}(w)$ for $\beta \in \BQ_>$ lies in $Y_{\eta}(\Glie)_{\beta}$ and annihilates the top weight space $N_{\beta_2}$, we obtain that 
 $$ f_{\Bp}^V(w) = f_{\Bp}^M(w) f_{\Bp}^N(w). $$
 Dividing Eq.\eqref{rel: factorization T Yangian} by the above equation, we get another factorization
$$ \overline{T}_{\Bp}^V(w) = (1 \otimes \overline{T}_{\Bp}^N(w)) \circ \Theta_{\Bp}^{\nu,\eta}(w) \circ (\overline{T}_{\Bp}^M(w) \otimes 1) \in \mathrm{Aut}(V^w).$$

 Let $v_1 \in M$ and $v_2 \in N$ be nonzero vectors of weights $\beta_1 - \gamma_1$ and $\beta_2 - \gamma_2$ respectively, with $\gamma_1, \gamma_2 \in \BQ_+$. By induction hypothesis, $\overline{T}_{\Bp}^M(w) v_1$ is a polynomial of degree $\langle \zeta, \gamma_1\rangle$ with coefficients in $M_{\beta_1-\gamma_1}$ and $\overline{T}_{\Bp}^N(w) v_2$ is a polynomial of degree $\langle \zeta, \gamma_2\rangle$ with coefficients in $N_{\beta_2-\gamma_2}$
 From the proof of Corollary \ref{cor: nontrivial associator Yangian} and the above factorization of $\overline{T}_{\Bp}^V(w)$ we see that the Laurent series
$$ \overline{T}_{\Bp}^V(w)(v_1 \otimes v_2) - \overline{T}_{\Bp}^M(w) v_1 \otimes \overline{T}_{\Bp}^N(w) v_2 $$
 is a finite sum over $\beta \in \BQ_>$ of polynomials with coefficients in $M_{\beta_1-\gamma_1-\beta} \otimes N_{\beta_2-\gamma_2+\beta}$ whose degrees are uniformly bounded by
$$  \langle \zeta, \gamma_2-\beta \rangle +  \langle \zeta, \beta \rangle + \langle \zeta, \gamma_1 \rangle =  \langle \zeta, \gamma_1+\gamma_2 \rangle. $$
 Since  $V_{\beta_0-\gamma}$ is the direct sum of the $M_{\beta_1-\gamma_1} \otimes N_{\beta_2 - \gamma_2}$ for $\gamma_1, \gamma_2 \in \BQ_+$ such that $\gamma = \gamma_1 + \gamma_2$, we get that $\overline{T}_{\Bp}^V(w)v$ is a polynomial of degree $\langle \zeta, \gamma \rangle$ for $0 \neq v \in V_{\beta_0-\gamma}$.

It follows that $\overline{T}_{\Bp}^V(w)$ stabilizes the subspace $V[w]$ of $V^w$. Notice that $V[w]$ is the underlying space of the submodule $L(\Bp)_w \otimes V \subset L(\Bp)_w \tilde{\otimes} V$ and the submodule $V \otimes L(\Bp)_w \subset V \tilde{\otimes} L(\Bp)_w$ in Theorem \ref{thm: Yangian T-series}. We obtain a module morphism from $L(\Bp)_w \otimes V$ to $V \otimes L(\Bp)_w$. From the module identifications in Example \ref{example: tensor prefund Yangian} it follows that $\overline{T}_{\Bp}^V(w-z)$ defines a module morphism from
$L(\Bp)_w \otimes V_z$ to $V_z \otimes L(\Bp)_w$.

The proof of part (ii) is parallel.
\end{proof}

\begin{rem}
(i) Let $\Bp = \Psi_{i,0}$.  If $V$ is a highest $\ell$-weight module, then our $\overline{T}_{\Bp}^V(w)$ equals the operator $R_i^V(w)$ in \cite[Proposition 5.7]{HZ} and 
up to renormalization the abelianized transfer matrix operator $\overline{T}_i(w)|_V$ as shown in \cite[Corollary 4.7]{GW}. 

(ii) If $V$ is a tensor product of finite-dimensional irreducible modules, then it is top graded and bottom graded. We have as linear operators on $V[w]$:
$$ \overline{T}_{\Bp}^V(w) \widetilde{T}_{\Bp}^V(w) = \frac{g_{\Bp}^V(w)}{f_{\Bp}^V(w)} \mathrm{Id}_{V[w]}. $$
\end{rem}
\begin{example}  \label{example: sl2 fund T}
Let $\Glie = sl_2,\ \Bp = \Psi_{1,0}$ and $V = L(\frac{\Psi_{1,-1}}{\Psi_{1,0}})$. Then $V$ is the two-dimensional vector space $\BC e_1 \oplus \BC e_2$ with the action of the generating series of $Y_0(sl_2)$ 
$$ x_1^+(z) = z^{-1} E_{12},  \quad x_1^-(z) = 
z^{-1} E_{21},\quad \xi_1(z) = (1 + z^{-1}) E_{11} + (1 - z^{-1}) E_{22}. $$
Here $E_{ij}$ for $1\leq i, j \leq 2$ denote the elementary matrices in $\mathrm{End} V$.
Fix $a, b \in \BC$. By \cite[Example 5.11]{HZ} we have $\overline{T}_{\Bp}^{V_a}(w) = E_{11} + (a-w) E_{22}$ as a $\BC[w]$-linear operator of $V[w]$. We obtain from Example \ref{example: Yangian Theta sl2} that as $\BC[w]$-linear operators of $V\otimes V[w]$:
\begin{align*}
\Theta_{\Bp}^{0,0}(w)|_{V_a \otimes V_b} & = \exp(E_{21} \otimes E_{12}) = 1 + E_{21} \otimes E_{12}, \\
\overline{T}_{\Bp}^{V_a \otimes V_b}(w) &= (1\otimes \overline{T}_{\Bp}^{V_b}(w)) \circ \Theta_{\Bp}^{0,0}(w)|_{V_a \otimes V_b} \circ (\overline{T}_{\Bp}^{V_a}(w)\otimes 1) \\
&= E_{11} \otimes E_{11} + (a-w)(b-w) E_{22} \otimes E_{22} + (b-w) E_{11} \otimes E_{22} \\
&\qquad + (a-w) E_{22} \otimes E_{11} + E_{21} \otimes E_{12}.
\end{align*}
\end{example}

\subsection{Decomposition of R-matrices}
In this subsection, starting from the polynomial R-matrices of Theorem \ref{thm: poly R Yangian}, and applying the trivial associativity of Theorem \ref{thm: trivial associativity Yangian} and the non-trivial associativity of Corollary \ref{cor: nontrivial associator Yangian}, we obtain more general R-matrices. 

To simplify notations, for $V$ a $Y_{\mu}(\Glie)$-module and $\Bp \in \CL_{\zeta}$ a polynomial $\ell$-weight, let us denote the tensor product modules over $Y_{\mu+\zeta}(\Glie)$ in Example \ref{example: tensor prefund Yangian} as follows:
$$ V_{\Bp,w}^1 := L(\Bp)_w \otimes V, \quad V_{\Bp,w}^2 := V \otimes L(\Bp)_w. $$
Their underlying spaces are $V[w]$, so that $w$ can be evaluated at complex numbers to get two families of $Y_{\mu+\zeta}(\Glie)$-module structures $V_{\Bp,a}^1$ and $V_{\Bp,a}^2$ for $a \in \BC$ on $V$.
\begin{theorem}  \label{thm: T Theta R Yangian}
Let $\Be \in \CL_{\mu}, \Bp \in \CL_{\zeta}$ and $\Bn \in \CL_{-\nu}$ be three $\ell$-weights such that $\Bp$ and $\Bn$ are polynomial and the $Y_{\mu}(\Glie)$-module $M := L(\Be)$ is finite-dimensional. Set $N := L(\frac{1}{\Bn})$. Then we have two $Y_{\mu+\nu+\zeta}(\Glie)$-module morphisms defined by the compositions
\begin{align*}
\check{R}_{M,N}(z) \circ (\widetilde{T}_{\Bp}^M(w-z) \otimes \mathrm{Id}_N) \circ  \Theta_{\Bp}^{\mu,\nu}(w)^{-1}_{M_z \otimes N}: M_z \otimes N_{\Bp,w}^1 \longrightarrow N_{\Bp,w}^1 \otimes M_z, \\
\Theta_{\Bp}^{\nu,\mu}(w)^{-1}_{N\otimes M_z} \circ (\mathrm{Id}_N \otimes \widetilde{T}_{\Bp}^M(w-z)) \circ \check{R}_{M,N}(z): M_z \otimes N_{\Bp,w}^2  \longrightarrow N_{\Bp,w}^2 \otimes M_z.
\end{align*}
\end{theorem}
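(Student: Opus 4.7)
The plan is to realize each of the two compositions as the total morphism of a hexagonal chain of module morphisms, following the diagram of dashed arrows sketched in the introduction with $K = L(\Bp)_w$ taken to be the trivial one-dimensional module. For the second composition I would write
\begin{gather*}
M_z \otimes (N \otimes L(\Bp)_w) \;=\; (M_z \otimes N) \otimes L(\Bp)_w \xrightarrow{\check{R}_{M,N}(z)\,\otimes\,\mathrm{Id}} (N\otimes M_z)\otimes L(\Bp)_w \\
\;=\; N \otimes (M_z \otimes L(\Bp)_w) \xrightarrow{\mathrm{Id}\,\otimes\,\widetilde{T}_{\Bp}^M(w-z)} N \otimes (L(\Bp)_w \otimes M_z) \\
\xrightarrow{\Theta_{\Bp}^{\nu,\mu}(w)^{-1}} (N \otimes L(\Bp)_w) \otimes M_z,
\end{gather*}
where the two equality signs are the identity map, promoted to module isomorphisms by the trivial associativity Theorem \ref{thm: trivial associativity Yangian} applied to the two triples whose third tensor factor is the trivial $Y_{\zeta}(\Glie)$-module $L(\Bp)_w$. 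For the first composition I would mirror this chain by placing $L(\Bp)_w$ at the first tensor slot, so that trivial associativity now applies to triples whose first tensor factor is trivial.

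I would then verify that every non-identity arrow survives the restriction from its relevant formal completion to the ordinary polynomial tensor product. The R-matrix $\check{R}_{M,N}(z)$ is provided by Theorem \ref{thm: poly R Yangian}, whose hypothesis on the inverse of the highest $\ell$-weight of $N$ is exactly the polynomiality of $\Bn$. The T-operator $\widetilde{T}_{\Bp}^M(w-z)$ comes from Proposition \ref{prop: poly T Yangian}(ii): the finite-dimensional irreducible module $M$ is in particular a lowest $\ell$-weight module, so the positive root grading of Example \ref{example: motivation root graded Yangian}(i) applies, and the spectral shift $w - z$ reflects the identifications $M_z \otimes L(\Bp)_w = (M \otimes L(\Bp)_{w-z})_z$ of Example \ref{example: tensor prefund Yangian}. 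The non-trivial associators $\Theta_{\Bp}^{\nu,\mu}(w)^{-1}$ and $\Theta_{\Bp}^{\mu,\nu}(w)^{-1}$ descend from the completed tensor products to the polynomial subspace by Corollary \ref{cor: nontrivial associator Yangian}, whose hypothesis is satisfied since $N$ is positive root graded as a lowest $\ell$-weight module and $M$ is negative root graded as a highest $\ell$-weight module.

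What I expect to be the main difficulty is not conceptual but notational: one must track the two spectral parameters $z$ and $w$ simultaneously across the chain. The R-matrix carries $z$, the Theta series $w$, and the T-operator the combination $w - z$, and the identifications of the identity maps must be made compatible with these parameters by invoking the intertwining property \eqref{intertwining Theta} in the spectral-shifted form obtained via Eq.\eqref{F G spectral Yangian}. Once these identifications are in place, each arrow is a $Y_{\mu+\nu+\zeta}(\Glie)$-module morphism for a definite module structure on the common underlying polynomial space $M \otimes N[z,w]$, and the composition is the asserted $\BC[z,w]$-linear module morphism with source and target as claimed.
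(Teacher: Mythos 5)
Your chain of morphisms is exactly the argument the paper gives: one realizes each composition as a zig-zag in which the two identity maps come from Theorem \ref{thm: trivial associativity Yangian} (with the trivial factor $L(\Bp)_w$ in an outer slot), the non-trivial associator comes from Corollary \ref{cor: nontrivial associator Yangian}, the operator $\widetilde{T}_{\Bp}^M(w-z)$ comes from Proposition \ref{prop: poly T Yangian}(ii) together with the identification $M_z\otimes L(\Bp)_w=(M\otimes L(\Bp)_{w-z})_z$ of Example \ref{example: tensor prefund Yangian}, and $\check{R}_{M,N}(z)$ comes from Theorem \ref{thm: poly R Yangian}. So the structure and the auxiliary results you invoke are the right ones.

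There is, however, one genuinely wrong assertion in your verification of the hypothesis of Corollary \ref{cor: nontrivial associator Yangian}: you claim that ``$N$ is positive root graded as a lowest $\ell$-weight module''. This is false. Since $\Bn$ is polynomial, $N=L(\frac{1}{\Bn})$ is the irreducible \emph{highest} $\ell$-weight module with highest $\ell$-weight $\frac{1}{\Bn}$ (a negative-prefundamental-type module, infinite-dimensional in general); it is top graded, hence \emph{negative} root graded, and it has no bottom weight, so it is not positive root graded. The consequence is that, as written, your check of the corollary's hypothesis for the associator $\Theta_{\Bp}^{\mu,\nu}(w)^{-1}$ acting on $M_z\otimes N$ (the first composition) fails: there one needs either $M_z$ positive root graded or $N$ negative root graded, and your two stated facts (``$N$ positive, $M$ negative'') cover neither disjunct. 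The repair is immediate and is what the paper does: $N$, being a highest $\ell$-weight module, is negative root graded; alternatively one may use that $M$, being finite-dimensional irreducible, is bottom graded and hence positive root graded --- the very grading you already invoked to apply Proposition \ref{prop: poly T Yangian}(ii) to $M$. For the second composition, where $\Theta_{\Bp}^{\nu,\mu}(w)^{-1}$ acts on $N\otimes M_z$, your appeal to ``$M$ negative root graded'' is the correct (and in fact the only available) disjunct, since ``$N$ positive root graded'' is not true. With the attributions of the gradings corrected in this way, your proposal coincides with the paper's proof.
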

\begin{proof}
Recall from Theorem \ref{thm: poly R Yangian} the module morphism $\check{R}_{M,N}(z): M_z \otimes N \longrightarrow N \otimes M_z$. Since $N$ is a highest $\ell$-weight module, by Corollary \ref{cor: nontrivial associator Yangian} the inverse of the Theta series $\Theta_{\Bp}^{\mu,\nu}(w)^{-1}_{M_z \otimes N}$ defines a module isomorphism $M_z \otimes (L(\Bp)_w \otimes N)\longrightarrow (M_z \otimes L(\Bp)_w) \otimes N$.
Applying Theorem \ref{thm: trivial associativity Yangian} to the trivial module $L(\Bp)_w$ and Proposition \ref{prop: poly T Yangian}(ii) to $M$, we get the first module morphism as the composition:
\begin{gather*} 
 \xymatrixcolsep{6pc} \xymatrix{
M_z \otimes (L(\Bp)_w \otimes N) \ar[d]^{\Theta_{\Bp}^{\mu,\nu}(w)^{-1}_{M_z \otimes N} }  & (L(\Bp)_w \otimes N) \otimes M_z  \\
(M_z \otimes L(\Bp)_w) \otimes N \ar[d]^{\widetilde{T}_{\Bp}^M(w-z) \otimes \mathrm{Id}_N}  &     L(\Bp)_w \otimes (N \otimes M_z)  \ar[u]_{\mathrm{Id}}           \\
(L(\Bp)_w \otimes M_z) \otimes N \ar[r]^{\mathrm{Id}} & L(\Bp)_w \otimes (M_z \otimes N) \ar[u]_{\check{R}_{M,N}(z)}
 } 
\end{gather*}
The second module morphism is obtained similarly.
\end{proof}
Theorem \ref{thm: T Theta R Yangian} stays true if $M$ is a tensor product $V_1 \otimes V_2 \otimes \cdots \otimes V_n$ of finite-dimensional irreducible modules over the ordinary Yangian. We have a module morphism $\check{R}_{M,N}(z): M_z \otimes N \longrightarrow N \otimes M_z$ defined in terms of the $\check{R}_{V_i,N}(z)$; see \cite[Proposition 5.3(i)]{HZ}. 

\begin{rem}
(i) The ordinary Yangian processes a universal R-matrix \cite{Dr1}, a formal power series in $Y(\Glie)^{\otimes 2}[[z^{-1}]]$. Its evaluation at a pair $(V, W)$ of $Y(\Glie)$-modules produces a modules morphism from $V_z \otimes  W$ to a suitable completion of $W \otimes V_z$. 
 In Theorem \ref{thm: T Theta R Yangian} if we assume $\mu = 0$ and $\zeta = -\nu$ so that $M, N_{\Bp,w}^1$ and $N_{\Bp,w}^2$ are modules over the ordinary Yangian, then it is natural to expect that the two module morphisms of the theorem are evaluations of the universal R-matrix, a proof of which should be along the line of \cite[\S 3.4]{GRW}. In particular, this would imply that our R-matrices in the ordinary Yangian case satisfy the quantum Yang--Baxter equation when $M$ is a tensor product of finite-dimensional irreducible modules.  Later for the quantum loop algebra we shall establish a similar R-matrix decomposition in terms of T-operators and Theta series for evaluations of the universal R-matrix; see Theorem \ref{thm: decomposition R-matrices}. 
 
 (ii) The R-matrix $\check{R}_{M,N}(z)$ of Theorem \ref{thm: poly R Yangian} was shown in \cite[Proposition 5.3(ii)]{HZ} to satisfy the quantum Yang--Baxter equation when $M$ is a tensor product of finite-dimensional irreducible modules over the ordinary Yangian. Its proof used  the monoidal category of representations of antidominantly shifted Yangians. It does not work in the situation of Theorem \ref{thm: T Theta R Yangian}  because we have to consider a larger category of representations of shifted Yangians which is unknown to be monoidal.
\end{rem}

\begin{rem}  \label{conj: T Theta R Yangian}
Motivated by polynomiality results in Theorem \ref{thm: poly R Borel} for the quantum loop algebra, we expect that the $\BC[z]$-linear map $\check{R}_{M,N}(z)$ of Theorem \ref{thm: poly R Yangian} is invertible over $\BC(z)$ and its inverse multiplied by a nonzero polynomial restricts to a module morphism $\mathcal{F}(z): N \otimes M_z \longrightarrow M_z \otimes N$. This would lead to module morphisms  
\begin{gather*}
 \Theta_{\Bp}^{\mu,\nu}(w)_{M_z \otimes N} \circ (\overline{T}_{\Bp}^M(w-z)\otimes \mathrm{Id}_N) \circ \mathcal{F}(z): N_{\Bp,w}^1 \otimes M_z\longrightarrow M_z \otimes N_{\Bp,w}^1,   \\
 \mathcal{F}(z) \circ (\mathrm{Id}_N \otimes \overline{T}_{\Bp}^M(w-z)) \circ \Theta_{\Bp}^{\nu,\mu}(w)_{N\otimes M_z}: N_{\Bp,w}^2 \otimes M_z \longrightarrow M_z \otimes N_{\Bp,w}^2.
\end{gather*}
\end{rem}
\begin{example}
Let $\Glie = sl_2,\ \Bp = \Psi_{1,0},\ N = L(\frac{1}{\Psi_{1,0}})$ and $M = L(\frac{\Psi_{1,-1}}{\Psi_{1,0}})$. We have constructed explicitly the two-dimensional module $M$ in Example \ref{example: sl2 fund T}. Fix a highest $\ell$-weight vector $v_0$ of $N$ and define $v_n := \frac{1}{n!} (x_{1,0}^-)^n v_0$ for $n \in \BN$. Then $(v_n)_{n\in \BN}$ forms a basis of $N$. We refer to \cite[Example 3.11]{HZ} for the action of the generating series of $Y_{-\varpi_1^{\vee}}(sl_2)$ in terms of this basis. In particular,
$$ x_{1,0}^+ v_{n+1} = v_n,\quad x_{1,0}^- v_n = (n+1) v_{n+1} \quad \mathrm{for}\ n \in \BN. $$
 Consider the R-matrix $M_z \otimes N_{\Bp,w}^2 \longrightarrow N_{\Bp,w}^2 \otimes M_z$ of Theorem \ref{thm: T Theta R Yangian}. All the factors being $\BC[z,w]$-linear, with respect to the basis $(e_1, e_2)$ of $M$, they are square matrices whose entries are $\BC[z,w]$-linear operators on $N[z,w]$. By Example \ref{example: sl2 fund T}:
$$ \mathrm{Id}_N \otimes \widetilde{T}_{\Bp}^M(w-z) = \begin{pmatrix}
z-w & 0 \\
0 & 1
\end{pmatrix}. $$ 
As a rewriting of \cite[Example 7.5]{HZ} with $\mathbf{a}^{\pm}$ therein identified with $x_{1,0}^{\mp}$, we have
$$ \check{R}_{M,N}(z) = \begin{pmatrix}
1 & x_{1,0}^- \\
x_{1,0}^+ & z + x_{1,0}^-x_{1,0}^+
\end{pmatrix} = \begin{pmatrix}
1 & \sum\limits_{n\geq 0} (n+1)  E_{n+1,n} \\
\sum\limits_{n>0} E_{n-1,n} & \sum\limits_{n\geq 0} (z+n) E_{nn}
\end{pmatrix}.  $$
By Example \ref{example: Yangian Theta sl2}, we have $\Theta_{\Bp}^{-\varpi_1^{\vee}, 0}(w)^{-1} = \exp(-x_{1,0}^- \otimes x_{1,0}^+)$. Its action on $N \otimes M_z$ is given by $1 - x_{1,0}^- \otimes x_{1,0}^+$. Since $x_{1,0}^+ e_2 = e_1$ in the deformed module $M_z$, we have 
$$\Theta_{\Bp}^{-\varpi_1^{\vee},0}(w)^{-1}|_{N \otimes M_z} = \begin{pmatrix}
1 &  -x_{1,0}^- \\
0 & 1
\end{pmatrix}.  $$
The R-matrix from $M_z \otimes N_{\Bp,w}^2 \longrightarrow N_{\Bp,w}^2 \otimes M_z$ is then given by
\begin{gather*}
\begin{pmatrix}
1 &  -x_{1,0}^- \\
0 & 1
\end{pmatrix}  \begin{pmatrix}
z-w & 0 \\
0 & 1
\end{pmatrix}  \begin{pmatrix}
1 & x_{1,0}^- \\
x_{1,0}^+ & z + x_{1,0}^-x_{1,0}^+
\end{pmatrix} = \begin{pmatrix}
z-w &  -x_{1,0}^- \\
0 & 1
\end{pmatrix}  \begin{pmatrix}
1 & x_{1,0}^- \\
x_{1,0}^+ & z + x_{1,0}^-x_{1,0}^+
\end{pmatrix} \\
= \begin{pmatrix}
\sum\limits_{n\geq 0} (z-w-n)E_{nn} & -\sum\limits_{n\geq 0} (n+1) (w+n) E_{n+1,n} \\
\sum\limits_{n>0} E_{n-1,n} & \sum\limits_{n\geq 0} (z+n) E_{nn}
\end{pmatrix}.
\end{gather*}
If we specialize $w$ to a complex number $\ell \in \BC$, then the resulting matrix is the monodromy matrix $T(z)$ associated to the asymptotic module $\mathcal{W}^{-\ell}$ of \cite[Example A.2]{FZ} by identifying the basis vector $w_n \in \mathcal{W}^{-\ell}$ therein with our $n! v_n \in N$ for $n \in \BN$. A direct computation shows that $\check{R}_{M,N}(z)$ is invertible over $\BC(z)$ with inverse given by
\begin{gather*}
 \check{R}_{M,N}(z)^{-1} = \frac{1}{z-1} \begin{pmatrix}
 \sum\limits_{n\geq 0}(z+n-1)E_{nn} & -\sum\limits_{n\geq 0} (n+1) E_{n+1,n} \\
-\sum\limits_{n>0}E_{n-1,n} & 1
 \end{pmatrix}.
\end{gather*}
As expected in Remark \ref{conj: T Theta R Yangian}, $(z-1) \check{R}_{M,N}(z)^{-1}$ is polynomial.
\end{example}
\subsection{Lowest diagonal entry} In this subsection as an application of Theta series we compute a particular diagonal entry of the R-matrices of Theorem \ref{thm: poly R Yangian}, improving a result in our previous work \cite{HZ}. 
\begin{defi} \cite[Definition 5.4]{HZ}  \label{def: lowest diagonal entry}
Let $V$ be a finite-dimensional irreducible module over $Y_{\mu}(\Glie)$, and $W$ be an irreducible module over $Y_{\nu}(\Glie)$ of highest $\ell$-weight $\frac{1}{\Bp}$ such that $\Bp$ is polynomial. Let $\beta_0$ be the bottom weight of $V$ and choose a nonzero vector $v_0 \in V$ of weight $\beta_0$.  The {\it lowest diagonal entry} of $\check{R}_{V,W}(z)$ is defined to be the $\BC[z]$-linear operator $t_{V,W}(z)$ on $W[z]$ such that for $\omega \in W$ we have
$$ \check{R}_{V,W}(z)(v_0 \otimes \omega) \equiv t_{V,W}(z) \omega \otimes v_0 \ \mathrm{mod.} \sum_{\gamma \in \BQ_>} W \otimes V_{\beta_0+\gamma}[z].  $$
Define the polynomial $\lambda_{V,W}(z) \in \BC[z]$ to be the eigenvalue of $t_{V,W}(z)$ associated to the top weight space of $W$.
\end{defi}

By \cite[Theorem 7.3]{HZ}  the polynomial $\lambda_{V,W}(z)$ is always monic. In \cite[Theorem 7.4]{HZ} we have expressed the lowest diagonal entry $t_{V,W}(z)$ in terms of ratios of the commuting family of operators $\overline{T}_{\Bp}^W(z)$ of Proposition \ref{prop: poly T Yangian}, under the technical assumption that $V$ is defined over the ordinary Yangian. The following result drops this technical assumption. Its proof is more conceptual and relies on Theta series.

\begin{theorem}
Let $\Bp \in \CL_{-\nu}$ be a polynomial $\ell$-weight and set $W := L(\frac{1}{\Bp})$. Let $V$ be a finite-dimensional irreducible $Y_{\mu}(\Glie)$-module whose lowest $\ell$-weight is the ratio $\frac{\Bm}{\Bn}$ of two polynomial $\ell$-weights $\Bm$ and $\Bn$. 
Then as $\BC[z]$-linear operators on $W[z]$ we have:
$$ t_{V,W}(z) \circ \overline{T}_{\Bn}^W(z) = \lambda_{V,W}(z) \overline{T}_{\Bm}^W(z). $$
\end{theorem}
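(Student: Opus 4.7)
The plan is to reduce the claimed equality, an identity of $\BC[z]$-linear operators on $W[z]$, to a verification on the highest $\ell$-weight generator of $W$ by showing both sides obey the same first-order intertwining with the $Y_{\nu}(\Glie)$-action on $W$.

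First I would introduce the composite operator $U(z) := \overline{T}_{\Bm}^W(z) \circ \overline{T}_{\Bn}^W(z)^{-1}$, well-defined as an automorphism of a suitable completion of $W[z]$ and with $U(z)\omega_h = \omega_h$ on a highest $\ell$-weight vector $\omega_h$ of $W$ by the normalization of $\overline{T}_{\Bp}^W(z)$ on top weight spaces. Using the intertwining property \eqref{intertwining T}, the commutation relations \eqref{comm: S x general}, and the multiplicativity of T-operators (in the spirit of Proposition \ref{prop: multip}(iii)), I would verify that $U(z)$ commutes with $\xi_i(u)$ acting on $W$ and satisfies $U(z)\circ x_i^-(u) = \frac{\Bm_i(u-z)}{\Bn_i(u-z)}\, x_i^-(u)\circ U(z)$ (principal parts in $u$), exactly matching the $\xi$-eigenvalue $\Bm/\Bn$ of $v_0$ in the deformed module $V_z$.

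Second I would show that $t_{V,W}(z)$ obeys the \emph{same} intertwining. The argument proceeds from the module-morphism identity $\check{R}_{V,W}(z) \circ \Delta_{\mu,\nu}(x_{i,m}^-) = \Delta^{\mathrm{op}}_{\nu,\mu}(x_{i,m}^-) \circ \check{R}_{V,W}(z)$, applied to $v_0 \otimes \omega$ and projected onto the $W \otimes v_0$ component. By Lemma \ref{lem: Yangian coproduct estimation} combined with the lowest $\ell$-weight properties of $v_0$ (annihilated by all $x_j^-$, with $\xi$-eigenvalue $\Bm/\Bn$), the left-hand projection collapses to $t_{V,W}(z)\, x_{i,m}^-\omega$; the right-hand projection yields a principal term $x_{i,m}^-\, t_{V,W}(z)\omega$ plus corrections coming from the higher $V$-weight components $\omega_\gamma \otimes v_\gamma$ of $\check{R}_{V,W}(z)(v_0\otimes\omega)$. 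A recursive / telescoping analysis, using an induction on the height of the $V$-weight $\gamma$ and the intertwining with $\Delta(\xi_i(u))$ to link consecutive off-diagonal entries, shows that these corrections collectively reproduce precisely the factor $\frac{\Bm_i(u-z)}{\Bn_i(u-z)}$.

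Third, since both $\lambda_{V,W}(z)^{-1}\,t_{V,W}(z)$ and $U(z)$ satisfy the same intertwining with the $x_i^-$-action on $W$ and agree on $\omega_h$ (by Definition \ref{def: lowest diagonal entry} for the former and by normalization for the latter), they agree on the whole module $W$, which is generated from $\omega_h$ by the $x_i^-$'s. Multiplying through by $\overline{T}_{\Bn}^W(z)$ gives the claimed identity, and both sides are already $\BC[z]$-linear on $W[z]$ by Proposition \ref{prop: poly T Yangian}.

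The principal obstacle is controlling the higher-$V$-weight corrections in the second step: the off-diagonal entries $t_{V,W}^{(\gamma)}(z)$ of $\check{R}_{V,W}(z)(v_0 \otimes -)$ do not individually admit clean closed forms, and one must show that \emph{their aggregate effect} under $\Delta^{\mathrm{op}}(x_{i,m}^-)$ assembles to the rational function $\Bm_i/\Bn_i$. This telescoping is essentially the content of \cite[Theorem 7.4]{HZ} in the unshifted case; the extension to shifted Yangians amounts to tracking the spectral shift coming from $V_z$, the shifted grading of $Y_\mu^\leq(\Glie)$, and using that the coproduct estimations of Lemma \ref{lem: Yangian coproduct estimation} are uniform in the coweights $\mu,\nu$.
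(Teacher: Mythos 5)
Your overall strategy (show the normalized lowest diagonal entry and the ratio of T-operators satisfy the same intertwining with the $x_i^-$-action and agree on the highest $\ell$-weight vector, then use that $W$ is generated from it) is a plausible uniqueness scheme, but its core step is not actually proved. The entire difficulty of the statement sits in your second step: when you project $\check{R}_{V,W}(z)\circ\Delta_{\mu,\nu}(x_{i,m}^-)(v_0\otimes\omega)$ onto the $W\otimes v_0$ component, the contributions of the off-diagonal entries $\omega_\gamma\otimes v_\gamma$ ($\gamma\in\BQ_>$) are exactly the terms for which no closed form is available, and you only assert that a ``recursive / telescoping analysis'' makes them assemble into the factor $\Bm_i(u-z)/\Bn_i(u-z)$. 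Appealing to \cite[Theorem 7.4]{HZ} for this is circular with respect to the purpose of the theorem: that result is proved only under the assumption that $V$ is a module over the ordinary (unshifted) Yangian, which is precisely the hypothesis this appendix is designed to remove, and even in that case its proof hinges on a cancellation ($\tilde{E}_a=0$) that the paper explicitly describes as ``miraculous'' and that the present, different argument is meant to explain. Nothing in your proposal supplies the mechanism for this cancellation when $\mu\neq 0$; the coproduct estimates of Lemma \ref{lem: Yangian coproduct estimation} being ``uniform in the coweights'' does not by itself control the aggregate of the higher-weight corrections. There is also a technical wrinkle in your first step: from the intertwining relations one only gets $U(z)\circ\langle\Bn_i(u-z)x_i^-(u)\rangle_+=\langle\Bm_i(u-z)x_i^-(u)\rangle_+\circ U(z)$, and extracting from this a relation with the rational factor $\Bm_i/\Bn_i$ requires care about in which completion the inverse $\overline{T}_{\Bn}^W(z)^{-1}$ and the expansion of the ratio live.

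For contrast, the paper avoids computing any correction terms. It evaluates at generic $a$, realizes $L(\Bm)_a$ as the one-dimensional submodule $\mathbf{1}\otimes v_0\subset L(\Bn)_a\otimes V_a$, and forms a composite module morphism $G$ out of trivial associativity (Theorem \ref{thm: trivial associativity Yangian}), $\check{R}_{V,W}(a)$, $\overline{T}_{\Bn}^W(a)\otimes\mathrm{Id}$, and the Theta-series associator of Corollary \ref{cor: nontrivial associator Yangian}. A q-character genericity argument forces $G$ to map $(\mathbf{1}\otimes v_0)\otimes W$ into $W\otimes(\mathbf{1}\otimes v_0)$; the uni-triangularity of the associator then identifies the restriction with $\overline{T}_{\Bn}^W(a)\circ t_{V,W}(a)$, and uniqueness of morphisms out of the highest $\ell$-weight module $L(\Bm)_a\otimes W$ pins it down as $\lambda_{V,W}(a)\overline{T}_{\Bm}^W(a)$. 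If you want to complete your route instead, you would have to prove the intertwining property of $t_{V,W}(z)$ for shifted $\mu$ from scratch, which amounts to redoing the hard part of \cite[Theorem 7.4]{HZ} in greater generality rather than citing it.
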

\begin{proof}
  It suffices to prove the identity with $z$ evaluated at generic complex numbers $a \in \BC$. 
By Proposition \ref{prop: poly T Yangian} we have module morphisms 
$$ \overline{T}_{\Bn}^W(a): L(\Bn)_a \otimes W \longrightarrow W \otimes L(\Bn)_a, \quad \overline{T}_{\Bm}^W(a) : L(\Bm)_a \otimes W \longrightarrow W \otimes L(\Bm)_a. $$
Fix a lowest $\ell$-weight vector $v_0$ of $V_a$, a highest $\ell$-weight vector $\omega_0$ of $W$, a nonzero vector $\mathbf{1}$ of $L(\Bn)_a$.
Observe from Eq.\eqref{equ: F G Yangian} that in the tensor product module $L(\Bn)_a \otimes V_a$ the vector $\mathbf{1} \otimes v_0$ spans a one-dimensional submodule which can be identified with $L(\Bm)_a$.
We assume that $a$ satisfies the following condition
\begin{itemize}
\item[(i)] in the triple tensor product module $W \otimes (L(\Bn)_a \otimes V_a)$ all vectors of highest $\ell$-weight $\Bp^{-1} \tau_a(\Bm)$ are proportional to $\omega_0 \otimes (\mathbf{1} \otimes v_0)$.
\end{itemize} 
This condition is generic by the q-character theory. (Indeed, let $A_{i,b} \in \CL_0$ for $i \in I$ and $b \in \BC$ be the generalized simple roots defined in \cite[(3.23)]{HZ}. By \cite[Proposition 5.8]{HZ}, there exists a countable subset $\Gamma_W$ of $\BC$ and a finite subset $\Gamma_V$ of $\BC$ such that the normalized q-character of $W$ is a power series of the $A_{i,b}^{-1}$ for $(i, b) \in I \times \Gamma_W$ and the normalized q-character of $V$ is a polynomial of the $A_{i,c}^{-1}$ for $(i,c) \in I \times \Gamma_V$. Assume 
$$ a  \notin \Gamma_W - \Gamma_V := \{b-c\ |\ b \in \Gamma_W,\ c \in \Gamma_V\}. $$
Then the $\ell$-weight space of $W \otimes (L(\Bn)_a \otimes V_a)$ of $\ell$-weight $\Bp^{-1} \tau_a(\Bm)$ is one-dimensional and spanned by the highest $\ell$-weight vector $\omega_0 \otimes (\mathbf{1} \otimes v_0)$, so Condition (i) is fulfilled.)
 Our goal is to prove the following identity of linear operators on $W$:
$$ \overline{T}_{\Bn}^W(a) \circ t_{V,W}(a) = \lambda_{V,W}(a) \overline{T}_{\Bm}^W(a) \in \mathrm{End}(W). $$

 Equip $V_a$ with the positive root grading and $W$ with the negative root grading as in Example \ref{example: motivation root graded Yangian}(i). By Corollary \ref{cor: nontrivial associator Yangian}, we have a module isomorphism 
$$F: (W \otimes (L(\Bn)_a) \otimes V_a \longrightarrow W \otimes (L(\Bn)_a \otimes V_a)  $$
which maps $(\omega \otimes \mathbf{1}) \otimes v$, for $\omega \in W_{(\alpha)}$ and $v \in (V_a)_{(\beta)}$, to $\omega \otimes (\mathbf{1} \otimes v)$ plus a linear combination of vectors in $W_{(\alpha-\gamma)} \otimes (\mathbf{1} \otimes (V_a)_{(\beta+\gamma)})$ for $\gamma \in \BQ_>$. Consider the following diagram of linear maps where $G$ is the composition of the other maps
\begin{gather*} 
 \xymatrixcolsep{4pc} \xymatrix{
(L(\Bn)_a \otimes V_a) \otimes W \ar[d]_{\mathrm{Id} } \ar@{-->}[r]^G  & W \otimes (L(\Bn)_a \otimes V_a) & (W \otimes L(\Bn)_a) \otimes V_a  \ar[l]_F  \\
L(\Bn)_a \otimes (V_a \otimes W) \ar[r]^{\check{R}_{V,W}(a)}  &   L(\Bn)_a \otimes (W\otimes V_a) \ar[r]^{\mathrm{Id}}          & (L(\Bn)_a \otimes W) \otimes V_a \ar[u]^{\overline{T}_{\Bn}^W(a) \otimes \mathrm{Id}_{V_a} }  } 
\end{gather*}
Since the two identity maps are module morphisms by Theorem \ref{thm: trivial associativity Yangian}, this is a commutative diagram of module isomorphisms. Since $(\mathbf{1} \otimes v_0) \otimes \omega_0$ is a vector of highest $\ell$-weight $\Bp^{-1}\tau_a(\Bm)$, so is its image by $G$ and from Condition (i) we obtain
$$ G((\mathbf{1} \otimes v_0) \otimes \omega_0) \in \BC \omega_0 \otimes (\mathbf{1}\otimes v_0). $$
 The right-hand side is included in the submodule $W \otimes (\mathbf{1} \otimes v_0)$ of the codomain of $G$. 
From \cite[Theorem 4.8]{HZ} or Eq.\eqref{equ: F G Yangian} observe that $(\mathbf{1} \otimes v_0) \otimes \omega_0$ generates the submodule $(\mathbf{1} \otimes v_0) \otimes W$ of the domain of $G$. It follows that 
$$ G((\mathbf{1} \otimes v_0) \otimes W) \subset W \otimes (\mathbf{1}\otimes v_0). $$
By definition of the lowest diagonal entry and the uni-triangular property of $F$, for $\beta \in \BQ_-$ and $\omega \in W_{(\beta)}$, the vector $G((\mathbf{1} \otimes v_0) \otimes \omega)$ is $\overline{T}_{\Bn}^W(a) (t_{V,W}(a) \omega) \otimes (\mathbf{1} \otimes v_0)$ plus a linear combination of vectors in $W_{(\beta-\gamma)} \otimes (\mathbf{1} \otimes V_{(\gamma)})$ for $\gamma \in \BQ_>$. Together with the relation $G((\mathbf{1} \otimes v_0) \otimes \omega) \in W \otimes (\mathbf{1} \otimes V_{(0)})$ we must have
$$ G((\mathbf{1} \otimes v_0) \otimes \omega) = \overline{T}_{\Bn}^W(a) ( t_{V,W}(a) \omega)  \otimes (\mathbf{1}\otimes v_0) \quad \mathrm{for}\ \omega \in W. $$
Under the identifications $W \otimes L(\Bm)_a = W \otimes (\mathbf{1} \otimes v_0)$ and $L(\Bm)_a \otimes W = (\mathbf{1} \otimes v_0) \otimes W$, the module morphism $G$ restricts to a module morphism
$$ \overline{T}_{\Bn}^W(a) \circ t_{V,W}(a): L(\Bm)_a \otimes W \longrightarrow W \otimes L(\Bm)_a $$
which as a linear operator on $W$ sends $\omega_0$ to $\lambda_{V,W}(a) \omega_0$. Such a module morphism is unique and equal to $\lambda_{V,W}(a) \overline{T}_{\Bm}^W(a)$ because the tensor product module $L(\Bm)_a \otimes W$ is generated by the highest $\ell$-weight vector $\mathbf{1} \otimes \omega_0$.
\end{proof}
In the proof, the genericity condition (i) implies $G(L(\Bm)_a \otimes W) \subset W \otimes L(\Bm)_a$. Since $G$ is polynomial in $a$, the inclusion holds true for all $a \in \BC$. This should explain the miraculous cancellation $\tilde{E}_a = 0$ at Step 3 of the proof of \cite[Theorem 7.4]{HZ}.

\section{Generalities on shifted quantum affine algebras}  \label{sec: quantum}
In this section we recall basic algebraic properties of shifted quantum affine algebras and their representation theory. 

Fix $q \in \BC^{\times}$ which is not a root of unity. For $t \in \BZ$ and $n \in \BN$ set
$$   (t)_q := \frac{q^{2t}-1}{q^2-1},\quad [t]_q := \frac{q^t-q^{-t}}{q-q^{-1}}, \quad (n)_q! := \prod_{m=1}^n (m)_q,\quad \dstirling{t}{n}_q := \prod_{m=1}^n [t-m+1]_q.  $$
Recall $I = \{1,2,\cdots, r\}$ and the symmetrized Cartan matrix $(b_{ij})_{i,j\in I}$.
 Define the symmetric quantum Cartan matrix to be $B(q) := ([b_{ij}]_q)_{i,j\in I}$. It is invertible because $q$ is generic, and its inverse is denoted by $\widetilde{B}(q) = (\widetilde{B}_{ij}(q))_{i,j\in I}$. 
 
Let $\theta = \sum_{i=1}^r a_i \alpha_i \in \BQ$ be the highest positive root of $\mathfrak{g}$. 
We enlarge the finite-type Cartan matrix $(c_{ij})_{1\leq i,j \leq r}$ to an affine-type Cartan matrix $(c_{ij})_{0\leq i,j\leq r}$ as follows:
$$c_{00} := 2, \quad c_{i0} = -\frac{2(\alpha_i, \theta)}{(\alpha_i, \alpha_i)},\quad  c_{0i} := -\frac{2(\theta, \alpha_i)}{(\theta, \theta)}\quad \mathrm{for}\ 1 \leq i \leq r. $$
The Kac--Moody algebra associated to the affine Cartan matrix $(c_{ij})_{0\leq i, j \leq r}$ without derivation operator is denoted by $\Gaff$. Its root lattice is $\BZ \delta \oplus \BQ$ where $\delta$ is the standard imaginary root. Its simple roots are $(\alpha_i)_{0\leq i \leq r}$ with $\alpha_0 = \delta-\theta$. As a Lie algebra it is a central extension of the loop algebra $L \Glie := \Glie \otimes \BC[t,t^{-1}]$. 

Set $q_i := q^{d_i}$ for $1\leq i \leq r$ and $q_0 := q^{d_0}$ where $d_0 := \frac{(\theta,\theta)}{2}$.
\subsection{The quantum loop algebra}
The {\it quantum loop algebra} $U_q(L\mathfrak{g})$ is the associative algebra generated by $e_i, f_i, k_i^{\pm 1}$ for $0 \leq i \leq r$ and with relations for $0 \leq i, j \leq r$:
\begin{gather*}
 k_ik_i^{-1} = 1 = k_i^{-1}k_i,\quad k_i k_j = k_j k_i, \quad k_0 k_1^{a_1} k_2^{a_2} \cdots k_r^{a_r} = 1, \\
k_i e_j = q_i^{c_{ij}} e_j k_i,\quad k_i f_j = q_i^{-c_{ij}} f_j k_i, \quad [e_i, f_j] = \delta_{ij} \frac{k_i - k_i^{-1}}{q_i-q_i^{-1}}, \\
\sum_{s=0}^{1-c_{ij}} (-1)^s \dstirling{1-c_{ij}}{s}_{q_i} x_i^{1-c_{ij}-s} x_j x_i^s = 0  \quad \textrm{if $i \neq j$ and $x \in \{e, f\}$}.
\end{gather*}
The algebra $\qaf$ has a Hopf algebra structure with coproduct given by:
\begin{equation}  \label{def: coproduct DJ}
\Delta(e_i) = e_i \otimes 1 + k_i \otimes e_i,\quad \Delta(f_i) = 1 \otimes f_i + f_i \otimes k_i^{-1},\quad \Delta(k_i) = k_i \otimes k_i.
\end{equation}
It contains two Hopf subalgebras, the {\it upper Borel subalgebra} $\Borel$  generated by $e_i, k_i$ for $0 \leq i \leq r$ and the {\it lower Borel subalgebra} $U_q(\mathfrak{c})$ generated by $f_i, k_i$ for $0 \leq i \leq r$.

We occasionally write the upper Borel subalgebra $\Borel$ as $U_q(\mathfrak{b}^+)$ and the lower Borel subalgebra $U_q(\mathfrak{c})$ as $U_q(\mathfrak{b}^-)$ to unify results about these two algebras.
\begin{rem}  \label{rem: affine}
In the above definition if we drop the relation $k_0 k_1^{a_1} k_2^{a_2} \cdots k_r^{a_r} = 1$, then we obtain the quantum affine algebra $U_q(\Gaff)$, the Drinfeld--Jimbo quantum group associated to the affine Kac--Moody algebra $\Gaff$.
\end{rem}

With respect to the conjugate action of the $k_i$, the Hopf algebra $\qaf$ is graded by the root lattice $\BQ$: an element $x \in \qaf$ is of weight $\beta \in \BQ$, and we write $\wt(x) = \beta$, if $k_i x k_i^{-1} = q^{(\alpha_i,\beta)} x$ for $1\leq i \leq r$. In terms of generators, we have
\begin{gather*}
 \wt(e_i) = \alpha_i, \quad  \wt(f_i) = -\alpha_i,\quad \wt(k_i) = 0 \quad \mathrm{for}\ 1\leq i \leq r; \\
 \wt(e_0) = -\theta,\quad \wt(f_0) = \theta,\quad \quad \wt(k_0) = 0.
\end{gather*}
The Hopf algebra $\qaf$ is also $\BZ$-graded by setting 
$$ \deg(e_i) = \delta_{0i},\quad \deg(f_i) = -\delta_{0i},\quad \deg(k_i) = 0 \quad \mathrm{for}\ 0 \leq i \leq r. $$
 It induces a $\BC[w,w^{-1}]$-algebra automorphism $\tau_w$ of $\qaf[w,w^{-1}]$ such that (we choose $w^{-m}$ below instead of commonly used $w^m$ to simplify the formulas)
 \begin{equation} \label{def: spectral parameter}
  \tau_w(x) = w^{-m} x\quad \textrm{for $x \in \qaf$ of degree $m$}.
 \end{equation}
Specializing $w$ to nonzero complex numbers, we obtain a one-parameter family of Hopf algebra automorphisms $\tau_a$ of $\qaf$ for $a \in \BC^{\times}$ satisfying $\tau_a \circ \tau_b = \tau_{ab}$. Notice that both the weight grading and the $\BZ$-grading descend to Borel subalgebras. 
For $\beta \in \BQ$, let $\qaf_{\beta}$ denote the subspace of elements in $\qaf$ of weight $\beta$. The subspaces $U_q(\mathfrak{b}^{\pm})_{\beta}$ are defined similarly.

We shall need Drinfeld's description of the square of the antipode $S^2$ as an algebra automorphism of $\qaf$. For this purpose, let $\rho \in \frac{1}{2} \BQ$ be half the sum of positive roots of $\Glie$. It is the element $\rho \in \Hlie^*$ characterized by
$ (\rho, \alpha_i) = d_i$ for $1\leq i \leq r$.
As in \cite[\S 3.5]{GW}, let $h^{\vee}$ denote the dual Coxeter number of $\Glie$ and set 
$$\kappa := \frac{1}{2} h^{\vee} \max(d_1, d_2, \cdots, d_r) = \frac{1}{4} (\theta, \theta + 2\rho). $$
Then the square of the antipode $S^2$ is computed as follows \cite[(1.6)]{FM}:
 \begin{equation}  \label{equ: antipode}
 S^2(x) = q^{-(2\rho, \beta) - 4 \kappa m} x \quad \textrm{for $x \in \qaf$ of weight $\beta$ and of degree $m$}.
 \end{equation}

\subsection{Drinfeld loop realization}   \label{ss: Beck isomorphism}
The quantum loop algebra $\qaf$ has a second presentation by Drinfeld generators $x_{i,m}^{\pm}, \phi_{i,m}^{\pm}$ for $(i,m) \in I \times \BZ$ subject to the following relations for $(i,j,m,n) \in I^2 \times \BZ^2$ and $\varepsilon \in \{+, -\}$:
\begin{gather} 
\phi_{i,m}^+ = 0 \ \mathrm{if}\ m < 0,\quad \phi_{i,m}^- = 0 \ \mathrm{if}\ m > 0, \quad \phi_{i,0}^+ \phi_{i,0}^- = 1,  \label{Drinfeld rel: initial}  \\
[\phi_{i,m}^{\pm}, \phi_{j,n}^{\varepsilon}] = 0, \quad [x_{i,m}^+, x_{j,n}^-] = \delta_{ij} \frac{\phi_{i,m+n}^+-\phi_{i,m+n}^-}{q_i-q_i^{-1}}, \label{Drinfel rel: Cartan}  \\
\phi_{i,m+1}^{\varepsilon} x_{j,n}^{\pm} -q^{\pm b_{ij}} \phi_{i,m}^{\varepsilon} x_{j,n+1}^{\pm} = q^{\pm b_{ij}} x_{j,n}^{\pm} \phi_{i,m+1}^{\varepsilon} - x_{j,n+1}^{\pm} \phi_{i,m}^{\varepsilon},  \label{Drinfeld rel: Drinfeld-Cartan} \\
x_{i,m+1}^{\pm} x_{j,n}^{\pm} -q^{\pm b_{ij}} x_{i,m}^{\pm} x_{j,n+1}^{\pm} = q^{\pm b_{ij}} x_{j,n}^{\pm} x_{i,m+1}^{\pm} - x_{j,n+1}^{\pm} x_{i,m}^{\pm},  \label{Drinfeld rel: Drinfeld} \\
\sum_{s=0}^{1-c_{ij}} (-1)^s \dstirling{1-c_{ij}}{s}_{q_i}(x_{i,0}^{\pm})^{1-c_{ij}-s} x_{j,0}^{\pm} (x_{i,0}^{\pm})^s = 0 \quad \mathrm{if}\ i \neq j. \label{Drinfeld rel: Serre}
\end{gather}

For $i \in I$ and $y \in \{x^{\pm}, \phi^{\pm}\}$ let us define the generating series $y_i(z) := \sum_{m\in \BZ} y_{i,m} z^m$. 
Since the series $\phi_i^{\pm}(z)$ mutually commute by Eq.\eqref{Drinfel rel: Cartan}, we define Drinfeld--Cartan elements $h_{i,s}$ for $i \in I$ and $s \in \BZ_{\neq 0}$ by (we follow the convention of \cite{FR1})
\begin{equation}   \label{def: h}
\phi_i^{\pm}(z) = \phi_{i,0}^{\pm} \exp(\pm (q-q^{-1}) \sum_{\pm s>0} h_{i,s} z^s).
\end{equation}
The $h_{i,s}$ mutually commute and we have the following relation from Eq.\eqref{Drinfeld rel: Drinfeld-Cartan}
\begin{equation} \label{Drinfeld rel: h x}
[h_{i,s}, x_{j,m}^{\pm}] = \frac{[b_{ij}s]_q}{s} x_{j,m+s}^{\pm} \quad \mathrm{for}\ (i,j,s,m) \in I^2 \times \BZ_{\neq 0} \times \BZ.
\end{equation}

The quantum loop algebra $\qaf$ admits a {\it triangular decomposition}. Let us define three subalgebras by generating subsets. The first $U_q^+(L\Glie)$ is generated by all the  $x_{i,m}^+$, the second $U_q^-(L\Glie)$ generated by the $x_{i,m}^-$, and the third $U_q^0(L\Glie)$ by the $\phi_{i,m}^{\pm}$. Then the multiplication map induces a vector space isomorphism
\begin{equation} \label{Drinfeld rel: triangular decomposition}
U_q^-(L\Glie) \otimes U_q^0(L\Glie) \otimes U_q^+(L\Glie) \longrightarrow \qaf,\quad x \otimes y \otimes z \mapsto xyz.
\end{equation}
Furthermore, the three subalgebras inherit the Drinfeld loop realization from $\qaf$. (To present the subalgebras $U_q^-(L\Glie)$ and $U_q^+(L\Glie)$ one should use the more general form of the Serre relation than \eqref{Drinfeld rel: Serre}, as in \cite{Beck}.)

\begin{rem} 
The correspondence between the Drinfeld--Jimbo generators and Drinfeld generators is due to Beck \cite{Beck}. We have for $i \in I$ (see also Proposition \ref{prop: root vectors}):
\begin{align*}
e_i = x_{i,0}^+, \quad f_i = x_{i,0}^-, \quad k_i = \phi_{i,0}^+,\quad k_0^{-1} e_0 \in U_q^-(L\Glie),\quad f_0 k_0  \in U_q^+(L\Glie). 
\end{align*}
For $m \in \BN$, the positive mode Drinfeld generators $x_{i,m}^+,\ x_{i,m+1}^-,\ h_{i,m+1}$ and $\phi_{i,m}^+$ belong to the upper Borel subalgebra $\Borel$, while the negative mode Drinfeld generators $x_{i,-m}^-,\ x_{i,-m-1}^+,\ h_{i,-m-1}$ and $\phi_{i,-m}^-$ belong to the lower Borel subalgebra $\lBorel$.
\end{rem}

The weight grading and the $\BZ$-grading in terms of Drinfeld generators are:
\begin{equation}  \label{def: bigrading}
\wt(x_{i,m}^{\pm}) = \pm \alpha_i,\quad \wt(\phi_{i,m}^{\pm}) = 0,\quad \deg(x_{i,m}^{\pm}) = m,\quad \deg(\phi_{i,m}^{\pm}) = m. 
\end{equation}
It follows from Eq.\eqref{equ: antipode} that for $i \in I$ and $m \in \BZ$:
\begin{equation} \label{Drinfeld rel: antipode}
S^2(x_{i,m}^{\pm}) = q^{-4\kappa m\mp 2d_i} x_{i,m}^{\pm},\quad S^2(\phi_{i,m}^{\pm}) = q^{-4\kappa m} \phi_{i,m}^{\pm}.
\end{equation}

\subsection{Shifted quantum affine algebras} 
For $\mu = \sum_{i\in I} n_i \varpi_i^{\vee}$ a coweight, the {\it shifted quantum affine algebra} $\CU_{\mu}(\Gaff)$ is the associative algebra generated by $ x_{i,m}^{\pm}, \phi_{i,m}^{\pm}$ for $(i, m) \in I \times \BZ$
subject to Eqs.\eqref{Drinfel rel: Cartan}--\eqref{Drinfeld rel: Serre} and the following relations \cite{FT}:
\begin{equation}  \label{Drinfeld rel: shift}
\phi_{i,m}^+ = 0 \ \mathrm{if}\ m < 0,\quad \phi_{i,m}^- = 0 \ \mathrm{if}\ m > n_i, \quad \phi_{i,0}^+ \phi_{i,n_i}^- \textrm{ is central and invertible}.
\end{equation}

The quantum loop algebra $\qaf$ is the quotient of the zero-shifted quantum affine algebra $\CU_0(\Gaff)$ by the ideal generated by the central elements $\phi_{i,0}^+ \phi_{i,0}^- - 1$ for $i \in I$. 

We have formal power series $x_i^{\pm}(z)$ and $\phi_i^{\pm}(z)$ with coefficients in $\CU_{\mu}(\Gaff)$.
By definition,  $\phi_i^+(z)$ is a power series in $z$ of leading term $\phi_{i,0}^+$, while $\phi_i^-(z)$ is a Laurent series in $z^{-1}$ of leading term $\phi_{i,n_i}^- z^{n_i}$. 

Define the three subalgebras $\CU_{\mu}^+(\Gaff),\ \CU_{\mu}^-(\Gaff)$ and $\CU_{\mu}^0(\Gaff)$ of $\CU_{\mu}(\Gaff)$ as in the case of the quantum loop algebra. Then as in \eqref{Drinfeld rel: triangular decomposition} we have a vector space isomorphism
$$ \CU_{\mu}^-(\Gaff) \otimes \CU_{\mu}^0(\Gaff) \otimes \CU_{\mu}^+(\Gaff) \longrightarrow \CU_{\mu}(\Gaff), \quad a \otimes b \otimes c \mapsto abc.  $$
 Furthermore, we have canonical identifications of algebras
 \begin{equation}  \label{identification subalgebras}
U_q^{\pm}(L\Glie) \cong \CU_{\mu}^{\pm}(\Gaff),\quad x_{i,m}^{\pm} \mapsto x_{i,m}^{\pm}.
 \end{equation}

Eq.\eqref{def: bigrading} equips the algebra $\CU_{\mu}(\Gaff)$ with a weight grading and a $\BZ$-grading. Notably, the $\BZ$-grading induces a one-parameter family of algebra automorphisms $\tau_a$ of $\CU_{\mu}(\Gaff)$ for $a \in \BC^{\times}$ and a $\BC[w,w^{-1}]$-algebra automorphism $\tau_w$ of $\CU_{\mu}(\Gaff)[w,w^{-1}]$ as in Eq.\eqref{def: spectral parameter}. 
An element $x \in \CU_{\mu}(\Gaff)$ is of weight $\beta \in \BQ$ if and only if $\phi_{i,0}^+ x = q^{(\beta,\alpha_i)} x \phi_{i,0}^+$ for all $i \in I$, if and only if $\phi_{i,n_i}^- x = q^{-(\beta,\alpha_i)} x \phi_{i,n_i}^-$ for all $i \in I$. 

In \cite[\S 10(vii)]{FT}, different shifted quantum affine algebras are connected by the so-called {\it shifted homomorphisms}. The proof of injectivity was given in \cite[Appendix I]{FT} in type A and can be extended to general types using the shuffle realization of the subalgebras $U_q^{\pm}(L\Glie)$ recently obtained in \cite[Theorem 1.7]{NT}.
\begin{prop} \cite{FT}  \label{prop: shifted homomorphism}
Let $\epsilon$ and $\eta$ be antidominant coweights. For $\mu$ a coweight we have an injective algebra homomorphism $\iota_{\epsilon,\eta}^{\mu}:  \CU_{\mu}(\Gaff) \longrightarrow \CU_{\mu+\epsilon+\eta}(\Gaff)$ defined by
\begin{gather*}
x_i^+(z) \mapsto (1-z)^{b_i} x_i^+(z) ,  \quad x_i^-(z) \mapsto (1-z)^{c_i} x_i^-(z), \quad \phi_i^{\pm}(z) \mapsto (1-z)^{b_i+c_i} \phi_i^{\pm}(z).
\end{gather*}
Here $b_i := -\langle \epsilon, \alpha_i\rangle$ and $c_i := -\langle \eta, \alpha_i\rangle$ for $i \in I$.
\end{prop}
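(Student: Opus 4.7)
The plan is to split the claim into well-definedness (the assignment extends to an algebra homomorphism) and injectivity. Throughout, antidominance of $\epsilon,\eta$ enters through $b_i,c_i\in\BN$, so $(1-z)^{b_i},(1-z)^{c_i},(1-z)^{b_i+c_i}$ are genuine polynomials with nonzero constant term; in particular the target series of the proposed map still have only finitely many negative modes in $z$ and are well-defined.

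\emph{Well-definedness.} I work entirely in the Drinfeld generating-series form. The relations \eqref{Drinfel rel: Cartan}--\eqref{Drinfeld rel: Drinfeld}, once cleared to polynomial identities of the shape
$(z-q^{\pm b_{ij}}w)\,x_i^\pm(z)x_j^\pm(w)=(q^{\pm b_{ij}}z-w)\,x_j^\pm(w)x_i^\pm(z)$ and their $\phi^\pm x^\pm$ analogues, are preserved by rescaling each $x_i^\pm(z)$ by the scalar factor $(1-z)^{b_i}$ (resp.\ $(1-z)^{c_i}$) and each $\phi_i^\pm(z)$ by $(1-z)^{b_i+c_i}$, because both sides acquire the same scalar multiplier which commutes through. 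The $[x^+,x^-]$ relation is only nontrivial for $i=j$, where the delta functional on the right collapses the two spectral variables, identifying $(1-z)^{b_i}(1-w)^{c_i}$ with $(1-z)^{b_i+c_i}$ and thereby matching the shift applied to $\phi_i^\pm$. The Serre relation \eqref{Drinfeld rel: Serre} is the subtle one: I pass to its equivalent functional form, a symmetrization identity among $x_i^+(z_1),\ldots,x_i^+(z_{1-c_{ij}}),x_j^+(w)$ (whose equivalence with \eqref{Drinfeld rel: Serre} modulo \eqref{Drinfeld rel: Drinfeld} is standard), so that the scalar factors $(1-z_k)^{b_i}$ and $(1-w)^{b_j}$ pull out of the symmetrization and the identity persists. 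Finally the target initial conditions \eqref{Drinfeld rel: shift} follow by direct inspection: in $\CU_{\mu+\epsilon+\eta}(\Gaff)$ the coweight has $i$-th component $n_i-b_i-c_i$, so $(1-z)^{b_i+c_i}\phi_i^-(z)$ has support in degrees $\leq n_i$ with top coefficient $(-1)^{b_i+c_i}\phi_{i,n_i-b_i-c_i}^-$, and its product with $\phi_{i,0}^+$ is a sign times the central invertible element of the target, while $(1-z)^{b_i+c_i}\phi_i^+(z)$ still starts at $z^0$ with leading term $\phi_{i,0}^+$.

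\emph{Injectivity.} I use the triangular decomposition $\CU_\mu^-(\Gaff)\otimes \CU_\mu^0(\Gaff)\otimes \CU_\mu^+(\Gaff)\cong \CU_\mu(\Gaff)$, which the shift respects factor-wise. On the Cartan part the shift rescales each $\phi_i^\pm(z)$ by the nonzero polynomial $(1-z)^{b_i+c_i}$ and is therefore manifestly injective on the commutative subalgebra $\CU_\mu^0(\Gaff)$. For the $\pm$ parts I use the identifications \eqref{identification subalgebras}, under which the shift becomes the endomorphism of $U_q^\pm(L\Glie)$ sending $x_i^\pm(z)$ to $(1-z)^{b_i}x_i^\pm(z)$ (resp.\ $(1-z)^{c_i}x_i^\pm(z)$); via the shuffle realization of \cite[Theorem 1.7]{NT}, this endomorphism corresponds to multiplying each shuffle symbol by nonzero polynomials in its spectral variables, which is injective since the shuffle algebra is torsion-free on each weight component. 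Combining the three factor-wise injectivities along the triangular decomposition gives injectivity of $\iota_{\epsilon,\eta}^\mu$ overall.

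The main obstacle is precisely the last step: injectivity of the shift restricted to $U_q^\pm(L\Glie)$ is not accessible by abstract generating-series manipulations, and must instead rely on a concrete model of these subalgebras. In type A the direct combinatorial computation of \cite[Appendix I]{FT} suffices, while the general-type statement genuinely depends on the recent shuffle description of \cite{NT}. Beyond this, the well-definedness verification amounts to organized bookkeeping in functional form.
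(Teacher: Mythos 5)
Your proof is correct and follows essentially the same route as the paper, which simply cites \cite{FT} for the construction (the relation-by-relation bookkeeping you carry out is what \cite[\S 10(vii)]{FT} amounts to) and obtains injectivity exactly as you do: via the triangular decomposition, with the Cartan part handled directly and the halves $U_q^{\pm}(L\Glie)$ handled by \cite[Appendix I]{FT} in type A and by the shuffle realization of \cite[Theorem 1.7]{NT} in general type. The only minor imprecision is your claim that the all-modes symmetrized Serre identity is equivalent to \eqref{Drinfeld rel: Serre} modulo \eqref{Drinfeld rel: Drinfeld} alone; in the target algebra it is cleanest to deduce it from the identification \eqref{identification subalgebras} of $\CU^{\pm}_{\mu+\epsilon+\eta}(\Gaff)$ with $U_q^{\pm}(L\Glie)$, where the general Serre relations are standard, but this does not affect your argument.
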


\subsection{Representations of shifted quantum affine algebras}  \label{ss: highest weight}
 We review results on highest weight representations of shifted quantum affine algebras from \cite{H}. All the constructions and definitions of Subsection \ref{ss: rep shifted Yangians} for shifted Yangians, except tensor product modules, carry over to the present situation after small modifications.

 Following \cite[\S 3.1]{HJ}, let $\mathfrak{t}^* := (\BC^{\times})^I$ be the set of $I$-tuples of nonzero complex numbers. For $i \in I$, let $\lambda(i)$ denote the $i$-th component of $\lambda$. Equip $\mathfrak{t}^*$ with an {\it additive} group structure by component-wise multiplication: $(\lambda+\omega)(i) := \lambda(i) \omega(i)$.
 
The root lattice $\BQ$, as the additive group freely generated by the simple roots $\alpha_j$ for $j \in I$, is viewed as a subgroup of $\mathfrak{t}^*$ so that $\alpha_j= (q^{b_{ji}})_{i\in I}$. This is an embedding because $q$ is not a root of unity and the matrix $(b_{ij})_{i,j\in I}$ is invertible. 

Fix a coweight $\mu = \sum_{i\in I} n_i \varpi_i^{\vee}$. Let $M$ be a module over the shifted quantum affine algebra $\CU_{\mu}(\Gaff)$. For $\lambda \in \mathfrak{t}^*$, set
$$ M_{\lambda} := \{v \in M \ |\ \phi_{i,0}^+ v = \lambda(i) v \quad \mathrm{for}\ i \in I \}. $$
If $M_{\lambda}$ is nonzero, then it is called the weight space of weight $\lambda$. By definition, for $x$ an element in $\CU_{\mu}(\Gaff)$ of weight $\beta \in \BQ$, we have $x M_{\lambda} \subset M_{\lambda+\beta}$.  

Call $M$ {\it weight graded} if $M$ is a direct sum of the weight spaces. 
Call $M$ {\it top graded} if it is weight graded and there exists $\lambda_0 \in \mathfrak{t}^*$ such that 
 $$ \textrm{$\dim M_{\lambda_0} = 1$ and $M_{\lambda} \neq \{0\}$ only if $\lambda_0 - \lambda \in  \BQ_+$.}$$
Call $\lambda_0$ the top weight and $M_{\lambda_0}$ is called the top weight space. Bottom graded modules are defined by replacing the condition $\lambda_0 - \lambda \in  \BQ_+$ with $\lambda - \lambda_0 \in  \BQ_+$.
  
  Define $\mathfrak{t}_{\mu}^*$ to be the set of $I$-tuples $(\Bf_i^{\pm}(z))_{i\in I}$ where
$\Bf_i^+(z) \in \BC[[z]]^{\times}$ is an invertible power series in $z$ and $\Bf_i^-(z) \in \BC((z^{-1}))$ is a Laurent series in $z^{-1}$ whose leading term is of degree $n_i$ for $i \in I$. Such an $I$-tuple is called an {\it $\ell$-weight} of coweight $\mu$. 
  
Given  $\Bf = (\Bf_i^{\pm}(z))_{i\in I} \in \mathfrak{t}_{\mu}^*$, define the highest weight {\it Verma module} to be the $\CU_{\mu}(\Gaff)$-module generated by $\omega$ subject to relations
$$ \phi_i^{\pm}(z) \omega = \Bf_i^{\pm}(z) \omega,\quad x_i^+(z) \omega = 0 \quad \mathrm{for}\ i \in I. $$
It has a unique irreducible quotient, denoted by $L_{\mu}(\Bf)$. Highest $\ell$-weight modules are defined in the obvious way as nonzero quotients of Verma modules. We add a subscript $\mu$ to the irreducible module over the shifted quantum affine algebra $\CU_{\mu}(\Gaff)$ because later we will be mainly concerned with the highest $\ell$-weight irreducible module over the upper Borel subalgebra $\Borel$ denoted by $L(\Bf)$.

Call an $\ell$-weight $\Bf \in \mathfrak{t}_{\mu}^*$ {\it rational} if for $i \in I$:
the series $\Bf_i^+(z)$ and $\Bf_i^-(z)$ are Laurent expansions of the same rational function, denoted by $\Bf_i(z)$, around $z = 0$ and $z = \infty$ respectively.
By abuse of language we identify $\Bf$ with the $I$-tuple $(\Bf_i(z))_{i\in I}$ of rational functions. Call $\Bf$ {\it polynomial} if it is rational and each $\Bf_i(z)$ is polynomial in $z$.  
Let $\mathfrak{r}_{\mu}$ denote the subset of $\mathfrak{t}_{\mu}^*$ formed of rational $\ell$-weights.

\begin{prop}\cite[Theorem 4.12]{H}   \label{prop: rationality shifted}
Let $\Bf \in \mathfrak{t}_{\mu}^*$. Then $\Bf$ is rational if and only if all weight spaces of the $\CU_{\mu}(\Gaff)$-module $L_{\mu}(\Bf)$ are finite-dimensional. $\Bf$ is polynomial if and only if $L_{\mu}(\Bf)$ is one-dimensional. 
\end{prop}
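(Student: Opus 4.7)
The plan is to extract information on $\Bf_i^\pm(z)$ by studying the action of the Drinfeld generators on the cyclic vector $\omega$. The triangular decomposition forces $L_\mu(\Bf) = \CU_\mu^-(\Gaff)\omega$, so weight considerations yield $L_\mu(\Bf)_{\lambda_0-\alpha_i} = \sum_{n\in\BZ}\BC\cdot x_{i,n}^-\omega$ for each $i \in I$; this subspace is the laboratory for both claims.

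I would first dispose of the polynomial equivalence. Given $\Bf$ polynomial, I construct a one-dimensional $\CU_\mu(\Gaff)$-module by declaring each $x_{i,m}^\pm$ to act as zero and $\phi_i^\pm(z)$ to act by the scalar $\Bf_i(z)$. The Drinfeld and Serre relations \eqref{Drinfeld rel: Drinfeld-Cartan}--\eqref{Drinfeld rel: Serre} all involve at least one $x^\pm$ and hold automatically; the only nontrivial check is \eqref{Drinfel rel: Cartan}, which reduces to $\Bf_i^+(z) = \Bf_i^-(z)$ and is immediate when the entries are polynomial. By uniqueness this one-dimensional module equals $L_\mu(\Bf)$. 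Conversely, if $L_\mu(\Bf)$ is one-dimensional then every commutator vanishes, so \eqref{Drinfel rel: Cartan} forces $\phi_{i,k}^+ = \phi_{i,k}^-$ as scalars for all $k$; combined with the initial vanishings \eqref{Drinfeld rel: shift} and the leading-degree condition in $\mathfrak{t}_\mu^*$, the common eigenvalue is a polynomial of degree exactly $n_i$.

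For necessity in the rational case, I analyse $W := L_\mu(\Bf)_{\lambda_0-\alpha_i}$. Applying \eqref{Drinfeld rel: Drinfeld-Cartan} at $(j,m) = (i,0)$ to $\omega$ and using invertibility of the constant term $\Bf_{i,0}^+$, I solve
\[
x_{i,n+1}^-\omega \;=\; \frac{1}{(q_i^{-2}-1)\Bf_{i,0}^+}\bigl(\phi_{i,1}^+ - q_i^{-2}\Bf_{i,1}^+\bigr)\,x_{i,n}^-\omega,
\]
exhibiting $W$ as $\phi_{i,1}^+$-invariant and realising the shift $x_{i,n}^-\omega \mapsto x_{i,n+1}^-\omega$ as an affine function of $\phi_{i,1}^+|_W$. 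Finite-dimensionality of $W$ then produces, via the minimal polynomial of $\phi_{i,1}^+|_W$, a constant-coefficient recursion $\sum_{k=0}^d c_k x_{i,n+k}^-\omega = 0$ valid for every $n \in \BZ$. Applying $x_{i,m}^+$ and using \eqref{Drinfel rel: Cartan} transports it to $\sum_k c_k (\Bf_{i,\ell+k}^+ - \Bf_{i,\ell+k}^-) = 0$ for every $\ell \in \BZ$. In generating-series form this reads $Q(z)\Bf_i^+(z) = Q(z)\Bf_i^-(z)$ with $Q(z) := \sum_k c_k z^{-k}$; the common value lies simultaneously in $z^{-d}\BC[[z]]$ and in $\BC((z^{-1}))$, so it is a Laurent polynomial, and $\Bf_i^\pm(z)$ are the two Laurent expansions of one rational function.

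For the sufficiency direction, my plan is to reduce to the antidominant case via Proposition \ref{prop: shifted homomorphism}: choose antidominant $\epsilon, \eta$ with $\mu+\epsilon+\eta$ antidominant, set $\Bf_i^{\prime\pm}(z) := (1-z)^{\langle\epsilon+\eta,\alpha_i\rangle}\Bf_i^\pm(z)$, which is again rational and defines an $\ell$-weight of coweight $\mu+\epsilon+\eta$, then pull $L_{\mu+\epsilon+\eta}(\Bf')$ back along the injective $\iota_{\epsilon,\eta}^\mu$. Since the shift homomorphism preserves $\phi_{i,0}^+$, weight-space dimensions survive the pullback and $L_\mu(\Bf)$ appears as an irreducible subquotient. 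For antidominant coweight, Hernandez's embedding $\Borel \hookrightarrow \CU_\mu(\Gaff)$ of \cite{H} identifies $L_\mu(\Bf)$ with an irreducible object of the $\Borel$-category $\mathcal{O}$ of \cite{HJ} carrying the same rational highest $\ell$-weight, whence the Hernandez--Jimbo finite-dimensionality theorem transfers. The main obstacle is precisely this sufficiency step: one must verify that weight-space data is genuinely preserved under both the pullback along $\iota_{\epsilon,\eta}^\mu$ and the Borel identification, and that the reduction does not shift the highest-weight parametrisation in a way that spoils the correspondence.
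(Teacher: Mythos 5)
The paper does not prove this statement: it is quoted verbatim from \cite[Theorem 4.12]{H}, so there is no internal proof to compare your argument against; I can only assess it on its own terms. On those terms your proposal is essentially correct, and it follows the same route as the cited source. The polynomial equivalence via the explicit one-dimensional module (all relations except \eqref{Drinfel rel: Cartan} involve an $x^{\pm}$, and \eqref{Drinfel rel: Cartan} forces $\phi_{i,k}^+=\phi_{i,k}^-$ as scalars) is fine. The implication ``finite-dimensional weight spaces $\Rightarrow$ rational'' via the $\phi_{i,1}^+$-recursion on $L_\mu(\Bf)_{\lambda_0-\alpha_i}$, the minimal polynomial, and the resulting constant-coefficient recurrence for $\Bf_{i,m}^+-\Bf_{i,m}^-$ is sound; the only slip is the constant: since $\phi_{i,0}^+x_{i,n+1}^-\omega=q_i^{-2}\Bf_{i,0}^+x_{i,n+1}^-\omega$, the denominator should be $(q_i^{-4}-1)\Bf_{i,0}^+$ rather than $(q_i^{-2}-1)\Bf_{i,0}^+$ — harmless, as it is still nonzero for generic $q$. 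Your worry about the weight bookkeeping in the sufficiency step is unfounded: $\iota_{\epsilon,\eta}^{\mu}$ fixes $\phi_{i,0}^+$ and $\jmath_{\mu}$ sends $k_i\mapsto\phi_{i,0}^+$, the pulled-back highest weight vector has $\ell$-weight exactly $\Bf$, and $L_\mu(\Bf)$ is the irreducible quotient of the $\CU_{\mu}(\Gaff)$-submodule it generates, so its weight multiplicities are bounded by those of $L_{\mu+\epsilon+\eta}(\Bf')$.

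The one point to be explicit about is logical dependence: your sufficiency half is not self-contained. It rests on the identification of the restriction along $\jmath_{\mu}$ with the irreducible $\Borel$-module of the same highest $\ell$-weight, i.e.\ Proposition \ref{prop: rationality}(ii), which this paper also only quotes from \cite{HJ,H}, together with the Hernandez--Jimbo finite-dimensionality theorem. Within the framework of this paper that is a legitimate appeal to stated results; but as a stand-alone proof of \cite[Theorem 4.12]{H} it is potentially circular, since in \cite{H} the irreducibility of the restriction and the rationality criterion are developed together. If you want independence from \cite{H}, you would need a direct bound on the weight multiplicities of $L_\mu(\Bf)$ for antidominant $\mu$ (e.g.\ via the limit/prefundamental constructions), since irreducibility of the $\Borel$-restriction is exactly what lets you transfer the upper bound.
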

The disjoint union of the $\mathfrak{t}_{\mu}^*$ over all coweights $\mu$ forms a multiplicative group under component-wise multiplication. We have $\mathfrak{t}_{\mu}^* \mathfrak{t}_{\nu}^* \subset \mathfrak{t}_{\mu+\nu}^*$. The subset of rational $\ell$-weights, namely the disjoint union of the $\mathfrak{r}_{\mu}$ over all coweights $\mu$, forms a subgroup.
\begin{defi} \label{defi: prefund}
 For $j \in I$ and $a \in \BC^{\times}$, the {\it prefundamental $\ell$-weight} $\Psi_{j,a}$ is the polynomial $\ell$-weight of coweight $\varpi_j^{\vee}$ whose $i$th component is $1-za \delta_{ij}$ for $i\in I$. For $\zeta$ a dominant coweight, let $\mathfrak{d}_{\zeta}$ denote the subset of $\mathfrak{r}_{\zeta}$ formed of $\ell$-weights which are monomials of the $\Psi_{i,a}$. Equivalently, $(\Bf_i(z))_{i\in I} \in \mathfrak{d}_{\zeta}$ if and only if each $\Bf_i(z)$ is a polynomial in $z$ of degree $\langle \zeta, \alpha_i\rangle$ and of constant term 1 (also called Drinfeld polynomial). Let $\mathfrak{d}$ be the disjoint union of the $\mathfrak{d}_{\zeta}$ over all dominant coweights $\zeta$. 
\end{defi}
\begin{defi}  \label{defi: deformed module}
Let $V$ be a $\CU_{\mu}(\Gaff)$-module. The Laurent polynomial space $V[w,w^{-1}]$ being a module over the algebra $\CU_{\mu}(\Gaff)[w,w^{-1}]$ by scalar extension, its pullback along the algebra homomorphism $\tau_w: \CU_{\mu}(\Gaff) \longrightarrow \CU_{\mu}(\Gaff)[w,w^{-1}]$ defines a $\CU_{\mu}(\Gaff)$-module, denoted by $V_w$ and called {\it deformed module}. We refer to $w$ as {\it spectral parameter}.
\end{defi}
Evaluating $w$ at nonzero complex numbers gives a one-parameter family of representations $V_a = \tau_a^*V$, for $a \in \BC^{\times}$, on the same underlying space as $V_1 = V$.  If $V$ is of highest $\ell$-weight $\Bf \in \mathfrak{t}_{\mu}^*$, then $V_a$ is of highest $\ell$-weight $(\Bf_i^{\pm}(za^{-1}))_{i\in I}$, denoted by $\tau_a(\Bf)$.

\section{Further properties of Borel subalgebras}  \label{sec: Borel}
We recall further properties of the upper Borel subalgebra of the quantum loop algebra: root vectors, highest/lowest weight representations, the universal R-matrix and the monodromy matrices. 
\subsection{Root vectors}   \label{ss: root vectors}
The finite-type Lie algebra $\mathfrak{g}$ has simple roots $\alpha_i \in \Hlie^*$ for $1\leq i \leq r$, whose $\BZ$-span is the root lattice $\BQ$. Let $\Phi \subset \BQ$ be the set of {\it positive} roots of $\mathfrak{g}$. Consider the affine Kac--Moody algebra $\Gaff$ with Cartan matrix $(c_{ij})_{0\leq i,j \leq r}$ and root lattice $\BZ \delta \oplus \BQ$. 
The set $\hat{\Phi}$ of real positive roots of $\Gaff$ is a disjoint union of the following two subsets of the affine root lattice:
$$ \hat{\Phi}_+ := \{n \delta+\beta\ |\ n \geq 0,\ \beta \in \Phi \},\quad \hat{\Phi}_- := \{s \delta - \beta\ |\ s > 0,\ \beta \in \Phi\}. $$
In \cite{Beck} depending on a reduced expression of a particular element in the extended affine Weyl group, $\hat{\Phi}$ has a linear ordering $\prec$ satisfying $\alpha \prec \alpha'$ for $\alpha \in \hat{\Phi}_+$ and $\alpha' \in \hat{\Phi}_-$. To each positive real root $\alpha$ are attached the {\it root vectors} $E_{\alpha} \in \Borel$ and $F_{\alpha} \in \lBorel$ whose weights and degrees are given by
$$ \wt(E_{n\delta+\beta}) = \beta = - \wt(F_{n\delta+\beta}),\quad \deg (E_{n\delta+\beta}) = n = -\deg (F_{n\delta+\beta}). $$
In particular, $E_{\alpha_i} = e_i$ and $F_{\alpha_i} = f_i$ for $0\leq i \leq r$.

 Define the following three subalgebras of Borel subalgebras
$$ U_q^{\bullet}(\mathfrak{b}^{\pm}) := U_q^{\bullet}(L\Glie) \cap U_q(\mathfrak{b}^{\pm})\quad \mathrm{for}\ \bullet \in \{-, 0, +\}. $$
Then the triangular decomposition \eqref{Drinfeld rel: triangular decomposition} restricts to vector space isomorphisms
$$ U_q^-(\mathfrak{b}^{\pm}) \otimes U_q^0(\mathfrak{b}^{\pm}) \otimes U_q^+(\mathfrak{b}^{\pm}) \longrightarrow U_q(\mathfrak{b}^{\pm}),\quad a \otimes b \otimes c \mapsto abc. $$

Let $\beta\mapsto k_{\beta}$ be the unique map $\BQ \longrightarrow \qaf$ such that $k_{\alpha_i} = k_i$ for $i \in I$ and $k_{\beta+\gamma} = k_{\beta}k_{\gamma}$ for $\beta, \gamma \in \BQ$.
We summarize the basic properties of the root vectors. 
\begin{prop}\cite[Proposition 9.3]{Damiani2}  \label{prop: root vectors}
\begin{itemize}
\item[(i)] Ordered monomials in the $E_{\alpha}$ for $\alpha \in \hat{\Phi}_+$ form a basis of $U_q^+(\mathfrak{b})$.
\item[(ii)] Ordered monomials in the $k_{\beta} E_{s\delta-\beta}$ for $s\delta-\beta \in \hat{\Phi}_-$ form a basis of $U_q^-(\mathfrak{b})$.
\item[(iii)]  Ordered monomials in the $F_{\alpha}$ for $\alpha \in \hat{\Phi}_+$ form a basis of $U_q^-(\mathfrak{c})$.
\item[(iv)] Ordered monomials in the $F_{s\delta-\beta}k_{\beta}^{-1}$ for $s\delta-\beta \in \hat{\Phi}_-$ form a basis of $U_q^+(\mathfrak{c})$.
\end{itemize}
\end{prop}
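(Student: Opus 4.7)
The plan is to deduce the statement from the PBW basis theorem for the full quantum loop algebra $\qaf$ combined with Beck's identification of root vectors and Drinfeld generators. First, I would invoke Damiani's PBW basis of $\qaf$: ordered monomials in the $E_{\alpha}$ for all $\alpha \in \hat{\Phi}$, in the Drinfeld--Cartan generators $h_{i,s}, k_i^{\pm1}$, and in the $F_{\alpha}$ for all $\alpha \in \hat{\Phi}$ form a basis, and this basis is compatible with the triangular decomposition \eqref{Drinfeld rel: triangular decomposition}. Via Beck's isomorphism, for $\alpha = n\delta + \beta \in \hat{\Phi}_+$ the root vector $E_{\alpha}$ is a polynomial in the Drinfeld currents $x_{j,m}^+$ with $m \geq 0$, so it lies in $U_q^+(L\Glie) \cap \Borel$; while for $\alpha = s\delta - \beta \in \hat{\Phi}_-$, the root vector $E_{\alpha}$ is of the form $k_{\beta}^{-1}$ times a polynomial in the $x_{j,m}^-$ with $m\geq 1$, so $k_{\beta} E_{s\delta-\beta} \in U_q^-(L\Glie)$. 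On the other hand, because $E_{\alpha}$ is an iterated $q$-bracket of Chevalley generators $e_0,\dots,e_r$, it lies in $\Borel$ for every $\alpha \in \hat{\Phi}$; the same holds for $k_{\beta}$.

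Next, I would combine the triangular decomposition of $\Borel$ stated just before the proposition with the weight and $\BZ$-gradings to identify which monomials in Damiani's basis actually belong to $\Borel$. The argument is by intersection: a PBW monomial is in $\Borel$ if and only if its positive factor is built from $E_{\alpha}$ with $\alpha \in \hat{\Phi}_+$, its negative factor from the twisted vectors $k_{\beta} E_{s\delta-\beta}$, and its Cartan factor from $U_q^0(\mathfrak{b})$. Linear independence of the claimed monomials is inherited from Damiani's basis, and spanning follows since any element of $U_q^\pm(\mathfrak{b})$ can, using the identities above and the Chevalley generating set $\{e_i,k_i\}_{0\leq i\leq r}$, be expressed as a linear combination of such monomials (the key point being $e_0 = E_{\delta-\theta}$, which after the $k_\theta$-twist enters the second family). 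Parts (iii) and (iv) for the lower Borel $\lBorel$ are proved in the parallel manner, interchanging the roles of $U_q^+(L\Glie)$ and $U_q^-(L\Glie)$ and replacing the left twist by $k_\beta$ with the right twist by $k_\beta^{-1}$; this is where the asymmetry between the two formulas comes from.

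The main obstacle is precisely the bookkeeping of the Cartan twists in the Beck identification. The relation $k_0 k_1^{a_1}\cdots k_r^{a_r}=1$ in $\qaf$ means that $k_0$ is not an independent Cartan generator, so one must verify that $k_\theta^{-1}$ (which appears when writing $e_0 \in k_{\theta}^{-1} U_q^-(L\Glie)$) really lies in $U_q^0(\mathfrak{b})$, and more generally that the two twist conventions of the proposition are the unique ones making each $k_\beta E_{s\delta-\beta}$ (respectively $F_{s\delta-\beta}k_\beta^{-1}$) land in the purely loop-generated half of the relevant Drinfeld subalgebra. Once these normalizations are written out explicitly, using Damiani's computation of the action of Lusztig's braid group on Drinfeld generators, the statement reduces to a direct inspection of the known PBW basis.
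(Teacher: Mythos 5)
First, note that the paper offers no argument for this statement at all: it is quoted directly from Damiani [Damiani2, Proposition 9.3], so the only comparison to make is between your sketch and what an actual proof requires. Judged on that basis, your proposal has a genuine gap at its central step. The easy half is fine: each $E_{n\delta+\beta}$ lies in $U_q^+(L\Glie)$ and in the Drinfeld--Jimbo positive part, hence in $\Borel$; each $k_{\beta}E_{s\delta-\beta}$ lies in $U_q^-(L\Glie)\cap\Borel$; and linear independence of the ordered monomials is inherited from Beck's PBW theorem. What is not established is the reverse inclusion, i.e.\ that $U_q^+(L\Glie)\cap\Borel$ (say) is no larger than the span of ordered monomials in the $E_{\alpha}$, $\alpha\in\hat{\Phi}_+$. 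Your sentences ``a PBW monomial is in $\Borel$ if and only if its positive factor is built from $E_{\alpha}$ with $\alpha\in\hat{\Phi}_+$\dots'' and ``spanning follows since any element of $U_q^{\pm}(\mathfrak{b})$ can \dots be expressed as a linear combination of such monomials'' are restatements of the conclusion, not arguments: membership in the subalgebra generated by $e_i,k_i^{\pm1}$ gives no a priori control on how an element of the intersection decomposes with respect to the loop triangular decomposition, and the weight and $\BZ$-gradings cannot decide membership in $\Borel$, since $\Borel$ is not cut out by gradings (a product such as $x_{i,-1}^+x_{i,1}^+$ has degree $0$ and weight $2\alpha_i$ yet must be shown \emph{not} to lie in $\Borel$).

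Concretely, the missing ingredient is a compatibility statement between the two triangular decompositions. One must show that if an element of $U_q^+(L\Glie)$, expanded in the mixed PBW family $\{E_{n\delta+\beta}\}\cup\{F_{s\delta-\beta}k_{\beta}^{-1}\}$, involves at least one factor $F_{s\delta-\beta}k_{\beta}^{-1}$, then its expansion in the Drinfeld--Jimbo PBW basis has a nonzero component with nontrivial lower part, hence cannot lie in $\Borel=U^0U^+$; and symmetrically for the three other cases. Proving this requires the Levendorskii--Soibelman convexity relations among root vectors and filtration arguments, which is exactly the technical content of Damiani's work; it is not a ``direct inspection of the known PBW basis''. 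There is also a circularity hazard: the assertion that the mixed families above give PBW bases of $U_q^{\pm}(L\Glie)$ is itself part of the same circle of results (it amounts to combining parts (i) and (iv), respectively (ii) and (iii)), so it cannot be invoked as an unexamined input. To make your route rigorous you would have to reproduce Damiani's filtration analysis, or do as the paper does and simply cite it.
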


We adapt the construction of the root vectors to shifted quantum affine algebras.
Given a coweight $\mu$, let $\beta\mapsto \phi_{\beta}^+$ be the unique map  $\BQ \longrightarrow \CU_{\mu}(\Gaff)$ such that $\phi_{\alpha_i}^+ = \phi_{i,0}^+$ for $i \in I$ and $\phi^+_{\beta+\gamma} = \phi^+_{\beta}\phi^+_{\gamma}$ for $\beta, \gamma \in \BQ$. 

\begin{defi} \label{def: shifted root vectors}
For $\mu = \sum_{i\in I} n_i \varpi_i^{\vee}$ a coweight, define in the shifted quantum affine algebra $\CU_{\mu}(\Gaff)$ the {\it Drinfeld--Cartan} elements $h_{i,s}$ for $(i,s) \in I \times \BZ_{\neq 0}$ by
\begin{equation*}  
\phi_i^+(z) = \phi_{i,0}^+ \exp((q-q^{-1}) \sum_{s>0} h_{i,s} z^s),\quad \phi_i^-(z) = \phi_{i,n_i}^- z^{n_i} \exp((q^{-1}-q) \sum_{s<0} h_{i,s} z^s).
\end{equation*}
For $\alpha \in \hat{\Phi}$ define the the {\it shifted root vector} $E_{\alpha}^{\mu} \in \CU_{\mu}(\Gaff)$  as follows.
\begin{itemize}
\item[(i)] If $\alpha \in \hat{\Phi}_+$, then by Proposition \ref{prop: root vectors}(i) the root vector $E_{\alpha}$ belongs to $U_q^+(L\Glie)$ and under the identification \eqref{identification subalgebras} corresponds to an element $E_{\alpha}^{\mu}$ of $\CU_{\mu}^+(\Gaff)$.
\item[(ii)] If $\alpha = s\delta - \beta  \in \hat{\Phi}_-$ with $\beta \in \Phi$, then by Proposition \ref{prop: root vectors}(ii) the normalized root vector $k_{\beta} E_{\alpha}$ belongs to $U_q^-(L\Glie)$ and under the identification \eqref{identification subalgebras} corresponds to an element $\overline{E}_{\alpha}^{\mu}$ of $\CU_{\mu}^-(\Gaff)$. Set $E_{\alpha}^{\mu} := \phi_{-\beta}^+ \overline{E}_{\alpha}^{\mu}$. 
\end{itemize}
\end{defi}

Antidominantly shifted quantum affine algebras contain the upper Borel subalgebra as a subalgebra, as shown in \cite[Proposition H.1(a)]{FT} for $\Glie$ of type A and in \cite[Proposition 3.4]{H} for general types. We state their results in a different form. 

\begin{prop}\cite{FT,H}  \label{prop: shifted upper Borel}
For $\mu$ an antidominant coweight, we have an injective algebra homomorphism $\jmath_{\mu}: \Borel \longrightarrow \CU_{\mu}(\Gaff)$ defined by
 \begin{gather*}
E_{\alpha} \mapsto E_{\alpha}^{\mu}, \quad h_{i,s} \mapsto h_{i,s},\quad k_i \mapsto \phi_{i,0}^+ \quad \mathrm{for}\ i \in I,\ s \in \BZ_{>0}\ \mathrm{and}\ \alpha \in \hat{\Phi}.
 \end{gather*}
\end{prop}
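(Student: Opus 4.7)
The plan is twofold: first verify that the prescribed assignment on the generators of $\Borel$ respects all defining relations (so defines an algebra homomorphism), and second deduce injectivity from the PBW theorem on both sides.

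For well-definedness, I would replace the Drinfeld--Jimbo presentation by the Drinfeld-loop presentation of $\Borel$. By Beck's isomorphism together with Proposition \ref{prop: root vectors}, the algebra $\Borel$ is generated by the ``positive-mode half'' of the Drinfeld generators: the $x_{i,m}^+$ for $m\geq 0$, the $x_{i,m}^-$ for $m\geq 1$, the $\phi_{i,m}^+$ for $m\geq 0$, the $h_{i,s}$ for $s>0$, and the $k_i^{\pm 1}$, with relations given by the restriction of \eqref{Drinfel rel: Cartan}--\eqref{Drinfeld rel: Serre} to these indices. Under the identification \eqref{identification subalgebras} the subalgebra $U_q^+(\mathfrak{b})\subset U_q^+(L\Glie)$ sits inside $\CU_{\mu}^+(\Gaff)$, and similarly Proposition \ref{prop: root vectors}(ii) embeds $U_q^-(\mathfrak{b})$ into $\CU_{\mu}^-(\Gaff)$ after multiplying each basis root vector $k_\beta E_{s\delta-\beta}$ by the inverse Cartan factor; the correcting factor $\phi_{-\beta}^+$ in Definition \ref{def: shifted root vectors} is precisely what makes the resulting elements of weight $-\beta$ in $\CU_\mu(\Gaff)$, matching the weight of $E_{s\delta-\beta}\in\Borel$. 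Thus $\jmath_\mu$ may be described on Drinfeld generators as $x_{i,m}^+\mapsto x_{i,m}^+$, $\phi_{i,m}^+\mapsto \phi_{i,m}^+$, $h_{i,s}\mapsto h_{i,s}$ for $m,s\geq 0$, and $x_{i,m}^-\mapsto x_{i,m}^-$ for $m\geq 1$. Every relation among these generators that defines $\Borel$ is a relation of the same shape in $\CU_{\mu}(\Gaff)$.

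The one point where antidominance enters is the bracket relation \eqref{Drinfel rel: Cartan} with $m\geq 0$, $n\geq 1$: in $\Borel$ the right-hand side reads $\delta_{ij}(q_i-q_i^{-1})^{-1}\phi_{i,m+n}^+$ because $\phi_{i,m+n}^-=0$, while in $\CU_{\mu}(\Gaff)$ the series $\phi_i^-(z)$ terminates at degree $n_i=\langle\mu,\alpha_i\rangle\leq 0$, so $\phi_{i,m+n}^-=0$ for $m+n\geq 1$ precisely when $\mu$ is antidominant. Without this hypothesis the two sides would disagree for some $i$, so the assignment would fail to respect the relations. With antidominance in hand, all positive-mode Drinfeld relations of $\Borel$ are genuinely inherited from those of $\CU_\mu(\Gaff)$, so $\jmath_\mu$ is a well-defined algebra homomorphism, and by construction it acts on root vectors and Drinfeld--Cartan generators as stated.

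For injectivity I would compare PBW bases. Using the triangular decomposition $\Borel=U_q^-(\mathfrak{b})\otimes U_q^0(\mathfrak{b})\otimes U_q^+(\mathfrak{b})$ from Proposition \ref{prop: root vectors} and the parallel triangular decomposition $\CU_\mu(\Gaff)=\CU_\mu^-(\Gaff)\otimes\CU_\mu^0(\Gaff)\otimes\CU_\mu^+(\Gaff)$ recorded in Section \ref{sec: quantum}, it suffices to show that ordered monomials in the shifted root vectors $E_\alpha^\mu$ and in the Drinfeld--Cartan generators $h_{i,s}$, $\phi_{i,0}^+$ remain linearly independent in $\CU_\mu(\Gaff)$. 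For the positive part this is immediate from \eqref{identification subalgebras}; for the negative part it follows after factoring out the invertible element $\phi_{-\beta}^+$ from each $E_{s\delta-\beta}^\mu$ and invoking the same identification; for the Cartan part it follows from the defining relations \eqref{Drinfeld rel: shift} together with the PBW theorem for $\CU_\mu^0(\Gaff)$. The PBW basis statement for $\CU_\mu^\pm(\Gaff)$ is established via the shuffle realization of \cite{NT} in general type (and \cite[Appendix I]{FT} in type A), as already used in Proposition \ref{prop: shifted homomorphism}.

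The main technical obstacle is the verification that the negative-mode root vectors $x_{i,m}^-$ ($m\geq 1$), viewed as non-trivial polynomials in the $e_i,k_i$ inside $\Borel$ via Beck's construction, really map to the expected expressions inside $\CU_\mu(\Gaff)$; this amounts to checking that Beck's isomorphism intertwines the two presentations compatibly with the identification \eqref{identification subalgebras}. Once this compatibility is granted, the argument above shows both existence and injectivity of $\jmath_\mu$.
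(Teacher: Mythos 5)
There is a genuine gap at the very first step of your argument. You define $\jmath_\mu$ by specifying images of the positive-mode Drinfeld generators and then "check the relations", but this only produces a homomorphism out of $\Borel$ if you know a \emph{presentation} of $\Borel$ by those generators — i.e.\ that every relation holding among $x_{i,m}^+\ (m\geq 0)$, $x_{i,m}^-\ (m\geq 1)$, $\phi_{i,m}^+$, $k_i^{\pm1}$ inside $\Borel$ is a consequence of the restriction of \eqref{Drinfel rel: Cartan}--\eqref{Drinfeld rel: Serre} to positive modes. Neither Beck's isomorphism nor Proposition \ref{prop: root vectors} gives this: Beck's theorem is a presentation of the whole algebra $\qaf$, and Proposition \ref{prop: root vectors} provides only PBW \emph{bases} of $U_q(\mathfrak{b}^{\pm})$, which is a linear statement, not a presentation. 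A Drinfeld-type presentation of the Borel subalgebra is exactly the delicate point that the literature avoids, and your closing paragraph ("once this compatibility is granted") concedes rather than closes this gap; without it, well-definedness of $\jmath_\mu$ is not established, and the subsequent PBW comparison for injectivity has nothing to apply to. Your observation about where antidominance enters (vanishing of $\phi_{i,m}^-$ for $m\geq 1$ when $\langle\mu,\alpha_i\rangle\leq 0$) is correct but does not repair this.

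For comparison, the paper does not attempt a presentation-based construction at all. The statement is cited from \cite{FT,H}, and the proof given there only explains why root vectors are preserved: one passes through the asymptotic algebra of Hernandez--Jimbo, whose relevant subalgebra $\mathcal{I}$ (generated by $x_{i,m}^+$, $k_i^{-1}x_{i,m}^-$, $k_i^{-1}\phi_{i,m}^{\pm}$, $k_i^{-1}$, intersected with $\Borel$) contains the $E_\alpha$ and $h_{i,s}$; by \cite[Proposition 2.1]{HJ} the Borel subalgebra $\Borel$ is the localization of $\mathcal{I}$ at the $k_i^{-1}$, and by \cite[Proposition 3.3]{H} there is, for $\mu$ antidominant, an algebra homomorphism $\mathcal{I}\longrightarrow\CU_{\mu}(\Gaff)$ sending $E_\alpha\mapsto E_\alpha^{\mu}$, $h_{i,s}\mapsto h_{i,s}$, $k_i^{-1}\mapsto(\phi_{i,0}^+)^{-1}$; localizing yields $\jmath_\mu$, with injectivity part of the cited results (\cite[Proposition H.1(a)]{FT} in type A, \cite[Proposition 3.4]{H} in general). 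If you want a self-contained argument you would either have to reprove those inputs or establish the half-current presentation of $\Borel$ you are implicitly assuming, which is a substantial task in its own right.
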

\begin{proof} 
We explain why root vectors are preserved.
    Recall from \cite[\S 2.2]{HJ} the asymptotic algebra defined as the subalgebra of the quantum loop algebra generated by 
$$x_{i,m}^+,\quad k_i^{-1} x_{i,m}^-, \quad k_i^{-1} \phi_{i,m}^{\pm},\quad k_i^{-1} \quad \textrm{for $i \in I$ and $m \in \BZ$}.  $$
Let $\mathcal{I}$ denote the intersection of the asymptotic algebra with the upper Borel subalgebra. 
By Proposition \ref{prop: root vectors}, the vectors $E_{\alpha}$ and $h_{i,s}$ for $\alpha \in \hat{\Phi}$ and $(i, s) \in I \times \BZ_{>0}$ belong to $\mathcal{I}$. It is known from \cite[Proposition 2.1]{HJ} that $\Borel$ can be obtained from $\mathcal{I}$ by localization at the $k_i^{-1}$. By \cite[Proposition 3.3]{H}, for $\mu$ antidominant there is a natural algebra homomorphism from $\mathcal{I}$ to $\CU_{\mu}(\Gaff)$ which maps 
$$E_{\alpha}\mapsto E_{\alpha}^{\mu},\quad h_{i,s}\mapsto h_{i,s},\quad k_i^{-1} \mapsto (\phi_{i,0}^+)^{-1}.  $$ 
After localization, this induces the desired algebra homomorphism. 
\end{proof}

\subsection{Representations of the upper Borel subalgebra} We review results on highest/lowest representations of the upper Borel subalgebra $\Borel$ from \cite{HJ}. The basic notions and constructions in Subsection \ref{ss: highest weight}, including weight spaces, weight graded modules, top/bottom graded modules and highest/lowest $\ell$-weight modules, carry over to modules over $\Borel$ with the following modifications. 

 The set of $\ell$-weights is $\mathfrak{t}_{\ell}^* := (\BC[[z]]^{\times})^I$, namely the set of $I$-tuples of invertible power series in $z$. It is a group under component-wise multiplication. An $\ell$-weight is {\it rational} if each component is the Taylor expansion at $z = 0$ of a rational function. The rational $\ell$-weights form a subgroup of $\mathfrak{t}_{\ell}^*$, denoted by $\mathfrak{r}$. It is identified with the disjoint union of the $\mathfrak{r}_{\mu}$ over all coweights $\mu$ defined after Proposition \ref{prop: rationality shifted}.
 
 Given $\Be = (\Be_i(z))_{i\in I} \in \mathfrak{t}_{\ell}^*$, the highest weight Verma module $M(\Be)$ is defined to be the $\Borel$-module generated by $\omega$ subject to relations
$$ \phi_i^+(z) \omega = \Be_i(z) \omega,\quad E_{\alpha} \omega = 0 \quad \textrm{for $i \in I$ and $\alpha \in \hat{\Phi}_+$}. $$
Its irreducible quotient is $L(\Be)$. Replacing the condition $\alpha \in \hat{\Phi}_+$ with $\alpha \in \hat{\Phi}_-$, we obtain the lowest weight Verma module $M'(\Be)$ and its irreducible quotient $L'(\Be)$.

\begin{prop}\cite{HJ,H}  \label{prop: rationality}
Let $\mu$ be an antidominant coweight and $\Be \in \mathfrak{t}_{\ell}^*$.
\begin{itemize}
\item[(i)] The $\ell$-weight $\Be$ is rational if and only if all weight spaces of the $\Borel$-module $L(\Be)$ are finite-dimensional. The same statement holds for $L'(\Be)$.
\item[(ii)] If $\Be \in \mathfrak{r}_{\mu}$, then the pullback of the $\CU_{\mu}(\Gaff)$-module $L_{\mu}(\Be)$ along $\jmath_{\mu}$ is irreducible and isomorphic to the $\Borel$-module $L(\Be)$.
\end{itemize}
\end{prop}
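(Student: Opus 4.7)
The plan is to prove (ii) in three steps—highest $\ell$-weight, cyclicity, irreducibility—and then deduce (i) as a corollary using Proposition \ref{prop: rationality shifted} and the Hernandez--Jimbo rationality argument for the converse.

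For Step 1 of (ii), let $\omega \in L_\mu(\Be)$ be a highest $\ell$-weight vector. Because $\jmath_\mu$ of Proposition \ref{prop: shifted upper Borel} sends $h_{i,s} \mapsto h_{i,s}$ for $s > 0$ and $k_i \mapsto \phi_{i,0}^+$, the generating series $\phi_i^+(z)$ in $\Borel$ and in $\CU_\mu(\Gaff)$ act identically on $\omega$, giving $\phi_i^+(z)\omega = \Be_i^+(z)\omega$ where $\Be_i^+(z)$ is the Taylor expansion of $\Be_i(z)$ at $z=0$; moreover $\jmath_\mu(E_\alpha) = E_\alpha^\mu \in \CU_\mu^+(\Gaff)$ for $\alpha \in \hat{\Phi}_+$ annihilates $\omega$. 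Hence $\omega$ is a highest $\ell$-weight vector for $\Borel$ of $\ell$-weight $\Be$ in the pullback, giving a surjection $L(\Be) \twoheadrightarrow \jmath_\mu(\Borel)\omega$ by the universal property.

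For Step 2 (cyclicity), recall $L_\mu(\Be) = \CU_\mu^-(\Gaff)\omega$, and by Proposition \ref{prop: root vectors}(ii) together with \eqref{identification subalgebras}, ordered monomials in the normalized shifted root vectors $\overline{E}_\alpha^\mu$ for $\alpha = s\delta - \beta \in \hat{\Phi}_-$ span $\CU_\mu^-(\Gaff)$. By Definition \ref{def: shifted root vectors}, $\jmath_\mu(E_{s\delta-\beta}) = \phi_{-\beta}^+\overline{E}_{s\delta-\beta}^\mu$; since $\Be_i^+(z) \in \BC[[z]]^\times$ forces $\Be_i(0) \neq 0$, the element $\phi_{i,0}^+$ (and hence every $\phi_{-\beta}^+$) acts invertibly by a nonzero scalar on each weight space of $L_\mu(\Be)$. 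Commuting $\phi^+$-factors through subsequent $\overline{E}^\mu$-factors using the weight grading, an induction on monomial length recovers each $\overline{E}_{\alpha_1}^\mu\cdots \overline{E}_{\alpha_k}^\mu\omega$ inside $\jmath_\mu(\Borel)\omega$, yielding $\jmath_\mu(\Borel)\omega = L_\mu(\Be)$.

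For Step 3 (irreducibility), let $W$ be a nonzero $\Borel$-submodule with nonzero weight vector $v$. By Proposition \ref{prop: rationality shifted} the weight spaces of $L_\mu(\Be)$ are finite-dimensional with one-dimensional top space $\BC\omega$. The image $\jmath_\mu(U_q^+(\mathfrak{b}))$ sits inside $\CU_\mu^+(\Gaff)$ via Proposition \ref{prop: root vectors}(i) and \eqref{identification subalgebras}; combining this positive-mode raising action with the Drinfeld relations \eqref{Drinfel rel: Cartan}--\eqref{Drinfeld rel: Drinfeld-Cartan} and the finite-dimensional weight-space structure, one produces an element of $\jmath_\mu(\Borel)$ sending $v$ to a nonzero multiple of $\omega$. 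Thus $\omega \in W$ and Step 2 forces $W = L_\mu(\Be)$, so the pullback is irreducible and isomorphic to $L(\Be)$. Part (i) follows: for $\Be$ rational, one chooses antidominant $\mu$ with $\Be \in \mathfrak{r}_\mu$ and applies (ii) and Proposition \ref{prop: rationality shifted}; conversely, finite-dimensionality of the weight space $L(\Be)_{\Be - \alpha_i}$ combined with \eqref{Drinfel rel: Cartan} forces $\phi_i^+(z)$ to act by a rational matrix-valued function via Cayley--Hamilton, giving rationality of $\Be$, and the lowest $\ell$-weight case is symmetric.

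The main obstacle is the irreducibility step: $\jmath_\mu(\Borel)$ contains only the positive-mode subalgebra $U_q^+(\mathfrak{b}) \subsetneq \CU_\mu^+(\Gaff)$ and omits the negative-mode Drinfeld--Cartan series $\phi_i^-(z)$, so one cannot naively transport $\CU_\mu(\Gaff)$-irreducibility to $\Borel$-irreducibility. The argument hinges on operator-invertibility of $\phi_{i,0}^+$ and $\phi_{-\beta}^+$ on $L_\mu(\Be)$ (rather than algebraic invertibility in $\CU_\mu(\Gaff)$) together with the weight-space finite-dimensionality from Proposition \ref{prop: rationality shifted}, which reduces the raise from $v$ to $\omega$ to a finite-dimensional linear algebra problem solvable within the positive-mode raising operators.
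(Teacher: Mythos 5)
The paper gives no proof of this proposition---it is quoted from \cite{HJ} and \cite{H}---so your attempt amounts to reproving a cited theorem, and as written it does not close the actual difficulty. The decisive gap is in Step 2. Proposition \ref{prop: root vectors}(ii) gives a basis of $U_q^-(\mathfrak{b}) = U_q^-(L\Glie)\cap \Borel$, not of $U_q^-(L\Glie)\cong \CU_{\mu}^-(\Gaff)$: ordered monomials in the $k_{\beta}E_{s\delta-\beta}$ ($s>0$) span only the subalgebra generated by the positive-mode lowering operators $x_{i,m}^-$ with $m\geq 1$. Hence $\jmath_{\mu}(\Borel)$ contains, up to the invertible $\phi^+$-factors you rightly handle, only these modes, and your induction shows at most that $\jmath_{\mu}(\Borel)\omega$ contains all monomials in the $x_{i,m}^-$, $m\geq 1$, applied to $\omega$---not $\CU_{\mu}^-(\Gaff)\omega = L_{\mu}(\Be)$, which also requires the modes $x_{i,m}^-$ with $m\leq 0$. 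Whether the $\Borel$-span of $\omega$ is stable under these missing modes is precisely the nontrivial content of the theorem, and nothing in your argument addresses it. Step 3 has the mirror-image problem on the raising side: $\jmath_{\mu}(U_q^+(\mathfrak{b}))$ contains only the $x_{i,m}^+$ with $m\geq 0$, and you merely assert that such operators raise a nonzero weight vector $v$ of a $\Borel$-submodule to a multiple of $\omega$; a priori $v$ could be annihilated by every $x_{i,m}^+$ with $m\geq 0$ while surviving some negative mode, and finite-dimensionality of weight spaces alone does not exclude this.

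The missing ingredient---which is what the proofs in \cite{HJ} and \cite{H} actually supply---is a rationality statement for the currents on category-$\mathcal{O}$ modules: finite-dimensional weight spaces force $x_i^{\pm}(z)$ to act on each weight vector by expansions of rational functions, so the negative modes are recovered from the positive ones, and a subspace stable under $\jmath_{\mu}(\Borel)$ is then stable under all of $\CU_{\mu}(\Gaff)$. Without proving something of this kind, Steps 2 and 3 do not go through. Two further points: at the end of Step 1 the universal property yields a surjection from the Verma module $M(\Be)$ onto $\jmath_{\mu}(\Borel)\omega$, not from $L(\Be)$ (harmless once irreducibility is established, but backwards as stated); and in your deduction of (i), a rational $\Be$ lies in $\mathfrak{r}_{\mu}$ for the unique coweight $\mu$ determined by the degrees at infinity of its components, which need not be antidominant (for instance $\Be=\Psi_{i,a}$), so ``choose antidominant $\mu$ with $\Be\in\mathfrak{r}_{\mu}$'' is not available in general, and the converse direction of (i) via ``Cayley--Hamilton'' is only a sketch of what is the substantial argument of \cite{HJ}.
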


Let $M$ be a weight graded $\Borel$-module whose weight spaces are finite-dimensional. Its {\it graded dual} $M^{\vee}$ is the vector space $\oplus_{\lambda \in \mathfrak{t}^*} M_{\lambda}^*$  with $\Borel$-action:
$$ \langle x f, v\rangle := \langle f, S(x) v \rangle \quad \mathrm{for}\ x \in \Borel,\ f \in M^{\vee},\ v \in M  $$
where $S: \Borel \longrightarrow \Borel$ is the antipode and $\langle, \rangle: M^{\vee} \times M \longrightarrow \BC$ is the natural pairing. From $S(k_i) = k_i^{-1}$ we obtain that $(M^{\vee})_{\lambda} = (M_{-\lambda})^*$ for $\lambda \in \mathfrak{t}^*$ and the graded dual is weight graded by finite-dimensional weight spaces.  Replacing $S$ with $S^{-1}$ we get on the same underlying space as $M^{\vee}$ another module structure, denoted by $M^{\wedge}$.

\begin{rem}  \label{rem: dual}
Let $\Be \in \mathfrak{r}$ be a rational $\ell$-weight. We have $\Borel$-module isomorphisms
$$ (L'(\Be))^{\wedge} \cong L(\Be^{-1}),\quad (L'(\Be))^{\vee}  \cong L(\tau_{q^{4\kappa}}(\Be)^{-1}). $$
The first isomorphism is \cite[Proposition 3.18]{HJ}, and the second follows from Eq.\eqref{Drinfeld rel: antipode} and the observation that  $M^{\vee}$ is the pullback module $(S^2)^* (M^{\wedge})$.
\end{rem}

Recall from Definition \ref{defi: prefund} the set $\mathfrak{d}$ of monomials in the prefundamental $\ell$-weights $\Psi_{i,a}$ for $i \in I$ and $\BC^{\times}$. For $\Bp \in \mathfrak{d}$ and $(i, s) \in I \times \BZ_{>0}$ let $\lambda_{i,s}^{\Bp}$ denote the eigenvalue of $h_{i,s}$ on a highest $\ell$-weight vector of the irreducible $\Borel$-module $L(\Bp)$, namely,
\begin{equation}  \label{def: lambda}
    \lambda_{i,s}^{\Bp} := \frac{c_1^s + c_2^s + \cdots + c_k^s}{s(q^{-1}-q)} \quad \mathrm{if}\ \Bp_i(z) = (1-zc_1)(1-zc_2) \cdots (1-zc_k).
\end{equation}
We record the following additional $\BN$-grading on the $\Borel$-modules $L(\Bp)$ and $L'(\Bp)$, obtained first in \cite[Theorem 6.1]{FH} for $\Psi_{i,a}$ and later in \cite[Proposition 5.11]{FJMM}. 

\begin{theorem}\cite{FH, FJMM}  \label{thm: polynomiality prefund}
Let $\zeta$ be a coweight, $\Bp \in \mathfrak{d}_{\zeta}$ and $M$ be the $\Borel$-module $L'(\Bp)$ with $\omega$ a lowest $\ell$-weight vector. The exists a $\BN$-grading $M = \oplus_{n\in \BN} M_n$ such that
\begin{itemize}
\item[(i)] each $M_n$ is stable by the $k_i^{\pm 1}$ for $i \in I$ and $M_0 = \BC \omega$;
\item[(ii)] for $\beta \in \Phi,\ i \in I,\ s >0$ and $t \geq 0$ we have (set $M_n = \{0\}$ for $n<0$)
\begin{gather*}
E_{s\delta-\beta} M_n \subset M_{n-s}, \quad (h_{i,s} - \lambda_{i,s}^{\Bp}) M_n \subset M_{n-s},  \quad E_{t\delta+\beta} M_n \subset \sum_{p=0}^{\langle\zeta,\beta \rangle} M_{n-t+p}.
\end{gather*}
\end{itemize}
Similar $\BN$-grading exists for $M = L(\Bp)$, by replacing $M_{n-s}$ and $M_{n-t+p}$ respectively with $M_{n+s}$ and $M_{n+t-p}$ at the right-hand sides of the above relations.  
\end{theorem}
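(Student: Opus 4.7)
The plan is to reduce to the prefundamental case $\Bp=\Psi_{j,a}$ and then pass to a general $\Bp=\Psi_{j_1,a_1}\cdots\Psi_{j_N,a_N}\in\mathfrak{d}_\zeta$ via the tensor product $T := L^{(1)}\otimes\cdots\otimes L^{(N)}$ with $L^{(k)} := L'(\Psi_{j_k,a_k})$, equipped with the iterated Drinfeld--Jimbo coproduct \eqref{def: coproduct DJ}. The module $T$ is lowest $\ell$-weight of lowest weight $\Bp$ with cyclic generator $\omega^{(1)}\otimes\cdots\otimes\omega^{(N)}$ (cyclicity in category $\mathcal{O}$ follows from a standard argument, passing to the cyclic submodule in non-generic cases). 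Equip each $L^{(k)}$ with the prefundamental grading of the theorem, so that $T$ carries the tensor-product grading $T_n := \bigoplus_{n_1+\cdots+n_N=n} L^{(1)}_{n_1}\otimes\cdots\otimes L^{(N)}_{n_N}$, which an intrinsic argument (either splitting the induced filtration by a diagonalizable grading operator, or a Rees-algebra construction) transfers to $L'(\Bp)$. The shift $\langle\zeta,\beta\rangle=\sum_{k}\langle\varpi_{j_k}^\vee,\beta\rangle$ in (ii) arises as the sum of prefundamental shifts. The case of $L(\Bp)$ is parallel throughout; the duality of Remark \ref{rem: dual} does not directly identify the two because $\Bp^{-1}\notin\mathfrak{d}$.

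For the prefundamental module $L'(\Psi_{j,a})$ I would use the inductive limit realization of \cite{HJ}: take $\lim_{k\to\infty}(W_{k,a_k}^{(j)})^{\mathrm{norm}}$ of suitably renormalized Kirillov--Reshetikhin modules along morphisms identifying their lowest weight vectors. The resulting ``depth from the lowest weight vector'' grading is invariant under $k_i^{\pm 1}$ and satisfies $M_0=\BC\omega$. The three inclusions of (ii) can then be verified on each finite-dimensional $W_{k,a_k}^{(j)}$ and passed to the limit: the deviation $h_{j,s}-\lambda_{j,s}^{\Psi_{j,a}}$ lowers depth by $s$ because the eigenvalue of $h_{j,s}$ on the $k$th lowest weight stabilizes to $\lambda_{j,s}^{\Psi_{j,a}}$ with a subleading correction of depth-order $s$; the negative-degree vectors $E_{s\delta-\beta}$ annihilate $\omega$ and act as lowering operators of depth $s$ in the $\hat{sl}_2^{(j)}$-oscillator Fock structure; and positive-degree vectors $E_{t\delta+\beta}$ lower depth by $t$ with a subleading increase bounded by the multiplicity of $\alpha_j$ in $\beta$, which equals $\langle\varpi_j^\vee,\beta\rangle$ -- again a $q$-oscillator bound coming from the $\hat{sl}_2^{(j)}$-restriction.

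The main obstacle is an adequately precise coproduct estimation of the positive real-root vectors $E_{t\delta+\beta}$ inside $\Borel\otimes\Borel$, needed to propagate the depth bound from the individual factors of $T$ to the whole. Since no closed formula for $\Delta(E_{t\delta+\beta})$ is available, I would proceed by induction on the height of $\beta$: the base case $\beta=\alpha_i$, $t=0$ is the primitivity of $e_i$; the case $\beta=\alpha_i$, $t>0$ follows from the quasi-primitivity $\Delta(h_{i,s})\equiv h_{i,s}\otimes 1+1\otimes h_{i,s}$ modulo positive-degree terms together with Eq.\eqref{Drinfeld rel: h x}; and the inductive step uses the Drinfeld relations \eqref{Drinfeld rel: Drinfeld} to express $E_{t\delta+\beta}$ as a $q$-commutator of lower-height root vectors. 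Throughout one tracks the extra summands $\phi_\gamma^+\otimes E_{t'\delta+\gamma'}$ in $\Delta(E_{t\delta+\beta})$; these are precisely what furnish the shift $\langle\zeta,\beta\rangle$ in (ii) upon transfer to $T$. This reproduces the estimates of \cite[Theorem 6.1]{FH} and \cite[Proposition 5.11]{FJMM}.
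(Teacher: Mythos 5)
First, note that the paper does not prove this statement at all: it is imported verbatim from \cite[Theorem 6.1]{FH} (for the prefundamental case) and \cite[Proposition 5.11]{FJMM} (for general $\Bp\in\mathfrak{d}$), so the only meaningful comparison is with those references. Measured against them, your reduction step contains a genuine gap. Equipping $T=L'(\Psi_{j_1,a_1})\otimes\cdots\otimes L'(\Psi_{j_N,a_N})$ with the tensor-product grading does \emph{not} give the inclusions of (ii), so there is nothing of the required strength to transfer to $L'(\Bp)$. The point is that $h_{i,s}$ and the $E_{s\delta-\beta}$ are only quasi-primitive: by Proposition \ref{prop: twist} the correction terms of $\Delta(h_{i,s})$ lie in $\Borel_{-\gamma}\otimes\Borel_{\gamma}$ with $\gamma\in\BQ_>$, and on a tensor product of lowest $\ell$-weight modules the positive-weight second leg can \emph{raise} the degree. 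Concretely, for $\Glie=sl_2$ and $\Bp=\Psi_{1,a}\Psi_{1,b}$, the $\gamma=\alpha_1$ correction of $\Delta(h_{1,1})$ is a nonzero multiple of $x_{1,1}^-\otimes x_{1,0}^+$ (up to Cartan factors; for weight and degree reasons this is the only possible shape inside $\Borel\otimes\Borel$), and in the bases of Example \ref{example: prefund sl2} it sends $v_j\otimes v_k$ to a nonzero multiple of $v_{j-1}\otimes v_{k+1}$, which has the \emph{same} total degree; no other term of $\Delta(h_{1,1})$ has this bi-weight, so $(h_{1,1}-\lambda_{1,1}^{\Bp})T_n\not\subset T_{n-1}$. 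Thus the grading asserted by the theorem on $L'(\Bp)$ is not the restriction of the tensor grading, and your proposed ``transfer'' cannot manufacture it; moreover, since no element of $\Borel$ implements the grading, submodules and subquotients of a graded module are not automatically graded, so even the passage from $T$ to the cyclic submodule and its irreducible quotient needs an argument you have not supplied. This is exactly where \cite{FJMM} has to work with a different, intrinsically defined degree on $L'(\Bp)$ rather than with the naive tensor grading.

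The prefundamental case is also not established by your sketch: the assertions that on each Kirillov--Reshetikhin module the ``depth'' is lowered by $s$ under $E_{s\delta-\beta}$ and under $h_{j,s}-\lambda_{j,s}$, and raised by at most $\langle\varpi_j^{\vee},\beta\rangle$ under $E_{t\delta+\beta}$, via an ``$\hat{sl}_2^{(j)}$-oscillator Fock structure,'' are precisely the content of \cite[Theorem 6.1]{FH}, whose proof is a substantial analysis of the Hernandez--Jimbo limit construction, not a formal oscillator bound; stating the conclusion on $W^{(j)}_{k,a}$ and ``passing to the limit'' is circular as written. Likewise, the coproduct estimates for all positive real root vectors cannot be obtained by induction on height from Eq.\eqref{Drinfeld rel: Drinfeld} alone, because $E_{t\delta+\beta}$ for non-simple $\beta$ is defined through Beck's braid-group construction \cite{Beck}; the usable estimates are those of Damiani-type quoted in Proposition \ref{prop: twist} and Lemma \ref{lem: coproduct estimation}, and even with them the tensor-grading obstruction above persists. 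In short, the proposal reproduces the statements of \cite{FH,FJMM} rather than a proof of them, and its reduction of general $\Bp$ to the prefundamental case fails at the first nontrivial coproduct correction.
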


\begin{example}  \label{example: prefund sl2} \cite{HJ} 
Let $\mathfrak{g} = sl_2$ so that $\kappa = 1$. On the infinite-dimensional vector space $\oplus_{j\in \BN} \BC v_j$ is realized the lowest $\ell$-weight $\Borel$-module $L'(\Psi_{1,1})$ with action $(n\in \BN)$
\begin{gather*}
x_{1,n}^+ v_j = \delta_{n0} v_{j+1},\quad x_{1,n+1}^- v_j = \delta_{n0} \frac{(j)_q}{q-q^{-1}}  v_{j-1}, \quad \phi_1^+(z) v_j = q^{2j}(1-z) v_j.
\end{gather*}  
On the same space is realized the highest $\ell$-weight $\Borel$-module $L(\Psi_{1,1})$:
\begin{gather*}
x_{1,n}^+ v_j = \delta_{n0} v_{j-1},\quad x_{1,n+1}^- v_j = \delta_{n0} \frac{(j+1)_{q^{-1}}}{q^{-1}-q} v_{j+1},\quad \phi_1^+(z) v_j = q^{-2j} (1-z) v_j.
\end{gather*}
 By setting $v_j$ to be of degree $j$, we get the $\BN$-grading of Theorem \ref{thm: polynomiality prefund}.
The graded dual $L'(\Psi_{1,1})^{\vee}$ is the highest $\ell$-weight $\Borel$-module $L(\Psi_{1,q^{-4}}^{-1})$ with action 
\begin{gather*} 
   \phi_1^+(z) v_j^* = q^{-2j}\frac{1-zq^{-2}}{(1-zq^{-2-2j})(1-zq^{-4-2j})} v_j^*, \\
    x_{1,m}^+ v_j^* = q^{-2m-2jm}(-q^{-2j}) v_{j-1}^*,  \quad x_{1,m}^- v_j^* = q^{-4m-2jm} \frac{q-q^{2j+3}}{(q-q^{-1})^2} v_{j+1}^*.
\end{gather*}
This also defines the $\CU_{-\varpi_1^{\vee}}(\Gaff)$-module $L_{-\varpi_1^{\vee}}(\Psi_{1,q^{-4}}^{-1})$.
\end{example}

\subsection{The universal R-matrix}  
The quantum loop algebra $\qaf$ is quasi-triangular as a Hopf algebra. 
We review basic properties of the universal R-matrix.

Recall the inverse $(\widetilde{B}_{ij}(q))_{i,j\in I}$ of the symmetric quantum Cartan matrix $([b_{ij}]_q)_{i,j\in I}$ and the Drinfeld--Cartan elements $h_{i,s}$ from Eq.\eqref{def: h}. The {\it abelian part} and the two {\it triangular parts} of the universal R-matrix are power series in $z$ defined as an exponential and two ordered infinite products of $q$-exponentials \cite{Damiani} 
\begin{align}  
    \CR_0(z) &:= \exp \left(\sum_{i,j\in I}\sum_{s>0} \frac{ s(q^{-1}-q)\widetilde{B}_{ij}(q^s)}{[s]_q} h_{i,s} \otimes h_{j,-s} z^s \right),  \label{def: R0}  \\
    \CR_{\pm}(z) &:= \prod_{n \in \BN,\ \beta \in \BQ:\ n\delta+\beta \in \hat{\Phi}_{\pm}}^{\prec} \exp_{q_{\beta}} ((q_{\beta}^{-1}-q_{\beta})   E_{n\delta+\beta} \otimes F_{n\delta+\beta} z^n). \label{def: R+-}
\end{align}
 Here $q_{\beta} := q^{\frac{(\beta,\beta)}{2}}$ for $\beta \in \BQ$ and the $q$-exponential is defined by $ \exp_q(x) :=  \sum_{n\geq 0} \frac{1}{(n)_q!} x^n$. The abelian part $\CR_0(z)$ and the lower triangular part $\CR_-(z)$ have coefficients in the ordinary tensor product algebra  $\Borel \otimes \lBorel$. The upper triangular part $\CR_+(z)$ has coefficients in a completion of $\Borel \otimes \lBorel$ with respect to the weight grading:
$$ \Borel \stimes \lBorel := \sum_{\alpha,\beta\in\BQ} \left( \prod_{\gamma \in \BQ_+} \Borel_{\alpha+\gamma} \otimes \lBorel_{\beta-\gamma} \right) \subset \prod_{\alpha,\beta \in \BQ} \Borel_{\alpha} \otimes \lBorel_{\beta}. $$
 It is an algebra and contains $\Borel \otimes \lBorel$ as a subalgebra by setting $\gamma = 0$. More generally, $M \stimes N$ makes sense for $\BQ$-graded vector spaces $M$ and $N$. 
 
The {\it reduced part} of the universal R-matrix is given by the factorization \cite{Damiani}:
\begin{equation} \label{def: R bar}
\barR(z) := \CR_+(z) \CR_0(z) \CR_-(z) \in (\Borel \stimes \lBorel)[[z]] 
\end{equation}
We need a further completion of $\Borel \stimes \lBorel$, called $q$-completion, by adding an invertible element $q^{t_{\infty}}$, whose inverse is denoted by $q^{-t_{\infty}}$, such that for $\beta, \gamma \in \BQ$:
\begin{equation}  \label{def: t infinity}
q^{-t_{\infty}} (x \otimes y) q^{t_{\infty}} = q^{-(\beta,\gamma)} x k_{-\gamma} \otimes y k_{-\beta} \quad \textrm{if $x \in \Borel_{\beta}$ and $y \in \lBorel_{\gamma}$}.
\end{equation}
See the paragraph above Proposition \ref{prop: root vectors} for the definition of $k_{-\gamma}$ and $k_{-\beta}$.

The Drinfeld--Jimbo coproduct for the Drinfeld--Cartan elements $h_{i,s}$ can be computed in terms of $\CR_{\pm}(z)$ by a twistor construction. Let $\sigma: x \otimes y \mapsto y \otimes x$ be the flip map and $\Delta^{\mathrm{cop}} := \sigma \circ \Delta$ be the opposite coproduct. Recall $\BQ_+ = \sum_{i\in I} \BN \alpha_i$ and $\BQ_> = \BQ_+ \setminus \{0\}$. 
\begin{prop}\cite{Damiani, KT, EKP} \label{prop: twist}
Let $i \in I$ and $s \neq 0$ we have
\begin{gather*}
(\mathrm{Id} \otimes \tau_z) \circ \Delta^{\mathrm{cop}}(h_{i,s}) = \CR_+(z) \times (h_{i,s} \otimes 1 + z^{-s} \otimes h_{i,s}) \times \CR_+(z)^{-1}, \\
(\mathrm{Id} \otimes \tau_z) \circ \Delta(h_{i,s}) = q^{t_{\infty}}\CR_-(z)^{-1} \times (h_{i,s} \otimes 1 +z^{-s} \otimes h_{i,s}) \times \CR_-(z)q^{-t_{\infty}}, \\
\Delta(h_{i,s}) \equiv 1 \otimes h_{i,s} + h_{i,s}\otimes 1 \ \mathrm{mod}. \sum_{\beta \in \BQ_>} \qaf_{-\beta} \otimes \qaf_{\beta}.
\end{gather*}
\end{prop}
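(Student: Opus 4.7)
My plan is to derive all three identities from the quasi-triangular structure of the quantum loop algebra, following Damiani, Khoroshkin--Tolstoy, and Enriquez--Khoroshkin--Pakuliak. Damiani's construction shows that, after insertion of the spectral parameter via $\mathrm{Id}\otimes\tau_z$, the reduced universal R-matrix $\barR(z) = \CR_+(z)\CR_0(z)\CR_-(z)$ supplemented by the Cartan factor $q^{t_\infty}$ satisfies the intertwining identity
\begin{equation*}
\barR(z) q^{t_\infty} \cdot (\mathrm{Id}\otimes\tau_z)\Delta(x) = (\mathrm{Id}\otimes\tau_z)\Delta^{\mathrm{cop}}(x) \cdot \barR(z) q^{t_\infty}
\end{equation*}
in the $q$-completion of $\Borel \stimes \lBorel$, for every $x \in \qaf$. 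All three parts of the proposition will be obtained by specializing this relation at $x = h_{i,s}$.

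For the third (quasi-primitivity) statement I would work directly from the Drinfeld realization. The standard coproduct estimation
\begin{equation*}
\Delta(\phi_i^\pm(z)) \equiv \phi_i^\pm(z) \otimes \phi_i^\pm(z) \pmod{\textstyle\sum_{\beta\in\BQ_>} \qaf_{-\beta}\otimes \qaf_\beta}
\end{equation*}
in the quantum loop setting is the exact analog of Lemma \ref{lem: Yangian coproduct estimation} and can be proved in the same way. Taking logarithms in the Drinfeld--Cartan subalgebra, where the $\phi_i^\pm(z)$ are exponentials of the $h_{i,\pm s}$ by Eq.\eqref{def: h}, and reading off the coefficient of $z^s$ yield the desired congruence for $h_{i,s}$; since $h_{i,s}$ has weight $0$, any correction automatically splits across components $\qaf_{-\beta}\otimes \qaf_\beta$ with $\beta \in \BQ_>$.

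For the first two identities I would use that $h_{i,s}$, lying in the Drinfeld--Cartan subalgebra, commutes with $\CR_0(z)$ and with $q^{t_\infty}$, and that its commutator with the Drinfeld root vectors is Eq.\eqref{Drinfeld rel: h x}. A direct differentiation inside the $q$-exponentials that define $\CR_+(z)$, grouped by affine positive root $n\delta+\beta \in \hat{\Phi}_+$, shows that conjugation by $\CR_+(z)$ sends $h_{i,s}\otimes 1 + z^{-s}\otimes h_{i,s}$ to itself plus a sum of terms lying in $\qaf^+_{\beta}\otimes \qaf^-_{-\beta}$ for $\beta \in \BQ_>$; a symmetric statement holds for $\CR_-(z)$ with $\qaf^-_{-\beta}\otimes \qaf^+_\beta$. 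Plugging the quasi-primitive expression from the previous step into the intertwining relation and projecting onto the weight decomposition then identifies $(\mathrm{Id}\otimes\tau_z)\Delta^{\mathrm{cop}}(h_{i,s})$ with the $\CR_+$-conjugate of the primitive element (first identity), and $(\mathrm{Id}\otimes\tau_z)\Delta(h_{i,s})$ with the $q^{t_\infty}\CR_-^{-1}$-conjugate, up to the $q^{-t_\infty}$ factor on the right (second identity), the Cartan twist arising from the $q^{t_\infty}$ appearing in the universal R-matrix.

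The main obstacle will be the bookkeeping of completions. The infinite ordered products defining $\CR_\pm(z)$ make sense only in the $q$-completion of $\Borel \stimes \lBorel$; both $\CR_\pm(z)$ and $\CR_\pm(z)^{-1}$ must be handled with care, as must the conjugation by $q^{t_\infty}$ through Eq.\eqref{def: t infinity}. One must verify that the right-hand sides of the two identities belong to this completion and that the formal intertwining manipulations remain valid there, which is where most of the technical work lies.
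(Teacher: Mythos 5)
The paper does not actually prove this proposition: it cites \cite[Prop.\ 3.8]{EKP}, \cite[Thm.\ 8.1]{KT} for the first two identities and \cite[Prop.\ 7.1]{Damiani} for the third. Your attempt to give a real derivation is therefore a different route, and its skeleton is sound. Indeed, writing $X:=(\mathrm{Id}\otimes\tau_z)\Delta(h_{i,s})$, $Y:=(\mathrm{Id}\otimes\tau_z)\Delta^{\mathrm{cop}}(h_{i,s})$ and $D:=h_{i,s}\otimes 1+z^{-s}\otimes h_{i,s}$, the third statement gives $X=D+L$ with $L$ strictly lower triangular (components in $\qaf_{-\beta}\otimes\qaf_{\beta}$, $\beta\in\BQ_>$) and, after flipping, $Y=D+U$ with $U$ strictly upper triangular. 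Rearranging the quasi-triangularity $Y\,\CR(z)=\CR(z)\,X$ with $\CR(z)=\CR_+(z)\CR_0(z)\CR_-(z)q^{-t_{\infty}}$ yields $\CR_+(z)^{-1}Y\CR_+(z)=\CR_0(z)\,\CR_-(z)q^{-t_{\infty}}Xq^{t_{\infty}}\CR_-(z)^{-1}\,\CR_0(z)^{-1}$; the left side is $D$ plus strictly upper terms, the right side is $D$ plus strictly lower terms (conjugation by $\CR_\pm$, $\CR_0$ and $q^{\pm t_{\infty}}$ preserves these shapes), so both corrections vanish and the two displayed formulas follow. This vanishing argument is exactly what your phrase "projecting onto the weight decomposition" conceals, and it must be spelled out, since the intertwining relation alone is a single equation and does not "identify" either side without the opposite-triangularity input.

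Two concrete defects to fix. First, your displayed intertwining relation carries $\barR(z)q^{t_{\infty}}$, whereas the paper's convention is $\CR(z)=\barR(z)q^{-t_{\infty}}$; carried through literally, your version produces $q^{-t_{\infty}}\CR_-(z)^{-1}(\cdots)\CR_-(z)q^{t_{\infty}}$ in the second identity, i.e.\ the Cartan twist on the wrong sides. Second, your treatment of the third statement is not genuinely a proof: the congruence $\Delta(\phi_i^{\pm}(z))\equiv\phi_i^{\pm}(z)\otimes\phi_i^{\pm}(z)$ modulo $\sum_{\beta\in\BQ_>}\qaf_{-\beta}\otimes\qaf_{\beta}$ is essentially Damiani's Proposition 7.1, the very reference the paper gives for this part, and it cannot be obtained "in the same way" as the Yangian Lemma \ref{lem: Yangian coproduct estimation} without the Beck--Damiani root-vector machinery (Knight's Yangian argument has no direct quantum-affine analogue). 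Granting that input, your logarithm extraction is fine: since the correction to $\Delta(\phi_i^{\pm}(z))$ is strictly lower triangular, no product involving it can contribute to the weight-$(0,0)$ component, so $\Delta(h_{i,s})$ inherits the stated congruence; but as written you have only reduced the cited statement to an equally deep cited statement. The completion issues you flag are real but routine, since all manipulations take place in the $q$-completion where the factorization of $\CR(z)$ already lives.
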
 
\begin{proof}
The first two equations are \cite[Proposition 3.8]{EKP} and \cite[Theorem 8.1]{KT}. Their right-hand sides are Laurent seires in $z$ with coefficients in the completed algebra $\qaf \stimes \qaf$.
The third relation follows from \cite[Proposition 7.1]{Damiani}.
\end{proof}
The full universal R-matrix is a power series in $z$ with coefficients in the $q$-completion:
\begin{equation} \label{def: R}
\CR(z) := \barR(z) q^{-t_{\infty}}.
\end{equation}
In what follows, we shall evaluate the first tensor factor of $\CR(z)$ at a $\Borel$-module $N$. To make sense of the $q$-completion at the level of representations, we need to put extra conditions on $N$. The following notion of root grading is weaker than the Yangian case in Definition \ref{defi: root graded Yangian} but suffices for our purpose.
\begin{defi} \label{defi: root graded}
A module over $\Borel$ is called {\it root graded} if it is weight graded and all its weights belong to $\BQ$.  Positive/negative root graded modules are defined in the obvious way using $\BQ_{\pm}$. If $N$ is a root graded $\Borel$-module and $V$ is a $\lBorel$-module, then define the linear operators $q^{\pm t_{\infty}}$ on $N\otimes V$ by
$$q^{\pm t_{\infty}}(u \otimes v) := u \otimes k_{\beta}^{\pm 1}(v) \qquad \textrm{for $\beta \in \BQ,\ u \in N_{\beta}$ and $v \in V$.} $$ 
\end{defi}
The above definition is made so that Eq.\eqref{def: t infinity} holds true as operators on $N \otimes V$. 

Given a $\qaf$-module $V$, one attaches as in Definition \ref{defi: deformed module} its deformed module structure $V_z$ on $V[z,z^{-1}]$.
Let $N$ be a $\Borel$-module. Then the $\Borel$-module $N \otimes V_z$ is the pullback of $N \otimes V[z,z^{-1}]$, viewed as a $\Borel\otimes \qaf[z,z^{-1}]$-module by scalar extension, along the algebra homomorphism 
$$ (\mathrm{Id} \otimes \tau_z) \Delta: \Borel \longrightarrow \Borel \otimes \qaf[z,z^{-1}].  $$
The space $(N \otimes V)((z))$ of Laurent series being a module over $\Borel\otimes \qaf[z,z^{-1}]$ again by scalar extension, its pullback along the above algebra homomorphism defines a $\Borel$-module, denoted by $\overline{N \otimes V_z}$, which is a completion of the tensor product module $N \otimes V_z$.  Similarly, the $\Borel$-module $\overline{V_z\otimes N}$ is defined via $(\tau_z \otimes \mathrm{Id}) \Delta$. 

The next proposition follows from the quasi-triangularity property \cite[(4.5)]{FR0}
$$(\mathrm{Id} \otimes \tau_z) \circ \Delta^{\mathrm{cop}}(x) = \CR(z) \times (\mathrm{Id} \otimes \tau_z) \circ \Delta(x) \times \CR(z)^{-1} \quad \mathrm{for}\ x \in \Borel. $$
\begin{prop}\cite[(4.10)]{FR0} \label{prop: R-matrix general}
Let $N$ be a negative root graded $\Borel$-module and $V$ be a $\qaf$-module. Then $\CR(z)$ sends $N \otimes V$ to $(N\otimes V)[[z]]$ and its composition with the flip map $\sigma$ induces a $\BC((z))$-linear isomorphism of $\Borel$-modules
$$ \check{R}_{N,V}(z) =  \sigma\circ \CR(z)|_{N\otimes V}: \overline{N \otimes V_z} \longrightarrow \overline{V_z \otimes N}. $$
\end{prop}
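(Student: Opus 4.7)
The argument splits into three tasks: well-definedness of $\CR(z)$ as a linear operator on $N\otimes V$ with values in $(N\otimes V)[[z]]$, its $\BC((z))$-linear invertibility, and the module intertwining property after composing with the flip. I rely throughout on the factorization $\CR(z) = \CR_+(z)\CR_0(z)\CR_-(z) q^{-t_\infty}$ from Eqs.~\eqref{def: R bar}--\eqref{def: R}.

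For well-definedness, the factors $\CR_0(z)$ and $\CR_-(z)$ lie in $(\Borel\otimes \lBorel)[[z]]$, so act termwise on $N\otimes V$ without trouble, and $q^{-t_\infty}$ is a genuine linear endomorphism of $N\otimes V$ because $N$ is root graded, by Definition~\ref{defi: root graded}. The only delicate factor is $\CR_+(z)$, whose coefficients live in the weight-graded completion $\Borel\stimes \lBorel$. For $v\in N_\lambda$ with $\lambda\in -\BQ_+$ and $u\in V$, the weight-$\gamma$ component of $\CR_+(z)$ with $\gamma\in \BQ_+$ carries $v\otimes u$ into $N_{\lambda+\gamma}\otimes V[[z]]$. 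Negative root gradedness of $N$ forces $\lambda+\gamma\in -\BQ_+$, hence $\gamma\in (-\lambda-\BQ_+)\cap \BQ_+$, which is a finite set. Consequently $\CR_+(z)(v\otimes u)$ reduces to a finite sum of weight components in $(N\otimes V)[[z]]$, as required.

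Extending $\BC((z))$-linearly to $(N\otimes V)((z))$, each factor in the factorization is invertible: the triangular pieces by formal inversion of $1$ plus higher-order contributions, $\CR_0(z)$ by inverting its exponential expression, and $q^{-t_\infty}$ by its companion $q^{t_\infty}$. Therefore $\CR(z)$ is $\BC((z))$-linear and invertible on $(N\otimes V)((z))$. For the intertwining property, the defining quasi-triangularity identity of \cite{FR0} reads
$$(\mathrm{Id}\otimes \tau_z)\Delta^{\mathrm{cop}}(x)\cdot \CR(z) \;=\; \CR(z)\cdot (\mathrm{Id}\otimes \tau_z)\Delta(x) \qquad \text{for } x\in \Borel.$$
Read inside $\mathrm{End}\bigl((N\otimes V)((z))\bigr)$, the right-hand side is the $\Borel$-action defining $\overline{N\otimes V_z}$ postcomposed with $\CR(z)$. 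Composing both sides with $\sigma$ on the target and using the tautology $\sigma\circ (\mathrm{Id}\otimes \tau_z)\Delta^{\mathrm{cop}}(x) = (\tau_z\otimes \mathrm{Id})\Delta(x)\circ \sigma$, the left-hand side becomes $\check R_{N,V}(z)$ precomposed with the $\Borel$-action defining $\overline{V_z\otimes N}$. This yields the morphism property, and the isomorphism statement follows from invertibility established above.

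The main obstacle is bookkeeping the weight completions so that each factor in $\CR(z)$ acts legitimately: $q^{-t_\infty}$ requires root gradedness, while $\CR_+(z)$ demands the sharper negative-root-graded hypothesis in order that only finitely many weight components contribute at each power of $z$. Once this formal framework is in place, the remainder is a direct transcription of the universal quasi-triangularity identity; the real content lies in the choice of completions $\overline{N\otimes V_z}$ and $\overline{V_z\otimes N}$, which are designed precisely to receive the formal series produced by $\CR(z)$ and $\CR(z)^{-1}$.
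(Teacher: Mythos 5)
Your proposal is correct and follows essentially the same route as the paper: the paper likewise reduces everything to the quasi-triangularity identity of \cite{FR0} together with the observation that the negative root grading of $N$ is exactly what makes $\CR_+(z)$ (and hence $\CR(z)$ and its inverse, via the factorization through $q^{-t_\infty}$) act with values in $(N\otimes V)[[z]]$. Your extra bookkeeping of weight components and of the flip identity $\sigma\circ(\mathrm{Id}\otimes\tau_z)\Delta^{\mathrm{cop}}(x)=(\tau_z\otimes\mathrm{Id})\Delta(x)\circ\sigma$ just makes explicit what the paper leaves implicit.
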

 The assumption \lq\lq negative" implies that $\CR_+(z)$ maps $N \otimes V$ to $(N\otimes V)[[z]]$. 
 
\subsection{Monodromy matrices}  \label{ss: monodromy matrix}
In this subsection as a modification of Proposition \ref{prop: R-matrix general} we evaluate the first tensor factor of the universal R-matrix at more general root graded representations of the upper Borel subalgebra. The main examples we will study later will be highest $\ell$-weight modules and lowest $\ell$-weight modules. 

\begin{defi}\cite[(4.32)]{FR0} \label{defi: monodromy matrix}
Let $M$ be a $\Borel$-module which is root graded by finite-dimensional weight spaces and equipped with a weight basis $\mathcal{B}$. The {\it monodromy matrix} associated to $M$ is the matrix $(t_{b_1,b_2}(z))_{b_1,b_2 \in \mathcal{B}}$ with entries in $\lBorel[[z]]$ such that 
 \begin{equation} \label{R: monodromy matrix}
\CR(z) (b_2 \otimes 1) = \sum_{b_1 \in \mathcal{B}} b_1 \otimes t_{b_1,b_2}(z)  \quad \mathrm{for}\ b_2 \in \mathcal{B}. 
\end{equation}
\end{defi}
To make sense of the definition, consider the completed tensor product
$$ M \stimes \lBorel = \sum_{\alpha,\beta\in\BQ} \left( \prod_{\gamma \in \BQ_+} M_{\alpha+\gamma} \otimes \lBorel_{\beta-\gamma} \right) \subset \prod_{\alpha,\beta \in \BQ} M_{\alpha} \otimes \lBorel_{\beta}. $$
It is a module over the completed algebra $\Borel \stimes \lBorel$ whose action is induced by the $\Borel$-module structure on $M$ and the regular representation of $\lBorel$. Furthermore, Definition \ref{defi: root graded} equips it with an action of $q^{\pm t_{\infty}}$. Therefore the full universal R-matrix $\CR(z)$ acts on $(M \stimes \lBorel)[[z]]$. For $b \in \mathcal{B}$ let $\wt(b) \in \BQ$ denote its weight. Since $\barR(z)$ is of total weight zero and each weight space of $M$ is finite-dimensional, both sides of Eq.\eqref{R: monodromy matrix} lie in the following subspace of $(M \stimes \lBorel)[[z]]$:
\begin{align*}
(\prod_{\alpha \in \BQ} M_{\alpha+\wt(b_2)} \otimes \lBorel_{-\alpha})[[z]] =\prod_{\alpha \in \BQ} M_{\alpha+\wt(b_2)} \otimes  \lBorel_{-\alpha}[[z]].
\end{align*}
In particular $t_{b_1,b_2}(z)$ belongs to $\lBorel_{\wt(b_2)-\wt(b_1)}[[z]]$. 

The coproduct of the monodromy matrix takes the usual matrix form \cite[(4.34)]{FR0}
\begin{equation}  \label{R: coproduct monodromy} 
\Delta(t_{b_1,b_2}(z)) = \sum_{b_3 \in \mathcal{B}} t_{b_3,b_2}(z) \otimes t_{b_1,b_3}(z) \in (\lBorel \otimes \lBorel)[[z]].
\end{equation}
 This is a consequence of the quasi-triangularity property \cite[(4.5)]{FR0}
$$ (\mathrm{Id} \otimes \Delta)(\mathcal{R}(z)) = \mathcal{R}_{13}(z) \mathcal{R}_{12}(z) $$
where $\CR_{12}(z) := \CR(z) \otimes 1$ and $\CR_{13}(z) := (\mathrm{Id} \otimes \sigma)(\CR_{12}(z))$.

Consider the graded dual module $M^{\vee}$. For $b \in \mathcal{B}$, let $b^{\vee}$ be  the linear form $b_1 \mapsto \delta_{b_1b}$ on $M$.  Then $\mathcal{B}^{\vee} := \{b^{\vee}: b \in \mathcal{B}\}$ forms a basis of the module $M^{\vee}$ and the weight of $b^*$ is opposite to the weight of $b$. Therefore the module $M^{\vee}$ is root graded by finite-dimensional weight spaces. Applying Definition \ref{defi: monodromy matrix} to $(M^{\vee}, \mathcal{B}^{\vee})$ we obtain another matrix  $(t_{b_1,b_2}^{\vee}(z))_{b_1,b_2 \in \mathcal{B}}$ with entries in $\lBorel[[z]]$ such that for $b_2 \in \mathcal{B}$:
\begin{equation}  \label{R: dual monodromy}
\CR(z) (b_2^{\vee} \otimes 1) = \sum_{b_1 \in \mathcal{B}} b_1^{\vee} \otimes t^{\vee}_{b_1,b_2}(z)  \in (M^{\vee} \stimes \lBorel)[[z]]. 
\end{equation}
The monodromy matrix of $M^{\vee}$ can be computed alternatively \cite[(4.13)]{FR0} 
\begin{equation} \label{R: inverse monodromy}
\CR(z)^{-1} (b_2 \otimes 1) = \sum_{b_1 \in \mathcal{B}} b_1 \otimes t^{\vee}_{b_2,b_1}(z)\in (M \stimes \lBorel)[[z]].
\end{equation}
It is a consequence of the following quasi-triangularity property \cite[(4.12)]{FR0}
$$ (S \otimes \mathrm{Id})(\CR(z)) = \CR(z)^{-1} = q^{t_{\infty}} \CR_-(z)^{-1} \CR_0(z)^{-1} \CR_+(z)^{-1}. $$

Consider the graded dual $M^{\wedge}$.
For $b\in \mathcal{B}$ let $b^{\wedge} \in M^*$ be the linear form $b_1 \mapsto \delta_{b_1b}$. Then $\{b^{\wedge}: b \in \mathcal{B}\}$ forms a basis of $M^{\wedge}$. The above quasi-triangularity implies:
\begin{equation}  \label{R: inverse dual}
    \CR(z)^{-1} (b_2^{\wedge} \otimes 1) = \sum_{b_1 \in \mathcal{B}} b_1^{\wedge} \otimes t_{b_2,b_1}(z) \in (M^{\wedge} \stimes \lBorel)[[z]].
\end{equation}
\section{Theta series from monodromy matrices}  \label{sec: Theta}
In this section we define and study Theta series for the quantum loop algebra. These are obtained from the coproduct of the T-series introduced by Frenkel--Hernandez \cite{FH} in their study of Baxter's Q-operators for quantum integrable systems. The main result of this section is polynomiality of Theta series.

Recall the Drinfeld--Cartan elements $h_{i,s}$ for $i \in I$ and $s \in \BZ_{\neq 0}$ from Eq.\eqref{def: h} and the inverse $(\widetilde{B}_{ij}(q))_{i,j\in I}$ of the symmetric quantum Cartan matrix $([b_{ij}]_q)_{i,j\in I}$. The T-series $T_i(z)$, for $i \in I$, is an invertible power series in $z$ with coefficients in the lower Borel subalgebra  \cite[Proposition 5.5]{FH}:
\begin{equation}  \label{def: Ti}
T_i(z) := \exp\left(\sum_{s>0} \frac{\tilde{h}_{i,-s}}{[s]_q} z^s \right) \quad \mathrm{where}\ \tilde{h}_{i,-s} :=  \sum_{j\in I} \widetilde{B}_{ij}(q^s) h_{j,-s}. 
\end{equation}
We refer to the $\tilde{h}_{i,-s}$ for $i \in I$ and $s > 0$ as modified Drinfeld--Cartan elements. Actually the other half $\tilde{h}_{i,s}$ can be defined but they are not used in this paper.
\begin{lem}   \label{lem: T x h}
For $i, j \in I$ and $m \in \BZ$  the following relations hold in $\qaf[[z]]$:
\begin{gather*}
T_i(z) x_{j,m}^- T_i(z)^{-1} = x_{j,m}^- - \delta_{ij} z x_{j,m-1}^-, \quad T_i(z)^{-1} x_{j,m}^+ T_i(z) = x_{j,m}^+ - \delta_{ij} z x_{j,m-1}^+, \\
\exp\left((q^{-1}-q) \sum_{s>0} h_{i,-s} z^s \right)= \prod_{k\in I} \frac{T_k(zq^{-b_{ik}})}{T_k(zq^{b_{ik}})}.
\end{gather*}
\end{lem}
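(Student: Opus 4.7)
Plan: Both identities reduce to direct calculations using the commutation $[h_{i,s}, x_{j,m}^{\pm}] = \pm\frac{[b_{ij}s]_q}{s} x_{j,m+s}^{\pm}$ (for $s \in \BZ_{\neq 0}$) together with the inverse property $\widetilde B(q^s) B(q^s) = I$ of the symmetric quantum Cartan matrix. Substituting $s \to -s$ and using $[-n]_q = -[n]_q$ yields $[h_{k,-s}, x_{j,m}^{\pm}] = \pm\frac{[b_{kj}s]_q}{s} x_{j,m-s}^{\pm}$ for $s > 0$. Expanding $\tilde h_{i,-s} = \sum_k \widetilde B_{ik}(q^s) h_{k,-s}$, using the scaling $[b_{kj}s]_q = [s]_q [b_{kj}]_{q^s}$, and invoking $\sum_k \widetilde B_{ik}(q^s)[b_{kj}]_{q^s} = \delta_{ij}$ collapses the sum to
\[
[\tilde h_{i,-s}/[s]_q,\, x_{j,m}^{\pm}] = \pm\frac{\delta_{ij}}{s}\, x_{j,m-s}^{\pm}.
\]

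Packaging the modes in the generating series $x_j^{\pm}(w)$ and summing over $s > 0$ then gives $[\log T_i(z), x_j^{\pm}(w)] = \mp\,\delta_{ij}\log(1-zw)\cdot x_j^{\pm}(w)$, understood as a scalar series in $\BC[[z,w]]$ multiplying $x_j^{\pm}(w)$. Since $\log(1-zw)$ is central in $\qaf[[z,w]]$, iterated adjoints yield pure powers of this scalar, and exponentiation produces $T_i(z)^{\varepsilon}\, x_j^{\pm}(w)\, T_i(z)^{-\varepsilon} = (1-zw)^{\mp\varepsilon\delta_{ij}}\, x_j^{\pm}(w)$ for $\varepsilon \in \{\pm 1\}$. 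The sign combinations $(\varepsilon,\pm) = (+1,-)$ and $(-1,+)$, which are exactly those appearing in the lemma, both yield exponent $+\delta_{ij}$, so the factor is either $1$ (when $j\neq i$) or the linear polynomial $1-zw$ (when $j = i$). Extracting the coefficient of $w^m$ produces the two-term formulas $x_{j,m}^{\pm} - \delta_{ij}z\, x_{j,m-1}^{\pm}$.

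The product identity follows by taking logarithms:
\[
\log\prod_{k\in I}\frac{T_k(zq^{-b_{ik}})}{T_k(zq^{b_{ik}})} = \sum_{k\in I,\, s>0}\frac{\tilde h_{k,-s}}{[s]_q}\bigl(q^{-b_{ik}s} - q^{b_{ik}s}\bigr)z^s.
\]
Factoring $q^{-b_{ik}s}-q^{b_{ik}s} = -(q-q^{-1})[b_{ik}s]_q$ and using $[b_{ik}s]_q/[s]_q = [b_{ik}]_{q^s}$ converts the right-hand side to $(q^{-1}-q)\sum_{s>0} z^s \sum_k [b_{ik}]_{q^s}\tilde h_{k,-s}$. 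Expanding $\tilde h_{k,-s} = \sum_j \widetilde B_{kj}(q^s) h_{j,-s}$ and invoking $B(q^s)\widetilde B(q^s) = I$ in the form $\sum_k [b_{ik}]_{q^s}\widetilde B_{kj}(q^s) = \delta_{ij}$ collapses the double sum to $h_{i,-s}$, matching the exponent on the left-hand side of the claimed identity.

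The only book-keeping subtlety is that iterated adjoint actions of $\log T_i(z)$ on $x_j^{\pm}(w)$ must remain scalar multiples of $x_j^{\pm}(w)$; this is automatic because $\log(1-zw) \in \BC[[z,w]]$ is central in $\qaf[[z,w]]$, so no higher-order correction terms appear. No deeper obstacle is expected.
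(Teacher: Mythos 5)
Your argument is correct and is essentially the paper's own proof: you derive the same key commutator $[\tilde h_{i,-s}/[s]_q,\,x_{j,m}^{\pm}] = \pm\frac{\delta_{ij}}{s}x_{j,m-s}^{\pm}$ from $\widetilde B(q^s)B(q^s)=I$, exponentiate to get the factor $1-\delta_{ij}zw$ (the paper does this mode-by-mode via $\exp(-\sum_{s>0}\delta_{ij}z^s/s)=1-\delta_{ij}z$ rather than through the central scalar $\log(1-zw)$ acting on $x_j^{\pm}(w)$, a purely cosmetic difference), and prove the third identity from the inversion $h_{i,-s}=\sum_k[b_{ik}]_{q^s}\tilde h_{k,-s}$. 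Note only that your starting relation carries the sign $\pm$ in $[h_{i,s},x_{j,m}^{\pm}]$, which is the convention the paper's computation actually uses (its displayed Eq.~\eqref{Drinfeld rel: h x} omits it), so your bookkeeping is consistent with the lemma.
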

\begin{proof}
For $s > 0$, by Eqs.\eqref{Drinfeld rel: h x} and \eqref{def: Ti} we have (see also \cite[Lemma 3.1]{FM})
\begin{align*}
[\frac{\tilde{h}_{i,-s}}{[s]_q}, x_{j,m}^{\pm}] &= \sum_{k\in I} \widetilde{B}_{ik}(q^s) [\frac{h_{k,-s}}{[s]_q}, x_{j,m}^{\pm}] =  \pm\sum_{k\in I} \widetilde{B}_{ik}(q^s) \frac{[s b_{kj}]_q}{s[s]_q} x_{j,m-s}^{\pm}  \\
&= \pm\sum_{k\in I} \widetilde{B}_{ik}(q^s) \frac{[b_{kj}]_{q^s}}{s} x_{j,m-s}^{\pm} = \pm \frac{\delta_{ij}}{s} x_{j,m-s}^{\pm}.
\end{align*}
 Therefore
$T_i(z)^{\pm 1} x_{j,m}^{\mp} T_i(z)^{\mp 1} = \sum_{n\geq 0} a_n x_{j,m-n}^{\mp} z^n$
 where the coefficients $a_n$ are encoded in the following power series
\begin{align*}
\sum_{n \geq 0} a_n z^n = \exp(- \sum_{s>0} \frac{\delta_{ij}}{s} z^s) = 1 - \delta_{ij} z.
\end{align*}
The third equation follows from 
$  h_{i,-s} = \sum_{k\in I} [b_{ik}]_{q^s} \tilde{h}_{k,-s}$.
\end{proof}
The third equation of the lemma is a quantum affine analog of Eq.\eqref{xi T Yangian}  at the level of algebras. In the Yangian case we only have T-operators at the level of representations.
 
\begin{defi}
To a polynomial $\ell$-weight $\Bp = \Psi_{i_1,a_1} \Psi_{i_2,a_2} \cdots \Psi_{i_n, a_n} \in \mathfrak{d}$ we associate its {\it T-series} and {\it Theta series} as the following power series in $z$ with leading term 1:
\begin{align}
T_{\Bp}(z) &:= T_{i_1}(za_1) T_{i_2}(za_2) \cdots T_{i_n}(za_n)  \in \lBorel[[z]],  \label{def: T-series}  \\
\Theta_{\Bp}(z) &:= (1\otimes T_{\Bp}(z)^{-1}) \times \Delta(T_{\Bp}(z)) \times (T_{\Bp}(z)^{-1} \otimes 1) \in \lBorel^{\otimes 2}[[z]].  \label{def: Theta series}
\end{align}
For $\beta \in \BQ_+$, let $\Theta_{\Bp,\beta}(z) \in (\lBorel_{-\beta} \otimes \lBorel_{\beta})[[z]]$ be the $\beta$-component of $\Theta_{\Bp}(z)$. Similarly, let $\widetilde{\Theta}_{\Bp,\beta}(z)$ be the $\beta$-component of the inverse Theta series $\Theta_{\Bp}(z)^{-1}$.
\end{defi}
Since $\tilde{h}_{i,-s}$ is a linear combination of the $h_{j,-s}$, its coproduct is quasi-primitive as in the case of $h_{j,-s}$ in Proposition \ref{prop: twist}. We obtain therefore
$$\Theta_{\Bp,0}(z) = 1,\quad \Theta_{\Bp}(z) = \sum_{\beta \in \BQ_+} \Theta_{\Bp,\beta}(z),\quad\Theta_{\Bp}(z)^{-1} = \sum_{\beta\in \BQ_+} \widetilde{\Theta}_{\Bp,\beta}(z). $$
As a consequence of Lemma \ref{lem: T x h}, for $i \in I$ we have in $\qaf[[z, w, w^{-1}]]$:
\begin{equation}  \label{rel: T x}
T_{\Bp}(z) x_i^-(w) = \Bp_i(zw)  x_i^-(w) T_{\Bp}(z),\quad  x_i^+(w) T_{\Bp}(z) = \Bp_i(zw) T_{\Bp}(z) x_i^+(w).
\end{equation}
Let $\mathrm{Ad}_{T_{\Bp}(z)}$ denote the conjugation on the algebra $\lBorel[[z]]$ by the invertible element $T_{\Bp}(z)$. As in Proposition \ref{prop: multip}(iii), Theta series are multiplicative:
$$ \Theta_{\Bm\Bn}(z) = (\mathrm{Id} \otimes \mathrm{Ad}_{T_{\Bn}(z)}^{-1})(\Theta_{\Bm}(z)) \times  (\mathrm{Ad}_{T_{\Bm}(z)} \otimes \mathrm{Id})(\Theta_{\Bn}(z))\quad \mathrm{for}\ \Bm, \Bn \in \mathfrak{d}. $$

\begin{rem} \label{rem: from R to T}
The T-series $T_{\Bp}(z)$ is obtained from $\CR_0(z)$ by substituting in the first tensor factor $h_{i,s}$, for $i \in I$ and $s \in \BZ_{>0}$,  with the complex number $\lambda_{i,s}^{\Bp}$ defined in Eq.\eqref{def: lambda}. In other words, if $\omega$ is a highest $\ell$-weight vector of $L(\Bp)$, then 
$$ \CR_0(z) (\omega \otimes 1) = \omega \otimes T_{\Bp}(z) \in L(\Bp) \otimes \lBorel[[z]]. $$
\end{rem}

\subsection{Polynomiality of monodromy matrices} \label{ss: poly mono}
 In this subsection we establish the polynomiality for the monodromy matrix associated to the lowest $\ell$-weight irreducible $\Borel$-module of Theorem \ref{thm: polynomiality prefund}. This is inspired by the work of Frenkel--Hernandez \cite{FH} on the transfer matrix (which is a weighted sum of the diagonal entries of the monodromy matrix specialized at a finite-dimensional $\qaf$-module).

Throughout this subsection, we fix a dominant coweight $\zeta$ and a polynomial $\ell$-weight $\Bp \in \mathfrak{d}_{\zeta}$. Take $M = L'(\Bp)$ to be the irreducible $\Borel$-module of lowest $\ell$-weight $\Bp$ as in Theorem \ref{thm: polynomiality prefund}. Let $b_0$ be a lowest $\ell$-weight vector of $M$, and extend it to a weight basis $\mathcal{B}$  compatible with the $\BN$-grading. Each basis vector $b \in \mathcal{B}$ is equipped with the weight $\wt(b) \in \BQ_+$ and the degree $p(b) \in \BN$. Notably, $\wt(b_0) = 0$ and $p(b_0) = 0$.

Since each weight space of $M$ is finite-dimensional by Proposition \ref{prop: rationality}, one can apply the monodromy-matrix construction of Definition \ref{defi: monodromy matrix} to $M$. As a result, we have the power series $t_{b_1,b_2}(z) \in \lBorel[[z]]$ for $b_1, b_2 \in \mathcal{B}$ defined by Eq.\eqref{R: monodromy matrix}. Introduce also the power series $t_{b_1,b_2}^{\bullet}(z) \in \lBorel[[z]]$ for $\bullet \in \{+,0,-\}$ by
\begin{align*}   
\CR_{\bullet}(z) (b_2 \otimes 1) &= \sum_{b_1 \in \mathcal{B}} b_1 \otimes t_{b_1,b_2}^{\bullet}(z).
\end{align*}
From the factorization \eqref{def: R}, we derive the following Gauss decomposition for the monodromy matrix $t$, as in \cite[line 6 of \S 7.4]{FH}:
\begin{equation}
t_{b_1,b_2}(z) = \sum_{b_3,b_4 \in \mathcal{B}} t^+_{b_1,b_3}(z) t^0_{b_3,b_4}(z) t_{b_4,b_2}^-(z) k_{\wt(b_2)}^{-1}.  \label{Gauss decomposition}
\end{equation}
 The terminology follows \cite{DF}; see \cite[Lemma 3.6]{FM0} for a similar Gauss decomposition for the monodromy matrix of a finite-dimensional irreducible $\qaf$-module. All the matrices in \cite{DF,FM0} are of finite size and their entries are power series in $z$. In contrast, our matrices are of infinite size and the entries of $t^{\pm}$ will be shown in Proposition \ref{prop: poly mono} below to be polynomial.

Recall from Subsection \ref{ss: root vectors} the subalgebras $U_q^{\bullet}(\mathfrak{c}) = \lBorel \cap U_q^{\bullet}(L\Glie)$ for $\bullet \in \{+,0,-\}$.
Let $\mathcal{F}$ denote the subalgebra of $\lBorel$ generated by the $F_{\alpha}$ for $\alpha \in \hat{\Phi}_-$. Then comparing with the basis of $U_q^+(\mathfrak{c})$ in Proposition \ref{prop: root vectors}(iv) we obtain 
\begin{equation}  \label{lower Borel positive}
\mathcal{F} \cap \lBorel_{\beta} =: \mathcal{F}_{\beta} = U_q^+(\mathfrak{c})_{\beta} k_{\beta} \quad \mathrm{for}\ \beta \in \BQ.
\end{equation}

We summarize the basic properties of the monodromy matrix. Its proof is quite close to the arguments in \cite[\S 7.4]{FH}. The main difference is that we do not specialize the matrix entries to any representation of $\qaf$. Recall that $b_0 \in \mathcal{B}$ is fixed to be a lowest $\ell$-weight vector of $M$.
\begin{prop} \label{prop: poly mono}
Fix $b_1, b_2 \in \mathcal{B}$. Set $\beta := \wt(b_2) - \wt(b_1)$ and $d := p(b_2) - p(b_1)$.
\begin{itemize}
\item[(i)] If $t_{b_1,b_2}^+(z) \neq 0$, then $\beta \in \BQ_-$ and $t_{b_1,b_2}^+(z)$ is a $U_q^-(\mathfrak{c})_{\beta}$-valued polynomial in $z$ of degree bounded by $d - \langle \zeta, \beta\rangle$. If $\beta = 0$, then $t_{b_1,b_2}^+(z) = \delta_{b_1,b_2}$.
\item[(ii)] If $t_{b_1,b_2}^0(z) \neq 0$, then $\beta = 0$ and $T_{\Bp}(z)^{-1} t_{b_1,b_2}^0(z)$ is a $U_q^0(\mathfrak{c})$-valued polynomial of degree $d$. Moreover, $t_{b_1,b_1}^0(z) = T_{\Bp}(z)$.
\item[(iii)] If $t_{b_1,b_2}^-(z) \neq 0$, then $\beta \in \BQ_+$ and $t_{b_1,b_2}^-(z)$ is an $\mathcal{F}_{\beta}$-valued polynomial of degree $d$. If $\beta = 0$, then $t_{b_1,b_2}^-(z) = \delta_{b_1,b_2}$.
\item[(iv)] We have $t_{b_1,b_0}(z) =  t_{b_1,b_0}^+(z) T_{\Bp}(z)$ and $t_{b_0,b_2}(z) = T_{\Bp}(z) t_{b_0,b_2}^-(z) k_{\wt(b_2)}^{-1}$.
\end{itemize}
\end{prop}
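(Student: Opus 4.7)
The plan is to apply each of the three factors in the decomposition $\CR(z) = \CR_+(z)\CR_0(z)\CR_-(z)q^{-t_{\infty}}$ of (\ref{def: R}) separately to $b_2\otimes 1$ and to read off the matrix entries $t^{\pm}_{b_1,b_2}(z), t^0_{b_1,b_2}(z)$. The central input will be the $\BN$-grading on $M = L'(\Bp)$ from Theorem \ref{thm: polynomiality prefund}: with $M_0 = \BC b_0$, $M_n = \{0\}$ for $n<0$, and controlled shifts of this grading by each generator appearing in $\CR_{\pm}(z)$ and $\CR_0(z)$. Polynomiality in $z$ will drop out purely from nonnegativity of the $\BN$-grading, which truncates the infinite products of $q$-exponentials and exponentials to finite sums on each weight space.

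For part (iii), I would expand (\ref{def: R+-}) acting on $b_2\otimes 1$: each monomial has first factor a product $E_{s_1\delta-\beta_1}^{n_1}\cdots E_{s_K\delta-\beta_K}^{n_K}b_2$ carrying $z^{\sum n_j s_j}$, and second factor the matching product of $F_{s_j\delta-\beta_j}$. By Theorem \ref{thm: polynomiality prefund}(ii), each $E_{s\delta-\beta}$ strictly lowers the $\BN$-grading by $s$ and shifts the weight by $-\beta$, so extracting the $b_1$-coefficient forces $\sum n_j s_j = d$ and $\sum n_j\beta_j = \beta \in \BQ_+$. Hence the $z$-degree is $d$, and the second-factor product lies in the subalgebra $\mathcal F_\beta$ by Proposition \ref{prop: root vectors}(iv) and Eq.(\ref{lower Borel positive}); the case $\beta = 0$ allows no $E$-action and yields $t^-_{b_1,b_2}(z) = \delta_{b_1,b_2}$. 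Part (i) is parallel but uses the weaker estimate $E_{t\delta+\gamma}M_n \subset \sum_{p=0}^{\langle\zeta,\gamma\rangle} M_{n-t+p}$: now $\sum t_i = d + \sum p_i$ with $0 \le p_i \le \langle\zeta,\gamma_i\rangle$, so the $z$-degree is bounded by $d - \langle\zeta,\beta\rangle$; the weight count forces $\beta \in \BQ_-$, and the $F$-product lies in $U_q^-(\mathfrak c)_\beta$ by Proposition \ref{prop: root vectors}(iii).

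Part (ii) will be the most delicate. The plan is to decompose the action of $h_{i,s}$ on $M$ ($s>0$) as $\lambda_{i,s}^{\Bp} + N_{i,s}$, where $N_{i,s}$ strictly lowers the $\BN$-grading by $s$. The exponent of $\CR_0(z)$ in (\ref{def: R0}) then splits as $A_1+A_2$, with $A_1$ carrying the scalars and $A_2$ the $N_{i,s}$'s; because all $h_{j,-s}$'s commute, $[A_1,A_2]=0$ and $(\pi\otimes\mathrm{Id})\CR_0(z) = \exp(A_1)\exp(A_2)$. The scalar part unwinds via the combinatorial identity $\sum_i \widetilde{B}_{ij}(q^s)\lambda_{i,s}^{\Bp} = [s]_q^{-1}\sum_m \widetilde{B}_{i_m,j}(q^s)a_m^s$ (where $\Bp = \prod_m \Psi_{i_m,a_m}$) into $\exp(A_1)(b_2\otimes 1) = b_2\otimes T_{\Bp}(z)$, compatible with Remark \ref{rem: from R to T}. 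Since $h_{j,-s}$ commutes with $T_{\Bp}(z)$, one can pull the Cartan series out: $(\pi\otimes\mathrm{Id})\CR_0(z)(b_2\otimes 1) = (1\otimes T_{\Bp}(z))\exp(A_2)(b_2\otimes 1)$. The $\BN$-grading argument of part (iii) applied to the $N_{i,s}$'s then shows $\exp(A_2)(b_2\otimes 1)$ is polynomial of degree $d$ in $z$ with second-factor coefficients in $U_q^0(\mathfrak c)$. When $b_1 = b_2$ the requirement $\sum s_k = 0$ with $s_k \ge 1$ rules out any $N$-action, giving $t^0_{b,b}(z) = T_{\Bp}(z)$.

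Part (iv) will follow by substituting $b_2 = b_0$ (resp.\ $b_1 = b_0$) in the Gauss decomposition (\ref{Gauss decomposition}). Since $b_0$ is a lowest $\ell$-weight vector, $E_{s\delta-\beta}b_0 = 0$ for every $s\delta-\beta \in \hat{\Phi}_-$, so part (iii) forces $t^-_{b_4,b_0}(z) = \delta_{b_4,b_0}$; the vanishing $p(b_0) = 0$ then makes the $\BN$-grading constraint in part (ii) force $t^0_{b_3,b_0}(z) = \delta_{b_3,b_0} T_{\Bp}(z)$, whence the first identity (noting $k_{\wt(b_0)} = 1$). For $b_1 = b_0$, all weights of $L'(\Bp)$ lie in $\BQ_+$ because the module is generated from $b_0$ by the $E_{n\delta+\gamma}$'s of weight $\gamma\in\Phi$, so part (i) with $\wt(b_3)\in\BQ_-\cap\BQ_+$ forces $b_3 = b_0$ and $t^+_{b_0,b_3}(z) = \delta_{b_3,b_0}$, yielding the second identity. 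The main expected obstacle is justifying the commutativity $[A_1,A_2] = 0$, the clean pull-out of $T_{\Bp}(z)$, and the combinatorial identification of $\exp(A_1)(b_2\otimes 1)$ with $b_2\otimes T_{\Bp}(z)$ in part (ii); once those are established, parts (i), (iii), (iv) reduce to direct $\BN$-grading bookkeeping.
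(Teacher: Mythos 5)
Your proposal is correct and is essentially the paper's own argument: you expand $\CR_{\pm}(z)$ and $\CR_0(z)$ on $b_2\otimes 1$ and let the $\BN$-grading of Theorem \ref{thm: polynomiality prefund} force the weight and degree constraints, your commuting split $\exp(A_1)\exp(A_2)$ is exactly the paper's factorization $\CR_0(z)=(1\otimes T_{\Bp}(z))\,\barR_0(z)$ obtained by replacing $h_{i,s}$ with $\overline{h}_{i,s}=h_{i,s}-\lambda^{\Bp}_{i,s}$ (Remark \ref{rem: from R to T}), and part (iv) is read off from the Gauss decomposition \eqref{Gauss decomposition} just as in the paper. The only blemish is cosmetic: in your displayed scalar identity the normalization should carry the factor $\frac{1}{s(q^{-1}-q)}$ coming from Eq.\eqref{def: lambda} rather than $[s]_q^{-1}$, which does not affect the conclusion since the resulting series is precisely $T_{\Bp}(z)$ as asserted in Remark \ref{rem: from R to T}.
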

\begin{proof}
Let $\pi_b: M \longrightarrow \BC b$ denote the projection with respect to the basis vector $b \in \mathcal{B}$.

\medskip

\noindent {\bf Part (i).}
From the definition \eqref{def: R+-} of $\CR_+(z)$ we get that $b_1 \otimes t_{b_1,b_2}^+(z)$ is a formal sum, with complex coefficients, of the following monomials 
$$ \pi_{b_1}(E_{s_1\delta+\beta_1} E_{s_2\delta+\beta_2} \cdots E_{s_m\delta+\beta_m} b_2) \otimes F_{s_1\delta+\beta_1} F_{s_2\delta+\beta_2} \cdots F_{s_m\delta+\beta_m} z^{s_1+s_2+\cdots+s_m} $$
where $m \geq 0, s_1,s_2,\cdots,s_m \geq 0$ and $\beta_1,\beta_2,\cdots \beta_m \in \Phi$ such that 
\begin{gather*}
\begin{cases}
\wt(b_1) = \wt(b_2) + \beta_1 + \beta_2 + \cdots + \beta_m, \\
p(b_1) \leq p(b_2) - s_1-s_2-\cdots-s_m + \langle \zeta, \beta_1+\beta_2+\cdots+\beta_m\rangle.
\end{cases}
\end{gather*}
The last condition comes from the relation $E_{s\delta+\beta} M_n \subset \sum_{k=0}^{\langle \zeta, \beta \rangle} M_{n-s+k}$ of Theorem \ref{thm: polynomiality prefund}. For fixed $b_1, b_2 \in \mathcal{B}$ there are finitely many  positive integers $s_1, s_2, \cdots, s_m$ and positive roots $\beta_1, \beta_2, \cdots, \beta_m$ satisfying the above conditions. Hence $t_{b_1,b_2}^+(z)$ is a finite sum of polynomials of degree bounded by $p(b_2) - p(b_1) + \langle \zeta, \wt(b_1)-\wt(b_2) \rangle$. 

If $\wt(b_1) = \wt(b_2)$, then necessarily  $m = 0$ and $b_1 \otimes t_{b_1,b_2}^+(z)$ is the leading term $1\otimes 1$ of $\CR_+(z)$ applied to $b_1\otimes 1$ and $t_{b_1,b_2}^+(z) = \delta_{b_1,b_2}$.

\medskip

\noindent {\bf Part (ii).} 
Recall from Eq.\eqref{def: lambda} the complex numbers $\lambda_{i,s}^{\Bp}$ for $(i, s) \in I \times \BZ_{>0}$ and set $\overline{h}_{i,s} := h_{i,s} - \lambda_{i,s}^{\Bp}$. Let $\barR_0(z)$ be obtained from $\CR_0(z)$ by substituting $h_{i,s} \mapsto \overline{h}_{i,s}$ in the first tensor factor. Define the power series $\overline{t}_{b_1,b_2}^0(z) \in U_q^0(\mathfrak{c})[[z]]$ in the obvious way:
$$\barR_0(z) (b_2 \otimes 1) = \sum_{b_1 \in \mathcal{B}} b_1 \otimes \overline{t}_{b_1,b_2}^0(z)  \quad \mathrm{for}\ b_2 \in \mathcal{B}.$$ 
By Remark \ref{rem: from R to T} we have
  $$  \CR_0(z) =  (1\otimes T_{\Bp}(z)) \times \barR_0(z). $$
Such a factorization implies that 
$$ t_{b_1,b_2}^0(z) = T_{\Bp}(z) \overline{t}_{b_1,b_2}^0(z).  $$
From the relation $\overline{h}_{i,s} M_n \subset M_{n-s}$ of Theorem \ref{thm: polynomiality prefund} and the definition \eqref{def: R0} of $\CR_0(z)$ we get that $b_1 \otimes \overline{t}_{b_1,b_2}^0(z)$ is a linear combination of the monomials
$$ \pi_{b_1}(\overline{h}_{i_1,s_1} \overline{h}_{i_2, s_2} \cdots \overline{h}_{i_m, s_m} b_2) \otimes h_{j_1,-s_1} h_{j_2,-s_2} \cdots h_{j_m,-s_m} z^{s_1+s_2+\cdots+s_m} $$
where $m\geq 0,\ i_1, i_2, \cdots, i_m, j_1, j_2, \cdots, j_m \in I$ and $s_1, s_2, \cdots, s_m > 0$ such that
$$ p(b_1) = p(b_2) - s_1- s_2 - \cdots - s_m,\quad \wt(b_1) = \wt(b_2). $$
This shows that $\overline{t}_{b_1,b_2}^0(z)$, if nonzero, is a polynomial in $z$ with coefficients in $U_q^0(\mathfrak{c})$ of degree $p(b_2) - p(b_1)$. If $b_1 = b_2$, then $m = 0$ and $b_1 \otimes \overline{t}_{b_1,b_1}^0(z)$ is the leading term $1\otimes 1$ of $\barR_0(z)$ applied to $b_1 \otimes 1$. This proves $\overline{t}_{b_1,b_1}^0(z) = 1$.   

\medskip

\noindent {\bf Part (iii).} From the relation $E_{s\delta-\beta} M_n \subset M_{n-s}$ of Theorem \ref{thm: polynomiality prefund} and the definition \eqref{def: R+-} of $\CR_-(z)$ we get that $b_1 \otimes t_{b_1,b_2}^-(z)$ is a linear combination of the monomials
$$ \pi_{b_1}(E_{s_1\delta-\beta_1} E_{s_2\delta-\beta_2} \cdots E_{s_m\delta-\beta_m} b_2) \otimes F_{s_1\delta-\beta_1} F_{s_2\delta-\beta_2} \cdots F_{s_m\delta-\beta_m} z^{s_1+s_2+\cdots+s_m} $$
where $m \geq 0,\ s_1, s_2,\cdots, s_m > 0$ and $\beta_1,\beta_2,\cdots \beta_m \in \Phi$ such that 
\begin{gather*}
\begin{cases}
\wt(b_1) = \wt(b_2) - \beta_1 - \beta_2 - \cdots - \beta_m, \\
p(b_1) = p(b_2) - s_1-s_2-\cdots-s_m.
\end{cases}
\end{gather*}
Part (iii) is now proved in the same way as part (ii).

\medskip

\noindent {\bf Part (iv).} Let us first take $b_1 = b_0$ in Eq.\eqref{Gauss decomposition}. Since $M$ is bottom graded, we have $\wt(b) \in \BQ_+$ and $\wt(b) = 0$ if and only if $b = b_0$. This together with part (i) forces $t_{b_0,b_3}^+(z) = \delta_{b_0,b_3}$. So in Eq.\eqref{Gauss decomposition} only the terms with $b_3 = b_0$ contribute. Again for weight reason and using part (ii), we have $t^0_{b_0,b_4}(z) = \delta_{b_0,b_4} T_{\Bp}(z)$. Therefore in Eq.\eqref{Gauss decomposition} only the single term with $b_3 = b_4 = b_0$ contributes and gives
\begin{equation*}  
t_{b_0,b_2}(z) = t_{b_0,b_0}^+(z) t^0_{b_0,b_0}(z) t_{b_0,b_2}^-(z) k_{\wt(b_2)}^{-1} = T_{\Bp}(z) t_{b_0,b_2}^-(z) k_{\wt(b_2)}^{-1}.
\end{equation*}

Next take $b_2 = b_0$ in Eq.\eqref{Gauss decomposition}. Only the single term with $b_3 = b_4 = b_0$ contributes:
 \begin{equation*}  
 t_{b_1,b_0}(z) =  t_{b_1,b_0}^+(z)  t_{b_0,b_0}^0(z) t_{b_0,b_0}^-(z) k_{\wt(b_0)}^{-1} = t_{b_1,b_0}^+(z)T_{\Bp}(z).
 \end{equation*}
This proves part (iv).
\end{proof}

\subsection{Polynomiality of Theta series} Recall the T-series and Theta series from Eqs.\eqref{def: T-series}--\eqref{def: Theta series}. In this subsection we establish polynomiality of Theta series and as a first application polynomiality of T-series acting on a tensor product of highest $\ell$-weight modules over $\qaf$.
\begin{theorem} \label{thm: polynomiality Theta}
Let $\zeta$ be a coweight and $\Bp \in \mathfrak{d}_{\zeta}$. For $\beta \in \BQ_+$, the power series $\Theta_{\Bp,\beta}(z)$ and $\widetilde{\Theta}_{\Bp,\beta}(z)$ are polynomials in $z$ with coefficients in $U_q^-(\mathfrak{c})_{-\beta} \otimes U_q^+(\mathfrak{c})_{\beta}$ of degree bounded by $\langle \zeta, \beta\rangle$.
\end{theorem}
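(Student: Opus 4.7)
The plan is to express $\Theta_{\Bp}(z)$ directly as a sum of tensor products of entries of the monodromy matrix of the lowest $\ell$-weight irreducible module $M := L'(\Bp)$, thereby reducing the polynomiality statement to Proposition \ref{prop: poly mono}. Let $b_0$ and $\mathcal{B}$ be as in Subsection \ref{ss: poly mono}. Since $\wt(b_0) = 0 = p(b_0)$, Proposition \ref{prop: poly mono}(i),(iv) give $t_{b_0,b_0}(z) = T_{\Bp}(z)$. Applying the coproduct formula \eqref{R: coproduct monodromy} with $b_1 = b_2 = b_0$ and then substituting the two identities of Proposition \ref{prop: poly mono}(iv), one obtains
\begin{equation*}
\Delta(T_{\Bp}(z)) = \sum_{b \in \mathcal{B}} \bigl(t^+_{b,b_0}(z) T_{\Bp}(z)\bigr) \otimes \bigl(T_{\Bp}(z)\, t^-_{b_0,b}(z) k_{\wt(b)}^{-1}\bigr).
\end{equation*}
Substituting into the definition \eqref{def: Theta series}, the outer $T_{\Bp}(z)^{-1}$ factors cancel cleanly and I arrive at the key formula
\begin{equation*}
\Theta_{\Bp}(z) = \sum_{b \in \mathcal{B}} t^+_{b,b_0}(z) \otimes t^-_{b_0,b}(z) k_{\wt(b)}^{-1}.
\end{equation*}

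Fix $\beta \in \BQ_+$; the component $\Theta_{\Bp,\beta}(z)$ is the restriction of this sum to basis vectors with $\wt(b) = \beta$. This restricted sum is finite because $M_{\beta}$ is finite-dimensional by Proposition \ref{prop: rationality}. By Proposition \ref{prop: poly mono}(i), $t^+_{b,b_0}(z)$ is a $U_q^-(\mathfrak{c})_{-\beta}$-valued polynomial of degree at most $\langle \zeta, \beta \rangle - p(b)$. By Proposition \ref{prop: poly mono}(iii) together with Eq.\eqref{lower Borel positive}, $t^-_{b_0,b}(z) k_{\beta}^{-1}$ is a $U_q^+(\mathfrak{c})_{\beta}$-valued polynomial of degree at most $p(b)$. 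Adding the two bounds yields $\langle \zeta, \beta \rangle$, so $\Theta_{\Bp,\beta}(z)$ is polynomial of degree at most $\langle \zeta, \beta \rangle$ with coefficients in $U_q^-(\mathfrak{c})_{-\beta} \otimes U_q^+(\mathfrak{c})_{\beta}$, as required.

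For the inverse series $\widetilde{\Theta}_{\Bp,\beta}(z)$, I would follow the opening argument of the proof of Theorem \ref{thm: Yangian poly Theta}: expanding $\Theta_{\Bp}(z) \Theta_{\Bp}(z)^{-1} = 1$ into $\BQ_+$-graded components gives a triangular recursion expressing $\widetilde{\Theta}_{\Bp,\beta}(z)$ as a $\BZ$-linear combination of products $\Theta_{\Bp,\gamma_1}(z) \Theta_{\Bp,\gamma_2}(z) \cdots \Theta_{\Bp,\gamma_s}(z)$ with $\gamma_1 + \gamma_2 + \cdots + \gamma_s = \beta$. Since each factor is polynomial of degree bounded by $\langle \zeta, \gamma_i \rangle$ with coefficients in $U_q^-(\mathfrak{c})_{-\gamma_i} \otimes U_q^+(\mathfrak{c})_{\gamma_i}$, each such product has total degree at most $\langle \zeta, \beta \rangle$ and lies in $U_q^-(\mathfrak{c})_{-\beta} \otimes U_q^+(\mathfrak{c})_{\beta}$, using that $U_q^{\pm}(\mathfrak{c})$ are $\BQ$-graded subalgebras of $\lBorel$.

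The main conceptual point is the identification of $\Theta_{\Bp}(z)$ with a sum of monodromy-matrix entries of $L'(\Bp)$; once in place, polynomiality is immediate from Proposition \ref{prop: poly mono}. The genuine content resides one step back, in the $\BN$-grading of Theorem \ref{thm: polynomiality prefund} used to prove Proposition \ref{prop: poly mono}. The only technical subtlety here is the precise cancellation of the $p(b)$-dependent degree bounds: the $\BN$-grading acts as a \emph{negative} offset for the degree of $t^+_{b,b_0}(z)$ and a \emph{positive} offset for the degree of $t^-_{b_0,b}(z)$, and these contributions cancel so that the total degree depends only on $\beta$, not on $b$ itself. This is what allows the sum over the finite set of basis vectors of weight $\beta$ to satisfy a uniform degree bound.
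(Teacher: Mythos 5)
Your proposal is correct and follows essentially the same route as the paper: the paper's proof also applies the coproduct formula \eqref{R: coproduct monodromy} to $t_{b_0,b_0}(z)=T_{\Bp}(z)$, uses Proposition \ref{prop: poly mono}(iv) to peel off the $T_{\Bp}(z)$ factors and obtain $\Theta_{\Bp,\beta}(z)=\sum_{b:\wt(b)=\beta} t^+_{b,b_0}(z)\otimes t^-_{b_0,b}(z)k_{\wt(b)}^{-1}$, and then concludes via Proposition \ref{prop: poly mono}(i)--(iii), Eq.\eqref{lower Borel positive}, and the same cancellation of the $p(b)$-offsets, with the inverse series handled by the same triangular recursion as in Theorem \ref{thm: Yangian poly Theta}. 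Your degree bookkeeping and the finiteness argument via finite-dimensional weight spaces match the paper's.
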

\begin{proof}
Combining Proposition \ref{prop: poly mono}(iv) with Eq.\eqref{R: coproduct monodromy} we get 
\begin{align*} 
\Delta(T_{\Bp}(z)) &= \Delta(t_{b_0,b_0}(z)) = \sum_{b\in \mathcal{B}} t_{b,b_0}(z) \otimes t_{b_0,b}(z) \\
&= (1\otimes T_{\Bp}(z)) (\sum_{b\in \mathcal{B}} t_{b,b_0}^+(z) \otimes t_{b_0, b}^-(z)k_{\wt(b)}^{-1}) (T_{\Bp}(z) \otimes 1).
\end{align*}
This leads to the following formula for $\beta \in \BQ_+$:
\begin{equation}  \label{equ: Theta i beta}
\Theta_{\Bp,\beta}(z) = \sum_{b\in \mathcal{B}: \wt(b) = \beta} t_{b,b_0}^+(z) \otimes t_{b_0,b}^-(z) k_{\wt(b)}^{-1}.
\end{equation}
The right-hand side is a finite sum of polynomials in $U_q^-(\mathfrak{c})_{-\beta} \otimes U_q^+(\mathfrak{c})_{\beta}[z]$ of degree bounded by $-p(b)+\langle\zeta, \beta\rangle+p(b)$ by Proposition \ref{prop: poly mono}(i)--(iii) and Eq.\eqref{lower Borel positive}.

The polynomiality of the inverse $\Theta_{\Bp}(z)^{-1}$ follows from the polynomiality of $\Theta_{\Bp}(z)$ as in the beginning of the proof of Theorem \ref{thm: Yangian poly Theta}. 
\end{proof}

\begin{example}
Fix $i \in I$ and $\Bp = \Psi_{i,1}$. For $j \in I$ and $n \in \BN$ we claim that
$$ \Theta_{\Bp,n\alpha_j}(z) = \frac{\delta_{ij}}{(n)_{q_i}!} \left((q_i-q_i^{-1}) x_{i,0}^- \otimes x_{i,-1}^+z\right)^n. $$
Consider the $\Borel$-module $M := L'(\Psi_{1,1})$ as in Subsection \ref{ss: poly mono}. If $j \neq i$ then $ n\alpha_j$ is not a weight of $M$ and so $\Theta_{\Bp,n\alpha_j}(z) = 0$. On the other hand $M_{n\alpha_i}$ is one-dimensional and spanned by the vector $v_n := (x_{i,0}^+)^n b_0$ which we assume to be in $\mathcal{B}$. It suffices to compute $\Theta_{\Bp,n\alpha_i}(z) = t_{v_n,v_0}^+(z) \otimes t_{v_0,v_n}^-(z) k_i^{-n}$. 
Notice first that
$$ k_i v_m = q_i^{2m} v_m,\quad x_{i,n+1}^- v_m = \delta_{n0} \frac{(m)_{q_i}}{q_i-q_i^{-1}} v_{m-1} \quad \mathrm{for}\ n, m \in \BN. $$
To compute $t_{v_0, v_n}^-(z)$, we need to find the $v_0$-component of $\CR_-(z)(v_n \otimes 1)$, which for weight reason amounts to evaluating the $n\alpha_i$-weight component of 
\begin{align*}
\exp_{q_{\alpha_i}}((q_{\alpha_i}^{-1}-q_{\alpha_i}) E_{\delta-\alpha_i} \otimes F_{\delta-\alpha_i})  = \exp_{q_i}((q_i^{-1}-q_i)  k_i^{-1} x_{i,1}^- \otimes x_{i,-1}^+ k_i z), \\
\Rightarrow  v_0 \otimes t_{v_0,v_n}^-(z) = \frac{(q_i^{-1}-q_i)^n }{ (n)_{q_i}! }(k_i^{-1} x_{i,1}^- z)^n v_n \otimes (x_{i,-1}^+ k_i)^n = v_0 \otimes (-1)^n (x_{i,-1}^+)^n k_1^n z^n.
\end{align*}
Similarly, the $v_n$-component of $\CR_+(z)(v_0 \otimes 1)$ equals the $n\alpha_i$-weight component of 
\begin{align*}
\exp_{q_{\alpha_i}}((q_{\alpha_i}^{-1}-q_{\alpha_i}) E_{\alpha_i} \otimes F_{\alpha_i}) (v_n \otimes 1) = \exp_{q_i}((q_i^{-1}-q_i) x_{i,0}^+\otimes x_{i,0}^-)(v_n \otimes 1), \\
\Rightarrow v_n \otimes t_{v_n,v_0}^+(z) =\frac{(q_i^{-1}-q_i)^n }{ (n)_{q_i}! } (x_{i,0}^+)^n v_0 \otimes (x_{i,0}^-)^n = v_n \otimes \frac{(q_i^{-1}-q_i)^n }{ (n)_{q_i}! } (x_{i,0}^-)^n. 
\end{align*}
This gives the desired formulas for $\Theta_{\Bp,n\alpha_i}(z)$ in view of Eq.\eqref{equ: Theta i beta}. In the rank-one case we have the following coproduct formula in $U_q(L sl_2)$:
$$\Delta(T_1(z)) = (1\otimes T_1(z)) \exp_q((q-q^{-1}) x_{1,0}^- \otimes x_{1,-1}^+z) (T_1(z) \otimes 1). $$
\end{example}
In the Yangian case the polynomiality of Theta series follows from the intertwining property. In the present situation, the polynomiality of Theta series follows from the polynomiality of the monodromy matrix associated to $\Borel$-module $L'(\Bp)$. 

We close this section with a polynomiality result for the action $T_{\Bp}(z)$ on a tensor product of highest $\ell$-weight $\qaf$-modules. This result was known for tensor products of finite-dimensional irreducible $\qaf$-modules \cite[Theorem 5.17]{FH} and fusion products of irreducible modules in category $\mathcal{O}$ for shifted quantum affine algebras \cite[Theorem 9.12]{H}. The proof in \cite{FH} required a nontrivial reduction to thin modules depending on the type of $\Glie$. The proof in \cite{H} required an asymptotic limit construction. Our proof is uniform in all types and is an adaptation of  proof of Proposition \ref{prop: poly T Yangian} in the Yangian case based on the factorization \eqref{def: Theta series} and the commutation relations \eqref{rel: T x}.

\begin{defi}  \label{defi: f and g}
Fix $\Bp \in \mathfrak{d}$ and let $V$ be a $\qaf$-module. If $V$ is top graded, let $f_{\Bp}^V(z) \in 1 + z \BC[[z]]$ denote the eigenvalue of $T_{\Bp}(z) \in \qaf[[z]]$ associated to the top weight space of $V$. If $V$ is bottom graded, let $g_{\Bp}^V(z) \in 1 + z \BC[[z]]$ denote the eigenvalue of $T_{\Bp}(z)$ associated to the bottom weight space of $V$.
\end{defi}

\begin{prop}  \label{prop: poly T}
Let $\zeta$ be a coweight, $\Bp \in \mathfrak{d}_{\zeta},\ \lambda_0 \in \mathfrak{t}^*$ and $V$ be a $\qaf$-module.
\begin{itemize}
\item[(i)] If $V$ is a tensor product of highest $\ell$-weight $\qaf$-modules with top weight $\lambda_0$, then on each weight space $V_{\lambda_0 -\beta}$ with $\beta \in \BQ_+$, the operator $f_{\Bp}^V(z)^{-1} T_{\Bp}(z)$ is a polynomial in $z$ of degree $\langle \zeta,\beta\rangle$. 
\item[(ii)] If $V$ is a tensor product of lowest $\ell$-weight $\qaf$-modules with bottom weight $\lambda_0$, then on each weight space $V_{\lambda_0+\beta}$ with $\beta \in \BQ_+$, the operator $g_{\Bp}^V(z) T_{\Bp}(z)^{-1}$ is a polynomial in $z$ of degree $\langle \zeta,\beta\rangle$. 
\end{itemize} 
\end{prop}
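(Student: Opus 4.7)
The plan is to mirror the proof of Proposition \ref{prop: poly T Yangian} for the Yangian, substituting the commutation relations \eqref{rel: T x} for those of the $S$-series and the Theta-series factorization \eqref{def: Theta series} for the analogous Yangian factorization. I will only sketch part (i); part (ii) is parallel, based on the second identity of \eqref{rel: T x} applied to $x_i^+(w)$ generators acting on a lowest $\ell$-weight vector.

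First I proceed by induction on the number $t$ of tensor factors in $V$. For $t = 1$, fix a highest $\ell$-weight vector $v_0 \in V$ so that $T_{\Bp}(z) v_0 = f_{\Bp}^V(z) v_0$. Any $v \in V_{\lambda_0 - \beta}$ is a linear combination of vectors $x_{j_1, m_1}^- x_{j_2, m_2}^- \cdots x_{j_s, m_s}^- v_0$ with $\beta = \alpha_{j_1} + \cdots + \alpha_{j_s}$. Writing the Drinfeld polynomial as $\Bp_i(u) = \sum_{k = 0}^{\langle \zeta, \alpha_i\rangle} p_{i,k} u^k$ and extracting the $w^m$-coefficient of the first identity of \eqref{rel: T x} gives
\[
T_{\Bp}(z) \, x_{i,m}^- \;=\; \sum_{k=0}^{\langle \zeta, \alpha_i \rangle} p_{i,k} \, z^k \, x_{i,m-k}^- \, T_{\Bp}(z).
\]
Commuting $T_{\Bp}(z)$ past the $s$ lowering generators and absorbing the final $f_{\Bp}^V(z)$ coming from its action on $v_0$ exhibits $f_{\Bp}^V(z)^{-1} T_{\Bp}(z) v$ as a polynomial in $z$ with coefficients in $V_{\lambda_0 - \beta}$ of degree bounded by $\sum_{t=1}^s \langle \zeta, \alpha_{j_t}\rangle = \langle \zeta, \beta\rangle$.

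For the inductive step $t > 1$, split $V = M \otimes N$ where $M$ and $N$ are tensor products of fewer than $t$ highest $\ell$-weight modules with top weights $\lambda_M$ and $\lambda_N$ satisfying $\lambda_M + \lambda_N = \lambda_0$. Applying the factorization $\Delta(T_{\Bp}(z)) = (1 \otimes T_{\Bp}(z)) \, \Theta_{\Bp}(z) \, (T_{\Bp}(z) \otimes 1)$ to the top weight vector $v_0^M \otimes v_0^N$ and using that by Theorem \ref{thm: polynomiality Theta} the second tensor factor of $\Theta_{\Bp, \beta'}(z)$ lies in $U_q^+(\mathfrak{c})_{\beta'}$, which annihilates $v_0^N$ for all $\beta' \neq 0$, yields the multiplicativity $f_{\Bp}^V(z) = f_{\Bp}^M(z) f_{\Bp}^N(z)$. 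Next, for a pure tensor $v_1 \otimes v_2$ with $v_1 \in M_{\lambda_M - \gamma_1}$, $v_2 \in N_{\lambda_N - \gamma_2}$ and $\beta = \gamma_1 + \gamma_2$, I decompose
\[
f_{\Bp}^V(z)^{-1} \Delta(T_{\Bp}(z))(v_1 \otimes v_2) \;=\; \sum_{\beta' \in \BQ_+} \bigl( 1 \otimes f_{\Bp}^N(z)^{-1} T_{\Bp}(z) \bigr) \, \Theta_{\Bp,\beta'}(z) \, \bigl( f_{\Bp}^M(z)^{-1} T_{\Bp}(z) \otimes 1 \bigr)(v_1 \otimes v_2).
\]
Since the second tensor factor of $\Theta_{\Bp, \beta'}(z)$ sends $N_{\lambda_N - \gamma_2}$ into $N_{\lambda_N - \gamma_2 + \beta'}$, which vanishes unless $\beta' \leq \gamma_2$ in $\BQ_+$, the sum is finite; by the induction hypothesis applied to $M$ and $N$ combined with the degree bound in Theorem \ref{thm: polynomiality Theta}, each surviving term is a polynomial in $z$ of degree bounded by $\langle \zeta, \gamma_1\rangle + \langle \zeta, \beta'\rangle + \langle \zeta, \gamma_2 - \beta'\rangle = \langle \zeta, \beta\rangle$, which closes the induction. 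The main subtlety is ensuring the formal infinite sum $\Theta_{\Bp}(z)$ acts term-by-term as a genuine linear operator and truncates the output to a polynomial; both are guaranteed by the weight-support property of highest $\ell$-weight modules together with the polynomiality and positive $\BQ_+$-grading of Theta series.
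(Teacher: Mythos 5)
Your route is essentially the paper's: the paper itself states that its argument is an adaptation of the Yangian case (Proposition \ref{prop: poly T Yangian}) based on the commutation relations \eqref{rel: T x} and the factorization \eqref{def: Theta series}, and your base case (commuting $T_{\Bp}(z)$ past the lowering generators via $T_{\Bp}(z)x^-_{i,m}=\sum_k p_{i,k}z^k x^-_{i,m-k}T_{\Bp}(z)$), your multiplicativity $f_{\Bp}^V=f_{\Bp}^Mf_{\Bp}^N$, and your inductive step through $\Delta(T_{\Bp}(z))=(1\otimes T_{\Bp}(z))\,\Theta_{\Bp}(z)\,(T_{\Bp}(z)\otimes 1)$ together with the degree bound $\langle\zeta,\beta'\rangle$ on $\Theta_{\Bp,\beta'}(z)$ from Theorem \ref{thm: polynomiality Theta} all match what the paper does (the paper even declares the tensor-product induction routine and only writes out the base case).

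There is, however, one point you do not address: the statement asserts that the operator $f_{\Bp}^V(z)^{-1}T_{\Bp}(z)$ restricted to $V_{\lambda_0-\beta}$ has degree \emph{equal to} $\langle\zeta,\beta\rangle$, whereas your argument (both in the base case and in the induction) only yields the upper bound. The paper closes this in the base case by keeping track of the leading coefficient: writing $c_iz^{n_i}$ for the dominant term of $\Bp_i(z)$ with $n_i=\langle\zeta,\alpha_i\rangle$, the top-degree coefficient of $\overline{T}^V_{\Bp}(z)$ applied to $x^-_{i_1,m_1}\cdots x^-_{i_s,m_s}\omega_0$ is the nonzero multiple $c_{i_1}\cdots c_{i_s}$ of $x^-_{i_1,m_1-n_{i_1}}\cdots x^-_{i_s,m_s-n_{i_s}}\omega_0$; if the degree of the operator on the whole weight space were strictly smaller than $\langle\zeta,\beta\rangle$, then all these shifted monomials (over all $m_t\in\BZ$) would vanish, which would force $V_{\lambda_0-\beta}=\{0\}$, a contradiction. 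Note that for an individual vector the degree can genuinely drop (the shifted monomial may be zero), so the exactness is a statement about the operator on the weight space and does require this extra argument; once it holds in the base case it propagates through the induction exactly as in the Yangian proof, since the correction terms coming from $\Theta_{\Bp,\beta'}(z)$ with $\beta'\neq 0$ land in different weight components of $M\otimes N$ and cannot cancel the leading term. Adding this leading-term/contradiction step would make your proof complete.
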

\begin{proof}
In view of the proof of Proposition \ref{prop: poly T Yangian}, the only nontrivial part is the initial step: if $V$ is a highest $\ell$-weight module then the normalized operator $\overline{T}_{\Bp}^V(z) := f_{\Bp}^V(z)^{-1} T_{\Bp}(z)$ restricted to each weight space has the desired degree. For $i \in I$, let $c_i w^{n_i}$ denote the dominant term of the polynomial $\Bp_i(z)$. Then $n_i = \langle \zeta, \alpha_i\rangle$.  Eq.\eqref{rel: T x} implies that $T_{\Bp}(z) x_{i,m}^- T_{\Bp}(z)^{-1}$ is a polynomial in $z$ of dominant term $c_i x_{i,m-n_i}^- z^{n_i}$. 

Fix $0 \neq \omega_0 \in V_{\lambda_0}$.
The weight space $V_{\lambda_0-\beta}$ is spanned by the $x_{i_1,m_1}^- x_{i_2,m_2}^- \cdots x_{i_s,m_s}^- \omega_0$ such that $\beta = \alpha_{i_1}+\alpha_{i_2} + \cdots + \alpha_{i_s}$. Applying $\overline{T}_{\Bp}^V(z)$ to such a vector, we get 
$$ c_{i_1} z^{n_{i_1}} c_{i_2} z^{n_{i_2}} \cdots c_{i_s} z^{n_{i_s}} x_{i_1,m_1-n_{i_1}}^- x_{i_2,m_2-n_{i_2}}^- \cdots x_{i_s,m_s-n_{i_s}}^- \omega_0   $$
plus a polynomial in $z$ of degree strictly lower than $n_{i_1} + n_{i_2} + \cdots + n_{i_s} = \langle \zeta, \beta\rangle$. If the degree of $\overline{T}_{\Bp}^V(z)|_{V_{\lambda_0-\beta}}$ is strictly lower than $\langle \zeta, \beta\rangle$, then we have:
$$x_{i_1,m_1-n_{i_1}}^- x_{i_2,m_2-n_{i_2}}^- \cdots x_{i_s,m_s-n_{i_s}}^- \omega_0 = 0  $$
for all $m_1, m_2, \cdots, m_s \in \BZ$ and $i_1, i_2, \cdots, i_s \in I$ such that $\beta = \alpha_{i_1}+\alpha_{i_2} + \cdots + \alpha_{i_s}$. This implies $V_{\lambda_0-\beta} = \{0\}$, contradicting the hypothesis that $\lambda_0-\beta$ is a weight of $V$.
\end{proof}

\section{R-matrices for tensor product modules}  \label{sec: R dec}
In this section, for $V$ an arbitrary finite-dimensional module over the quantum loop algebra and for $W$ a suitable tensor product module over an antidominantly shifted quantum affine algebra, we establish a decomposition formula for the R-matrix $\check{R}_{W,V}(z)$ in terms of T-series, Theta series and another R-matrix. It is inspired by the decomposition formula for shifted Yangians in Theorem \ref{thm: T Theta R Yangian}, but their proofs are different.

For $\nu$ a coweight, as in Definition \ref{defi: trivial modules Yangians} let $\BCU_{\nu}(\Gaff)$ denote the quotient of $\CU_{\nu}(\Gaff)$ by the two-sided ideal generated by the $x_{i,m}^{\pm}$ for $i \in I$ and $m \in \BZ$. Let $\phi_i(z)$ be the image of $\phi_i^+(z)$ in the quotient; it is a polynomial in $z$. Let $\pi_{\nu}: \CU_{\nu}(\Gaff) \longrightarrow \BCU_{\nu}(\Gaff)$ be the quotient map. Call a $\CU_{\nu}(\Gaff)$-module trivial if it factorizes through $\pi_{\nu}$.

The following result shows that the tensor product of a $\CU_{\mu}(\Gaff)$-module with a $\CU_{\nu}(\Gaff)$-module is naturally a $\CU_{\mu+\nu}(\Gaff)$-module if one of the tensor factors is a trivial module.  This differs from the fusion product \cite[\S 5.3]{H}.

\begin{lem}  \label{lem: tensor trivial}
Let $\mu$ and $\nu$ be coweights. Then we have two algebra homomorphisms $F_{\mu,\nu}: \CU_{\mu+\nu}(\Gaff) \longrightarrow \BCU_{\mu}(\Gaff) \otimes \CU_{\nu}(\Gaff)$ and $G_{\mu,\nu}: \CU_{\mu+\nu}(\Gaff) \longrightarrow \CU_{\mu}(\Gaff) \otimes \BCU_{\nu}(\Gaff)$ defined by
\begin{align*}
F_{\mu,\nu}: &\ x_i^+(z) \mapsto \phi_i(z) \otimes x_i^+(z), \quad x_i^-(z) \mapsto 1 \otimes x_i^-(z),\quad \phi_i^{\pm}(z) \mapsto \phi_i(z) \otimes \phi_i^{\pm}(z), \\
G_{\mu,\nu}:&\ x_i^+(z) \mapsto x_i^+(z) \otimes 1, \quad x_i^-(z) \mapsto  x_i^-(z) \otimes \phi_i(z),\quad \phi_i^{\pm}(z) \mapsto \phi_i^{\pm}(z) \otimes \phi_i(z).
\end{align*}
\end{lem}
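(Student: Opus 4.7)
The plan is to verify directly that the proposed assignments extend to well-defined algebra homomorphisms by checking each defining relation of $\CU_{\mu+\nu}(\Gaff)$. I will treat $F_{\mu,\nu}$; the construction of $G_{\mu,\nu}$ follows symmetrically by swapping the tensor factors and the roles of $x^+$ and $x^-$. The crucial feature throughout is that $\BCU_\mu(\Gaff)$ is commutative, so every $\phi_i(z)$ appearing in the first tensor slot commutes with all other first-slot contributions.

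Write $\mu = \sum_i a_i \varpi_i^\vee$ and $\nu = \sum_i b_i \varpi_i^\vee$. The initial and shift conditions \eqref{Drinfeld rel: initial}--\eqref{Drinfeld rel: shift} for $\CU_{\mu+\nu}(\Gaff)$ are immediate from the fact that $F_{\mu,\nu}(\phi_i^+(z)) = \phi_i(z)\otimes\phi_i^+(z)$ is a power series in $z$ and $F_{\mu,\nu}(\phi_i^-(z)) = \phi_i(z)\otimes\phi_i^-(z)$ is a Laurent polynomial in $z$ of top degree $a_i+b_i$, while the leading product $F_{\mu,\nu}(\phi_{i,0}^+\phi_{i,a_i+b_i}^-) = \phi_{i,0}\phi_{i,a_i}\otimes\phi_{i,0}^+\phi_{i,b_i}^-$ is central and invertible as a tensor product of central invertibles in $\BCU_\mu(\Gaff)$ and $\CU_\nu(\Gaff)$. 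For the quadratic relations \eqref{Drinfel rel: Cartan}--\eqref{Drinfeld rel: Drinfeld}, I would first rewrite each in closed generating-series form -- for instance $(z - q^{\pm b_{ij}}w)\,x_i^\pm(z) x_j^\pm(w) = (q^{\pm b_{ij}}z - w)\,x_j^\pm(w) x_i^\pm(z)$ and the mixed relation $(q_i - q_i^{-1})[x_i^+(z), x_j^-(w)] = \delta_{ij}\,\delta(z/w)(\phi_i^+(z) - \phi_i^-(z))$, plus the analogous $\phi^\varepsilon\,x^\pm$ and $\phi^\varepsilon\,\phi^{\varepsilon'}$ identities. Under $F_{\mu,\nu}$ the first-slot factors all lie in the commutative algebra $\BCU_\mu(\Gaff)$ and pull out of every product, so each identity reduces to the corresponding one in the second tensor slot, where it holds in $\CU_\nu(\Gaff)$; the mixed relation uses additionally $f(z)\delta(z/w) = f(w)\delta(z/w)$ to identify $\phi_i(z)$ with $\phi_i(w)$ against the delta support.

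The main subtlety lies in the Serre relation \eqref{Drinfeld rel: Serre}. For $x^-$ it is trivial from $F_{\mu,\nu}(x_{i,0}^-) = 1 \otimes x_{i,0}^-$. For $x^+$, expanding $F_{\mu,\nu}(x_{i,0}^+)^{1-c_{ij}-s} F_{\mu,\nu}(x_{j,0}^+) F_{\mu,\nu}(x_{i,0}^+)^s$ using $F_{\mu,\nu}(x_{i,0}^+) = \sum_{k=0}^{a_i} \phi_{i,k} \otimes x_{i,-k}^+$ produces tensor terms whose first slot is the same symmetric product $\phi_{i,k_1}\cdots\phi_{i,k_{1-c_{ij}}}\phi_{j,l}$ independently of $s$, again by commutativity of $\BCU_\mu(\Gaff)$. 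The Serre alternating sum in $s$ then factors as this common product times an inner sum in $\CU_\nu^+(\Gaff)$; after symmetrizing the indices $k_1, \ldots, k_{1-c_{ij}}$ (legitimate since the first-slot factor is already symmetric in them), the inner sum becomes the symmetrized generating-series Serre relation in the modes $x_{i,-k_a}^+$ and $x_{j,-l}^+$, which vanishes in $\CU_\nu^+(\Gaff)$ via the identification \eqref{identification subalgebras} and the shuffle realization of $U_q^+(L\Glie)$ in \cite[Theorem 1.7]{NT}. The principal obstacle I foresee is precisely this reduction, since the shifted quantum affine setting lacks a Hopf-algebraic shortcut for the higher-mode Serre identities; once the shuffle realization is invoked the remaining verification is routine bookkeeping, and $G_{\mu,\nu}$ is handled identically by the mirror argument.
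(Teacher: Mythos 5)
Your proposal is correct, but it takes a genuinely different route from the paper. The paper's proof is two lines: it recalls the Drinfeld formal coproduct $\widetilde{\Delta}_{\mu,\nu}\colon \CU_{\mu+\nu}(\Gaff) \to \CU_{\mu}(\Gaff)\,\widehat{\otimes}\,\CU_{\nu}(\Gaff)$ of Finkelberg--Tsymbaliuk \cite[Lemma 10.3]{FT}, which is already known to be an algebra homomorphism to a completed tensor product, and observes that composing with the quotient map $\pi_{\mu}$ (resp.\ $\pi_{\nu}$) kills the terms responsible for the completion and collapses the formulas to exactly the stated finite ones. You instead verify all defining relations of $\CU_{\mu+\nu}(\Gaff)$ by hand, exploiting that $\BCU_{\mu}(\Gaff)$ is commutative so that first-slot factors pull out of every relation; your treatment of the shift conditions \eqref{Drinfeld rel: shift}, the quadratic relations, and the reduction of the Serre relation to the symmetrized general-mode Serre identity (legitimate because the first-slot coefficient is symmetric in the $k_a$'s) is sound. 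What your route buys is self-containedness and an explicit explanation of why triviality of one factor removes the completion; what it costs is the nontrivial input that the symmetrized Serre relations in arbitrary modes hold in $U_q^+(L\Glie)\cong\CU_{\nu}^+(\Gaff)$. For that step, invoking the shuffle realization of \cite{NT} works but is heavier than needed: the general-mode Serre relations are part of the presentation of $U_q^{\pm}(L\Glie)$ in the new Drinfeld realization, as in \cite{Beck}, which is exactly the reference the paper points to after \eqref{Drinfeld rel: triangular decomposition}; citing that would streamline your argument.
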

\begin{proof}
Recall the Drinfeld formal coproduct $\widetilde{\Delta}_{\mu,\nu}: \CU_{\mu+\nu}(\Gaff) \longrightarrow \CU_{\mu}(\Gaff)\ \widehat{\otimes}\ \CU_{\nu}(\Gaff)$ from \cite[Lemma 10.3]{FT}. Passing to the quotients $\CU_{\mu}(\Gaff) \longrightarrow \BCU_{\mu}(\Gaff)$ and $\CU_{\nu}(\Gaff) \longrightarrow \BCU_{\nu}(\Gaff)$ give the desired algebra homomorphisms.
\end{proof}

\begin{example} \label{example: tensor prefund}
Let $\nu, \zeta$ be coweights, $\Bp \in \mathfrak{d}_{\zeta}$ and $N$ be a $\CU_{\nu}(\Gaff)$-module. Consider the $\CU_{\nu+\zeta}(\Gaff)$-modules $L_{\zeta}(\Bp)_{w^{-1}} \otimes N$ and $N \otimes L_{\zeta}(\Bp)_{w^{-1}}$. 
As in Example \ref{example: tensor prefund Yangian}, these are pullbacks of the polynomial space $N[w,w^{-1}]$, viewed as a $\CU_{\nu}(\Gaff)[w,w^{-1}]$-module by scalar extension, along the algebra homomorphisms $F_{\Bp,w}^{\nu+\zeta}$ and $G_{\Bp,w}^{\nu+\zeta}$ respectively from $\CU_{\nu+\zeta}(\Gaff)$ to $\CU_{\nu}(\Gaff)[w,w^{-1}]$ defined by: 
\begin{align*}
F_{\Bp,w}^{\nu+\zeta}:&\ x_i^+(z) \mapsto \Bp_i(zw) x_i^+(z), \quad x_i^-(z) \mapsto x_i^-(z),\quad \phi_i^{\pm}(z) \mapsto \Bp_i(zw) \phi_i^{\pm}(z), \\
G_{\Bp,w}^{\nu+\zeta}:& \ x_i^+(z) \mapsto  x_i^+(z), \quad x_i^-(z) \mapsto \Bp_i(zw)x_i^-(z),\quad \phi_i^{\pm}(z) \mapsto \Bp_i(zw) \phi_i^{\pm}(z).
\end{align*}
 If we take the underlying space to be the space of Laurent series $N((w))$ as in Proposition \ref{prop: R-matrix general}, we get completed tensor product modules over $\CU_{\nu+\zeta}(\Gaff)$: 
$$\overline{L_{\zeta}(\Bp)_{w^{-1}} \otimes N}, \qquad\overline{N \otimes L_{\zeta}(\Bp)_{w^{-1}}}.  $$
\end{example}

Eqs.\eqref{def: Ti}--\eqref{def: T-series} make sense in $\CU_{\nu}(\Gaff)$ and define the power series $T_{\Bp}^{\nu}(z) \in \CU_{\nu}(\Gaff)[[z]]$ for $\Bp \in \mathfrak{d}$; the case $\Bp = \Psi_{i,1}$ corresponds to $T_i^-(z)$ in \cite[\S 9.2]{H}. The commutation relations of Eqs.\eqref{Drinfeld rel: h x} and \eqref{rel: T x} still hold true in shifted quantum affine algebras, from which we obtain the following  quantum affine analog of Theorem \ref{thm: Yangian T-series}.
\begin{prop}  \label{prop: one-dim R}
Let $\nu, \zeta$ be coweights, $\Bp \in \mathfrak{d}_{\zeta}$ and $N$ be a module over $\CU_{\nu}(\Gaff)$. The action of $T_{\Bp}^{\nu}(w) \in \CU_{\nu}(\Gaff)[[w]]$ on $N((w))$ defines an isomorphism of $\CU_{\nu+\zeta}(\Gaff)$-modules 
$$ T_{\Bp}^{\nu}(w)|_N:  \overline{L_{\zeta}(\Bp)_{w^{-1}} \otimes N} \longrightarrow \overline{N \otimes L_{\zeta}(\Bp)_{w^{-1}}}. $$
\end{prop}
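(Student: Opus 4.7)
The plan is to proceed in direct analogy with Theorem \ref{thm: Yangian T-series} but without the need for a dressing factor $D^V_{\Bp,w}$: here the intertwining property follows immediately from Eq.\eqref{rel: T x}, which is already an algebra-level identity in $\CU_{\nu}(\Gaff)[[z, w]]$. First, I will identify the module structures on $\overline{L_{\zeta}(\Bp)_{w^{-1}} \otimes N}$ and $\overline{N \otimes L_{\zeta}(\Bp)_{w^{-1}}}$ as pullbacks of the $\CU_{\nu}(\Gaff)[w,w^{\pm 1}]$-module $N((w))$ (by scalar extension) along the algebra homomorphisms $F_{\Bp,w}^{\nu+\zeta}$ and $G_{\Bp,w}^{\nu+\zeta}$ of Example \ref{example: tensor prefund}. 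Thus it suffices to exhibit a $\BC((w))$-linear automorphism $\Phi(w)$ of $N((w))$ satisfying the intertwining relation $\Phi(w) \circ F_{\Bp,w}^{\nu+\zeta}(x) = G_{\Bp,w}^{\nu+\zeta}(x) \circ \Phi(w)$ for every $x \in \CU_{\nu+\zeta}(\Gaff)$, and to identify $\Phi(w)$ with the action of $T_{\Bp}^{\nu}(w)$.

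Second, I will check well-definedness. Since $T_{\Bp}^{\nu}(w) \in 1 + w\,\CU_{\nu}^{0}(\Gaff)[[w]]$ is an invertible power series in $w$ with coefficients acting as linear operators on $N$, for any $v(w) = \sum_{k \geq k_0} v_k w^k \in N((w))$ the product $T_{\Bp}^{\nu}(w)\,v(w)$ is again a Laurent series in $w$ (each coefficient is a finite sum), and similarly for its inverse. Hence $T_{\Bp}^{\nu}(w)$ and $T_{\Bp}^{\nu}(w)^{-1}$ induce mutually inverse $\BC((w))$-linear automorphisms of $N((w))$.

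Third, the intertwining is verified generator by generator using Eq.\eqref{rel: T x} and the fact that $T_{\Bp}^{\nu}(w)$ is built from the modified Drinfeld--Cartan elements $\tilde h_{i,-s}$. Symmetry of $\Bp_i(zw)$ in the pair $(z,w)$ converts the $T x^+$ relation of Eq.\eqref{rel: T x} into the identity
\[
x_i^+(z)\, T_{\Bp}^{\nu}(w) = \Bp_i(zw)\, T_{\Bp}^{\nu}(w)\, x_i^+(z),
\]
which matches the prescriptions $F_{\Bp,w}^{\nu+\zeta}(x_i^+(z)) = \Bp_i(zw) x_i^+(z)$ and $G_{\Bp,w}^{\nu+\zeta}(x_i^+(z)) = x_i^+(z)$. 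A similar computation works for $x_i^-(z)$, and the $\phi_i^{\pm}$-case is trivial because $T_{\Bp}^{\nu}(w)$ lies in the (commutative) Drinfeld--Cartan subalgebra $\CU_{\nu}^{0}(\Gaff)[[w]]$ and $F_{\Bp,w}^{\nu+\zeta}$, $G_{\Bp,w}^{\nu+\zeta}$ agree on $\phi_i^{\pm}(z)$.

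The main conceptual point is thus just the observation that the two algebra homomorphisms $F_{\Bp,w}^{\nu+\zeta}$ and $G_{\Bp,w}^{\nu+\zeta}$ describing the two tensor product module structures differ precisely by conjugation by $T_{\Bp}^{\nu}(w)$; no further analysis (such as the root-grading refinement needed in the Yangian case) is required because the formal parameter $w$ is grouplike enough to absorb the shift. I expect no serious obstacle; the only care needed is to make sure the completions on both sides are matched correctly, which is transparent since both underlying spaces are literally $N((w))$ and $T_{\Bp}^{\nu}(w)$ preserves this space.
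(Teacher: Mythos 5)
Your proposal is correct and follows essentially the same route as the paper, which justifies the proposition precisely by noting that the commutation relations \eqref{rel: T x} (and \eqref{Drinfeld rel: h x}) persist in shifted quantum affine algebras and then intertwining the pullback module structures via $F_{\Bp,w}^{\nu+\zeta}$ and $G_{\Bp,w}^{\nu+\zeta}$ from Example \ref{example: tensor prefund}; your observation that no dressing factor or root grading is needed (because \eqref{rel: T x} produces exactly the factor $\Bp_i(zw)$, unlike the Yangian relation \eqref{comm: S x general} with its extra $\Bp_i(-w)^{-1}$) is exactly the point. Your write-up is in fact more detailed than the paper's one-line argument, and the only trifling imprecision is locating \eqref{rel: T x} in $\CU_{\nu}(\Gaff)[[z,w]]$ rather than allowing negative powers of the current variable, which does not affect the argument.
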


In Example \ref{example: tensor prefund} assume $\nu+\zeta$ to be antidominant so that both tensor products are modules over the upper Borel subalgebra. To the end of this section, we will evaluate the first tensor factor of the universal R-matrix to these tensor product modules. For that purpose, we need a shifted version of the universal R-matrix defined as follows.

Let $\nu$ be a coweight. Recall from Definition \ref{def: shifted root vectors} the shifted root vectors $E_{\alpha}^{\nu}$ and the modified Drinfeld--Cartan elements $h_{i,s}$ in the shifted quantum affine algebra $\CU_{\nu}(\Gaff)$. In Eqs.\eqref{def: R0}--\eqref{def: R bar} for $\CR_0(z), \CR_{\pm}(z)$ and $\barR(z)$, we substitute $h_{i,s}, E_{\alpha} \in \Borel$ in their first tensor factor with $h_{i,s}, E_{\alpha}^{\nu} \in \CU_{\nu}(\Gaff)$. The resulting series are denoted by 
$$ \CR_0^{\nu}(z),\ \CR_{\pm}^{\nu}(z),\ \barR^{\nu}(z) \in  (\CU_{\nu}(\Gaff) \stimes \lBorel)[[z]]. $$
Here the completion is as in the Borel case based on the weight grading on $\CU_{\nu}(\Gaff)$.

\begin{lem} \label{lem: deformed shifted homo}
For $\nu, \zeta$ coweights and $\Bp \in \mathfrak{d}_{\zeta}$, we have in $(\CU_{\nu}(\Gaff) \stimes \lBorel)[[z,w]]$:
\begin{align}
&\begin{cases}
(F_{\Bp,w}^{\nu+\zeta} \otimes \mathrm{Id})(\CR_+^{\nu+\zeta}(z)) = (T_{\Bp}^{\nu}(w)^{-1} \otimes 1) \times \CR_+^{\nu}(z) \times (T_{\Bp}^{\nu}(w) \otimes 1), \\
(F_{\Bp,w}^{\nu+\zeta} \otimes \mathrm{Id})(\CR_0^{\nu+\zeta}(z)) = (1 \otimes T_{\Bp}(zw)) \times \CR_0^{\nu}(z), \\
(F_{\Bp,w}^{\nu+\zeta} \otimes \mathrm{Id})(\CR_-^{\nu+\zeta}(z)) = \CR_-^{\nu}(z), 
\end{cases}   \label{F R} \\
&\begin{cases}
(G_{\Bp,w}^{\nu+\zeta} \otimes \mathrm{Id})(\CR_+^{\nu+\zeta}(z)) = \CR_+^{\nu}(z), \\
 (G_{\Bp,w}^{\nu+\zeta} \otimes \mathrm{Id})(\CR_0^{\nu+\zeta}(z)) =  \CR_0^{\nu}(z) \times (1\otimes T_{\Bp}(zw)), \\
 (G_{\Bp,w}^{\nu+\zeta} \otimes \mathrm{Id})(\CR_-^{\nu+\zeta}(z)) =  (T_{\Bp}^{\nu}(w) \otimes 1) \times \CR_-^{\nu}(z) \times (T_{\Bp}^{\nu}(w)^{-1} \otimes 1).
\end{cases}  \label{G R}
\end{align}
\end{lem}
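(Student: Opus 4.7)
The plan is to verify the six identities by computing the action of $F_{\Bp,w}^{\nu+\zeta}$ and $G_{\Bp,w}^{\nu+\zeta}$ on the building blocks of the universal R-matrix, namely the shifted root vectors from Definition~\ref{def: shifted root vectors} and the Drinfeld--Cartan elements $h_{i,s}$, and then substituting into the exponential/product expressions~\eqref{def: R0}--\eqref{def: R+-}.

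For the Cartan part, both homomorphisms act on $\phi_i^+(z)$ by multiplication by $\Bp_i(zw)$; taking logarithms and using the formula~\eqref{def: lambda} for $\lambda_{i,s}^{\Bp}$, I obtain $F(h_{i,s}) = G(h_{i,s}) = h_{i,s} + w^s \lambda_{i,s}^{\Bp}$ for $s>0$. Substituting into~\eqref{def: R0} and separating the operator and scalar contributions, the operator piece reassembles into $\CR_0^\nu(z)$, while the scalar piece, after swapping the order of summation using the inverse symmetric quantum Cartan matrix $(\widetilde{B}_{ij}(q^s))$ and the defining expansions~\eqref{def: Ti}--\eqref{def: T-series} of $T_{\Bp}(zw)$, equals $1 \otimes T_{\Bp}(zw)$. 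Since both factors lie in the commutative Cartan part, the two orderings in~\eqref{F R} and~\eqref{G R} agree, settling the middle equations.

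For the triangular parts, the key observation is that the restrictions of $F_{\Bp,w}^{\nu+\zeta}$ and $G_{\Bp,w}^{\nu+\zeta}$ to the subalgebras $\CU^{\pm}$ coincide with inner automorphisms by the T-series. The commutation relations~\eqref{rel: T x} are derived from the Drinfeld relations alone and therefore persist in $\CU_\nu(\Gaff)[[w]]$, giving $T_{\Bp}^\nu(w)^{-1} x_i^+(z) T_{\Bp}^\nu(w) = \Bp_i(zw) x_i^+(z)$ and $T_{\Bp}^\nu(w) x_i^-(z) T_{\Bp}^\nu(w)^{-1} = \Bp_i(zw) x_i^-(z)$. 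Under the identifications~\eqref{identification subalgebras}, the positive root vectors $E^{\nu+\zeta}_{n\delta+\beta}$ for $n\delta+\beta \in \hat{\Phi}_+$ correspond to the same elements of $U_q^+(L\Glie)$ for every shift, so $F$ acts on them as $\mathrm{Ad}_{T_{\Bp}^\nu(w)^{-1}}$ and $G$ acts as the identity; substitution into~\eqref{def: R+-} yields the first equations of~\eqref{F R} and~\eqref{G R} respectively. For $\CR_-(z)$ I use Definition~\ref{def: shifted root vectors}(ii) to decompose $E^\mu_{s\delta-\beta} = \phi^+_{-\beta} \overline{E}^\mu_{s\delta-\beta}$ with $\overline{E}^\mu_{s\delta-\beta} \in \CU^-_\mu(\Gaff)$. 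Since $\Bp_i(0) = 1$, both $F$ and $G$ preserve every $\phi_{i,0}^+$ and hence each $\phi^+_{-\beta}$; on the factor $\overline{E}$ one has $F = \mathrm{Id}$ while $G = \mathrm{Ad}_{T_\Bp^\nu(w)}$, and since $T_\Bp^\nu(w)$ lies in the Cartan and commutes with $\phi^+_{-\beta}$, the third equations of~\eqref{F R} and~\eqref{G R} follow.

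The main obstacle is that when $\nu$ is not antidominant, Proposition~\ref{prop: shifted upper Borel} does not provide an embedding of $\Borel$ into $\CU_\nu(\Gaff)$, so the series $T_\Bp^\nu(w)$ cannot be imported from the definition~\eqref{def: Ti} inside $\lBorel$. This is resolved by observing that Definition~\ref{def: shifted root vectors} supplies the elements $h_{i,-s}$ for $s>0$ via the expansion of $\phi_i^-(z)$ in every shifted quantum affine algebra; therefore $T_\Bp^\nu(w)$ is a well-defined invertible element of $\CU_\nu^0(\Gaff)[[w]]$, and the commutation relations of Lemma~\ref{lem: T x h} carry over intact from the Drinfeld presentation. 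Once this is in place, the remaining computations are elementary manipulations with the ordered infinite products of $q$-exponentials in~\eqref{def: R+-}, which are stable under the algebra homomorphism $F\otimes \mathrm{Id}$ (or $G\otimes \mathrm{Id}$) and under conjugation by $T_\Bp^\nu(w)^{\pm 1} \otimes 1$.
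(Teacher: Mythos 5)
Your proposal is correct and follows essentially the same route as the paper: compute the images of $h_{i,s}$, $x_{i,m}^{\pm}$ and the shifted root vectors under $F_{\Bp,w}^{\nu+\zeta}$ and $G_{\Bp,w}^{\nu+\zeta}$ (identifying the multiplication by $\Bp_i(zw)$ with conjugation by $T_{\Bp}^{\nu}(w)^{\mp1}$ via Eq.~\eqref{rel: T x}, and the shift $h_{i,s}\mapsto h_{i,s}+\lambda_{i,s}^{\Bp}w^s$ with the factor $1\otimes T_{\Bp}(zw)$ as in Remark~\ref{rem: from R to T}), then substitute into Eqs.~\eqref{def: R0}--\eqref{def: R+-}. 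Your preliminary discussion of why $T_{\Bp}^{\nu}(w)$ and the relations of Lemma~\ref{lem: T x h} make sense in a general shifted quantum affine algebra matches what the paper establishes just before Proposition~\ref{prop: one-dim R}, so it is setup rather than a new ingredient.
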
  
\begin{proof}
We shall prove Eq.\eqref{F R} as the case of \eqref{G R} is parallel.
Write $\mu := \nu + \zeta$. By Eq.\eqref{rel: T x} and Remark \ref{rem: from R to T} we have for $(i,m,s) \in I \times \BZ \times \BZ_{>0}$:
\begin{gather*}
F_{\Bp,w}^{\mu}(x_{i,m}^+) = T_{\Bp}^{\nu}(w)^{-1} x_{i,m}^+ T_{\Bp}^{\nu}(w), \quad F_{\Bp,w}^{\mu}(h_{i,s}) =  h_{i,s} + \lambda_{i,s}^{\Bp} w^s, \\
F_{\Bp,w}^{\mu}(\phi_{i,0}^+) = \phi_{i,0}^+,\quad F_{\Bp,w}^{\mu}(x_{i,m}^-) = x_{i,m}^-
\end{gather*}
By Definition \ref{def: shifted root vectors} if $\alpha \in \hat{\Phi}_+$ then we have $F_{\Bp,w}^{\mu}(E_{\alpha}^{\mu}) = T_{\Bp}^{\nu}(w)^{-1} E_{\alpha}^{\nu} T_{\Bp}^{\nu}(w)$; if $\alpha \in \hat{\Phi}_-$ then $F_{\Bp,w}^{\mu}(E_{\alpha}^{\mu}) = E_{\alpha}^{\nu}$. Eq.\eqref{F R} follows from Eqs.\eqref{def: R0}--\eqref{def: R+-}. 
\end{proof}

By Eq.\eqref{def: Theta series} and Theorem \ref{thm: polynomiality Theta}, for $\Bp \in \mathfrak{d}$ we have a power series $\Theta_{\Bp}(z)$ in $z$ with coefficients in $U_q^-(\mathfrak{c}) \otimes U_q^+(L\Glie)$, which corresponds to a power series with coefficients in $U_q^-(\mathfrak{c}) \otimes \CU_{\nu}^+(\Gaff)$, denoted by $\Theta_{\Bp}^{0,\nu}(z)$. It is the formal sum of the polynomials 
$$\Theta_{\Bp,\beta}^{0,\nu}(z) \in U_q^-(\mathfrak{c})_{-\beta} \otimes \CU_{\nu}^+(\Gaff)_{\beta}[z] \quad \mathrm{for}\ \beta \in \BQ_+. $$ 
Similarly, the power series $\Theta_{\Bp}^{\nu,0}(z) \in (\CU_{\nu}^-(\Gaff) \otimes U_q^+(\mathfrak{c}))[[z]]$ and its polynomial components are defined. Below we shall need the modified series:
$$ \Theta_{\Bp}^{\infty}(z) := \sum_{\beta \in \BQ_+} q^{(\beta,\beta)} \Theta_{\Bp,\beta}(z) (k_{-\beta} \otimes k_{\beta}), \quad \Theta_{\Bp}^{\nu,\infty}(z)  := \sum_{\beta \in \BQ_+} q^{(\beta,\beta)} \Theta_{\Bp,\beta}^{\nu,0}(z)(\phi_{-\beta}^+\otimes k_{\beta}). $$
The first power series has coefficients in $\lBorel \otimes \lBorel$, and the second $\CU_{\nu}(\Gaff) \otimes \lBorel$. In terms of the $q$-completion of Eq.\eqref{def: t infinity}, we have 
\begin{equation}  \label{q-completion Theta}
\Theta_{\Bp}^{\infty}(z) = q^{-t_{\infty}} \Theta_{\Bp}(z) q^{t_{\infty}}.
\end{equation}

For $X = \sum_i a_i \otimes b_i$ let us denote $X_{21} := \sigma(X) = \sum_i b_i \otimes a_i$.
\begin{lem}  
For $\nu$ a coweight, we have in $(\CU_{\nu}(\Gaff)\stimes \lBorel)[[z,w]]$:
\begin{align}    
\begin{split}  \label{Theta R+}
&\CR_+^{\nu}(z) \times (T_{\Bp}^{\nu}(w) \otimes T_{\Bp}(zw))\times \CR_+^{\nu}(z)^{-1}   \\
            = &\ (T_{\Bp}^{\nu}(w) \otimes 1) \times \left[(\tau_z \otimes \mathrm{Id}) \left(\Theta_{\Bp}^{0,\nu}(w)\times (T_{\Bp}(w) \otimes 1)\right)\right]_{21},   
\end{split} \\
\begin{split} \label{Theta R-}
& \CR_-^{\nu}(z)^{-1} \times (T_{\Bp}^{\nu}(w) \otimes T_{\Bp}(zw)) \times \CR_-^{\nu}(z)  \\
= &\   (\mathrm{Id} \otimes \tau_z)\left(  (1\otimes T_{\Bp}(w)) \times  \Theta_{\Bp}^{\nu,\infty}(w)\right) \times  (T_{\Bp}^{\nu}(w) \otimes 1).
\end{split}
\end{align}
\end{lem}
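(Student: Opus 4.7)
The approach is to interpret both identities as encoding, via the factorization of $\Delta(T_\Bp(w))$ through the Theta series from~\eqref{def: Theta series}, the conjugation of $T_\Bp^\nu(w)\otimes T_\Bp(zw)$ by $\CR_\pm^\nu(z)$, and to compute the latter using Proposition~\ref{prop: twist}. Two elementary observations drive the calculation: first, since $\deg h_{i,-s}=-s$, one has $\tau_z(\tilde h_{i,-s})=z^s\tilde h_{i,-s}$ and therefore $\tau_z(T_\Bp(w))=T_\Bp(zw)$; second, $T_\Bp(w)$ is of weight zero, hence invariant under $\mathrm{Ad}(q^{\pm t_\infty})$ via Eq.~\eqref{def: t infinity}.

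In the unshifted case $\nu=0$, since the modified Drinfeld--Cartan elements $\tilde h_{i,-s}$ mutually commute and $T_\Bp(w)$ is the exponential of a $\BC[[w]]$-linear combination of them, exponentiating Proposition~\ref{prop: twist} term-by-term in the exponent yields
\begin{gather*}
\CR_+(z)(T_\Bp(w)\otimes T_\Bp(zw))\CR_+(z)^{-1}=(\mathrm{Id}\otimes\tau_z)\Delta^{\mathrm{cop}}(T_\Bp(w)), \\
\CR_-(z)^{-1}(T_\Bp(w)\otimes T_\Bp(zw))\CR_-(z)=q^{-t_\infty}(\mathrm{Id}\otimes\tau_z)\Delta(T_\Bp(w))q^{t_\infty}.
\end{gather*}
Substituting $\Delta(T_\Bp(w))=(1\otimes T_\Bp(w))\Theta_\Bp(w)(T_\Bp(w)\otimes 1)$, I would obtain the $\nu=0$ version of Eq.~\eqref{Theta R+} by passing $\sigma$ through the product and using the intertwiner $\sigma\circ(\tau_z\otimes\mathrm{Id})=(\mathrm{Id}\otimes\tau_z)\circ\sigma$ together with $\sigma(T_\Bp(zw)\otimes 1)=1\otimes T_\Bp(zw)$. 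For Eq.~\eqref{Theta R-}, weight-zeroness of $T_\Bp(w)$ lets $\mathrm{Ad}(q^{\pm t_\infty})$ pass through the outer factors, leaving $q^{-t_\infty}\Theta_\Bp(w)q^{t_\infty}=\Theta_\Bp^\infty(w)$ by Eq.~\eqref{q-completion Theta}, which gives the claimed form.

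For general coweight $\nu$, the shifted R-matrices $\CR_\pm^\nu(z)$ are defined by the same formal expression as $\CR_\pm(z)$ with $E_\alpha$ replaced by the shifted root vectors $E_\alpha^\nu\in\CU_\nu(\Gaff)$ of Definition~\ref{def: shifted root vectors}. Since the $h_{i,-s}\in\CU_\nu(\Gaff)$ satisfy identical commutation relations with the $E_\alpha^\nu$ as their $\qaf$ counterparts with $E_\alpha$, and the coefficients of $\Theta_\Bp^{0,\nu}(w)$, $\Theta_\Bp^{\nu,\infty}(w)$ lie in subalgebras canonically identified with those of $\qaf$ via~\eqref{identification subalgebras}, the calculation above extends verbatim to the shifted setting. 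If a more conservative justification is required, one can pick antidominant coweights $\epsilon,\eta$ with $\mu:=\nu+\epsilon+\eta$ antidominant, use $\jmath_\mu$ from Proposition~\ref{prop: shifted upper Borel} to transport the unshifted identity into $\CU_\mu(\Gaff)\stimes\lBorel$, then descend along the injective shift homomorphism $\iota^\nu_{\epsilon,\eta}$ of Proposition~\ref{prop: shifted homomorphism} to $\CU_\nu(\Gaff)\stimes\lBorel$. The main obstacle will be precisely this lifting to the shifted setting: the coproduct is not available on $\CU_\nu(\Gaff)$, so the identities must be treated as purely algebraic statements about the specific series $\CR_\pm^\nu(z)(T_\Bp^\nu(w)\otimes T_\Bp(zw))\CR_\pm^\nu(z)^{\mp 1}$, whose computation depends only on the shared Drinfeld relations.
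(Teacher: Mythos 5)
Your proposal is correct and follows essentially the same route as the paper: exponentiate the twist relations of Proposition \ref{prop: twist} for the (modified) Drinfeld--Cartan elements to compute the conjugation of $T_{\Bp}(w)\otimes T_{\Bp}(zw)$ by $\CR_{\pm}(z)$, substitute the factorization \eqref{def: Theta series} (using $\tau_z(T_{\Bp}(w))=T_{\Bp}(zw)$, weight-zeroness of $T_{\Bp}$ and Eq.\eqref{q-completion Theta}), and then transfer the resulting identity to $\CU_{\nu}(\Gaff)$ through the canonical identification of the subalgebra generated by $x_{i,m}^{+}$ (resp. $x_{i,m}^{-}$, $\phi_{i,0}^{+}$) and $h_{i,s}$ with its shifted counterpart, exactly as in the paper's argument via the subalgebras $\mathcal{X}\cong\mathcal{X}_{\nu}$. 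Only your optional ``more conservative'' fallback does not work as stated: the first tensor factors involved (e.g. the coefficients of $T_{\Bp}^{\nu}(w)$, which are built from $h_{i,-s}$ with $s>0$) do not lie in $\Borel$, so $\jmath_{\mu}$ cannot transport the identity, and $\iota^{\nu}_{\epsilon,\eta}$ rescales $\phi_i^{\pm}(z)$ and hence does not preserve the $h_{i,s}$ or the shifted root vectors, so the descent would not match the series $\CR_{\pm}^{\mu}(z)$, $T_{\Bp}^{\mu}(w)$ with their $\nu$-shifted analogues; fortunately your main argument does not rely on it.
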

\begin{proof}
Each of the factors in Eq.\eqref{Theta R+} is  actually a power series in $z, w$; indeed the first tensor factor of $\Theta_{\Bp}^{0,\nu}(w)$ lies in $\lBorel$, which is generated by the elements $f_j$ for $0\leq j \leq r$ of negative degrees. 
By Proposition \ref{prop: twist} and Eq.\eqref{def: Theta series}:
\begin{align*}
 \CR_+(z) \times (T_{\Bp}(w) \otimes T_{\Bp}(zw)) \times \CR_+(z)^{-1} = \sigma \circ (\tau_z \otimes \mathrm{Id}) \circ\Delta(T_{\Bp}(zw)) \\
= (T_{\Bp}(w) \otimes 1) \times \left[ (\tau_z \otimes \mathrm{Id}) \left(\Theta_{\Bp}(w)   \times (T_{\Bp}(w) \otimes 1)\right)  \right]_{21}.
\end{align*}
Here we used  $\tau_z(T_{\Bp}(w)) = T_{\Bp}(zw)$. 
This proves the Borel analog of Eq.\eqref{Theta R+} in $(\mathcal{X} \stimes \lBorel)[[z,w]]$, where $\mathcal{X}$ is the subalgebra of $\qaf$ generated by the $x_{i,m}^+$ and $h_{i,s}$ for $(i, m, s) \in I \times \BZ\times \BZ_{\neq  0}$. Here we used the fact that the first tensor factor of $\CR_+(z)$ lies in $U_q^+(\mathfrak{b}) \subset \mathcal{X}$, the second tensor factor of $\Theta_{\Bp}(w)$ lies in $U_q^+(\mathfrak{c}) \subset \mathcal{X}$, and the coefficients of $T_{\Bp}(w)$ lie in the subalgebra generated by the $h_{i,s} \in \mathcal{X}$. 

Define the subalgebra $\mathcal{X}_{\nu}$ of $\CU_{\nu}(\Gaff)$ in the same way. Then the natural identification of \eqref{identification subalgebras} extends to $\mathcal{X} \cong \mathcal{X}_{\nu}$; indeed as a consequence of triangular decomposition both $\mathcal{X}$ and $\mathcal{X}_{\nu}$ are generated by $x_{i,m}^+, h_{i,s}$ subject to definition relations \eqref{Drinfeld rel: Drinfeld}, \eqref{Drinfeld rel: Serre} and \eqref{Drinfeld rel: h x}. Under this identification, the equation in $(\mathcal{X} \stimes \lBorel)[[z,w]]$ corresponds to an equation in $(\mathcal{X}_{\nu} \stimes \lBorel)[[z,w]]$, which is precisely Eq.\eqref{Theta R+}.

To prove Eq.\eqref{Theta R-}, by Proposition \ref{prop: twist}, Eqs.\eqref{def: t infinity} and\eqref{q-completion Theta}:
\begin{align*}
&\CR_-(z)^{-1} \times (T_{\Bp}(w) \otimes T_{\Bp}(zw)) \times \CR_-(z) =  q^{-t_{\infty}} \times (\mathrm{Id} \otimes \tau_z)\Delta(T_{\Bp}(w)) \times q^{t_{\infty}}\\
&= (\mathrm{Id} \otimes \tau_z)\left( (1\otimes T_{\Bp}(w)) \times \Theta_{\Bp}^{\infty}(w)\right) \times (T_{\Bp}(w) \otimes 1).
\end{align*}
As in the case of $\CR_+$, the first tensor factors of the above equations can be shifted, resulting in the desired Eq.\eqref{Theta R-}.
\end{proof}

\begin{cor} 
For $\nu, \zeta$ coweights and $\Bp \in \mathfrak{d}_{\zeta}$, we have in $(\CU_{\nu}(\Gaff) \stimes \lBorel)[[z,w]]$:
\begin{gather}
(F_{\Bp,w}^{\nu+\zeta} \otimes \mathrm{Id})(\barR^{\nu+\zeta}(z)) =\left[(\tau_z \otimes \mathrm{Id})\left(\Theta_{\Bp}^{0,\nu}(w) \times (T_{\Bp}(w) \otimes 1) \right) \right]_{21} \times \barR^{\nu}(z), \label{dec: cyclic} \\
(G_{\Bp,w}^{\nu+\zeta} \otimes \mathrm{Id})(\barR^{\nu+\zeta}(z)) = \barR^{\nu}(z) \times (\mathrm{Id} \otimes \tau_z)\left((1\otimes T_{\Bp}(w)) \times \Theta_{\Bp}^{\nu,\infty}(w)\right).    \label{dec: cocyclic}
\end{gather}
\end{cor}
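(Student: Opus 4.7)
The plan is to derive both decomposition formulas by plugging the factorization $\barR^{\nu+\zeta}(z) = \CR_+^{\nu+\zeta}(z) \CR_0^{\nu+\zeta}(z) \CR_-^{\nu+\zeta}(z)$ of Eq.\eqref{def: R bar} into the left-hand sides, applying Eqs.\eqref{F R}--\eqref{G R} factor by factor, and then using the conjugation identities \eqref{Theta R+}--\eqref{Theta R-} to absorb the T-series and reassemble $\barR^{\nu}(z)$. The whole calculation is algebraic manipulation in $(\CU_{\nu}(\Gaff)\stimes \lBorel)[[z,w]]$; there is no genuine obstacle beyond careful bookkeeping.

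For Eq.\eqref{dec: cyclic}, applying $F_{\Bp,w}^{\nu+\zeta}\otimes\mathrm{Id}$ to each of the three factors via Eq.\eqref{F R} produces
\begin{equation*}
(T_{\Bp}^{\nu}(w)^{-1}\otimes 1)\CR_+^{\nu}(z)(T_{\Bp}^{\nu}(w)\otimes 1)\cdot(1\otimes T_{\Bp}(zw))\CR_0^{\nu}(z)\cdot\CR_-^{\nu}(z).
\end{equation*}
The factors $(T_{\Bp}^{\nu}(w)\otimes 1)$ and $(1\otimes T_{\Bp}(zw))$ commute because they live in different tensor slots, and they can be grouped as $(T_{\Bp}^{\nu}(w)\otimes T_{\Bp}(zw))$. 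Commuting this combined factor through $\CR_+^{\nu}(z)$ is exactly the content of Eq.\eqref{Theta R+}; after this step the outer $(T_{\Bp}^{\nu}(w)^{-1}\otimes 1)$ cancels with the $(T_{\Bp}^{\nu}(w)\otimes 1)$ produced on the right of Eq.\eqref{Theta R+}, and what remains is $[(\tau_z\otimes \mathrm{Id})(\Theta_{\Bp}^{0,\nu}(w)(T_{\Bp}(w)\otimes 1))]_{21}$ times $\CR_+^{\nu}(z)\CR_0^{\nu}(z)\CR_-^{\nu}(z)=\barR^{\nu}(z)$.

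For Eq.\eqref{dec: cocyclic} the argument is symmetric: applying $G_{\Bp,w}^{\nu+\zeta}\otimes\mathrm{Id}$ via Eq.\eqref{G R} yields
\begin{equation*}
\CR_+^{\nu}(z)\cdot\CR_0^{\nu}(z)(1\otimes T_{\Bp}(zw))\cdot(T_{\Bp}^{\nu}(w)\otimes 1)\CR_-^{\nu}(z)(T_{\Bp}^{\nu}(w)^{-1}\otimes 1).
\end{equation*}
Again the two middle T-series group into $(T_{\Bp}^{\nu}(w)\otimes T_{\Bp}(zw))$ sandwiched between $\CR_0^{\nu}(z)$ and $\CR_-^{\nu}(z)$. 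Commuting it past $\CR_-^{\nu}(z)$ is the content of Eq.\eqref{Theta R-}; the resulting $(T_{\Bp}^{\nu}(w)\otimes 1)$ on the far right cancels the outermost $(T_{\Bp}^{\nu}(w)^{-1}\otimes 1)$, leaving $\barR^{\nu}(z)$ followed by $(\mathrm{Id}\otimes\tau_z)((1\otimes T_{\Bp}(w))\Theta_{\Bp}^{\nu,\infty}(w))$.

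The one point that requires a small verification is that all the intermediate expressions genuinely make sense in the completed algebra $(\CU_{\nu}(\Gaff)\stimes\lBorel)[[z,w]]$: the triangular factors $\CR_\pm^\nu(z)$ live there by construction, $\CR_0^\nu(z)$ and the T-series lie in the Drinfeld--Cartan subalgebra and hence commute freely with each other, and the polynomiality of $\Theta_{\Bp,\beta}^{0,\nu}(w)$, $\Theta_{\Bp,\beta}^{\nu,0}(w)$ established in Theorem \ref{thm: polynomiality Theta} ensures the $\tau_z$-twists are still well-defined power series in $z,w$. With these observations the two identities drop out.
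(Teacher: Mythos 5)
Your argument is correct and is essentially the paper's own proof: factor $\barR^{\nu+\zeta}(z)$ via Eq.\eqref{def: R bar}, apply Eqs.\eqref{F R}--\eqref{G R} to each factor, merge the two T-series factors into $T_{\Bp}^{\nu}(w)\otimes T_{\Bp}(zw)$, conjugate through $\CR_{\pm}^{\nu}(z)$ with Eqs.\eqref{Theta R+}--\eqref{Theta R-}, and cancel the outer $T_{\Bp}^{\nu}(w)^{\pm 1}\otimes 1$. The only cosmetic difference is your well-definedness remark: what makes the $\tau_z$-twist a genuine power series in $z$ is that the relevant tensor factor lies in $\lBorel$ (non-positive degrees), a point already settled in the lemma establishing \eqref{Theta R+}--\eqref{Theta R-}, rather than the $w$-polynomiality of Theorem \ref{thm: polynomiality Theta}.
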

\begin{proof}
Eq.\eqref{dec: cyclic} follows from a sequence of identities
\begin{align*}
& (F_{\Bp,w}^{\nu+\zeta} \otimes \mathrm{Id})(\barR^{\nu+\zeta}(z)) = (F_{\Bp,w}^{\nu+\zeta} \otimes \mathrm{Id})(\CR_+^{\nu+\zeta}(z) \times \CR_0^{\nu+\zeta}(z) \times \CR_-^{\nu+\zeta}(z))   \\
=&\ (T_{\Bp}^{\nu}(w)^{-1} \otimes 1) \times \CR_+^{\nu}(z) \times ( T_{\Bp}^{\nu}(w) \otimes 1) \times (1 \otimes T_{\Bp}(zw)) \times \CR_0^{\nu}(z)  \times  \CR_-^{\nu}(z)  \\
=&\  (T_{\Bp}^{\nu}(w)^{-1} \otimes 1) \times \CR_+^{\nu}(z)\times  ( T_{\Bp}^{\nu}(w) \otimes T_{\Bp}(zw)) \times \CR_0^{\nu}(z)  \times  \CR_-^{\nu}(z)  \\
 =&\ \left[(\tau_z \otimes \mathrm{Id})\left(\Theta_{\Bp}^{0,\nu}(w) \times (T_{\Bp}(w) \otimes 1) \right) \right]_{21} \times \CR_+^{\nu}(z) \times \CR_0^{\nu}(z)  \times  \CR_-^{\nu}(z)  \\
 =&\ \left[(\tau_z \otimes \mathrm{Id})\left(\Theta_{\Bp}^{0,\nu}(w) \times (T_{\Bp}(w) \otimes 1) \right) \right]_{21} \times \barR^{\nu}(z).
\end{align*}
The second line follows from Eq.\eqref{F R} and the fourth line from Eq.\eqref{Theta R+}. Eq.\eqref{dec: cocyclic} is proved similarly based on Eqs.\eqref{G R} and \eqref{Theta R-}.
\end{proof}
We specialize the decomposition formulas \eqref{dec: cyclic}--\eqref{dec: cocyclic} to modules over antidominantly shifted quantum affine algebras. Their module structures can be restricted to the upper Borel subalgebra via the embedding of Proposition \ref{prop: shifted upper Borel}. To simplify notations, in Example \ref{example: tensor prefund} let us write the tensor product modules as
$$ N_{\Bp,w}^1 := L_{\zeta}(\Bp)_{w^{-1}} \otimes N,\quad N_{\Bp,w}^2 := N \otimes L_{\zeta}(\Bp)_{w^{-1}}.$$ 

\begin{theorem}  \label{thm: decomposition R-matrices}
Let $\nu, \zeta$ be coweights and $\Bp \in \mathfrak{d}_{\zeta}$ with $\nu+\zeta$ antidominant. 
Let $N$ be a $\CU_{\nu}(\Gaff)$-module which is negative root graded as a $\Borel$-module. Then we have the following decomposition of R-matrices for any finite-dimensional $\qaf$-module $V$:
\begin{align}
\check{R}_{N_{\Bp,w}^1, V}(z) &= \left[\Theta_{\Bp}^{0,\nu}(w) \times (T_{\Bp}(w) \otimes 1) \right]_{V_z \otimes N}  \circ \check{R}_{N,V}(z), \label{R cyclic}  \\
\check{R}_{N_{\Bp,w}^2, V}(z) &= \check{R}_{N, V}(z) \circ \left[(1 \otimes T_{\Bp}(w)) \times \Theta_{\Bp}^{\nu,0}(w) \right]_{N \otimes V_z}. \label{R cocyclic}
\end{align}
\end{theorem}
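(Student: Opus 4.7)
The plan is to pull both sides of \eqref{R cyclic} and \eqref{R cocyclic} back to operators on $N\otimes V$ with $w$ as a formal parameter, and then invoke the decomposition formulas \eqref{dec: cyclic}--\eqref{dec: cocyclic} which have just been established. The key preliminary observation is that
\begin{equation*}
\CR(z)\big|_{N_{\Bp,w}^1\otimes V} = (F_{\Bp,w}^{\nu+\zeta}\otimes\mathrm{Id})\bigl(\barR^{\nu+\zeta}(z)\bigr)\cdot q^{-t_{\infty}}\big|_{N\otimes V},
\end{equation*}
with an analogous identity for $N_{\Bp,w}^2$ using $G_{\Bp,w}^{\nu+\zeta}$. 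This holds because $N_{\Bp,w}^1$ is, by Example \ref{example: tensor prefund}, the pullback of $N[w,w^{-1}]$ along $F_{\Bp,w}^{\nu+\zeta}$; its $\Borel$-module structure is obtained by further restricting along the embedding $\jmath_{\nu+\zeta}$ of Proposition \ref{prop: shifted upper Borel} (available because $\nu+\zeta$ is antidominant) under which $\barR(z)\mapsto\barR^{\nu+\zeta}(z)$ by the definition of the shifted R-matrix in terms of the shifted root vectors $E_{\alpha}^{\nu+\zeta}$; and the operator $q^{-t_{\infty}}$ on $N_{\Bp,w}^1\otimes V$ agrees with the one on $N\otimes V$ since the Drinfeld polynomial condition $\Bp_i(0)=1$ forces $F_{\Bp,w}^{\nu+\zeta}$ to fix the Cartan generators $\phi_{i,0}^+$, leaving the weight grading unchanged.

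For \eqref{R cyclic}, substituting \eqref{dec: cyclic} into the displayed formula and composing with $\sigma$, I would apply the elementary flip identity $\sigma\circ[X_{21}]_{N\otimes V}=[X]_{V\otimes N}\circ\sigma|_{N\otimes V}$ to obtain
\begin{equation*}
\check{R}_{N_{\Bp,w}^1,V}(z) = \bigl[(\tau_z\otimes\mathrm{Id})\bigl(\Theta_{\Bp}^{0,\nu}(w)(T_{\Bp}(w)\otimes 1)\bigr)\bigr]_{V\otimes N}\circ\check{R}_{N,V}(z),
\end{equation*}
with Proposition \ref{prop: R-matrix general} used to recognise the final factor. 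Absorbing $\tau_z$ into the deformed module structure on $V$ via the identification $[(\tau_z\otimes\mathrm{Id})(Y)]_{V\otimes N}=[Y]_{V_z\otimes N}$ then yields \eqref{R cyclic}.

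For \eqref{R cocyclic}, the analogous substitution from \eqref{dec: cocyclic} produces the series $\Theta_{\Bp}^{\nu,\infty}$ rather than $\Theta_{\Bp}^{\nu,0}$, so an additional conjugation step is required: I would commute $\Theta_{\Bp}^{\nu,\infty}(w)$ past $q^{-t_{\infty}}$ by the shifted analogue of Eq.\eqref{q-completion Theta}, converting it into $\Theta_{\Bp}^{\nu,0}(w)$. The factor $1\otimes T_{\Bp}(w)$ commutes with $q^{-t_{\infty}}$ because $T_{\Bp}(w)$ has weight zero, so this rearrangement is unambiguous, and the remaining factor $\sigma\circ\barR^{\nu}(z)q^{-t_{\infty}}|_{N\otimes V}$ is again $\check{R}_{N,V}(z)$. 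The principal bookkeeping obstacle is precisely this interaction between $q^{-t_{\infty}}$ and the shifted Theta series in the second identity; once it is resolved, the theorem follows formally from \eqref{dec: cyclic}--\eqref{dec: cocyclic}, which contain all the essential algebraic content.
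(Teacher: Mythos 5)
Your proposal is correct and follows essentially the same route as the paper's proof: identify the action of $\CR(z)$ on $N_{\Bp,w}^s\otimes V$ with $(F_{\Bp,w}^{\nu+\zeta}\otimes\mathrm{Id})$ resp. $(G_{\Bp,w}^{\nu+\zeta}\otimes\mathrm{Id})$ applied to $\barR^{\nu+\zeta}(z)$ composed with the unchanged $q^{-t_{\infty}}$ (using $\Bp_i(0)=1$ and the embedding $\jmath_{\nu+\zeta}$), then substitute \eqref{dec: cyclic}--\eqref{dec: cocyclic}, flip, absorb $\tau_z$ into the deformed module, and convert $\Theta_{\Bp}^{\nu,\infty}$ into $\Theta_{\Bp}^{\nu,0}$ by $q^{t_{\infty}}$-conjugation exactly as in the paper. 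The only points you leave implicit are the routine well-definedness of the right-hand factors on the completion and the identification $\barR(z)|_{N\otimes V}=\barR^{\nu}(z)|_{N\otimes V}$, which uses that $\nu$ itself is antidominant since $\zeta$ is dominant.
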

\begin{proof}
Set $\mu := \nu + \zeta$. By Example \ref{example: tensor prefund} and Proposition \ref{prop: shifted upper Borel} we have 
 $$F_{\Bp,w}^{\mu}(\phi_{i,0}^+) = G_{\Bp,w}^{\mu}(\phi_{i,0}^+) = \phi_{i,0}^+ = \jmath_{\mu}(k_i) \quad \textrm{for $i \in I$}. $$
 This means that the weight space decompositions of the $\Borel$-modules $N_{\Bp,w}^1$ and $N_{\Bp,w}^2$ are simply the $\BC[w,w^{-1}]$-scalar extension of the weight space decomposition of $N$: 
 $$ (N_{\Bp,w}^1)_{\lambda} = (N_{\Bp,w}^2)_{\lambda} = N_{\lambda}[w,w^{-1}]\quad \mathrm{for}\ \lambda \in \mathfrak{t}^*. $$
 So both modules $N_{\Bp,w}^1$ and $N_{\Bp,w}^2$ are negative root graded, and the left-hand sides of Eq.\eqref{R cyclic}--\eqref{R cocyclic} are well-defined by Proposition \ref{prop: R-matrix general}. Moreover, the actions of $q^{-t_{\infty}}$ on $N_{\Bp,w}^1 \otimes V$ and $N_{\Bp,w}^2\otimes  V$ are induced from the action of $q^{-t_{\infty}}$ on $N\otimes V$.
 
Since $V$ is finite-dimensional and all tensor factors of Theta series are of negative degrees, the series $(\tau_z \otimes \mathrm{Id})(\Theta_{\Bp}^{0,\nu}(w))$ maps $V \otimes N$ to $V\otimes N[w,z]$ and extends to a $\BC[w,w^{-1}]((z))$-linear operator on $(V\otimes N)[w,w^{-1}]((z))$, the underlying space of the completed tensor product $\overline{V_z \otimes N_{\Bp,w}^1}$ in Proposition \ref{prop: R-matrix general}. In the same way, all the other factors at the right-hand sides of Eqs.\eqref{R cyclic}--\eqref{R cocyclic} are well-defined.

 Since $\mu$ is antidominant, $\jmath_{\mu} \otimes \mathrm{Id}$ sends $\barR(z)$ to $\barR^{\mu}(z)$ by Proposition \ref{prop: shifted upper Borel}. Therefore the action of $\barR(z)$ on $N_{\Bp,w}^1 \otimes V$ is given by the action of $\barR^{\mu}(z)$ on $N_{\Bp,w}^1 \otimes V$. By definition this is the action of $(F_{\Bp,w}^{\mu} \otimes \mathrm{Id})(\barR^{\mu}(z))$ on $N\otimes V$ extended by $\BC[w,w^{-1}]$-linearity to $N[w,w^{-1}] \otimes V$. We are ready to apply Eq.\eqref{dec: cyclic}: 
\begin{align*}
 \check{R}_{N_{\Bp,w}^1,V}(z) &= \sigma \circ (F_{\Bp,w}^{\mu} \otimes \mathrm{Id})(\barR^{\mu}(z))|_{N\otimes V}  \circ q^{-t_{\infty}}|_{N \otimes V} \\
&= \left[ (\tau_z \otimes \mathrm{Id})\left(\Theta_{\Bp}^{0,\nu}(w) \times ( T_{\Bp}(w) \otimes 1)  \right) \right]_{V\otimes N} \circ \sigma \circ \barR^{\nu}(z) q^{-t_{\infty}}|_{N \otimes V}   \\
 &=  \left[\Theta_{\Bp}^{0,\nu}(w) \times (T_{\Bp}(w) \otimes 1) \right]_{V_z \otimes N}  \circ \check{R}_{N,V}(z).
 \end{align*}
 At the third line we used the deformed module structure to drop $\tau_z$ and identified $\barR(z)|_{N\otimes V}$ with $\barR^{\nu}(z)|_{N\otimes V}$ because is $\nu$ antidominant. This proves Eq.\eqref{R cyclic}.
 
 The action of $\Theta_{\Bp}^{\nu,\infty}(w)$ on the $\CU_{\nu}(\Gaff) \otimes \qaf$-module $N[w,w^{-1}] \otimes V_z$ equals the action of $q^{-t_{\infty}}\circ \Theta_{\Bp}^{\nu,0}(w) \circ q^{t_{\infty}}$ on $N[w,w^{-1}]\otimes V_z$ where $q^{\pm t_{\infty}}$ is computed as in Definition \ref{defi: root graded} with respect to the $\Borel \otimes \qaf$-module structure on $N\otimes V$. Together with the commutativity of $q^{-t_{\infty}}$ with $1\otimes T_{\Bp}(w)$ acting on $N\otimes V_z$, we have
\begin{align*}
 \check{R}_{N_{\Bp,w}^2,V}(z) &= \sigma \circ (G_{\Bp,w}^{\mu} \otimes \mathrm{Id})(\barR^{\mu}(z))|_{N \otimes V} \circ q^{-t_{\infty}}|_{N \otimes V}  \\
 &= \sigma \circ \barR^{\nu}(z)|_{N\otimes V} \circ \left[(\mathrm{Id} \otimes \tau_z) \left( (1\otimes T_{\Bp}(w)) \times \Theta_{\Bp}^{\nu,\infty}(w) \right)\right]_{N\otimes V} \circ q^{-t_{\infty}}|_{N\otimes V} \\
 &=  \sigma \circ \barR^{\nu}(z)q^{-t_{\infty}}|_{N\otimes V}  \circ \left[ (1 \otimes T_{\Bp}(w) \times \Theta_{\Bp}^{\nu,0}(w)\right]_{N\otimes V_z}  \\
 &= \check{R}_{N,V}(z) \circ \left[(1 \otimes T_{\Bp}(w)) \times \Theta_{\Bp}^{\nu,0}(w) \right]_{N\otimes V_z}.
\end{align*} 
At the second line we applied Eq.\eqref{dec: cocyclic}. This proves Eq.\eqref{R cocyclic}.
\end{proof}
\begin{rem}  \label{rem: R tensor trivial}
Let $V$ be a tensor product of finite-dimensional highest $\ell$-weight modules over $\qaf$. 
Let $\alpha_{N,V}(z) \in \BC[[z]]$ be such that 
$\alpha_{N,V}(z) \check{R}_{N,V}(z)$ restricts to a polynomial R-matrix from $N \otimes V_z$ to $V_z \otimes N$; see Theorem \ref{thm: poly R Borel} for general examples.  By Proposition \ref{prop: poly T} and Theorem \ref{thm: decomposition R-matrices} we have two more polynomial R-matrices
\begin{align*}
 \frac{\alpha_{N,V}(z)}{f_{\Bp}^V(zw)}  \check{R}_{N_{\Bp,w}^s, V}(z)&: N_{\Bp,w}^s \otimes V_z \longrightarrow V_z \otimes N_{\Bp,w}^s \quad \mathrm{for}\ s \in \{1, 2\}.
\end{align*}
Specializing $w$ to an arbitrary nonzero complex number, we obtain from $N_{\Bp,w}^2$ asymptotic representations of \cite{Z1} (see Eq.\eqref{asym quantum} for the precise statement) and therefore polynomial R-matrices between an asymptotic representation and $V$. 
\end{rem}
\begin{example}
Fix $\Glie = sl_2$.  Let $N$ be the irreducible $\CU_{-\varpi_1^{\vee}}(\Gaff)$-module of highest $\ell$-weight $\Psi_{1,q^{-4}}^{-1}$, constructed in Example \ref{example: prefund sl2} with basis $(v_n^*: n \in \BN)$. Let $V = \BC e_1 \oplus \BC e_2$ be the irreducible $\qaf$-module defined by ($m \in \BZ$ and $s \in \BZ_{\neq 0}$):
\begin{gather*}
    k_1 = \begin{pmatrix}
    q & 0 \\
    0 & q^{-1}
    \end{pmatrix},\  x_{1,m}^+ = \begin{pmatrix}
    0 & 1 \\
    0 & 0
    \end{pmatrix},\  x_{1,m}^- = \begin{pmatrix}
    0 & 0 \\
    1 & 0
    \end{pmatrix},\  h_{1,s} = \begin{pmatrix}
    \frac{1-q^{-2s}}{s(q-q^{-1})} & 0 \\
    0 & \frac{1-q^{2s}}{s(q-q^{-1})}
    \end{pmatrix}. 
\end{gather*}
 Our goal is to compute the R-matrix of Eq.\eqref{R cyclic} for $\Bp = \Psi_{1,1}$. Let $d$ be the linear operator on $N$ sending $v_j^*$ to $q^j v_j^*$. Then for $v \in N$ we have
 $$q^{-t_{\infty}}(v \otimes e_1)  = d(v) \otimes e_1, \quad  q^{-t_{\infty}}(v \otimes e_2)  = d^{-1}(v) \otimes e_2. $$
  Consider the T-series  $T_{\Bp}(z) = \exp(\sum_{s>0} \frac{q-q^{-1}}{q^{2s}-q^{-2s}} h_{1,-s} z^s)$  acting on $V$:
$$ f_{\Bp}^V(z) = \exp(\sum_{s>0} \frac{1}{s(1+q^{-2s})}  z^s),\quad g_{\Bp}^V(z) = \exp(\sum_{s>0} \frac{-1}{s(q^{2s}+1)} z^s) = f_{\Bp}^V(z) (1-z).   $$
Next we evaluate the second tensor factor of $\CR(z)$ at $V$. With respect to the basis $(e_1, e_2)$ of $V$, it is a square matrix whose entries lie in $\Borel[[z]]$. Introduce the half currents $x_{1,>}^{\pm}(z) := \sum_{m>0}x_{1,m}^{\pm} z^m$ and $x_{1,\geq}^{\pm}(z) := \sum_{m\geq 0}x_{1,m}^{\pm} z^m$. We have
\begin{align*}
\CR_-(z)|_{-\otimes V} &=  \prod_{m>0}^{\prec} \exp_q ((q^{-1}-q)   k_1^{-1} x_{1,m}^-z^m \otimes x_{1,-m}^+ k_1) \\ 
    &= \prod_{m>0}^{\prec} \exp_q \begin{pmatrix} 
    0 & (q^{-2}-1)   k_1^{-1} x_{1,m}^-z^m  \\
    0 & 0
    \end{pmatrix}  = \begin{pmatrix}
    1 & (q^{-2}-1) k_1^{-1} x_{1,>}^-(z) \\
    0 & 1
    \end{pmatrix}, \\
\CR_0(z)|_{-\otimes V} &=  \exp ( - \sum_{m>0}  \frac{m(q-q^{-1})^2}{q^{2m}-q^{-2m}} h_{1,m} z^m \otimes h_{1,-m} )   \\              
              &= \begin{pmatrix} \exp( \sum\limits_{m>0} \frac{q^{-1}-q}{1+q^{-2m}} h_{1,m} z^m) & 0 \\
              0 & \exp(\sum\limits_{m>0} \frac{q-q^{-1}}{1+q^{2m}} h_{1,m} z^m) 
               \end{pmatrix}, \\
\CR_+(z)|_{-\otimes V} &= \prod_{m\geq 0}^{\prec} \exp_q((q^{-1}-q)  x_{1,m}^+z^m \otimes x_{1,-m}^-) \\
               &=  \prod_{m\geq 0}^{\prec} \exp_q\begin{pmatrix}
    0 & 0 \\
    (q^{-1}-q)  x_m^+z^m & 0
    \end{pmatrix} =  \begin{pmatrix}
    1 & 0 \\
    (q^{-1}-q)x_{\geq}^+(z) & 1
    \end{pmatrix},    \\
\CR(z)|_{-\otimes V} &= \CR_+(z)\CR_0(z)\CR_-(z) q^{-t_{\infty}}|_{-\otimes V} = \begin{pmatrix}
    A(z) d & B(z) d^{-1} \\
    C(z) d & D(z) d^{-1}
    \end{pmatrix} \quad \mathrm{where} \\
    A(z) &= \exp( \sum_{m>0} \frac{q^{-1}-q}{1+q^{-2m}} h_{1,m} z^m), \\
    B(z) &= (q^{-2}-1) k_1^{-1} A(z)  x_{1,>}^-(z), \quad    C(z) = (q^{-1}-q)x_{1,\geq}^+(z) A(z), \\
    D(z) &= (q^{-1}-q) x_{1,\geq}^+(z) B(z) + \exp(\sum_{m>0} \frac{q-q^{-1}}{1+q^{2m}} h_{1,m} z^m).    
\end{align*}
Evaluate the ABCD series at $v_j^*$ for $j \in \BN$. We obtain from Example \ref{example: prefund sl2}:
\begin{align*}
g_{\Bp}^V(z) A(z) d v_j^* &= (1-zq^{-2-2j}) q^j v_j^* = (q^j - zq^{-j-2}) v_j^*, \\
g_{\Bp}^V(z) B(z) d^{-1}v_j^* &= g_{\Bp}^V(z) A(z) (q^{-2}-1) k_1^{-1}  x_{1,>}^-(z)q^{-j} v_j^*  = \frac{z(q^j-q^{-j-2})}{q-q^{-1}}  v_{j+1}^*, \\
g_{\Bp}^V(z) C(z) d v_j^* &= (q^{-1}-q)x_{1,\geq}^+(z) g_{\Bp}^V(z) A(z) q^j v_j^* = (q-q^{-1}) q^{-j} v_{j-1}^*, \\
g_{\Bp}^V(z) D(z) d^{-1} v_j^* &=  (q^{-1}-q) x_{1,\geq}^+(z) g_{\Bp}^V(z) B(z) d^{-1} v_j^*  \\
& \qquad+ g_{\Bp}^V(z) \exp(\sum_{m>0} \frac{q-q^{-1}}{1+q^{2m}} h_{1,m} z^m)q^{-j} v_j^*  =  q^{-j} v_j^*.               \end{align*}
Therefore, the R-matrix $\check{R}_{N,V}(z)$ multiplied by $g_{\Bp}^V(z)$ is polynomial in $z$:
$$ g_{\Bp}^V(z) \check{R}_{N,V}(z) =  \begin{pmatrix}
\sum\limits_{j\geq 0} (q^j-zq^{-j-2}) E_{jj} & \frac{z}{q-q^{-1}} \sum\limits_{j\geq 0} (q^j-q^{-j-2}) E_{j+1,j} \\
(q-q^{-1}) \sum\limits_{j>0}   q^{-j} E_{j-1,j} & \sum\limits_{j\geq 0} q^{-j} E_{jj}
\end{pmatrix}. $$
At last, consider $\Theta_{\Bp}^{0,-\varpi_1^{\vee}}(w)= \exp_q((q-q^{-1})x_{1,0}^- \otimes x_{1,-1}^+w ) $ acting on $ V_z \otimes N$. Since $x_{1,0}^- e_1 =  e_2$ in $V_z$, we have a uni-triangular matrix
 $$ \Theta_{\Bp}^{0,-\varpi_1^{\vee}}(w)|_{V_z \otimes N} = \begin{pmatrix}
 1 &   0 \\
 (q-q^{-1}) x_{1,-1}^+ w &  1
\end{pmatrix} = \begin{pmatrix}
1 & 0\\
w(q-q^{-1}) \sum\limits_{j>0} (-q^2) E_{j-1,j} & 1
\end{pmatrix}.  $$
The R-matrix $\check{R}_{N_{\Bp,w}^1, V}(z)$ of Theorem \ref{thm: decomposition R-matrices} multiplied by $\frac{g_{\Bp}^V(z)}{f_{\Bp}^V(zw)}$ is polynomial: 
\begin{gather*}
\frac{g_{\Bp}^V(z)}{f_{\Bp}^V(zw)} \check{R}_{N_{\Bp,w}^1, V}(z) = \Theta_{\Bp}^{0,-\varpi_1^{\vee}}(w)|_{V_z \otimes N} \times \frac{T_{\Bp}(zw)}{f_{\Bp}^V(zw)} \times   g_{\Bp}^V(z) \check{R}_{N,V}(z) \\
= \begin{pmatrix}
\sum\limits_{j\geq 0} (q^j-zq^{-j-2})  E_{jj} & \frac{z}{q-q^{-1}} \sum\limits_{j\geq 0} (q^j-q^{-j-2}) E_{j+1,j} \\
(q-q^{-1}) \sum\limits_{j>0} (q^{-j} - w q^{j+2}) E_{j-1,j} & \sum\limits_{j\geq 0} (q^{-j} - zw q^{j+2}) E_{jj}
\end{pmatrix}.
\end{gather*}   
\end{example}
\section{Polynomiality of R-matrices}  \label{sec: R poly}
In this section we establish polynomiality, upon multiplication by a constant series, for the R-matrix $\check{R}_{V,W}(z)$ of Proposition \ref{prop: R-matrix general} and its inverse, where $V$ is a tensor product of irreducible $\Borel$-modules of rational highest $\ell$-weights and $W$ is a tensor product of finite-dimensional highest $\ell$-weight $\qaf$-modules. 

 By Weyl group symmetry \cite{Chari} a finite-dimensional $\qaf$-module is of highest $\ell$-weight if and only if it is of lowest $\ell$-weight, so the module $W$ is at the same time top graded and bottom graded. The constant series will be described in terms of the  invertible power series $f_{\Bp}^W(z)$ and $g_{\Bp}^W(z)$ for $\Bp \in \mathfrak{d}$ in Definition \ref{defi: f and g}.

\begin{prop}  \label{prop: poly R negative pref}
 Let $\Bp \in \mathfrak{d}$ and $N$ be an irreducible $\Borel$-module of highest $\ell$-weight $\tau_{q^{4\kappa}}(\Bp)^{-1}$. If $W$ is a tensor product of finite-dimensional lowest $\ell$-weight modules over $\qaf$, then the operator $g_{\Bp}^W(z) \check{R}_{N,W}(z)$ sends $N \otimes W$ to $W \otimes N[z]$.
\end{prop}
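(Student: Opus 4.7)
The plan is to reduce the polynomiality of $\check{R}_{N,W}(z)$ to an analogue of Proposition \ref{prop: poly mono} for the \emph{inverse} universal R-matrix evaluated on $M := L'(\Bp)$. By Remark \ref{rem: dual} there is a $\Borel$-module isomorphism $N \cong M^{\vee}$, so I identify $N$ with $M^{\vee}$, fix a weight basis $\mathcal{B}$ of $M$ compatible with the $\BN$-grading of Theorem \ref{thm: polynomiality prefund}, and read off from Eq.\eqref{R: dual monodromy} that $\CR(z)(b_2^{\vee} \otimes v) = \sum_{b_1} b_1^{\vee} \otimes t^{\vee}_{b_1,b_2}(z)\,v$ for $v \in W$. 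The sum is finite since $t^{\vee}_{b_1,b_2}$ has weight $\wt(b_2)-\wt(b_1)$, $W$ has finitely many weights, and each weight space of $N$ is finite-dimensional by Proposition \ref{prop: rationality}. It therefore suffices to prove that $g_{\Bp}^W(z)\,t^{\vee}_{b_1,b_2}(z)$ sends $W$ to $W[z]$.

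To compute $t^{\vee}$, I use Eq.\eqref{R: inverse monodromy}, namely $\CR(z)^{-1}(b_2 \otimes 1) = \sum_{b_1} b_1 \otimes t^{\vee}_{b_2,b_1}(z)$, together with the factorization $\CR(z)^{-1} = q^{t_{\infty}}\CR_-(z)^{-1}\CR_0(z)^{-1}\CR_+(z)^{-1}$. The plan is then to repeat the proof of Proposition \ref{prop: poly mono} verbatim with $\CR_{\pm}(z)$ and $\CR_0(z)$ replaced by their inverses. Because $\CR_{\pm}(z)^{-1}$ are reverse-ordered products of $q$-exponentials in the same root vectors $E_\alpha, F_\alpha$, the degree estimates of Theorem \ref{thm: polynomiality prefund}(ii) yield polynomiality of the matrix entries $t^{\prime\pm}_{b,b'}(z)$ of $\CR_{\pm}(z)^{-1}$ on $\mathcal{B}$ with coefficients in $\lBorel$. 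Substituting $h_{i,s} = \lambda_{i,s}^{\Bp} + (h_{i,s}-\lambda_{i,s}^{\Bp})$ in the first tensor factor of $\CR_0(z)^{-1}$ and using $(h_{i,s}-\lambda_{i,s}^{\Bp})M_n \subset M_{n-s}$ produces
$$\CR_0(z)^{-1}(b_3 \otimes 1) = \sum_{b_4 \in \mathcal{B}} b_4 \otimes T_{\Bp}(z)^{-1}\,\overline{t}^{\prime 0}_{b_4,b_3}(z),$$
with $\overline{t}^{\prime 0}_{b_4,b_3}(z) \in U_q^0(\mathfrak{c})[z]$ polynomial. Composing the three blocks, applying $q^{t_{\infty}}$ as in Definition \ref{defi: root graded}, and using Eq.\eqref{rel: T x} to pull $T_{\Bp}(z)^{-1}$ to the left of the $t^{\prime -}$ factors (conjugation by $T_{\Bp}(z)^{\pm 1}$ keeps polynomials polynomial in $z$), one obtains the crucial decomposition
$$t^{\vee}_{b_1,b_2}(z) = T_{\Bp}(z)^{-1}\,P_{b_1,b_2}(z), \qquad P_{b_1,b_2}(z) \in \lBorel[z].$$

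Finally Proposition \ref{prop: poly T}(ii) applies directly: since $W$ is a tensor product of finite-dimensional lowest $\ell$-weight $\qaf$-modules, $g_{\Bp}^W(z)\,T_{\Bp}(z)^{-1}|_W$ is a polynomial operator on $W$, whence for $v \in W$,
$$g_{\Bp}^W(z)\,t^{\vee}_{b_1,b_2}(z)\,v = \bigl(g_{\Bp}^W(z)\,T_{\Bp}(z)^{-1}|_W\bigr)\,P_{b_1,b_2}(z)\,v \in W[z],$$
as required. The main obstacle is the verbatim adaptation of Proposition \ref{prop: poly mono} to $\CR(z)^{-1}$: the polynomial block structure is preserved by the inverse Gauss decomposition thanks to Theorem \ref{thm: polynomiality prefund}, but one must confirm that the middle factor produces $T_{\Bp}(z)^{-1}$ rather than $T_{\Bp}(z)$ on the lowest weight vector. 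This is exactly the Cartan series tamed by the normalization $g_{\Bp}^W(z)$ on tensor products of lowest $\ell$-weight modules, and this matching is what makes the hypotheses on $N$ and $W$ in the statement compatible.
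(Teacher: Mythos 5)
Your overall architecture coincides with the paper's proof: identify $N$ with the graded dual of $M = L'(\Bp)$, expand $\check{R}_{N,W}(z)$ through the dual monodromy entries $t^{\vee}_{b_1,b_2}(z)$ of Eq.\eqref{R: dual monodromy}--\eqref{R: inverse monodromy}, prove analogues of Proposition \ref{prop: poly mono}(i)--(iii) for the factors of $\CR(z)^{-1}$ using the $\BN$-grading of Theorem \ref{thm: polynomiality prefund}, and invoke Proposition \ref{prop: poly T}(ii) to absorb the Cartan singularity into $g_{\Bp}^W(z)$. However, your ``crucial decomposition'' $t^{\vee}_{b_1,b_2}(z) = T_{\Bp}(z)^{-1}\,P_{b_1,b_2}(z)$ with $P_{b_1,b_2}(z)\in\lBorel[z]$ is a genuine gap, and it is false as an identity in $\lBorel[[z]]$. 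In the Gauss decomposition coming from Eq.\eqref{R: inverse monodromy}, the factor $T_{\Bp}(z)^{-1}$ sits \emph{between} the $t^{\vee-}$ entries (built from the $F_{s\delta-\beta}$, i.e.\ $\mathcal{F}$-valued, essentially positive Drinfeld modes) and the $t^{\vee+}$ entries (built from the $F_{s\delta+\beta}$, i.e.\ $U_q^-(\mathfrak{c})$-valued). Pulling it out to the left requires conjugating the $\mathcal{F}$-valued entries by $T_{\Bp}(z)\,(\cdot)\,T_{\Bp}(z)^{-1}$, and Eq.\eqref{rel: T x} gives $T_{\Bp}(z)\,x_i^+(w)\,T_{\Bp}(z)^{-1} = \Bp_i(zw)^{-1}x_i^+(w)$; pulling it out to the right requires $T_{\Bp}(z)^{-1}x_i^-(w)T_{\Bp}(z) = \Bp_i(zw)^{-1}x_i^-(w)$. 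In both directions the correction factor is the power series $\Bp_i(zw)^{-1}$, not a polynomial, so ``conjugation by $T_{\Bp}(z)^{\pm1}$ keeps polynomials polynomial'' holds only in the two directions you do not need here, and the series produced by the wrong direction does not truncate when applied to a finite-dimensional $W$ (the modes $x^{\pm}_{i,m}$ act nontrivially for all $m$).

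The repair is exactly what the paper does: do not factor $T_{\Bp}(z)^{-1}$ out at the algebra level, but keep the Gauss decomposition $t^{\vee}_{b_1,b_2}(z) = \sum_{b_3,b_4} k_{\wt(b_2)}\,t^{\vee-}_{b_4,b_2}(z)\,t^{\vee0}_{b_3,b_4}(z)\,t^{\vee+}_{b_1,b_3}(z)$ and apply it to a weight vector $\omega\in W_{\lambda_0+\gamma}$ factor by factor, in order. First $t^{\vee+}_{b_1,b_3}(z)\omega$ is polynomial of controlled degree and lands in a specific weight space of $W$; then one applies $g_{\Bp}^W(z)T_{\Bp}(z)^{-1}$, whose polynomiality is \emph{not} an algebra statement but a statement about its restriction to each weight space of $W$, with degree depending on that weight (this is where the hypothesis that $W$ is a tensor product of lowest $\ell$-weight modules and Proposition \ref{prop: poly T}(ii) enter); finally one applies the remaining polynomial factors $k_{\wt(b_2)}t^{\vee-}_{b_4,b_2}(z)\,\bigl(t^{\vee0}_{b_3,b_4}(z)T_{\Bp}(z)\bigr)$. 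One must also check that only finitely many pairs $(b_3,b_4)$ contribute, which follows since $W$ is bottom graded and the $t^{\vee+}$ entries have weights in $\BQ_-$. With these two adjustments your argument becomes the paper's proof; as written, the middle step does not go through.
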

\begin{proof}
Let $\lambda_0$ be the bottom weight of $W$ and $\zeta$ be the coweight of $\Bp$.

Let $M$ be the irreducible  $\Borel$-module of lowest $\ell$-weight $\Bp$. By Remark \ref{rem: dual}, its graded dual $M^{\vee}$ is precisely the module $N$.  Fix an $\BN$-graded weight basis $\mathcal{B}$ of $M$ as in Subsection \ref{ss: poly mono}. Let $\mathcal{B}^{\vee} = \{b^{\vee}: b \in \mathcal{B}\}$ be the dual basis of the graded dual $N$ defined above Eq.\eqref{R: dual monodromy}.  The monodromy-matrix construction of Definition \ref{defi: monodromy matrix} applied to the graded dual, we obtain from Eq.\eqref{R: dual monodromy} the power series $t^{\vee}_{b_1,b_2}(z) \in \lBorel[[z]]$ for $b_1, b_2 \in \mathcal{B}$. By Proposition \ref{prop: R-matrix general} we have for $b_2 \in \mathcal{B}$ and $\omega \in W$:
 $$ \check{R}_{N,W}(z)(b_2^{\vee} \otimes \omega) = \sum_{b_1 \in \mathcal{B}}  t^{\vee}_{b_1,b_2}(z) \omega \otimes b_1^{\vee} \in (W\otimes N)[[z]]. $$
Since $W$ is finite-dimensional and $t^{\vee}_{b_1,b_2}(z)$ is of weight $\wt(b_1)-\wt(b_2)$, fixing $b_2 \in \mathcal{B}$, we have $t^{\vee}_{b_1,b_2}(z) \omega = 0$ for all but finitely many $b_1$. It is therefore enough to prove polynomiality of $g_{\Bp}^W(z) t^{\vee}_{b_1,b_2}(z)\omega$ for all $b_1, b_2 \in \mathcal{B}$ and $\omega \in W$.

As in Subsection \ref{ss: poly mono}, define $t_{b_1,b_2}^{\vee\bullet}(z) \in \lBorel[[z]]$ for $b_1, b_2 \in \mathcal{B}$ and $\bullet \in \{+,0,-\}$ by:
\begin{align*}   
\CR_{\bullet}(z)^{-1} (b_1 \otimes 1) = \sum_{b_2 \in \mathcal{B}} b_2 \otimes t^{\vee\bullet}_{b_1,b_2}(z).
\end{align*}
Fix $b_1, b_2 \in \mathcal{B}$. Set $\beta := \wt(b_1) - \wt(b_2)$ and $d := p(b_1) - p(b_2)$. We claim:
\begin{itemize}
\item[(i)] If $t^{\vee+}_{b_1,b_2}(z) \neq 0$, then $\beta \in \BQ_-$ and $t^{\vee+}_{b_1,b_2}(z)$ is a $U_q^-(\mathfrak{c})_{\beta}$-valued polynomial in $z$ of degree bounded by $d - \langle \zeta, \beta\rangle$. If $\beta = 0$, then $t^{\vee+}_{b_1,b_2}(z) = \delta_{b_1,b_2}$.
\item[(ii)] If $t^{\vee0}_{b_1,b_2}(z) \neq 0$, then $\beta = 0$ and $t^{\vee0}_{b_1,b_2}(z) T_{\Bp}(z)$ is a $U_q^0(\mathfrak{c})$-valued polynomial of degree $d$. Moreover, $t^{\vee0}_{b_1,b_2}(z) = T_{\Bp}(z)^{-1}$.
\item[(iii)]  If $t^{\vee-}_{b_1,b_2}(z) \neq 0$, then $\beta \in \BQ_+$ and $t^{\vee-}_{b_1,b_2}(z)$ is an $\mathcal{F}_{\beta}$-valued polynomial of degree $d$. If $\beta = 0$, then $t^{\vee-}_{b_1,b_2}(z) = \delta_{b_1,b_2}$.
\item[(iv)]  If $\gamma \in \BQ_+$ and $\omega \in W_{\lambda_0 + \gamma}$, then $g_{\Bp}^W(z) t^{\vee}_{b_1,b_2}(z) \omega$  is a $W_{\lambda_0+\gamma+\beta}$-valued polynomial of degree bounded by $d + \langle \zeta,\gamma\rangle$. 
\end{itemize}
The first three parts are proved in the same way as Proposition \ref{prop: poly mono}(i)--(iii) based on the $\BN$-grading of the lowest $\ell$-weight module $M = L'(\Bp)$. It remains to prove (iv). 
Indeed, by Eq.\eqref{R: inverse monodromy} we have a Gauss decomposition for $t^{\vee}$ similar to Eq.\eqref{Gauss decomposition}:
\begin{equation*}
t^{\vee}_{b_1,b_2}(z) = \sum_{b_3,b_4 \in \mathcal{B}} k_{\wt(b_2)} t^{\vee-}_{b_4,b_2}(z) t^{\vee0}_{b_3,b_4}(z) t^{\vee+}_{b_1,b_3}(z). 
\end{equation*}
Apply it to $\omega$. Given $b_3, b_4 \in \mathcal{B}$, observe first from (i) that $t^{\vee+}_{b_1,b_3}(z) \omega$ is a polynomial of degree bounded by $p(b_1)-p(b_3) + \langle \zeta, \wt(b_3)-\wt(b_1)\rangle$ and with coefficients in the subspace of weight $\lambda_0+\gamma+\wt(b_1)-\wt(b_3)$. Next apply $g_{\Bp}^W(z) T_{\Bp}(z)^{-1}$ to $t^{\vee+}_{b_1,b_3}(z) \omega$. From Proposition \ref{prop: poly T}(ii) we get a polynomial of degree bounded by
$$ p(b_1)-p(b_3) + \langle \zeta, \wt(b_3)-\wt(b_1)\rangle + \langle \zeta, \gamma+\wt(b_1)-\wt(b_3) \rangle = p(b_1)-p(b_3) +  \langle \zeta, \gamma\rangle $$
and of the same weight.
Finally applying $ k_{\wt(b_2)} t^{\vee-}_{b_4,b_2}(z) \times t^{\vee0}_{b_3,b_4}(z) T_{\Bp}(z)$ and using (ii)--(iii), we get a polynomial of degree bounded by
$$p(b_1)-p(b_3) +  \langle \zeta, \gamma\rangle + p(b_4)-p(b_2) + p(b_3)-p(b_4) = p(b_1) - p(b_2) + \langle \zeta, \gamma\rangle.  $$
In summary, $g_{\Bp}^W(z) t^{\vee}_{b_1,b_2}(z) \omega$ is a sum, over $b_3, b_4 \in \mathcal{B}$, of polynomials of degree bounded by  $p(b_1) - p(b_2) + \langle \zeta, \gamma\rangle$. It suffices to show that the summation is finite.
Since $W$ is bottom graded and $t^{\vee+}_{b_1,b_3}(z)$ is of weight $\wt(b_1)-\wt(b_3) \in \BQ_-$, we have $t^{\vee+}_{b_1,b_3}(z) w = 0$ for all but finitely many $b_3$. Together with $\wt(b_3) = \wt(b_4)$, we see that only finitely many $(b_3, b_4)$ contribute.
\end{proof}

\begin{rem}  \label{rem: denominator}
(i) In the above proof let us take $\omega$ to be a lowest $\ell$-weight vector of $W$ and assume $b_1, b_2 \in \mathcal{B}$ are of the same weight. The computation of $t^{\vee}_{b_1,b_2}(z) \omega$ can be simplified: first we must have $b_1 = b_3$ since the subspace $U_q^-(\mathfrak{c})_{\beta}$ kills $\omega$ if $\beta \neq 0$; second in order that $t^{\vee-}_{b_4,b_2}(z) t^{\vee0}_{b_1,b_4}(z)\omega$ is nonzero we must have $b_2 = b_4$. Therefore 
$$ t^{\vee}_{b_1,b_2}(z) \omega = g_{\Bp}^W(z)^{-1} k_{\wt(b_2)} \times  t^{\vee0}_{b_1,b_2}(z)  T_{\Bp}(z) \omega\quad \mathrm{if}\ \wt(b_1) = \wt(b_2). $$ 
Notably $t^{\vee}_{b_0,b_0}(z) \omega = g_{\Bp}^W(z)^{-1} \omega$ where $b_0 \in \mathcal{B}$ is the unique basis vector of zero weight. We view $g_{\Bp}^W(z)$  as the {\it denominator} of $\check{R}_{N,W}(z)$ in the following sense: if $g(z) \in \BC[[z]]$ is such that $g(z) \check{R}_{N,W}(z)$ maps $N\otimes W$ to $W \otimes N[z]$, then $g(z) g_{\Bp}^W(z)^{-1}  \in \BC[z]$.

(ii) By \cite[Definition 5.1]{FH} the transfer matrix $t_N(z, u)$ associated to $N$ is a weighted sum over $b \in \mathcal{B}$, depending on a formal parameter $u$, of the power series $t^{\vee}_{b,b}(z)$. The above proof implies that $g_{\Bp}^W(z) t_N(z,u)$ acting on $W$ is polynomial in $z$, which confirms a particular case of the conjecture on transfer matrices in \cite{FH0}.
\end{rem}
Proposition \ref{prop: poly R negative pref} can be restated as follows: the singularity of $\check{R}_{N,W}(z)$ is controlled by the action of $T_{\Bp}(z)^{-1}$ on $W$. The next result is also in this kind, by replacing $N$ with an irreducible $\Borel$-module of highest $\ell$-weight $\Bp$. 
\begin{prop}  \label{prop: poly R positive pref}
 Let $\Bp \in \mathfrak{d}$ and $K$ be an irreducible $\Borel$-module of highest $\ell$-weight $\Bp$. If $W$ is a tensor product of finite-dimensional highest $\ell$-weight modules over $\qaf$, then the operator $f_{\Bp}^W(z)^{-1} \check{R}_{K,W}(z)$ sends $K \otimes W$ to $W \otimes K[z]$.
\end{prop}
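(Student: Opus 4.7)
The plan is to mirror the proof of Proposition~\ref{prop: poly R negative pref} by applying the monodromy matrix construction of Definition~\ref{defi: monodromy matrix} directly to $K = L(\Bp)$, rather than to the graded dual of a lowest $\ell$-weight module. Since the weight spaces of $K$ are finite-dimensional by Proposition~\ref{prop: rationality} and all weights lie in $\BQ_-$ (as each $\Bp_i(z)$ has constant term $1$), the construction yields power series $t_{b_1, b_2}(z) \in \lBorel[[z]]$ with Gauss components $t^{\pm}_{b_1,b_2}(z),\ t^0_{b_1,b_2}(z)$ satisfying \eqref{Gauss decomposition}, and by Proposition~\ref{prop: R-matrix general} one has $\check{R}_{K,W}(z)(b_2 \otimes \omega) = \sum_{b_1} t_{b_1,b_2}(z)\omega \otimes b_1$.

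First I would fix a weight basis $\mathcal{B}$ of $K$ compatible with the $\BN$-grading $K = \oplus_{n \in \BN} K_n$ provided by the \emph{highest} $\ell$-weight version of Theorem~\ref{thm: polynomiality prefund}, with top vector $b_0 \in K_0$ of weight $0$. Using the three grading relations $E_{s\delta - \beta} K_n \subset K_{n+s}$, $(h_{i,s} - \lambda_{i,s}^{\Bp}) K_n \subset K_{n+s}$, and $E_{t\delta + \beta} K_n \subset \sum_{p=0}^{\langle \zeta, \beta\rangle} K_{n+t-p}$, I would establish the analog of Proposition~\ref{prop: poly mono} for $K$ with the sign of $d := p(b_1) - p(b_2)$ reversed. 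Setting $\beta := \wt(b_1) - \wt(b_2)$ and letting $\zeta$ denote the coweight of $\Bp$, this yields: $t^-_{b_1,b_2}(z)$ is an $\mathcal{F}_{-\beta}$-valued polynomial of degree $d$ when $\beta \in \BQ_-$; $T_{\Bp}(z)^{-1} t^0_{b_1,b_2}(z)$ is a $U_q^0(\mathfrak{c})$-valued polynomial of degree $d$ when $\beta = 0$, with $t^0_{b_0,b_0}(z) = T_{\Bp}(z)$; and $t^+_{b_1,b_2}(z)$ is a $U_q^-(\mathfrak{c})_{-\beta}$-valued polynomial of degree bounded by $d + \langle \zeta, \beta\rangle$ when $\beta \in \BQ_+$.

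Next, given $\omega \in W_{\lambda_0 - \gamma}$ with $\lambda_0$ the top weight of $W$ and $\gamma \in \BQ_+$, apply \eqref{Gauss decomposition} to express $t_{b_1,b_2}(z)\omega$ as a double sum over $b_3, b_4 \in \mathcal{B}$. Only finitely many pairs $(b_3,b_4)$ contribute, because $W$ is finite-dimensional: the intermediate weight shifts must stay within the finite weight set of $W$, forcing $\wt(b_3) = \wt(b_4)$ to lie in a finite range, and within each weight $\mathcal{B}$ has only finitely many vectors. For each surviving pair, combine the three degree estimates above with Proposition~\ref{prop: poly T}(i), which says that $f_{\Bp}^W(z)^{-1} T_{\Bp}(z)$ acts polynomially on each weight space of $W$. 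A direct degree count then gives $f_{\Bp}^W(z)^{-1} t_{b_1,b_2}(z)\omega$ as a polynomial of degree bounded by $p(b_1)-p(b_2) + \langle\zeta, \gamma + \wt(b_1)-\wt(b_2)\rangle$; summing over the finitely many $b_1$ that contribute yields $f_{\Bp}^W(z)^{-1} \check{R}_{K,W}(z)(b_2 \otimes \omega) \in W \otimes K[z]$.

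The main obstacle is careful bookkeeping of the reversed $\BN$-grading: the signs of degree differences and the direction of inequalities such as $\wt(b_1) - \wt(b_3) \in \BQ_+$ flip relative to Proposition~\ref{prop: poly R negative pref}. One must also verify that the $T_{\Bp}(z)$ factor extracted from $t^0_{b_1,b_2}(z)$ via the scalar substitution $h_{i,s} \mapsto \lambda_{i,s}^{\Bp}$ aligns with Proposition~\ref{prop: poly T}(i) (rather than (ii)), which is the case precisely because $K$ and all tensor factors of $W$ are highest $\ell$-weight modules. No new conceptual ingredient appears to be required beyond this parallel.
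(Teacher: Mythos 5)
Your proposal is correct and follows essentially the same route as the paper: apply the monodromy‑matrix construction directly to $K=L(\Bp)$ with the $\BN$-graded basis from the highest‑$\ell$-weight version of Theorem \ref{thm: polynomiality prefund}, prove the analogue of Proposition \ref{prop: poly mono} with the degree difference reversed, and then run the Gauss‑decomposition degree count using Proposition \ref{prop: poly T}(i) and the finiteness forced by $W$ being finite-dimensional. Up to the opposite sign convention for $\beta$, your intermediate estimates and the final degree bound coincide with the paper's claims (i)--(iv).
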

\begin{proof}
Let $\lambda_1$ be the top weight of $W$ and $\zeta$ be the coweight of $\Bp$.

Choose an $\BN$-graded weight basis $\mathcal{B}$ of the highest $\ell$-weight irreducible module $K$ as in Theorem \ref{thm: polynomiality prefund}. Let $(t_{b_1, b_2}(z))_{b_1,b_2 \in \mathcal{B}}$ be the monodromy matrix associated to $(K, \mathcal{B})$. Define the auxiliary power series $t^{\bullet}_{b_1,b_2}(z)$ for $\bullet \in \{+,0,-\}$ as in Subsection \ref{ss: poly mono}. 

Fix $b_1, b_2 \in \mathcal{B}$. Set $\beta := \wt(b_2) - \wt(b_1)$ and $d := p(b_1) - p(b_2)$. We claim:
\begin{itemize}
\item[(i)] If $t_{b_1,b_2}^+(z) \neq 0$, then $\beta \in \BQ_-$ and $t_{b_1,b_2}^+(z)$ is a $U_q^-(\mathfrak{c})_{\beta}$-valued polynomial in $z$ of degree bounded by $d - \langle \zeta, \beta\rangle$. If $\beta = 0$, then $t_{b_1,b_2}^+(z) = \delta_{b_1,b_2}$.
\item[(ii)] If $t_{b_1,b_2}^0(z) \neq 0$, then $\beta = 0$ and $ t_{b_1,b_2}^0(z)T_{\Bp}(z)^{-1}$ is a $U_q^0(\mathfrak{c})$-valued polynomial of degree $d$. Moreover, $t_{b_1,b_1}^0(z) = T_{\Bp}(z)$.
\item[(iii)] If $t_{b_1,b_2}^-(z) \neq 0$, then $\beta \in \BQ_+$ and $t_{b_1,b_2}^-(z)$ is an $\mathcal{F}_{\beta}$-valued polynomial of degree $d$. If $\beta = 0$, then $t_{b_1,b_2}^-(z) = \delta_{b_1,b_2}$.
\item[(iv)] If $\gamma \in \BQ_+$ and $\omega \in W_{\lambda_1-\gamma}$, then $f_{\Bp}^W(z)^{-1} t_{b_1,b_2}(z) \omega$ is a $W_{\lambda_1-\gamma+\beta}$-valued polynomial of degree bounded by $d + \langle \zeta,\gamma-\beta\rangle$. 
\end{itemize}
Again (i)--(iii) are proved as in Proposition \ref{prop: poly mono} and (iv) as in Proposition \ref{prop: poly R negative pref} based on the Gauss decomposition \eqref{Gauss decomposition}.  
\end{proof}

Proposition \ref{prop: poly R positive pref} appeared previously in \cite[Lemma 7.2]{FJMM}. The proof therein seems to use a hidden assumption that $\check{R}_{K,W}(z)$ is rational in $z$ after renormalization. As in Remark \ref{rem: denominator}(i), one can show that $f_{\Bp}^W(z)^{-1}$ is the denominator of $\check{R}_{K,W}(z)$. Our main result of this section generalizes Propositions \ref{prop: poly R negative pref} and \ref{prop: poly R positive pref}.

 \begin{theorem}  \label{thm: poly R Borel}
 Let $s \geq 1$ and $\Bp_t, \Bn_t \in \mathfrak{d}$ be polynomial $\ell$-weights for $1\leq t \leq s$. Set $V$ to be the tensor product of $\Borel$-modules
 $$ V := L(\frac{\Bp_1}{\Bn_1}) \otimes L(\frac{\Bp_2}{\Bn_2}) \otimes \cdots \otimes L(\frac{\Bp_s}{\Bn_s}). $$ 
 Let $W$ be a tensor product of finite-dimensional highest $\ell$-weight $\qaf$-modules. Set 
 \begin{align*}
 \alpha_{V,W}(z) := \prod_{t=1}^s \frac{g_{\Bn_t}^W(zq^{4\kappa})}{f_{\Bp_t}^W(z)}, \quad
 \beta_{W,V}(z) := \prod_{t=1}^s \frac{g_{\Bp_t}^W(z)}{f_{\Bn_t}^W(z)} \in 1 + z\BC[[z]].
 \end{align*}
 Then both operators $\alpha_{V,W}(z) \check{R}_{V,W}(z)$ and $\beta_{W,V}(z) \check{R}_{V,W}(z)^{-1}$ are polynomial in $z$; they send $V \otimes W$ to $W \otimes V[z]$ and $W \otimes V$ to $V \otimes W[z]$ respectively.
 \end{theorem}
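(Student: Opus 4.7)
The plan is to reduce to the case $s=1$ by the tensor-product factorization of R-matrices, realize $L(\Bp/\Bn)$ as a quotient of a submodule of $L(\Bp) \otimes L(\Bn^{-1})$, and combine Propositions \ref{prop: poly R negative pref} and \ref{prop: poly R positive pref} applied to the two tensor factors via a naturality argument.

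First, from the quasi-triangularity identity $(\Delta \otimes \mathrm{Id})(\CR(z)) = \CR_{13}(z) \CR_{23}(z)$ one derives the standard tensor-product factorization
\begin{equation*}
\check{R}_{V_1 \otimes V_2, W}(z) = (\check{R}_{V_1, W}(z) \otimes \mathrm{Id}_{V_2}) \circ (\mathrm{Id}_{V_1} \otimes \check{R}_{V_2, W}(z))
\end{equation*}
and an analogous factorization of the inverse. Iterating over the $s$ irreducible factors of $V$, the normalizing series $\alpha_{V, W}(z)$ and $\beta_{W, V}(z)$ split into the corresponding products, so both statements reduce to the single factor case $V = L(\Bp/\Bn)$ with $\Bp, \Bn \in \mathfrak{d}$.

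For this single factor, fix highest $\ell$-weight vectors $v_1 \in L(\Bp)$ and $v_2 \in L(\Bn^{-1})$ and set $M := L(\Bp) \otimes L(\Bn^{-1})$. Since $\Delta(e_i)(v_1 \otimes v_2) = e_iv_1 \otimes v_2 + k_iv_1 \otimes e_iv_2 = 0$ for every affine node $i$ and $U_q^+(\mathfrak{b})$ is generated by the Chevalley generators $e_i$, the vector $v_1 \otimes v_2$ is annihilated by $E_\alpha$ for all $\alpha \in \hat{\Phi}_+$ and is thus a highest $\ell$-weight vector of $\ell$-weight $\Bp\Bn^{-1}$. The cyclic submodule $M^0 \subset M$ it generates has unique irreducible quotient $\pi: M^0 \twoheadrightarrow L(\Bp/\Bn)$. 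Proposition \ref{prop: poly R positive pref} applied to $K = L(\Bp)$ gives polynomiality of $f_\Bp^W(z)^{-1} \check{R}_{L(\Bp), W}(z)$; Proposition \ref{prop: poly R negative pref} applied with input $\tau_{q^{-4\kappa}}(\Bn) \in \mathfrak{d}$, so that the associated $N$ is identified with $L(\Bn^{-1})$ and its denominator becomes $g_{\tau_{q^{-4\kappa}}(\Bn)}^W(z) = g_\Bn^W(zq^{4\kappa})$, gives polynomiality of $g_\Bn^W(zq^{4\kappa}) \check{R}_{L(\Bn^{-1}), W}(z)$. Combining via the factorization above, the operator $\alpha_{V, W}(z) \check{R}_{M, W}(z)$ sends $M \otimes W$ into $W \otimes M[z]$. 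Since $M^0$ is a $\Borel$-submodule of $M$ and $\CR(z)$ preserves submodules in the first tensor factor, the same polynomiality holds for $\check{R}_{M^0, W}(z)$. The universality of $\CR(z)$ yields the naturality identity $(\mathrm{Id}_W \otimes \pi) \circ \check{R}_{M^0, W}(z) = \check{R}_{L(\Bp/\Bn), W}(z) \circ (\pi \otimes \mathrm{Id}_W)$, and the surjectivity of $\pi$ transfers the polynomiality to $\check{R}_{L(\Bp/\Bn), W}(z)$.

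The inverse statement follows from the same three-step scheme applied to the reversed tensor order $L(\Bn^{-1}) \otimes L(\Bp)$, using the dual-side descriptions \eqref{R: inverse dual} and \eqref{R: inverse monodromy} of $\CR(z)^{-1}$ in place of \eqref{R: monodromy matrix}, together with $\CR^{-1}$-analogs of Propositions \ref{prop: poly R negative pref} and \ref{prop: poly R positive pref} yielding denominators $g_\Bp^W(z)$ and $f_\Bn^W(z)^{-1}$ that assemble into $\beta_{W, V}(z)$. These analogs are obtained by re-running the Gauss-decomposition argument for $\CR(z)$ with $\CR_+$ and $\CR_-$ interchanged and invoking the $\BN$-grading of Theorem \ref{thm: polynomiality prefund} on the opposite (highest versus lowest) $\ell$-weight module. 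The main technical hurdle is precisely this symmetric re-analysis: verifying that the degrees of the Gauss-decomposition entries for $\CR(z)^{-1}$ satisfy the bounds that produce the required denominators $g_\Bp^W(z)$ and $f_\Bn^W(z)^{-1}$ for the inverse.
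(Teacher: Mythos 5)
Your overall architecture is the same as the paper's: reduce to $s=1$ by quasi-triangularity and the tensor factorization of $\check{R}$, realize $L(\frac{\Bp}{\Bn})$ as the irreducible quotient of the submodule of $L(\Bp)\otimes L(\Bn^{-1})$ generated by the tensor product of highest $\ell$-weight vectors, get polynomiality of the two factors from Propositions \ref{prop: poly R positive pref} and \ref{prop: poly R negative pref} (your shift $\tau_{q^{-4\kappa}}(\Bn)$, giving $g_{\Bn}^W(zq^{4\kappa})$, is the right bookkeeping), and transfer it through the subquotient; the inverse is handled by the dual descriptions of $\CR(z)^{-1}$.

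There is, however, one step that is wrong as written: you justify that $v_1\otimes v_2$ is a highest $\ell$-weight vector by claiming $\Delta(e_i)(v_1\otimes v_2)=e_iv_1\otimes v_2+k_iv_1\otimes e_iv_2=0$ for \emph{every} affine node, including $i=0$. But $e_0=E_{\alpha_0}$ with $\alpha_0=\delta-\theta\in\hat{\Phi}_-$, so $e_0$ does \emph{not} annihilate highest $\ell$-weight vectors of $\Borel$-modules (for $sl_2$, $e_0$ is proportional to $x_{1,1}^-k_1^{-1}$ and acts nontrivially on the highest $\ell$-weight vector of $L(\Psi_{1,1})$ in Example \ref{example: prefund sl2}); moreover the subalgebra generated by the $e_i$, $0\leq i\leq r$, is not $U_q^+(\mathfrak{b})$, and annihilation by all $e_i$ is not what "highest $\ell$-weight" requires. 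The fact you need is nevertheless true and standard (this is the multiplicativity of highest $\ell$-weights the paper invokes): since $L(\Bp)\otimes L(\Bn^{-1})$ is top graded and every $E_\alpha$ with $\alpha\in\hat{\Phi}_+$ has weight in $\Phi\subset\BQ_>$, these operators must kill the top weight vector, and the $\phi_i^+(z)$-eigenvalue property follows from the weight-triangular form of the coproduct; so this is a repairable slip rather than a structural flaw. Concerning the inverse, your sketch defers the degree estimates, whereas the paper's treatment shows that only the factor $L(\Bp)$ actually requires re-running the Gauss-decomposition analysis (with the highest-$\ell$-weight version of the $\BN$-grading in Theorem \ref{thm: polynomiality prefund}, the singularity being governed by $T_{\Bp}(z)^{-1}$ on $W$ and Proposition \ref{prop: poly T}(ii) yielding $g_{\Bp}^W(z)$); for the factor $L(\Bn^{-1})$ no new estimates are needed, because by Remark \ref{rem: dual} it is $(L'(\Bn))^{\wedge}$ and Eq.\eqref{R: inverse dual} identifies the matrix of $\CR(z)^{-1}$ on it with the monodromy matrix of $\CR(z)$ on $L'(\Bn)$ already bounded in Proposition \ref{prop: poly mono}, which is why $f_{\Bn}^W(z)$ appears with no $q^{4\kappa}$ shift in $\beta_{W,V}(z)$.
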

 \begin{proof}
 We prove first the case for $\check{R}_{V,W}(z)$. By the quasi-triangularity
 $$ (\Delta \otimes \mathrm{Id})(\CR(z)) = \CR_{13}(z) \CR_{23}(z) $$
 one may assume without loss of generality $s = 1$ and $V = L(\frac{\Bp}{\Bn})$ where $\Bp, \Bn \in \mathfrak{d}$ are polynomial $\ell$-weights. Take $K := L(\Bp)$ and $N := L(\frac{1}{\Bn})$.  Then we have
 $$ \check{R}_{K\otimes N, W}(z) = (\check{R}_{K,W}(z) \otimes \mathrm{Id}_N) (\mathrm{Id}_K \otimes \check{R}_{N,W}(z)).  $$
 It follows from Propositions \ref{prop: poly R negative pref} and \ref{prop: poly R positive pref} that $\alpha_{V,W}(z) \check{R}_{K\otimes N, W}(z)$ is polynomial.
Let $S$ be the sub-$\Borel$-module of $K \otimes N$ generated by  a tensor product of highest $\ell$-weight vectors. Then $S$ is a highest $\ell$-weight module and contains a maximal submodule $S'$. By multiplicativity of highest $\ell$-weights we have a module isomorphism $S/S' \cong V$.

By definition of the universal R-matrix, $\check{R}_{K\otimes N,W}(z)$ maps $S \otimes W$ to $(W \otimes S)[[z]]$ and $S' \otimes W$ to $(W\otimes S')[[z]]$. It induces a linear map $S/S' \otimes W \longrightarrow (W\otimes S/S')[[z]]$ which is the restriction of $\check{R}_{V,W}(z)$ to $V \otimes W$ upon identifying $S/S'$ with $V$. The polynomiality of $\alpha_{V,W}(z) \check{R}_{K\otimes N,W}(z)$ restricts to the polynomiality of $\alpha_{V,W}(z)\check{R}_{V,W}(z)$. 

\medskip

By Proposition \ref{prop: R-matrix general} the inverse R-matrix $\check{R}_{V,W}(z)^{-1}$ is obtained by taking the flip map $W \otimes V \longrightarrow W \otimes V$ and then evaluating $\CR(z)^{-1}$ at $V \otimes W$. It is therefore enough to study the singularity of the action of $\CR(z)^{-1}$ on $V \otimes W$. As in the case of $\check{R}_{V,W}(z)$ one may assume either $V = L(\Bp)$ or $V = L(\frac{1}{\Bp})$, with $\Bp \in \mathfrak{d}$. 

 Assume $V = L(\Bp)$. As in the situation of Proposition \ref{prop: poly R positive pref}, based on the $\BN$-grading of $L(\Bp)$ in Theorem \ref{thm: polynomiality prefund}, we show that the singularity of $\CR(z)^{-1}$ on $V \otimes W$ is governed by the action of $T_{\Bp}(z)^{-1}$ on $W$. It follows from Proposition \ref{prop: poly T}(ii) that $g_{\Bp}^W(z) \check{R}_{V,W}(z)^{-1}$ is polynomial.

Assume $V = L(\frac{1}{\Bp})$. Let $M$ be the irreducible $\Borel$-module of lowest $\ell$-weight $\Bp$ so that by Remark \ref{rem: dual} its graded dual $M^{\wedge}$ is the module $V$. Comparing Eq.\eqref{R: monodromy matrix} with Eq.\eqref{R: inverse dual} we get that the singularity of $\CR(z)^{-1}$ acting on $V\otimes W$ is governed by the action of $\CR(z)$ on $M \otimes W$, which is governed by the action of $T_{\Bp}(z)$ on $W$ as in the situation of Proposition \ref{prop: poly R negative pref}, based on the Gauss decomposition \eqref{Gauss decomposition} and Proposition \ref{prop: poly mono}. Conclude from Proposition \ref{prop: poly T}(i) that $f_{\Bp}^W(z)^{-1} \check{R}_{V,W}(z)^{-1}$ is polynomial.
 \end{proof}

\begin{rem}
(i) We expect that the two polynomiality results of Theorem \ref{thm: poly R Borel} can be related to each other by the functor $\mathcal{F}_q$ of \cite{Pinet}. The series $\alpha_{V,W}(z)$ and $\beta_{W,V}(z)$ are in general not denominators of the corresponding R-matrices.

(ii) Let $v_0 \in V$ and $\omega_0 \in W$ be tensor products of highest $\ell$-weight vectors. Then a careful analysis of the factorization \eqref{def: R bar} together with Remark \ref{rem: from R to T} and Definition \ref{defi: f and g} shows that (see also the proof of \cite[Lemma 2.6]{FM}) 
$$ \CR(z) (v_0 \otimes \omega_0) = \CR_0(z) (v_0 \otimes \omega_0) = \prod_{t=1}^s \frac{f_{\Bp_t}^W(z)}{f_{\Bn_t}^W(z)} (v_0 \otimes \omega_0). $$
In the particular case $\Bp_t = 1$ for all $1\leq t \leq s$, the above scalar coincides with $\beta_{W,V}(z)$ and
we get a module morphism $W_z \otimes V \longrightarrow V \otimes W_z$ sending $\omega_0 \otimes v_0$ to $v_0 \otimes \omega_0$. This is a quantum affine analog of Theorem \ref{thm: poly R Yangian}. 

(iii) Assume that $V$ is the restriction of a module over an antidominantly shifted quantum affine algebra $\CU_{\nu}(\Gaff)$. Let $\zeta$ be a dominant coweight and $\Bp \in \mathfrak{d}_{\zeta}$ be a polynomial $\ell$-weight such that $\nu + \zeta$ is antidominant. By Remark \ref{rem: R tensor trivial} and Theorem \ref{thm: poly R Borel}  both the R-matrices $\check{R}_{V \otimes L_{\zeta}(\Bp),W}(z)$ and $\check{R}_{V \otimes L(\Bp),W}(z)$ multiplied by $ \frac{\alpha_{V,W}(z)}{f_{\Bp}^W(z)}$ are polynomial. We expect an embedding of $\Borel$-modules $V \otimes L_{\zeta}(\Bp) \subset V \otimes L(\Bp)$; notice that $L_{\zeta}(\Bp)$ is one-dimensional while $L(\Bp)$ is infinite-dimensional if $\Bp \neq 1$.
\end{rem}

\section{Theta series as associators in type A}  \label{sec: type A}
For $\Glie$ of type A the Drinfeld--Jimbo coproduct of shifted quantum affine algebras was obtained by Finkelberg--Tsymbaliuk \cite{FT}. In this section, we  adapt the constructions of Yangian associator maps to the case of shifted quantum affine algebras. 

Let $\mu$ and $\nu$ be coweights. For $\alpha \in \hat{\Phi}$ a real affine positive root, by Proposition \ref{prop: root vectors}(iii)--(iv) the root vector $F_{\alpha}$ of the lower Borel subalgebra $\lBorel$ is a non-commutative polynomial of the $x_{i,m}^-$ and $x_{i,m}^+k_i$. Modifying Definition \ref{def: shifted root vectors}, we define the shifted root vector $F_{\alpha}^{\mu,\nu}$ in the shifted quantum affine algebra $\CU_{\mu+\nu}(\Gaff)$ by substituting 
$$   x_{i,m}^- \mapsto x_{i,m+\langle\nu,\alpha_i\rangle}^-,\quad x_{i,m}^+ k_i \mapsto x_{i,m+\langle \mu, \alpha_i\rangle}^+ (\phi_{i,\langle\mu+\nu, \alpha_i\rangle}^-)^{-1}. $$

\begin{prop}\cite[Proposition H.1(b)]{FT} \label{prop: shifted lower Borel}
For $\mu, \nu$ two antidominant coweights, we have an injective algebra homomorphism $\jmath_{\mu,\nu}: \lBorel \longrightarrow \CU_{\mu+\nu}(\Gaff)$ defined by 
\begin{gather*}
F_{\alpha} \mapsto F_{\alpha}^{\mu,\nu}, \quad h_{i,s} \mapsto h_{i,s},\quad k_i^{-1} \mapsto \phi_{i,\langle\mu+\nu, \alpha_i\rangle}^-  \quad \mathrm{for}\ i \in I,\ s \in \BZ_{<0}\ \mathrm{and}\ \alpha \in \hat{\Phi}.
\end{gather*}
\end{prop}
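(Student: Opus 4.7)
The plan is to mirror the proof strategy of Proposition \ref{prop: shifted upper Borel} for the upper Borel subalgebra, adapted to the lower Borel $\lBorel$. The construction proceeds in three phases: first define the homomorphism on a suitable ``asymptotic'' subalgebra of $\lBorel$, then extend to all of $\lBorel$ by localization, and finally verify injectivity.

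First I would introduce the lower Borel asymptotic subalgebra $\mathcal{J} \subset \lBorel$ generated by $x_{i,m}^-$, $k_i^{-1} x_{i,m}^+$, $k_i^{-1} \phi_{i,m}^\pm$ and $k_i^{-1}$ for $(i,m) \in I \times \BZ$. By Proposition \ref{prop: root vectors}(iii)--(iv), all root vectors $F_\alpha$ belong to $\mathcal{J}$: those with $\alpha \in \hat{\Phi}_+$ lie in $U_q^-(\mathfrak{c})$ directly, while those with $\alpha = s\delta-\beta \in \hat{\Phi}_-$ arise from the normalized vector $F_\alpha k_\beta^{-1} \in U_q^+(\mathfrak{c})$. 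On this subalgebra I would define an algebra homomorphism to $\CU_{\mu+\nu}(\Gaff)$ by the substitutions dictated by the proposition: $x_{i,m}^- \mapsto x_{i,m+\langle\nu,\alpha_i\rangle}^-$, $x_{i,m}^+ k_i \mapsto x_{i,m+\langle\mu,\alpha_i\rangle}^+ (\phi_{i,\langle\mu+\nu,\alpha_i\rangle}^-)^{-1}$, $h_{i,s} \mapsto h_{i,s}$ for $s<0$, and $k_i^{-1} \mapsto \phi_{i,\langle\mu+\nu,\alpha_i\rangle}^-$. The antidominance of $\mu$ and $\nu$ ensures the shifted indices $m + \langle\nu,\alpha_i\rangle$ remain nonnegative in the relevant cases, so the relations of Drinfeld's presentation are preserved; checking the Drinfeld--Serre relations is a direct computation. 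The shifted root vectors $F_\alpha^{\mu,\nu}$ defined before the proposition are by construction the images of the non-commutative polynomial expressions for $F_\alpha$ under these substitutions.

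Next I would argue that $\lBorel$ is obtained from $\mathcal{J}$ by localization at the $k_i^{-1}$, parallel to \cite[Proposition 2.1]{HJ}. The image $\phi_{i,\langle\mu+\nu,\alpha_i\rangle}^-$ of $k_i^{-1}$ is invertible in $\CU_{\mu+\nu}(\Gaff)$ because antidominance makes $\langle\mu+\nu,\alpha_i\rangle$ equal to the top mode index of $\phi_i^-(z)$, so by \eqref{Drinfeld rel: shift} the product $\phi_{i,0}^+ \phi_{i,\langle\mu+\nu,\alpha_i\rangle}^-$ is central and invertible. Thus the map on $\mathcal{J}$ extends to $\jmath_{\mu,\nu}:\lBorel \to \CU_{\mu+\nu}(\Gaff)$.

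The main obstacle will be injectivity. For this I would combine the triangular decomposition of $\lBorel$ with the identification $U_q^\pm(\mathfrak{c}) \cong \CU_{\mu+\nu}^\pm(\Gaff)$ coming from \eqref{identification subalgebras} and the shuffle realization of $U_q^\pm(L\Glie)$ of \cite[Theorem 1.7]{NT}, the same tool the paper invokes for Proposition \ref{prop: shifted homomorphism}. Since $\jmath_{\mu,\nu}$ preserves the triangular decomposition in the appropriate sense and restricts to the identity (up to the canonical identification) on the $\pm$ pieces while mapping the Cartan part faithfully (by antidominance, $\phi_{i,\langle\mu+\nu,\alpha_i\rangle}^-$ is algebraically independent in the shifted algebra), a triangular-factor-by-factor argument gives injectivity. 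In type A this is exactly \cite[Appendix I]{FT}; the general-type argument would consist of pushing the same combinatorial argument through using the shuffle description uniformly.
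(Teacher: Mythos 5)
You are supplying an argument where the paper supplies only a citation: Proposition \ref{prop: shifted lower Borel} is quoted from \cite[Proposition H.1(b)]{FT}, a type A statement, and all of Section \ref{sec: type A} works in the type A setting (for the upper Borel the general-type analogue, Proposition \ref{prop: shifted upper Borel}, is instead deduced from \cite[Proposition 3.3]{H} together with \cite[Proposition 2.1]{HJ}). Your architecture -- a lower-Borel analogue $\mathcal{J}$ of the asymptotic algebra, extension by localization, injectivity via the triangular decomposition and the shuffle realization of \cite{NT} -- is the correct mirror of the proof of Proposition \ref{prop: shifted upper Borel}, and the injectivity sketch is plausible. (Minor point: invertibility of $\phi^-_{i,\langle\mu+\nu,\alpha_i\rangle}$ has nothing to do with antidominance; for any coweight the $\phi$'s commute and $\phi^+_{i,0}\phi^-_{i,n_i}$ is central and invertible by \eqref{Drinfeld rel: shift}.)

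The genuine gap is the existence step, i.e.\ well-definedness of the map on $\mathcal{J}$, which you dismiss with ``the relations of Drinfeld's presentation are preserved; checking the Drinfeld--Serre relations is a direct computation.'' But $\mathcal{J}$, like $\lBorel$ itself, is only defined as a subalgebra of $\qaf$: no presentation in the generators $x^-_{i,m}$, $x^+_{i,m}k_i$, $k_i^{-1}\phi^{\pm}_{i,m}$, $k_i^{-1}$ is available, and the root vectors $F_{\alpha}$ come from Beck's construction via Proposition \ref{prop: root vectors}. Proving that every relation these elements satisfy inside $\lBorel$ survives the index shift in $\CU_{\mu+\nu}(\Gaff)$ is precisely the nontrivial content of \cite[Proposition H.1(b)]{FT} (and, on the upper Borel side, of \cite[Proposition 3.3]{H}, which the paper invokes rather than re-derives); it cannot simply be asserted. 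Your stated role for antidominance is also off: the loop modes occurring in $\lBorel$ are nonpositive and remain so, and the shifted algebra has $x^{\pm}_{i,m}$ for all $m\in\BZ$ anyway. What antidominance actually buys is that in the image of mixed relations such as $[x^+_{i,m}k_i,\,x^-_{i,n}]$ with $m\le -1$, $n\le 0$, the Cartan modes $\phi^+_{i,\,m+n+\langle\mu+\nu,\alpha_i\rangle}$ have strictly negative index and hence vanish, matching the vanishing of $\phi^+_{i,m+n}$ in $\lBorel$; for a non-antidominant shift these terms survive and the assignment fails to be an algebra map. Finally, the localization claim ``$\lBorel$ is obtained from $\mathcal{J}$ by inverting the $k_i^{-1}$'' is not in the cited literature for the lower Borel; either prove it or transport \cite[Proposition 2.1]{HJ} through the symmetry exchanging the two Borel subalgebras -- and if you use such a symmetry, it is worth checking whether it deduces the whole embedding directly from the upper-Borel case, keeping track of how it permutes the shifts $\mu,\nu$.
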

Recall the shifted homomorphisms from Proposition \ref{prop: shifted homomorphism} and the embedding of the upper Borel subalgebra from Proposition \ref{prop: shifted upper Borel}.
\begin{theorem}\cite[\S 10]{FT}  \label{thm: coproduct A}
There exists a unique family of algebra homomorphisms $$\Delta_{\mu,\nu}: \CU_{\mu+\nu}(\Gaff) \longrightarrow \CU_{\mu}(\Gaff) \otimes \CU_{\nu}(\Gaff)$$ 
for  $\mu, \nu \in \BP^{\vee}$ such that
\begin{gather*}
\Delta_{\mu,\nu}(\phi_{i,0}^+) = \phi_{i,0}^+ \otimes \phi_{i,0}^+,\quad \Delta_{\mu,\nu}(\phi_{i,\langle\mu+\nu,\alpha_i\rangle}^-) = \phi_{i,\langle\mu,\alpha_i\rangle}^- \otimes \phi_{i,\langle\nu,\alpha_i\rangle}^-,\\
\Delta_{\mu,\nu}(x_{i,m}^+) = x_{i,m}^+ \otimes 1\ \mathrm{if}\ \langle\mu,\alpha_i\rangle \leq m < 0, \quad \Delta_{\mu,\nu}(x_{i,n}^-) = 1 \otimes x_{i,n}^-\ \mathrm{if}\ \langle\nu,\alpha_i\rangle < n \leq 0,
\end{gather*}
and for $\epsilon, \eta$ antidominant we have the following commutative diagrams:
\begin{gather*}
 \xymatrixcolsep{6pc} \xymatrix{
\CU_{\mu+\nu}(\Gaff) \ar[d]_{\iota_{\epsilon,\eta}^{\mu+\nu}} \ar[r]^{\Delta_{\mu,\nu}} & \CU_{\mu}(\Gaff) \otimes \CU_{\nu}(\Gaff) \ar[d]_{\iota_{\epsilon,0}^{\mu} \otimes \iota_{0,\eta}^{\nu}} \\
\CU_{\mu+\nu+\epsilon+\eta}(\Gaff) \ar[r]^{\Delta_{\mu+\epsilon,\nu+\eta}}          & \CU_{\mu+\epsilon}(\Gaff) \otimes \CU_{\nu+\eta}(\Gaff), }  \\
\xymatrixcolsep{6pc} \xymatrix{
\Borel \ar[d]^{\jmath_{\epsilon+\eta}} \ar[r]^{\Delta} & \Borel \otimes \Borel \ar[d]^{\jmath_{\epsilon} \otimes \jmath_{\eta}} \\
\CU_{\epsilon+\eta}(\Gaff) \ar[r]^{\Delta_{\epsilon,\eta}}          & \CU_{\epsilon}(\Gaff) \otimes \CU_{\eta}(\Gaff), }   \qquad
 \xymatrixcolsep{6pc} \xymatrix{
\lBorel \ar[d]^{\jmath_{\epsilon,\eta}} \ar[r]^{\Delta} & \lBorel \otimes \lBorel \ar[d]^{\jmath_{\epsilon,0} \otimes \jmath_{0,\eta}} \\
\CU_{\epsilon+\eta}(\Gaff) \ar[r]^{\Delta_{\epsilon,\eta}}          & \CU_{\epsilon}(\Gaff) \otimes \CU_{\eta}(\Gaff), } \\
  \xymatrixcolsep{6pc} \xymatrix{
\CU_{\mu+\epsilon+\nu}(\Gaff) \ar[d]^{\Delta_{\mu,\epsilon+\nu}} \ar[r]^{\Delta_{\mu+\epsilon,\nu}} & \CU_{\mu+\epsilon}(\Gaff) \otimes \CU_{\nu}(\Gaff) \ar[d]^{\Delta_{\mu,\epsilon} \otimes \mathrm{Id}} \\
\CU_{\mu}(\Gaff) \otimes \CU_{\epsilon+\nu}(\Gaff) \ar[r]^{\mathrm{Id} \otimes \Delta_{\epsilon,\nu}}          & \CU_{\mu}(\Gaff) \otimes \CU_{\epsilon}(\Gaff) \otimes \CU_{\nu}(\Gaff).} 
\end{gather*} 
\end{theorem}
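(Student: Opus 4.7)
The plan follows the strategy of Finkelberg--Tsymbaliuk \cite{FT} and parallels the Yangian construction of Theorem \ref{thm: coproduct Yangian}. First establish the coproduct when $\mu,\nu$ are both antidominant; then extend to arbitrary coweights via the injective shift homomorphisms $\iota_{\epsilon,\eta}^\mu$ of Proposition \ref{prop: shifted homomorphism}; finally verify uniqueness, Borel compatibility, and coassociativity.

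For the antidominant case, combine the Borel embeddings $\jmath_{\mu+\nu}:\Borel \hookrightarrow \CU_{\mu+\nu}(\Gaff)$ and $\jmath_{\mu,\nu}:\lBorel \hookrightarrow \CU_{\mu+\nu}(\Gaff)$ of Propositions \ref{prop: shifted upper Borel} and \ref{prop: shifted lower Borel}. The triangular decomposition and the identifications $\CU_{\mu+\nu}^\pm(\Gaff)\cong U_q^\pm(L\Glie)$ exhibit $\CU_{\mu+\nu}(\Gaff)$ as generated by images of Borel root vectors and Cartan elements. Push the ordinary Drinfeld--Jimbo coproduct of $\qaf$, restricted to the two Borel subalgebras, through these embeddings to define $\Delta_{\mu,\nu}$ on generators, and verify the Drinfeld-new relations for the tensor product image — this is exactly what the two Borel compatibility squares encode. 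The initial-condition vanishing statements on $x_{i,m}^+$ for $\langle\mu,\alpha_i\rangle\le m<0$ and $x_{i,n}^-$ for $\langle\nu,\alpha_i\rangle<n\le 0$ follow by degree bookkeeping using the explicit form of $\jmath_{\mu+\nu}$ and $\jmath_{\mu,\nu}$ on shifted root vectors.

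For general coweights, pick antidominant $\epsilon,\eta$ so that $\mu+\epsilon$ and $\nu+\eta$ are antidominant and set
\[
\Delta_{\mu,\nu} := (\iota_{\epsilon,0}^\mu \otimes \iota_{0,\eta}^\nu)^{-1}\circ \Delta_{\mu+\epsilon,\nu+\eta}\circ \iota_{\epsilon,\eta}^{\mu+\nu}.
\]
Well-definedness requires that for every Drinfeld generator $x$ of $\CU_{\mu+\nu}(\Gaff)$ the image $\Delta_{\mu+\epsilon,\nu+\eta}(\iota_{\epsilon,\eta}^{\mu+\nu}(x))$ lies in the image of $\iota_{\epsilon,0}^\mu \otimes \iota_{0,\eta}^\nu$, and one must check independence of the auxiliary choice $(\epsilon,\eta)$. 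Coassociativity then reduces, by injectivity of $\iota_{\epsilon,0}^\mu\otimes\iota_{0,\eta}^\eta\otimes\iota_{0,\eta'}^\nu$ applied to the fully antidominant three-fold version of the square, to the already-proven antidominant case. Uniqueness follows because relation \eqref{Drinfeld rel: Drinfeld-Cartan} allows one to propagate $\Delta_{\mu,\nu}$ from $\phi_i^\pm(z)$ and finitely many $x_{i,m}^\pm$ to all Drinfeld generators, so the specified windows of mode indices determine $\Delta_{\mu,\nu}$ completely in the antidominant case and hence, by the shift-compatibility square and injectivity, in general.

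\textbf{Main obstacle.} The hard part is the containment step in the extension to general coweights: showing that $(\iota_{\epsilon,0}^\mu \otimes \iota_{0,\eta}^\nu)^{-1}$ is applicable, i.e., that $\Delta_{\mu+\epsilon,\nu+\eta}\circ \iota_{\epsilon,\eta}^{\mu+\nu}$ lands in the tensor square of the smaller shifted algebras. In type A this is handled via the shuffle realization of $\CU_\mu^\pm(\Gaff)$ developed in \cite[Appendix I]{FT}, which identifies these subalgebras combinatorially with spaces of symmetric Laurent polynomials satisfying wheel/pole conditions and lets one verify the containment by an explicit computation on factorized currents. The absence of an analogous shuffle realization outside type A is precisely what confines Theorem \ref{thm: coproduct A} to that case, and the step-by-step verification of Drinfeld-new relations on tensor factors (analogous to \cite[Corollary 3.16]{coproduct} in the Yangian setting) requires the stronger form of Serre relations that the shuffle realization automatically supplies.
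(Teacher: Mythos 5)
You should first note that the paper contains no proof of Theorem \ref{thm: coproduct A}: it is quoted verbatim (up to a change of convention, as remarked right after the statement) from Finkelberg--Tsymbaliuk \cite[\S 10]{FT}. So there is no internal argument to compare your proposal with; what can be assessed is whether your sketch would amount to a proof of the cited result, and as it stands it does not.

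The overall shape (antidominant case first via the Borel embeddings, then extension by shift homomorphisms, with a containment statement as the crux) is consistent with the known strategy, but the substantive steps are named rather than carried out. In the antidominant case, pushing the Drinfeld--Jimbo coproduct through $\jmath_{\mu+\nu}$ and $\jmath_{\mu,\nu}$ (Propositions \ref{prop: shifted upper Borel} and \ref{prop: shifted lower Borel}) only prescribes $\Delta_{\mu,\nu}$ on the images of the two Borel subalgebras; one must still show that these images generate $\CU_{\mu+\nu}(\Gaff)$ and, more seriously, that the prescription is compatible with the cross relations between them (the $[x^+,x^-]$ and $\phi$--$x$ relations), which is where the real work in \cite{FT} lies -- the two Borel-compatibility squares are consequences of the construction, not a device that verifies the homomorphism property. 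In the extension to general coweights, both the containment of $\Delta_{\mu+\epsilon,\nu+\eta}\circ\iota^{\mu+\nu}_{\epsilon,\eta}$ in the image of $\iota^{\mu}_{\epsilon,0}\otimes\iota^{\nu}_{0,\eta}$ and the independence of the auxiliary choice $(\epsilon,\eta)$ are asserted without argument, and your attribution of the type-A restriction to the shuffle realization of \cite[Appendix I]{FT} is misplaced: in this paper that appendix is invoked only for the injectivity of the shift homomorphisms (Proposition \ref{prop: shifted homomorphism}), while the type-A hypothesis in \cite{FT} already enters in the explicit verification of the coproduct formulas on generators. Finally, your uniqueness argument does not work from the listed data alone: the theorem specifies $\Delta_{\mu,\nu}$ only on $\phi^+_{i,0}$, on $\phi^-_{i,\langle\mu+\nu,\alpha_i\rangle}$ and on the indicated (possibly empty) windows of modes of $x^{\pm}_i$, so propagation via \eqref{Drinfeld rel: Drinfeld-Cartan} is not available without knowing $\Delta_{\mu,\nu}$ on all modes of $\phi_i^{\pm}(z)$; uniqueness has to be routed through the Borel-compatibility squares (which pin down the antidominant case on a generating set) together with the shift-compatibility square and injectivity of the shift homomorphisms.
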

Our $\Delta_{\mu,\nu}$ is opposite to $\sigma \circ \Delta_{\nu,\mu}$ in \cite{FT} in order to be compatible with the coproduct of the quantum loop algebra $\qaf$ in Eq.\eqref{def: coproduct DJ}. 

Recall from Eq.\eqref{def: Theta series} the Theta series $\Theta_{\Bp}(w)$, which is a power series in $w$ with coefficients in $U_q^-(L\Glie)\otimes U_q^+(L\Glie)$. Given two coweights $\mu$ and $\nu$, via the natural identifications $U_q^{\pm}(L\Glie) = \CU_{\mu}^{\pm}(\Gaff)$, the Theta series $\Theta_{\Bp}(w)$ corresponds to a power series with coefficients in $\CU_{\mu}^-(\Gaff) \otimes \CU_{\nu}^+(\Gaff)$, denoted by $\Theta_{\Bp}^{\mu,\nu}(w)$. 

\begin{lem} 
Given $\Bp \in \mathfrak{d}$ and two coweights $\mu$ and $\nu$, we have
$$ \Delta_{\mu,\nu}(T_{\Bp}^{\mu+\nu}(w)) = (1\otimes T_{\Bp}^{\nu}(w)) \times \Theta_{\Bp}^{\mu,\nu}(w) \times (T_{\Bp}^{\mu}(w) \otimes 1). $$
\end{lem}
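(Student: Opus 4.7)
The plan is to reduce the identity to the known factorization \eqref{def: Theta series} in the ordinary quantum loop algebra via the embedding $\jmath_{\mu,\nu}$ of Proposition \ref{prop: shifted lower Borel}, first in the antidominant case and then for arbitrary coweights by a shift-homomorphism argument.

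Suppose first that $\mu,\nu$ are both antidominant. Since $T_{\Bp}(w) \in \lBorel[[w]]$ is a product of exponentials in the Drinfeld--Cartan elements $h_{i,-s}$ ($s>0$), which are preserved by each of the embeddings $\jmath_{\mu,\nu}$, $\jmath_{\mu,0}$, $\jmath_{0,\nu}$, one has $\jmath_{\mu,\nu}(T_{\Bp}(w)) = T_{\Bp}^{\mu+\nu}(w)$ and similarly for the other two. Applying the third commutative diagram of Theorem \ref{thm: coproduct A}, namely $\Delta_{\mu,\nu} \circ \jmath_{\mu,\nu} = (\jmath_{\mu,0}\otimes\jmath_{0,\nu}) \circ \Delta$, to the defining factorization \eqref{def: Theta series} of $\Theta_{\Bp}(w)$ will yield
\[\Delta_{\mu,\nu}(T_{\Bp}^{\mu+\nu}(w)) = (1\otimes T_{\Bp}^{\nu}(w))\cdot(\jmath_{\mu,0}\otimes\jmath_{0,\nu})(\Theta_{\Bp}(w))\cdot(T_{\Bp}^{\mu}(w)\otimes 1).\]
The remaining step is to show $(\jmath_{\mu,0}\otimes\jmath_{0,\nu})(\Theta_{\Bp}(w)) = \Theta_{\Bp}^{\mu,\nu}(w)$. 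Since the coefficients of $\Theta_{\Bp}(w)$ lie in $U_q^-(\mathfrak{c})\otimes U_q^+(\mathfrak{c})$, this reduces to checking that $\jmath_{\mu,0}|_{U_q^-(\mathfrak{c})}$ and $\jmath_{0,\nu}|_{U_q^+(\mathfrak{c})}$ coincide with the natural identifications of \eqref{identification subalgebras}. For $U_q^-(\mathfrak{c})$ this is immediate since the $x_{i,m}^-$ are untouched by $\jmath_{\mu,0}$ when $\nu=0$; for $U_q^+(\mathfrak{c})$ one uses that Drinfeld--PBW basis elements $F_{s\delta-\alpha_i}k_{\alpha_i}^{-1}$ are multiples of the loop generators $x_{i,-s}^+$, and the factor $\jmath_{0,\nu}(k_{\alpha_i}^{-1}) = \phi_{i,\langle\nu,\alpha_i\rangle}^-$ will cancel the $(\phi_{i,\langle\nu,\alpha_i\rangle}^-)^{-1}$ appearing in the shifted root vector $F_{s\delta-\alpha_i}^{0,\nu}$.

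For general $\mu,\nu$, I would choose antidominant $\epsilon,\eta$ with $\mu+\epsilon$ and $\nu+\eta$ antidominant, and apply $\iota_{\epsilon,0}^{\mu}\otimes\iota_{0,\eta}^{\nu}$ to both sides of the claimed identity. A direct computation from Proposition \ref{prop: shifted homomorphism} shows that each such $\iota$ modifies $h_{i,-s}$ by a central scalar, so $\iota_{\epsilon,\eta}^{\mu+\nu}(T_{\Bp}^{\mu+\nu}(w)) = g(w)\,T_{\Bp}^{\mu+\nu+\epsilon+\eta}(w)$ for a scalar series $g(w)\in 1+w\BC[[w]]$, and the additive splitting of $b_i+c_i$ factors $g(w)=g_1(w)g_2(w)$ with analogous formulas $\iota_{\epsilon,0}^{\mu}(T_{\Bp}^{\mu}(w)) = g_1(w)T_{\Bp}^{\mu+\epsilon}(w)$ and $\iota_{0,\eta}^{\nu}(T_{\Bp}^{\nu}(w)) = g_2(w)T_{\Bp}^{\nu+\eta}(w)$. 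Since $\iota_{\epsilon,0}^{\mu}$ fixes $x_{i,m}^-$ and $\iota_{0,\eta}^{\nu}$ fixes $x_{i,m}^+$, one also has $(\iota_{\epsilon,0}^{\mu}\otimes\iota_{0,\eta}^{\nu})(\Theta_{\Bp}^{\mu,\nu}(w)) = \Theta_{\Bp}^{\mu+\epsilon,\nu+\eta}(w)$. Using the first commutative diagram of Theorem \ref{thm: coproduct A}, both sides of the claimed identity thus transform into $g(w)$ times the corresponding identity for $(\mu+\epsilon,\nu+\eta)$, which holds by the antidominant case; injectivity of $\iota_{\epsilon,0}^{\mu}\otimes\iota_{0,\eta}^{\nu}$ then completes the reduction.

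The main obstacle will be the subalgebra identification in the antidominant case. Whereas the assertion for $\jmath_{\mu,0}|_{U_q^-(\mathfrak{c})}$ is transparent, the map $\jmath_{0,\nu}$ acts non-trivially on the Cartan factors $k_\beta^{-1}$ in the Drinfeld--PBW basis of $U_q^+(\mathfrak{c})$, and one must carefully track how the shifted root-vector substitutions and the $\jmath_{0,\nu}(k_\beta^{-1})$-contributions combine to yield precisely the Drinfeld loop generators $x_{i,m}^+$.
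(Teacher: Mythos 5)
Your proposal is correct and follows essentially the same route as the paper: the paper likewise handles general coweights by the zigzag/shift-homomorphism compatibility (first commutative diagram of Theorem \ref{thm: coproduct A}) and settles the base case by applying the embeddings $\jmath$ of $\lBorel$ together with the third commutative diagram to the defining factorization \eqref{def: Theta series}, the only cosmetic difference being that the paper phrases the base case at $\mu=\nu=0$ via $\jmath_{0,0}\otimes\jmath_{0,0}$ while you treat general antidominant pairs directly via $\jmath_{\mu,0}\otimes\jmath_{0,\nu}$. The compatibility of $\jmath$ with the identifications \eqref{identification subalgebras} on $U_q^{\mp}(\mathfrak{c})$, which you flag as the main obstacle, is indeed needed (and left implicit in the paper); your cancellation mechanism is the right one, since $(\phi_{i,\langle\nu,\alpha_i\rangle}^-)^{-1}$ satisfies the same $q$-commutation with the $x_{j,m}^+$ as $k_i$ does, so the argument for simple roots extends verbatim to all PBW generators $F_{s\delta-\beta}k_{\beta}^{-1}$.
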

\begin{proof}
By zigzag arguments as in the proof of \cite[Corollary 10.12]{FT} we are reduced to the case $\mu = \nu = 0$, which follows by applying $\jmath_{0,0} \otimes \jmath_{0,0}$ to Eq.\eqref{def: Theta series} and using the the third commutative diagram of Theorem \ref{thm: coproduct A}.
\end{proof}
\begin{lem} \label{lem: coproduct estimation}
For $\mu, \nu$ coweights, $i \in I$ and $m \in \BZ$ we have:
\begin{align*}
\Delta_{\mu,\nu}(\phi_{i,m}^{\pm})& \equiv \sum_{n\in \BZ} \phi_{i,n}^{\pm}\otimes \phi_{i,m-n}^{\pm}  \ \mathrm{mod}. \sum_{\beta \in \BQ_>} \CU_{\mu}(\Gaff)_{-\beta} \otimes \CU_{\nu}(\Gaff)_{\beta}, \\
\Delta_{\mu,\nu}(x_{i,m}^+) &\equiv x_{i,m}^+ \otimes 1 + \sum_{s\geq 0} \phi_{i,m-s}^+ \otimes x_{i,s}^+ + \sum_{t<0} \phi_{i,m-t}^- \otimes x_{i,t}^+ \\
&\qquad \qquad  \mathrm{mod}. \sum_{\beta \in \BQ_>} \CU_{\mu}(\Gaff)_{-\beta} \otimes \CU_{\nu}(\Gaff)_{\beta+\alpha_i}, \\ 
\Delta_{\mu,\nu}(x_{i,m}^-) &\equiv 1 \otimes x_{i,m}^- + \sum_{s>0} x_{i,s}^- \otimes \phi_{i,m-s}^+ + \sum_{t\leq 0} x_{i,t}^- \otimes \phi_{i,m-t}^-   \\
&\qquad \qquad \mathrm{mod}. \sum_{\beta \in \BQ_>} \CU_{\mu}(\Gaff)_{-\beta-\alpha_i} \otimes \CU_{\nu}(\Gaff)_{\beta}.
\end{align*}
\end{lem}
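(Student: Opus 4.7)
The strategy mirrors that of Lemma~\ref{lem: Yangian coproduct estimation} in the Yangian setting: I will first establish the three estimates in the zero-shift case $\mu=\nu=0$, i.e.\ for the Drinfeld--Jimbo coproduct on $\qaf$, and then transfer them to arbitrary shifts via the shift--homomorphism compatibility diagram of Theorem~\ref{thm: coproduct A}.

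For $\mu=\nu=0$, the $\phi$-estimate follows by exponentiating the quasi-primitivity relation $\Delta(h_{i,s})\equiv h_{i,s}\otimes 1+1\otimes h_{i,s}\pmod{\sum_{\beta\in\BQ_>}\qaf_{-\beta}\otimes\qaf_{\beta}}$ from Proposition~\ref{prop: twist}. Since the weight-positive ideal is closed under multiplication, Eq.~\eqref{def: h} gives $\Delta(\phi_i^{\pm}(z))\equiv \phi_i^{\pm}(z)\otimes\phi_i^{\pm}(z)$ modulo that same ideal, and extracting the coefficient of $z^m$ yields the first claim. For the $x$-estimates I start from the Drinfeld--Jimbo formulas $\Delta(x_{i,0}^+)=x_{i,0}^+\otimes 1+\phi_{i,0}^+\otimes x_{i,0}^+$ and $\Delta(x_{i,0}^-)=1\otimes x_{i,0}^-+x_{i,0}^-\otimes(\phi_{i,0}^+)^{-1}$, and propagate to all modes by iterated bracketing with $\Delta(h_{i,\pm 1})$ via Eq.~\eqref{Drinfeld rel: h x}: the correction terms at each induction step lie in the prescribed weight ideal because $\mathrm{ad}\,\Delta(h_{i,\pm 1})$ preserves the weight grading and, by the $\phi$-estimate just obtained, acts modulo that ideal as the sum of its restrictions to the two tensor factors. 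This recovers the classical Damiani--Khoroshkin--Tolstoy coproduct estimate (cf.~\cite{Damiani, KT, EKP}), the $x^-$ case being entirely symmetric to $x^+$.

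For general $\mu,\nu$, fix antidominant $\epsilon,\eta$ large enough that both $\mu+\epsilon$ and $\nu+\eta$ are antidominant, and invoke the first commutative diagram of Theorem~\ref{thm: coproduct A},
\[(\iota_{\epsilon,0}^{\mu}\otimes\iota_{0,\eta}^{\nu})\circ\Delta_{\mu,\nu}=\Delta_{\mu+\epsilon,\nu+\eta}\circ\iota_{\epsilon,\eta}^{\mu+\nu}.\]
By Proposition~\ref{prop: shifted homomorphism}, $\iota_{\epsilon,\eta}^{\mu+\nu}$ multiplies each Drinfeld current by a polynomial in $z$ with non-negative exponents, hence rewrites $x_{i,m}^{\pm}$ or $\phi_{i,m}^{\pm}$ as a finite weight-preserving $\BC$-linear combination of other modes. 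For antidominant $\mu+\epsilon$ and $\nu+\eta$, the Borel-embedding diagrams in Theorem~\ref{thm: coproduct A} together with the fact that each Drinfeld generator of $\CU_{\mu+\epsilon+\nu+\eta}(\Gaff)$ lies in the image of $\jmath_{\mu+\nu+\epsilon+\eta}$ or $\jmath_{\mu+\epsilon,\nu+\eta}$ (by Propositions~\ref{prop: shifted upper Borel} and \ref{prop: shifted lower Borel} and Definition~\ref{def: shifted root vectors}) express $\Delta_{\mu+\epsilon,\nu+\eta}$ on Drinfeld generators in terms of the Drinfeld--Jimbo coproduct on $\Borel$ and $\lBorel$, reducing to the $\mu=\nu=0$ analysis. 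Injectivity of $\iota_{\epsilon,0}^{\mu}\otimes\iota_{0,\eta}^{\nu}$ then uniquely recovers the estimates for $\Delta_{\mu,\nu}$; the sum ranges $s\geq 0$ and $t<0$ in the statement emerge from the defining support conditions $\phi_{i,n}^+=0$ for $n<0$ and $\phi_{i,n}^-=0$ for $n>\langle\mu,\alpha_i\rangle$ in $\CU_{\mu}(\Gaff)$.

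\textbf{Main obstacle.} The delicate step is the combinatorial bookkeeping of the $\phi^+$--$\phi^-$ cutoff after the shift: one must verify that the linear combinations of estimates for $\Delta_{\mu+\epsilon,\nu+\eta}(x_{i,m+k}^{\pm})$ appearing after expanding $\iota_{\epsilon,\eta}^{\mu+\nu}(x_{i,m}^{\pm})$ reassemble into the stated two-sum form without spilling across the cutoff, and that the error terms remain in $\sum_{\beta\in\BQ_>}\CU_{\mu}(\Gaff)_{-\beta-\alpha_i}\otimes\CU_{\nu}(\Gaff)_{\beta}$ (or its $x^+$-analog) throughout this rewriting.
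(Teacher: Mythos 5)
Your overall architecture matches the paper's: reduce to $\mu=\nu=0$ by combining the shift-homomorphism diagram with the two Borel-embedding diagrams of Theorem~\ref{thm: coproduct A} and the injectivity of $\iota_{\epsilon,0}^{\mu}\otimes\iota_{0,\eta}^{\nu}$, and obtain the $\phi$-estimate in the unshifted case by exponentiating the quasi-primitivity of $\Delta(h_{i,s})$ from Proposition~\ref{prop: twist}. That part is fine and is essentially what the paper does (its ``zigzag'' reduction following \cite{coproduct} and \cite{HZ}).

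The genuine gap is in your treatment of the $x^{\pm}$-estimates for $\qaf$ itself. You propose to propagate from $\Delta(x_{i,0}^{\pm})$ by iterated bracketing with $\Delta(h_{i,\pm1})$ and assert that ``the correction terms at each induction step lie in the prescribed weight ideal.'' This fails at the critical weight. Write $\Delta(h_{i,1})=h_{i,1}\otimes 1+1\otimes h_{i,1}+E$ with $E\in\sum_{\beta\in\BQ_>}\qaf_{-\beta}\otimes\qaf_{\beta}$; the component of $E$ in $\qaf_{-\alpha_i}\otimes\qaf_{\alpha_i}$ is nonzero, and its bracket with the leading term $x_{i,m}^{+}\otimes 1$ lands in $\qaf_{0}\otimes\qaf_{\alpha_i}$, which is \emph{not} contained in $\sum_{\beta\in\BQ_>}\qaf_{-\beta}\otimes\qaf_{\beta+\alpha_i}$ — it has exactly the bigrading of the main terms $\phi^{\pm}\otimes x^{+}$. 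Moreover these contributions are not junk that one may hope to discard: bracketing only the main part $h_{i,1}\otimes1+1\otimes h_{i,1}$ with the claimed expression for $\Delta(x_{i,m}^{+})$ produces $[b_{ii}]_q\bigl(x_{i,m+1}^{+}\otimes1+\sum_{s\geq 1}\phi^{+}_{i,m+1-s}\otimes x^{+}_{i,s}+\sum_{t\leq 0}\phi^{-}_{i,m+1-t}\otimes x^{+}_{i,t}\bigr)$, i.e.\ it is missing the $s=0$ term $\phi^{+}_{i,m+1}\otimes x^{+}_{i,0}$ and instead contains the wrong term $\phi^{-}_{i,m+1}\otimes x^{+}_{i,0}$; the discrepancy $(\phi^{+}_{i,m+1}-\phi^{-}_{i,m+1})\otimes x^{+}_{i,0}$ must be supplied precisely by the bracket of $E$ with $x^{+}_{i,m}\otimes 1$ via the relation $[x^{+},x^{-}]\sim\phi^{+}-\phi^{-}$. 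So the induction does not close without an exact control of $E$ and of all critical-weight cross terms, and that computation is the real content of the statement — which is why the paper does not reprove it but cites it: the estimate was stated in \cite[Proposition 1.2]{Chari} and proved in all types in \cite[Proposition 3.4]{Wang}. To repair your argument you would either have to carry out that mode-by-mode analysis yourself (keeping the exact form of the error in $\Delta(h_{i,\pm1})$, e.g.\ from Damiani), or simply invoke the known quantum-loop estimate as the paper does.
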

\begin{proof}
Adapt the zigzag arguments in the proof of \cite[Theorem 4.12]{coproduct} to reduce to the case $\mu = \nu = 0$; see \cite[Lemma 2.5]{HZ} for a closer situation. In view of the second and third commutative diagrams of Theorem \ref{thm: coproduct A}  it suffices to prove the above coproduct estimations for the quantum loop algebra. The first relation follows from Proposition \ref{prop: twist}. The second and third were claimed without proof in \cite[Proposition 1.2]{Chari}. A proof can be found in \cite[Proposition 3.4]{Wang} which works in all types. 
\end{proof}

Recall the quotient map $\pi_{\mu}: \CU_{\mu}(\Gaff) \longrightarrow \BCU_{\mu}(\Gaff)$ before Lemma \ref{lem: tensor trivial}. The algebra homomorphisms of Lemma \ref{lem: tensor trivial}, initially obtained from Drinfeld formal coproduct, also follow from Drinfeld--Jimbo coproduct by Lemma \ref{lem: coproduct estimation}:
$$ F_{\mu, \nu} = (\pi_{\mu} \otimes \mathrm{Id}) \Delta_{\mu,\nu},\quad G_{\mu, \nu} = (\mathrm{Id} \otimes \pi_{\nu}) \Delta_{\mu,\nu}.  $$
This means that in defining the module structure on the tensor product of a module and a trivial module, the two coproducts of Lemma \ref{lem: tensor trivial} and Theorem \ref{thm: coproduct A} are equivalent.
The next result is proved in the same way as Theorem \ref{thm: trivial associativity Yangian}.

\begin{theorem} 
Given $\mu, \zeta$ and $\nu$ coweights, we have as algebra homomorphisms from $\CU_{\mu+\zeta+\nu}(\Gaff)$ to $\CU_{\mu}(\Gaff)\otimes \CU_{\zeta}(\Gaff) \otimes \BCU_{\nu}(\Gaff)$ and to $\BCU_{\mu}(\Gaff) \otimes \CU_{\zeta}(\Gaff) \otimes \CU_{\nu}(\Gaff)$ respectively, 
\begin{gather*}
(\mathrm{Id} \otimes \mathrm{Id} \otimes \pi_{\nu}) (\mathrm{Id} \otimes \Delta_{\zeta,\nu})  \Delta_{\mu,\zeta+\nu} = (\mathrm{Id} \otimes \mathrm{Id} \otimes \pi_{\nu}) (\Delta_{\mu,\zeta} \otimes \mathrm{Id}) \Delta_{\mu+\zeta,\nu},  \\
(\pi_{\mu} \otimes \mathrm{Id} \otimes \mathrm{Id}) (\mathrm{Id} \otimes \Delta_{\zeta,\nu})  \Delta_{\mu,\zeta+\nu} = (\pi_{\mu} \otimes \mathrm{Id} \otimes \mathrm{Id}) (\Delta_{\mu,\zeta} \otimes \mathrm{Id}) \Delta_{\mu+\zeta,\nu}.  
\end{gather*}
\end{theorem}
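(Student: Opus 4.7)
The plan is to mimic the zigzag argument from the proof of Theorem \ref{thm: trivial associativity Yangian}. I will treat the first identity (the one involving $\pi_\nu$), as the second is parallel.

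The first task is to establish the quantum affine analogs of Eq.\eqref{F shift}: for $\epsilon, \eta$ antidominant coweights and $\mu, \nu$ arbitrary coweights,
\begin{gather*}
(\mathrm{Id} \otimes \iota^{\nu}_{\epsilon,\eta}) F_{\mu,\nu} = F_{\mu,\epsilon+\eta+\nu} \iota_{\epsilon, \eta}^{\mu+\nu}, \\
(\iota^{\mu}_{\epsilon,\eta} \otimes \mathrm{Id}) G_{\mu,\nu} = G_{\mu+\epsilon+\eta,\nu} \iota_{\epsilon,\eta}^{\mu+\nu}.
\end{gather*}
Both sides are algebra homomorphisms, so it suffices to test them on the Drinfeld generating series $x_i^{\pm}(z)$ and $\phi_i^{\pm}(z)$. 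Using the explicit formulas for $F_{\mu,\nu}$ and $G_{\mu,\nu}$ recorded in Lemma \ref{lem: tensor trivial}, together with the fact that $\iota_{\epsilon,\eta}^{\mu}$ acts on each generating series by multiplication by a scalar Laurent series $(1-z)^{b_i}$, $(1-z)^{c_i}$ or $(1-z)^{b_i+c_i}$ (Proposition \ref{prop: shifted homomorphism}), the identities reduce to commuting a scalar series past a tensor slot, which is immediate.

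With these identities in hand, I would run the same chain of rewritings as in the proof of Theorem \ref{thm: trivial associativity Yangian}: choose an antidominant coweight $\eta$ such that $\zeta+\eta$ is antidominant; compose both sides of the first identity with $\mathrm{Id} \otimes \iota_{0,\eta}^{\zeta} \otimes \mathrm{Id}$; and alternately apply the $G$-shift identity just established, the coproduct-versus-shift diagram (the first commutative diagram in Theorem \ref{thm: coproduct A}) and the coassociativity diagram (the last commutative diagram in Theorem \ref{thm: coproduct A}). Each application of a diagram from Theorem \ref{thm: coproduct A} is legitimate because the relevant shift coweights are antidominant by construction. The resulting chain transforms the composed left-hand side into the composed right-hand side, and injectivity of $\iota_{0,\eta}^{\zeta}$ (Proposition \ref{prop: shifted homomorphism}) then cancels the auxiliary shift to yield the desired equality.

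The principal obstacle is essentially bookkeeping: the commutative diagrams of Theorem \ref{thm: coproduct A} are only asserted under the hypothesis that the shift coweights are antidominant, so the whole point of introducing the auxiliary $\eta$ with $\zeta+\eta$ antidominant is to replace the arbitrary middle coweight $\zeta$ by one for which coassociativity is available. Beyond this device, no genuinely new difficulty arises in the quantum affine setting: the shifted homomorphism acts on the generating series by multiplication by central scalar series, so the verification of the $F$-shift and $G$-shift identities is routine and entirely symmetric to the Yangian case.
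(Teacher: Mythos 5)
Your proposal is correct and is essentially the paper's own proof: the theorem is obtained by running the same zigzag as in Theorem \ref{thm: trivial associativity Yangian}, using the quantum affine analogues of Eq.\eqref{F shift} (which are immediate on the Drinfeld generating series because $\iota_{\epsilon,\eta}^{\mu}$ acts by the scalar polynomial factors $(1-z)^{b_i}$, $(1-z)^{c_i}$, so no principal parts intervene) together with the coproduct-versus-shift and coassociativity diagrams of Theorem \ref{thm: coproduct A} and the injectivity of $\iota_{0,\eta}^{\zeta}$. The only step you leave tacit is the identification $F_{\mu,\nu}=(\pi_{\mu}\otimes\mathrm{Id})\Delta_{\mu,\nu}$ and $G_{\mu,\nu}=(\mathrm{Id}\otimes\pi_{\nu})\Delta_{\mu,\nu}$, needed to pass between the compositions in the statement (which involve the Finkelberg--Tsymbaliuk coproduct) and the explicit formulas of Lemma \ref{lem: tensor trivial} (defined via the Drinfeld formal coproduct); this is exactly what Lemma \ref{lem: coproduct estimation} provides and what the paper records immediately before the theorem.
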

Let $M$ be a $\CU_{\mu}(\Gaff)$-module, $N$ be a $\CU_{\nu}(\Gaff)$-module
and $K$ be a $\CU_{\zeta}(\Gaff)$-module. If either $M$ or $N$ is a trivial module then the above theorem implies that the identity map is a $\CU_{\mu+\zeta+\nu}(\Gaff)$-module isomorphism $(M \otimes K) \otimes N \longrightarrow M \otimes (K \otimes N)$. 
We now consider the case when $K = L(\Bp)_{w^{-1}}$ is trivial with $\Bp \in \mathfrak{d}_{\zeta}$ a polynomial $\ell$-weight. 

Consider the $\CU_{\mu+\zeta+\nu}(\Gaff)$-module $M \otimes (L_{\zeta}(\Bp)_{w^{-1}} \otimes N)$. As in Example \ref{example: tensor prefund}, it is the pullback of $M\otimes N[w,w^{-1}]$, viewed as a $\CU_{\mu}(\Glie) \otimes \CU_{\nu}(\Gaff)[w,w^{-1}]$-module by scalar extension, along the algebra homomorphism 
 $$  (\mathrm{Id} \otimes F^{\zeta+\nu}_{\Bp,w}) \Delta_{\mu,\zeta+\nu}: \CU_{\mu+\zeta+\nu}(\Gaff) \longrightarrow \CU_{\mu}(\Gaff) \otimes \CU_{\nu}(\Gaff)[w,w^{-1}]. $$
Replacing the underlying space $M\otimes N[w,w^{-1}]$ with $(M \otimes N)((w))$, we get a completed triple tensor product module over $\CU_{\mu+\zeta+\nu}(\Gaff)$, denoted by $\overline{M \otimes (L_{\zeta}(\Bp)_{w^{-1}} \otimes N)}$. In the same way, the other triple tensor product modules are completed: 
\begin{gather*}
\overline{(M\otimes L_{\zeta}(\Bp)_{w^{-1}}) \otimes N}, \quad \overline{(M\otimes N) \otimes L_{\zeta}(\Bp)_{w^{-1}}}, \quad \overline{M \otimes (N \otimes L_{\zeta}(\Bp)_{w^{-1}})}, \\
\overline{(L_{\zeta}(\Bp)_{w^{-1}} \otimes M)\otimes N}, \quad \overline{L_{\zeta}(\Bp)_{w^{-1}} \otimes (M \otimes N)}.
\end{gather*}

The next result is proved in the same way as Theorem \ref{thm:Yangian associator} and Corollary \ref{cor: nontrivial associator Yangian} by diagram chasing, based on the previous results of this section and Proposition \ref{prop: one-dim R}.
\begin{theorem}  
Let $M$ and $N$ be modules over $\CU_{\mu}(\Gaff)$ and $\CU_{\nu}(\Gaff)$ respectively. For $\zeta$ a coweight and $\Bp \in \mathfrak{d}_{\zeta}$, we have an isomorphism of $\CU_{\mu+\zeta+\nu}(\Gaff)$-modules induced by the action of $\Theta_{\Bp}^{\mu,\nu}(w) \in (\CU_{\mu}(\Gaff) \otimes \CU_{\nu}(\Gaff))[[w]]$ on $M \otimes N$,
$$ \Theta_{\Bp}^{\mu,\nu}(w)|_{M\otimes N}:  \overline{(M \otimes L_{\zeta}(\Bp)_{w^{-1}}) \otimes N} \longrightarrow \overline{M \otimes (L_{\zeta}(\Bp)_{w^{-1}} \otimes N)}.   $$
If either $M$ is bottom graded or $N$ is top graded, then $\Theta_{\Bp}^{\mu,\nu}(w)|_{M\otimes N}$ restricts to a module isomorphism from $(M \otimes L_{\zeta}(\Bp)_{w^{-1}}) \otimes N$ to $M \otimes (L_{\zeta}(\Bp)_{w^{-1}} \otimes N)$.
\end{theorem}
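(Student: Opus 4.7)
The plan is to mirror the strategy used in the proof of Theorem \ref{thm:Yangian associator} and Corollary \ref{cor: nontrivial associator Yangian}, replacing the shifted Yangian inputs by their shifted quantum affine counterparts that have been collected in this section.

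First I would check that $\Theta_{\Bp}^{\mu,\nu}(w)$ acts as a well-defined, invertible linear operator on $(M\otimes N)((w))$. By the identifications $U_q^{\pm}(L\Glie)\cong \CU_{\mu}^{\pm}(\Gaff)$ and the polynomiality theorem \ref{thm: polynomiality Theta}, the $\beta$-component $\Theta_{\Bp,\beta}^{\mu,\nu}(w)\in (\CU_{\mu}^-(\Gaff)_{-\beta}\otimes\CU_{\nu}^+(\Gaff)_{\beta})[w]$ is polynomial in $w$, and similarly for the inverse $\widetilde{\Theta}_{\Bp,\beta}^{\mu,\nu}(w)$. The triangular weight structure then guarantees that $\Theta_{\Bp}^{\mu,\nu}(w)=\sum_{\beta\in\BQ_+}\Theta_{\Bp,\beta}^{\mu,\nu}(w)$ converges as an operator on the space of Laurent series $(M\otimes N)((w))$.

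Next I would apply Proposition \ref{prop: one-dim R} three times to produce $\CU_{\mu+\zeta+\nu}(\Gaff)$-module isomorphisms induced by the three T-series appearing in the factorization
\[
\Delta_{\mu,\nu}(T_{\Bp}^{\mu+\nu}(w)) = (1\otimes T_{\Bp}^{\nu}(w)) \times \Theta_{\Bp}^{\mu,\nu}(w) \times (T_{\Bp}^{\mu}(w) \otimes 1),
\]
namely
\begin{align*}
A &:= \Delta_{\mu,\nu}(T_{\Bp}^{\mu+\nu}(w)):\ \overline{L_{\zeta}(\Bp)_{w^{-1}}\otimes (M\otimes N)}\longrightarrow \overline{(M\otimes N)\otimes L_{\zeta}(\Bp)_{w^{-1}}},\\
B &:= 1\otimes T_{\Bp}^{\nu}(w):\ \overline{M\otimes (L_{\zeta}(\Bp)_{w^{-1}}\otimes N)}\longrightarrow \overline{M\otimes (N\otimes L_{\zeta}(\Bp)_{w^{-1}})},\\
C &:= T_{\Bp}^{\mu}(w)\otimes 1:\ \overline{(L_{\zeta}(\Bp)_{w^{-1}}\otimes M)\otimes N}\longrightarrow \overline{(M\otimes L_{\zeta}(\Bp)_{w^{-1}})\otimes N}.
\end{align*}
Since the middle tensor factor $L_{\zeta}(\Bp)_{w^{-1}}$ is a trivial module, the trivial associativity theorem established earlier in this section identifies, via the identity map, the source of $A$ with the source of $C$, and the target of $A$ with the target of $B$. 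Chasing the resulting diagram yields a module isomorphism $B^{-1}\circ A\circ C^{-1}$ from $\overline{(M\otimes L_{\zeta}(\Bp)_{w^{-1}})\otimes N}$ to $\overline{M\otimes (L_{\zeta}(\Bp)_{w^{-1}}\otimes N)}$, and the factorization above shows that this composite operator coincides with the action of $\Theta_{\Bp}^{\mu,\nu}(w)$ on $(M\otimes N)((w))$.

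For the final restriction assertion, suppose that $M$ is bottom graded or $N$ is top graded. Then for any $v\in M\otimes N$ of fixed weight, only finitely many $\beta\in\BQ_+$ satisfy $\Theta_{\Bp,\beta}^{\mu,\nu}(w)\cdot v\neq 0$, because $\CU_{\mu}^-(\Gaff)_{-\beta}$ lowers weights by $\beta$ in $M$ (resp. $\CU_{\nu}^+(\Gaff)_{\beta}$ raises them by $\beta$ in $N$) and only finitely many such shifts remain inside the weight support. Applying Theorem \ref{thm: polynomiality Theta} to each surviving $\beta$, both $\Theta_{\Bp}^{\mu,\nu}(w)\cdot v$ and its inverse image are polynomial in $w$. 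Hence $\Theta_{\Bp}^{\mu,\nu}(w)$ stabilizes the uncompleted subspace $M\otimes N[w,w^{-1}]$, giving the restricted module isomorphism from $(M\otimes L_{\zeta}(\Bp)_{w^{-1}})\otimes N$ to $M\otimes (L_{\zeta}(\Bp)_{w^{-1}}\otimes N)$. The main obstacle I anticipate is bookkeeping for the various completions $\overline{\,\cdot\,}$ so that the T-series, trivial associativity, and factorization composition are compatible on a common space of Laurent series in $w$; once the completions are set up as in Example \ref{example: tensor prefund}, the remaining verifications are diagram chases parallel to the Yangian argument.
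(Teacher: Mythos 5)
Your proposal is correct and follows exactly the route the paper intends: the paper's own proof is the one-line remark that the result is proved in the same way as Theorem \ref{thm:Yangian associator} and Corollary \ref{cor: nontrivial associator Yangian} by diagram chasing, based on the previous results of Section \ref{sec: type A} and Proposition \ref{prop: one-dim R}, and your chase with $A$, $B$, $C$, trivial associativity, the factorization of $\Delta_{\mu,\nu}(T_{\Bp}^{\mu+\nu}(w))$, and polynomiality (Theorem \ref{thm: polynomiality Theta}) for the restricted statement is precisely that argument. You also correctly exploit the one simplification over the Yangian case, namely that $T_{\Bp}^{\nu}(w)$ intertwines on the nose by Proposition \ref{prop: one-dim R}, so no dressing operators $D_{\Bp}$ are needed.
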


\appendix

\section{Factorizations of asymptotic representations}  \label{app: asym}
In this appendix we identify the asymptotic representations for the Yangian \cite{Z2} and for the quantum loop algebra \cite{Z1} with tensor product modules over shifted Yangians and shifted quantum affine algebras. Fix a Dynkin node $i \in I$.

\subsection{The Yangian case} In \cite{Z2} we constructed two families of representations via a limit procedure of Hernandez--Jimbo \cite{HJ}: the {\it asymptotic representations} $\mathscr{L}(\frac{\Psi_{i,y}}{\Psi_{i,0}})$ for $y \in \BC$ defined over the ordinary Yangian $Y_0(\Glie)$; the {\it negative prefundamental representation} $L_{i,0}^-$   defined over the shifted Yangian $Y_{-\varpi_i^{\vee}}(\Glie)$. The proof of \cite[Proposition 3.15]{HZ} identified $L_{i,0}^-$ with the irreducible module $L(\Psi_{i,0}^{-1})$. In this subsection, we prove that $\mathscr{L}(\frac{\Psi_{i,y}}{\Psi_{i,0}})$ is the tensor product module $L(\Psi_{i,0}^{-1}) \otimes L(\Psi_{i,y})$.

Define $\mathcal{Y}_i$ to be the subset of $Y_0(\Glie)$  whose elements are 
$$ x_{i,n}^+,\ x_{j,n}^{\pm},\ \xi_{j,n} \quad \mathrm{for}\ n \in \BN,\   j \in I \setminus \{i\}.  $$
By abuse of language view $\mathcal{Y}_i$ also as a subset of $Y_{-\varpi_i^{\vee}}(\Glie)$. Fix $y \in \BC$.

For $n \in \BN$, let $W_n$ denote the finite-dimensional irreducible $Y_0(\Glie)$-module of highest $\ell$-weight $\Psi_{i,0}^{-1} \Psi_{i,-nd_i}$ with $\rho_n: Y_0(\Glie) \longrightarrow \mathrm{End}(W_n)$ the structural map.  By \cite[\S 3.2]{Z2} we have an inductive system of vector spaces $F_{m,n}: W_n \longrightarrow W_m$ for $m > n$ with two properties: the $F_{m,n}$ commute with the actions of $\mathcal{Y}_i$; for $n \in \BN$ there exists a unique linear map $f_n: W_n \longrightarrow W_{n+1}$ such that for $m > n+1$ we have: 
\begin{align*}
\rho_m(\xi_i(z)) F_{m,n} &= F_{m,n} \frac{z+md_i}{z+nd_i} \rho_n(\xi_i(z)), \\
\rho_m(x_i^-(z)) F_{m,n} &= F_{m,n+1} (\frac{z+md_i}{z+nd_i} F_{n+1,n} \rho_n(x_i^-(z)) + \frac{md_i-nd_i}{z+nd_i} f_n).
\end{align*}
Our $f_n$ corresponds to $\frac{1}{d_i} \mathscr{F}_{n+1,n}(-\otimes x_{i,0}^- \omega_{n,n+1})$ in the notations of \cite[Lemma 15]{Z2}.
Let $W_{\infty}$ be the inductive limit of the $W_n$. It is the underlying space of the asymptotic representation $\mathscr{L}( \frac{\Psi_{i,y}}{\Psi_{i,0}})$ and the negative prefundamental representation $L(\Psi_{i,0}^{-1})$. Their structural maps $\rho^y: Y_0(\Glie) \longrightarrow \mathrm{End}(W_{\infty})$ and $\rho_{\infty}: Y_{-\varpi_i^{\vee}}(\Glie) \longrightarrow \mathrm{End}(W_{\infty})$ are defined as follows (\cite[Definition 16, Proposition 23]{Z2}):
\begin{gather*}
\rho^y(X) = \rho_{\infty}(X) = \lim_{n \rightarrow \infty} \rho_n(X) \quad \mathrm{for}\ X \in \mathcal{Y}_i, \\
\rho^y(\xi_i(z)) = \lim_{n \rightarrow \infty}   \frac{z-y}{z+nd_i} \rho_n(\xi_i(z)), \quad \rho_{\infty}(\xi_i(z)) = \lim_{n \rightarrow \infty}   \frac{1}{z+nd_i} \rho_n(\xi_i(z)), \\
\rho^y(x_i^-(z)) = \lim_{n\rightarrow \infty} (\frac{z-y}{z+nd_i} F_{n+1,n} \rho_n(x_i^-(z)) + \frac{-y-nd_i}{z+nd_i} f_n), \\
\rho_{\infty}(x_i^-(z)) = \lim_{n\rightarrow \infty} (\frac{1}{z+nd_i} F_{n+1,n} \rho_n(x_i^-(z)) + \frac{1}{z+nd_i} f_n).
\end{gather*}
Here $\lim\limits_{n\rightarrow \infty}$ means the inductive limit of a morphism of inductive systems; see \cite[\S 3.1]{Z2}. Naively, $\rho^y$ is obtained from $\rho_m$ for $m$ big enough by replacing $md_i$ with $-y$, and $\rho_{\infty}$ is obtained from $\rho_m$ by first dividing the actions of $x_i^-(z)$ and $\xi_i(z)$ by $md_i$ and then replacing $md_i$ with zero. It is clear from the  above definition that
\begin{gather*}
    \rho^y(\xi_i(z)) = (z-y) \rho_{\infty}(\xi_i(z)), \\
    \rho^y(x_i^-(z)) - (z-y) \rho_{\infty}(x_i^-(z)) =  -\lim_{n\rightarrow \infty}  f_n = -\rho_{\infty}(x_{i,0}^-).
\end{gather*}
Comparing with Example \ref{example: tensor prefund Yangian}, we have as $Y_0(\Glie)$-modules
\begin{equation}  \label{asym: Yangian}
\mathscr{L}(\frac{\Psi_{i,y}}{\Psi_{i,0}}) \cong L(\Psi_{i,0}^{-1}) \otimes L(\Psi_{i,y}). 
\end{equation}

\subsection{The quantum loop case} For $a \in \BC^{\times}$ let $\Phi_{i,a} \in \mathfrak{t}_{\ell}^*$ be the polynomial $\ell$-weight whose $j$th component is $(a - z a^{-1})^{\delta_{ij}}$  for $j\in I$. In particular, $\Phi_{i,1} = \Psi_{i,1}$.  

 In \cite{HJ} Hernandez--Jimbo constructed the negative prefundamental representation $L(\Phi_{i,1}^{-1})$ of the upper Borel subalgebra via a limit procedure. Their limit procedure was modified in \cite{Z1} to get asymptotic representations $\mathscr{L}(\frac{\Phi_{i,c}}{\Phi_{i,1}})$, for $c \in \BC^{\times}$, of the quantum loop algebra. In this subsection we determine the $\CU_{-\varpi_i^{\vee}}(\Gaff)$-module structure on $L(\Phi_{i,1}^{-1}) = L_{-\varpi_i^{\vee}}(\Phi_{i,1}^{-1})$ and use it to reconstruct the asymptotic representations.

Let $\mathcal{U}_i$ denote the subset of $\qaf$ with elements
$$ x_{i,s}^+,\quad x_{j,s}^{\pm},\quad \phi_{j,s}^{\pm}\quad \textrm{for $s \in \BZ$ and $j \in I \setminus \{i\}$.} $$
View $\mathcal{U}_i$ also as a subset of $\CU_{-\varpi_i^{\vee}}(\Gaff)$. Fix $c \in \BC^{\times}$. 

For $n \in \BN$, let $V_n$ denote the finite-dimensional irreducible $\qaf$-module of highest $\ell$-weight $\Phi_{i,1}^{-1} \Phi_{i,q_i^n}$ with $\rho_n: \qaf \longrightarrow \mathrm{End} (V_n)$ the structural map. By \cite[\S 4.2]{HJ} we have an inductive system of vector spaces $F_{m,n}: V_n \longrightarrow V_m$ for $m > n$ with two properties: the $F_{m,n}$ commute with the actions of $\mathcal{U}_i$; for $n \in \BN$ there exists a unique linear map $f_n: V_n \longrightarrow V_{n+1}$ such that for $m > n+1$ we have: 
\begin{align*}
\rho_m(\phi_i^{\pm}(z)) F_{m,n} &= F_{m,n} \frac{q_i^m - z q_i^{-m}}{q_i^n - z q_i^{-n}} \rho_n(\phi_i^{\pm}(z)), \\
\rho_m(x_{i,0}^-) F_{m,n} &= F_{m,n+1} (q_i^{n-m} F_{n+1,n} \rho_n(x_{i,0}^-) + \frac{q_i^{m-n} - q_i^{n-m}}{q_i-q_i^{-1}} f_n), \\
\rho_m(x_{i,1}^-) F_{m,n} &= F_{m,n+1} (q_i^{m-n} F_{n+1,n} \rho_n(x_{i,1}^-) + \frac{q_i^{m-3n} - q_i^{-n-m}}{q_i-q_i^{-1}} f_n).
\end{align*}
The first equation is \cite[Proposition 4.2]{HJ}. Let us sketch a proof to the last two equations. Recall from \cite[Appendix A]{Z1} the module morphism $\mathscr{F}_{m,n}: V_n \otimes Z_{nm} \longrightarrow V_m$ where $Z_{nm}$ is a $\qaf$-module with a highest $\ell$-weight vector $v_{nm}$. The last two equations are proved in the same way as \cite[Lemma 15]{Z2} based on the definitions
$$\mathscr{F}_{m,n}(v \otimes v_{nm}) =: F_{m,n}(v), \quad  \mathscr{F}_{n+1,n}(v\otimes x_{i,0}^- v_{n,n+1}) =: f_n(v) $$ 
for $v \in V_n$ and the following key formulas:
\begin{gather*}
     x_{i,0}^-(v \otimes v_{nm}) = v \otimes  x_{i,0}^- v_{nm} + x_{i,0}^- v \otimes q_i^{n-m} v_{nm} \in V_n \otimes Z_{nm}, \\ 
     x_{i,1}^-(v \otimes v_{nm}) = v \otimes q_i^{-2n} x_{i,0}^- v_{nm} + x_{i,1}^- v \otimes q_i^{m-n} v_{nm} \in V_n \otimes Z_{nm}, \\
     \mathscr{F}_{m,n}(v \otimes x_{i,0}^- v_{nm}) = [m-n]_{q_i} F_{m,n+1} f_n(v) \in V_m.
\end{gather*}
Let $V_{\infty}$ be the inductive limit of the $V_n$. It is the underlying space of the asymptotic representation $\mathscr{L}( \frac{\Phi_{i,c}}{\Phi_{i,1}})$ and the negative prefundamental representation $L(\Phi_{i,1}^{-1})$. Their structural maps $\rho^c: \qaf \longrightarrow \mathrm{End}(V_{\infty})$ and $\rho_{\infty}: \CU_{-\varpi_i^{\vee}}(\Gaff) \longrightarrow \mathrm{End}(V_{\infty})$ are defined as follows (\cite[comments after Remark 4.6]{HJ} and \cite[Appendix A]{Z1}):
\begin{gather*}
\rho^c(X) = \rho_{\infty}(X) = \lim_{n \rightarrow \infty} \rho_n(X) \quad \mathrm{for}\ X \in \mathcal{U}_i, \\
\rho^c(\phi_i^{\pm}(z)) = \lim_{n \rightarrow \infty}   \frac{c-zc^{-1}}{q_i^n - z q_i^{-n}} \rho_n(\phi_i^{\pm}(z)), \quad \rho_{\infty}(\phi_i^{\pm}(z)) = \lim_{n \rightarrow \infty}   \frac{1}{q_i^n - z q_i^{-n}} \rho_n(\phi_i^{\pm}(z)), \\
\rho^c(x_{i,1}^-) = \lim_{n\rightarrow \infty} ( c q_i^{-n} F_{n+1,n} \rho_n(x_{i,1}^-) + \frac{c q_i^{-3n} - c^{-1} q_i^{-n}}{q_i-q_i^{-1}} f_n), \\
\rho_{\infty}(x_{i,0}^-) = \lim_{n\rightarrow \infty}  \frac{q_i^{-n}}{q_i-q_i^{-1}} f_n,\quad \rho_{\infty}(x_{i,1}^-) = \lim_{n\rightarrow \infty} (q_i^{-n}F_{n+1,n} \rho_n(x_{i,1}^-) + \frac{q_i^{-3n}}{q_i-q_i^{-1}} f_n).
\end{gather*}
Naively,  $\rho^c$ is obtained from $\rho_m$ for $m$ big enough by replacing $q_i^{\pm m}$ with $c^{\pm 1}$, and $\rho_{\infty}$ is obtained from $\rho_m$ by first dividing the actions of $x_{i,s}^-$ and $\phi_{i,s}^{\pm}$ by $q_i^m$ and then replacing $q_i^{-2m}$ with zero. It is clear from the above definition that
$$ \rho^c(x_{i,1}^-) = \rho_{\infty}(c x_{i,1}^- - c^{-1} x_{i,0}^-),\quad \rho^c(\phi_i^{\pm}(z)) = \rho_{\infty}((c - z c^{-1}) \phi_i^{\pm}(z)). $$
By Lemma \ref{lem: tensor trivial} we have as $\CU_0(\Gaff)$-modules 
\begin{equation}  \label{asym quantum}
\mathscr{L}(\frac{\Phi_{i,c}}{\Phi_{i,1}}) \cong L(\Phi_{i,1}^{-1})  \otimes L_{\varpi_i^{\vee}}(\Phi_{i,c}).
\end{equation}

\section{Proof of Proposition \ref{prop: purity Yangian} }  \label{app: coefficients}
In this appendix we proof Proposition \ref{prop: purity Yangian} which asserts that the Yangian Theta series $\Theta_{\Bp}^{\mu,\nu}(w)$ lie in the subalgebra $Y_{\mu}^+(\Glie) \otimes Y_{\nu}^-(\Glie)$.

We need the power series $\CR_{\beta}^-(w) \in Y_0^-(\Glie)_{-\beta} \otimes Y_0^+(\Glie)_{\beta})[[w^{-1}]]$ for $\beta \in \BQ_+$  constructed in \cite[\S 4]{GTLW}; initially $\CR_0^-(w) = 1$. The sum of all these power series, denoted by $\CR^-(w)$, is an invertible element in the completed tensor product $Y_0^-(\Glie) \otimes_w Y_0^+(\Glie)$; here we extend Definition \ref{defi: second completion tensor product} to $\BQ$-graded vector spaces in the obvious way.
\begin{theorem}\cite[Theorem 4.1]{GTLW} 
For $\Bp$ a polynomial $\ell$-weight, we have the following identity in the algebra $(Y_0(\Glie) \otimes_w Y_0(\Glie))[[z^{-1}]]$:
\begin{equation}
 (\tau_w \otimes \mathrm{Id})\Delta(S_{\Bp}(z)) = \CR^-(w)^{-1} (\tau_w(S_{\Bp}(z)) \otimes S_{\Bp}(z)) \CR^-(w). \label{equ: twistor Yangian}
\end{equation}
\end{theorem}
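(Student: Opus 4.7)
The plan is to reduce the identity to the single prefundamental case $\Bp = \Psi_{j,0}$, and then match both sides against the difference equation that uniquely characterizes $S_j(z)$.

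\textbf{Reduction.} Write $L_{\Bp}(z,w)$ and $R_{\Bp}(z,w)$ for the two sides. Since $S_{\Bp_1\Bp_2}(z) = S_{\Bp_1}(z) S_{\Bp_2}(z)$ by Eq.\eqref{def: S series one-dim Yangian}, the left-hand side is trivially multiplicative in $\Bp$. For the right-hand side, note that
$$R_{\Bp_1\Bp_2} = \CR^-(w)^{-1}\bigl((\tau_w(S_{\Bp_1})\otimes S_{\Bp_1})(\tau_w(S_{\Bp_2})\otimes S_{\Bp_2})\bigr)\CR^-(w) = R_{\Bp_1}R_{\Bp_2},$$
since conjugation by $\CR^-(w)$ is an algebra automorphism of the completed tensor product. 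Thus it suffices to prove the identity when $\Bp = \Psi_{j,a}$, and by the substitution $z \mapsto z+a$ the case $a \neq 0$ reduces to $\Bp = \Psi_{j,0}$, i.e.\ $S_{\Bp}(z) = S_j(z)$.

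\textbf{Uniqueness via the difference equation.} By Proposition \ref{prop: S-series}, $S_j(z)$ is the unique series in $1 + z^{-1} Y_0^0(\Glie)[[z^{-1}]]$ satisfying the difference equation \eqref{difference: S A}. I would apply $(\tau_w \otimes \mathrm{Id})\Delta$ to \eqref{difference: S A} to show that $L_{\Psi_{j,0}}$ satisfies
$$X(z+d_j) = X(z)\cdot (\tau_w\otimes\mathrm{Id})\Delta(A_j(z))\cdot (\tau_w\otimes\mathrm{Id})\Delta\bigl((z/(z+d_j))^{a_{j,0}}\bigr),$$
and verify the analogous difference equation for $R_{\Psi_{j,0}}$. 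The leading terms match: $\CR^-(w)$ has constant term $1$, and the weight-zero component of $\Delta(S_j(z))$ equals $S_j(z)\otimes S_j(z)$ by Lemma \ref{lem: coproduct S}, so after applying $\tau_w \otimes \mathrm{Id}$ both sides begin with $\tau_w(S_j(z))\otimes S_j(z)$, and induction on the weight finishes the argument.

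\textbf{Key conjugation formula and the main obstacle.} The crux is to establish
$$(\tau_w\otimes\mathrm{Id})\Delta(A_j(z)) \;=\; \CR^-(w)^{-1}\bigl(\tau_w(A_j(z))\otimes A_j(z)\bigr)\CR^-(w),$$
together with the parallel statement for $(z/(z+d_j))^{a_{j,0}}$. This is precisely the defining property of $\CR^-(w)$ as the ``lower Gauss factor'' of the Yangian universal R-matrix \cite{GTLW}: after the spectral twist, the full coproduct of each Drinfeld--Cartan generating series is obtained from its ``naive'' diagonal coproduct by conjugation with $\CR^-(w)$. This property for the $\overline{\xi}_i(z)$ passes to the GKLO series $A_j(z)$ because Definition \ref{defi: GKLO series} expresses $\overline{\xi}_i(z)$ as a product of translates of $A_k(z)^{\pm 1}$, and the system is upper-triangular enough to be inverted in the completed algebra; the scalar factor involving $a_{j,0}$ is controlled because $\CR^-(w)$-conjugation multiplies a weight-$\beta$ element by a specific $w$-dependent scalar, which exactly matches the expansion of $(\tau_w\otimes\mathrm{Id})\Delta((z/(z+d_j))^{a_{j,0}})$. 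The main obstacle is checking that these intertwining statements truly propagate from the $\overline{\xi}_i(z)$ to the $A_j(z)$ inside the completion $Y_0(\Glie)\otimes_w Y_0(\Glie)$: one must verify that inversion and the rational recursion of Definition \ref{defi: GKLO series} commute with $\CR^-(w)$-conjugation, which is where a careful use of the weight grading and the GTLW construction is essential.
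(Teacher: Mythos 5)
Your reduction to a single prefundamental weight and the appeal to uniqueness of solutions of the difference equation are fine in principle, but the proof is not closed: everything hinges on the conjugation formula for $A_j(z)$ and for $(z/(z+d_j))^{a_{j,0}}$, which you yourself identify as the crux and then leave as an ``obstacle''. Moreover, the justification you sketch for the scalar factor is incorrect: conjugation by $\CR^-(w)$ does not multiply a weight-$\beta$ element by a $w$-dependent scalar --- $\CR^-(w)$ is not a Cartan-type (diagonal) element, and its conjugation only preserves the total weight while mixing the weights of the two tensor factors --- so that part of the argument fails as written.

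The step you flag as the obstacle is in fact the entire content of the proof, and it is closed by linearizing, which is exactly what the paper does. Write $\overline{\xi}_i(z) = \exp(\sum_{m} t_{i,m} z^{-m-1})$ and $S_{\Bp}(z) = \exp(\sum_m s_{\Bp,m} z^{-m-1})$. The cited result of Gautam--Toledano Laredo--Wendlandt is the identity $(\tau_w\otimes\mathrm{Id})\Delta(t_{i,m}) = \CR^-(w)^{-1}\,(\tau_w(t_{i,m})\otimes 1 + 1\otimes t_{i,m})\,\CR^-(w)$ for the logarithmic Drinfeld--Cartan generators. By the proofs of Proposition \ref{prop: S-series} and Lemma \ref{lem: coproduct S}, each $s_{\Bp,m}$ is a $\BC$-linear combination of the GKLO elements $a_{i,k}$, which, by taking logarithms in Definition \ref{defi: GKLO series}, are in turn linear combinations of the $t_{i,m}$; since $(\tau_w\otimes\mathrm{Id})\Delta$ and conjugation by $\CR^-(w)$ are both linear, the same identity holds with $t_{i,m}$ replaced by $s_{\Bp,m}$, and exponentiating (the two diagonal factors commute) yields \eqref{equ: twistor Yangian} at once. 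With this linearization your detour through the difference equation \eqref{difference: S A}, the uniqueness argument and the weight induction become superfluous; without it, your proposal never actually establishes the conjugation formula for $A_j(z)$, so as written the proof has a genuine gap.
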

\begin{proof}
Recall from the proof of Lemma \ref{lem: coproduct S} the elements $s_{\Bp,m}$ and $t_{i,m}$, which arise as logarithms of $S_{\Bp}(z)$ and $\xi_i(z) = \overline{\xi}_i(z)$. By \cite[Theorem 4.1]{GTLW},
$$ (\tau_w \otimes \mathrm{Id})\Delta(t_{i,m}) = \CR^-(w)^{-1} (\tau_w(t_{i,m}) \otimes 1 + 1 \otimes t_{i,m}) \CR^-(w).  $$
Since the $s_{\Bp,m}$ are linear combinations of the $t_{i,m}$, the same equation holds with $t_{i,m}$ replaced by $s_{\Bp,m}$. Taking exponentials gives the desired identity.
\end{proof}
The algebra automorphism $\tau_w^{-1} \otimes \mathrm{Id}$ of $Y_0(\Glie) \otimes Y_0(\Glie)[w]$ does not extend to the completion $Y_0(\Glie) \otimes_w Y_0(\Glie)$. It can not be applied to Eq.\eqref{equ: twistor Yangian} to simplify the equation.
\begin{lem}  \label{lem: purity}
Let $\Bp$ be a polynomial $\ell$-weight of coweight $\zeta$ and $\beta \in \BQ_+$.
\begin{itemize}
    \item[(i)] For $x \in Y_0^+(\Glie)_{\beta}$, we have $S_{\Bp}(z)^{-1} x S_{\Bp}(z) \in Y_0^+(\Glie)_{\beta}[[z^{-1}]]$.
    \item[(ii)] For $y \in Y_0^-(\Glie)_{-\beta}$, we have $\tau_w(S_{\Bp}(z)) y \tau_w(S_{\Bp}(z))^{-1} \in Y_0^-(\Glie)_{-\beta}[w][[z^{-1}]]$. All the $z$-coefficients are polynomials in $w$ of degree bounded by $\langle \zeta, \beta\rangle$.
\end{itemize}
\end{lem}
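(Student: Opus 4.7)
The plan is to verify both statements first on the algebra generators $x_{i,m}^\pm$ of $Y_0^\pm(\Glie)$ and then extend multiplicatively using the fact that $S_{\Bp}(z)$ has weight zero (its coefficients lie in the commutative subalgebra $Y_0^0(\Glie)$). Conjugation by $S_{\Bp}(z)$ or by $\tau_w(S_{\Bp}(z))$ defines a ring automorphism of the appropriate algebra of series that preserves the weight grading, so once the images of the generators are shown to lie in $Y_0^\pm(\Glie)_{\pm \alpha_i}[[z^{-1}]]$ with the expected $w$-degree bound in case (ii), closure under multiplication gives the statement for an arbitrary element of $Y_0^\pm(\Glie)_{\pm\beta}$, with the $w$-degrees along a product $x_{j_1,m_1}^-\cdots x_{j_s,m_s}^-$ adding up to $\sum_k \langle \zeta,\alpha_{j_k}\rangle = \langle \zeta,\beta\rangle$.

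For part (i), rewrite the second half of Eq.\eqref{comm: S x general} (renaming $w,z$ as $z,u$) in the form
\[
 S_{\Bp}(z)^{-1}\, x_i^+(u)\, S_{\Bp}(z) \;=\; \frac{1}{\Bp_i(-z)}\bigl\langle \Bp_i(u-z)\, x_i^+(u)\bigr\rangle_+.
\]
The Taylor expansion $\Bp_i(u-z) = \sum_{k=0}^{n_i} \tfrac{\Bp_i^{(k)}(-z)}{k!}\, u^k$ followed by extraction of the coefficient of $u^{-m-1}$ in the principal part yields
\[
 S_{\Bp}(z)^{-1} x_{i,m}^+ S_{\Bp}(z) \;=\; \sum_{k=0}^{n_i} \frac{\Bp_i^{(k)}(-z)}{k!\,\Bp_i(-z)}\, x_{i,m+k}^+ \;\in\; Y_0^+(\Glie)_{\alpha_i}[[z^{-1}]],
\]
since each ratio $\Bp_i^{(k)}(-z)/\Bp_i(-z)$, of degrees $n_i-k$ over $n_i$ in $z$, expands as a power series in $z^{-1}$ starting at $z^{-k}$.

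For part (ii), apply $\tau_w$ to the first half of Eq.\eqref{comm: S x general}, use $\tau_w(x_i^-(u)) = x_i^-(u-w)$, and then substitute $u \mapsto u+w$. The key technical point is that the principal-part operator $\langle\,\cdot\,\rangle_+$ in $u$ commutes with this substitution: for any Laurent series $f(u) = \sum_n a_n u^n$ one has $\langle f(u+w)\rangle_+ = \sum_{n<0} a_n(u+w)^n = \langle f(u)\rangle_+\big|_{u\mapsto u+w}$. This gives
\[
 \tau_w(S_{\Bp}(z))\, x_i^-(u)\, \tau_w(S_{\Bp}(z))^{-1} \;=\; \frac{1}{\Bp_i(-z)}\bigl\langle \Bp_i(u+w-z)\, x_i^-(u)\bigr\rangle_+,
\]
and Taylor-expanding $\Bp_i(u+w-z)$ in $u$ then $\Bp_i^{(k)}(w-z) = \sum_{j=0}^{n_i-k} \tfrac{w^j}{j!}\,\Bp_i^{(k+j)}(-z)$ in $w$ produces, for each $m \in \BN$,
\[
 \tau_w(S_{\Bp}(z))\, x_{i,m}^-\, \tau_w(S_{\Bp}(z))^{-1} \;=\; \sum_{k=0}^{n_i} \frac{\Bp_i^{(k)}(w-z)}{k!\,\Bp_i(-z)}\, x_{i,m+k}^-,
\]
each scalar factor being a polynomial in $w$ of degree at most $n_i - k \leq n_i = \langle \zeta,\alpha_i\rangle$ with coefficients in $\BC[[z^{-1}]]$. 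Extending multiplicatively as in (i) gives the desired $w$-degree bound $\langle \zeta,\beta\rangle$.

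The argument is essentially bookkeeping, and there is no serious obstacle; the only technically delicate point is the commutation of the principal-part operator in $u$ with the spectral shift $u \mapsto u+w$, settled by the elementary identity above.
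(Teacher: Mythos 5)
Your proof is correct and follows essentially the same route as the paper's: verify the statement on the Drinfeld generators using Eq.\eqref{comm: S x general} together with the $\tau_w$-shift of the current (with the principal part commuting with the substitution $u\mapsto u+w$), then extend multiplicatively by the weight-preserving conjugation automorphism, the $w$-degrees adding up to $\langle\zeta,\beta\rangle$. The only cosmetic difference is that the paper first reduces to $\Bp=\Psi_{i,0}$ by multiplicativity and computes with the rank-one formula $(1-\sigma_i^- z^{-1})$, whereas you keep a general $\Bp$ and obtain the explicit coefficients $\Bp_i^{(k)}(w-z)/(k!\,\Bp_i(-z))$, which specialize to the paper's formula $(1-wz^{-1})x_{i,n}^- - z^{-1}x_{i,n+1}^-$ when $\Bp=\Psi_{i,0}$.
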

\begin{proof}
(i) follows from Eq.\eqref{comm: S x general}. For (ii), by multiplicativity  one may assume $\Bp = \Psi_{i,0}$ and  $y = x_{j,n}^-$ for $i, j \in I$ and $n \in \BN$. If $i \neq j$, then $\tau_w(S_i(z)) x_{j,n}^- \tau_w(S_i(z))^{-1} = x_{j,n}^-$ is constant. If $i = j$, again by Eq.\eqref{comm: S x general},  
\begin{align*}
&\quad \tau_w (S_i(z)) x_i^-(u) \tau_w(S_i(z))^{-1} = \tau_w (S_i(z) \tau_w^{-1}(x_i^-(u)) S_i(z)^{-1}) \\
&= \tau_w (S_i(z)  x_i^-(u+w) S_i(z)^{-1}) = \tau_w((1-(u+w)z^{-1}) x_i^-(u+w) + z^{-1} x_{i,0}^-) \\
&= (1-(u+w) z^{-1}) x_i^-(u) + z^{-1} x_{i,0}^-, \\
&\tau_w(S_i(z)) x_{i,n}^- \tau_w(S_i(z))^{-1} = (1-wz^{-1}) x_{i,n}^- - z^{-1} x_{i,n+1}^-.
\end{align*}
In particular all the $z$-coefficients are polynomials in $w$ of degree $\leq 1$.
\end{proof}

\noindent {\bf Proof of Proposition \ref{prop: purity Yangian}.}
As in Step 4 of the proof of Theorem \ref{thm: Yangian poly Theta} it suffices to prove the statement for the Omega series in the case $\mu = \nu = 0$.
 By Lemma \ref{lem: purity}, the following two elements of the algebra $(Y_0(\Glie) \otimes_w Y_0(\Glie))[[z^{-1}]]$, denoted by $f(z,w)$ and $g(z,w)$ respectively, belong to the subalgebra $(Y_0^-(\Glie) \otimes_w Y_0^+(\Glie))[[z^{-1}]]$:
$$ (1\otimes S_{\Bp}(z)^{-1}) \CR^-(w)^{-1} (1\otimes S_{\Bp}(z)),\quad (\tau_w(S_{\Bp}(z)) \otimes 1) \CR^-(w) (\tau_w(S_{\Bp}(z))^{-1} \otimes 1). $$
By Eq.\eqref{equ: twistor Yangian} we have a factorization in the algebra $(Y_0(\Glie) \otimes_w Y_0(\Glie))[[z^{-1}]]$:
$$ (\tau_w \otimes \mathrm{Id})\Delta(S_{\Bp}(z)) = (1\otimes S_{\Bp}(z)) f(z,w)g(z,w) (\tau_w(S_{\Bp}(z)) \otimes 1). $$
On the other hand Definition \ref{def: Theta series Yangian}  gives another factorization of $(\tau_w \otimes \mathrm{Id})\Delta(S_{\Bp}(z))$. After cancelling the common factors $\tau_w(S_{\Bp}(z)) \otimes 1$ on the right and $1\otimes S_{\Bp}(z)$ on the left, which are all invertible in the algebra $(Y_0(\Glie) \otimes_w Y_0(\Glie))[[z^{-1}]]$, we obtain 
$$ (Y_0(\Glie)^{\otimes 2}[w])[[z^{-1}]] \ni (\tau_w \otimes \mathrm{Id}) (\Omega_{\Bp}(z)) = f(z,w) g(z,w) \in (Y_0^-(\Glie) \otimes_w Y_0^+(\Glie))[[z^{-1}]].  $$
It follows that $(\tau_w \otimes \mathrm{Id}) (\Omega_{\Bp}(z)) \in (Y_0^-(\Glie) \otimes Y_0^+(\Glie)[w])[[z^{-1}]]$. From the injectivity of $\tau_w: Y_0(\Glie) \longrightarrow Y_0(\Glie)[w]$ we conclude that $\Omega_{\Bp}(z) \in (Y_0^-(\Glie) \otimes Y_0^+(\Glie))[[z^{-1}]]$.
\hfill $\Box$


\begin{thebibliography}{XX}
\bibitem{Beck}{J. Beck},
\newblock {\it Braid group action and quantum affine algebras},
\newblock Commun. Math. Phys. {\bf 165} (1994): 555--568.

\bibitem{BFN}{A. Braverman, M. Finkelberg, and H. Nakajima}, 
\newblock {\it Coulomb branches of $3d$ $\mathcal{N} = 4$ quiver gauge theory and slices in the affine Grassmannian},
\newblock Adv. Theor. Math. Phys. {\bf 23} (2019): 75--166, with appendices by A. Braverman, M. Finkelberg, J. Kamnitzer, R. Kodera, H. Nakajima, B. Webster, and A. Weekes. (arXiv:1604.03625)

\bibitem{BK}{J. Brundan and A. Kleshchev},
\newblock {\it Representations of shifted Yangians and finite W-algebras}, 
\newblock Mem. Amer. Math. Soc. {\bf 196} (2008), no. 918.

\bibitem{Chari}{V. Chari},
\newblock {\it Braid group actions and tensor products},
\newblock Int. Math. Res. Not. {\bf 2002} (2002): 357--382.

\bibitem{Damiani}{I. Damiani},
\newblock {\it La $\mathcal{R}$-matrice pour les alg\`ebres quantiques de type affine non tordu},
\newblock Ann. Sci. \'{E}cole Norm. Sup. {\bf 31} (1998): 493--523.

\bibitem{Damiani2}{I. Damiani},
\newblock {\it From the Drinfeld realization to the Drinfeld--Jimbo presentation of affine quantum
algebras: the injectivity},
\newblock Publ. RIMS Kyoto Univ. {\bf 51} (2015): 131--171.

\bibitem{DF}{J. Ding and I. Frenkel},
\newblock {\it Isomorphism of two realizations of quantum affine algebra $U_q(\widehat{\mathfrak{gl}(n)})$},
\newblock Commun. Math. Phys. {\bf 156} (1993): 277--300.

\bibitem{Dr1}{V. Drinfeld},
\newblock {\it Hopf algebras and the quantum Yang-Baxter equation}, 
\newblock Soviet Math. Dokl. {\bf 32} (1985): 254--258. 

\bibitem{Dr}{V. Drinfeld},
\newblock {\it A new realization of Yangians and of quantum affine algebras},
\newblock Soviet Math. Dokl. {\bf 36} (1988): 212--216.

\bibitem{EKP}{B. Enriquez, S. Khoroshkin and S. Pakuliak}, 
\newblock {\it Weight functions and Drinfeld currents}, 
\newblock Commun. Math. Phys. {\bf 276} (2007): 691--725.

\bibitem{FJMM}{B. Feigin, M. Jimbo, T. Miwa, and E. Mukhin},
\newblock {\it Finite type modules and Bethe ansatz equations},
\newblock Ann. Henri Poincar\'e {\bf 18} (2017): 2543--2579.

\bibitem{FZ}{G. Felder and H. Zhang}, 
\newblock {\it Baxter operators and asymptotic representations}, 
\newblock Selecta Math. New Ser. {\bf 23} (2017): 2947--2975.

\bibitem{coproduct}{M. Finkelberg, J. Kamnitzer, K. Pham, L. Rybnikov, and A. Weekes},
\newblock {\it Comultiplication for shifted Yangians and quantum open Toda lattice}, 
\newblock Adv. Math. {\bf 327} (2018): 349--389.

\bibitem{FT}{M. Finkelberg and A. Tsymbaliuk},
\newblock {\it Multiplicative slices, relativistic Toda and shifted quantum affine algebras}, 
\newblock Representations and nilpotent orbits of Lie algebraic systems, Progress in Mathematics {\bf 330} (2019): 133--304. (arXiv:1708.01795)

\bibitem{FH}{E. Frenkel and D. Hernandez},
\newblock {\it Baxter's relations and spectra of quantum integrable models},
\newblock Duke Math. J. {\bf 164}, no. 12 (2015): 2407--2460.

\bibitem{FH0}{E. Frenkel and D. Hernandez},
\newblock  in preparation.

\bibitem{FM}{E. Frenkel and E. Mukhin},
\newblock {\it Combinatorics of $q$-characters of finite-dimensional representations of quantum affine algebras},
\newblock Commun. Math. Phys. {\bf 216} (2001): 23--57.

\bibitem{FM0}{E. Frenkel and E. Mukhin},
\newblock {\it The Hopf algebra $\mathrm{Rep}\ U_q\widehat{\mathfrak{gl}}_{\infty}$}, 
\newblock Selecta Math. New Ser. {\bf 8} (2002): 537--635.

\bibitem{FR1}{E. Frenkel and N. Reshetikhin}, 
\newblock {\it The $q$-character of representations of quantum affine algebras and deformations of $\mathcal{W}$-algebras},
\newblock Recent Developments in Quantum Affine Algebras and related topics, Contemp. Math. {\bf 248} (1999): 163--205. (arXiv:9810055)

\bibitem{FR0}{I. Frenkel and N. Reshetikhin},
\newblock {\it Quantum affine algebras and holonomic difference equations}, 
\newblock Commun. Math. Phys. {\bf 146} (1992): 1--60.

\bibitem{GRW}{N. Guay, V. Regelskis, and C. Wendlandt},
\newblock {\it Equivalences between three presentations of orthogonal and symplectic Yangians},
\newblock Lett. Math. Phys. {\bf 109} (2019): 327--379.

\bibitem{GW}{S. Gautam and C. Wendlandt},
\newblock {\it Poles of finite-dimensional representations of Yangians},
\newblock Selecta Math. New Ser. {\bf 29}, 13 (2023).

\bibitem{GTL0}{S. Gautam and V. Toledano Laredo},
\newblock {\it Yangians and quantum loop algebras},
\newblock Selecta Math. New Ser. {\bf 19} (2013): 271--336.

\bibitem{GTL1}{S. Gautam and V. Toledano Laredo},
\newblock {\it Yangians, quantum loop algebras and abelian difference equations},
\newblock J. Amer. Math. Soc. {\bf 29} (2016): 775--824.

\bibitem{GTL2}{S. Gautam and V. Toledano Laredo},
\newblock {\it Meromorphic tensor equivalence for Yangians and quantum loop algebras},
\newblock Publ. math. IHES {\bf 125} (2017): 267--337. 

\bibitem{GTLW}{S. Gautam, V. Toledano Laredo, and C. Wendlandt},
\newblock {\it The meromorphic $R$-matrix of the Yangian},
\newblock Representation Theory, Mathematical Physics, and Integrable Systems. Progress in Mathematics {\bf 340} (2021): 201--269. (arXiv:1907.03525)

\bibitem{GKLO}{A. Gerasimov, S. Kharchev, D. Lebedev, S. Oblezin}, 
\newblock {\it On a class of representations of the Yangian and moduli space of monopoles}, 
\newblock Commun. Math. Phys. {\bf 260} (2005): 511--525.

\bibitem{GNW}{N. Guay, H. Nakajima, and C. Wendlandt},
\newblock {\it Coproduct for Yangians of affine Kac--Moody algebras},
\newblock Adv. Math. {\bf 338} (2018): 865--911.

\bibitem{H}{D. Hernandez},
\newblock {\it Representations of shifted quantum affine algebras}, \newblock Intern. Math. Res. Notices {\bf 2023} no. 13 (2023): 11035--11126.

\bibitem{H1}{D. Hernandez},
\newblock {\it Stable maps, Q-operators and category $\mathcal{O}$},
\newblock  Represent. Theory {\bf 26} (2022): 179--210.

\bibitem{HJ}{D. Hernandez and M. Jimbo},
\newblock {\it Asymptotic representations and Drinfeld rational fractions}, 
\newblock Compos. Math. {\bf 148}, no. 5 (2012): 1593--1623.

\bibitem{HZ}{D. Hernandez and H. Zhang},
\newblock {\it Shifted Yangians and polynomial R-matrices},
\newblock Publ. RIMS Kyoto Univ. to appear, arXiv:2103.10993.

\bibitem{KTWWY}{J. Kamnitzer, P. Tingley, B. Webster, A. Weekes and O. Yacobi},
\newblock {\it Highest weights for truncated shifted Yangians and product monomial crystals},
\newblock J. Comb. Algebra {\bf 3} (2019): 237--303.

\bibitem{KWWY}{J. Kamnitzer, B. Webster, A. Weekes and O. Yacobi},
\newblock {\it Yangians and quantizations of slices in the affine Grassmannian},
\newblock Algebra Number Theory {\bf 8} (2014): 857--893.

\bibitem{KT}{S. Khoroshkin and V. Tolstoy},
\newblock {\it Twisting of quantum (super)algebras. Connection of Drinfeld’s
and Cartan-Weyl realizations for quantum affine algebras}, 
\newblock Generalized symmetries in physics (Clausthal, 1993), World Sci. Publ., River Edge, NJ, 1994, pp. 42--54. (arXiv:9404036)

\bibitem{Knight}{H. Knight},
\newblock {\it Spectra of tensor products of finite dimensional representations of Yangians},
\newblock J. Algebra {\bf 174} (1995): 187--196.

\bibitem{Molev}{A. Molev},
\newblock {\it Yangians and their applications},
\newblock in: Handbook of Algebra 3, Elsevier/North-Holland, Amsterdam, 2003, pp. 907--959.

\bibitem{NT}{A. Negut and A. Tsymbaliuk},
\newblock {\it Quantum loop groups and shuffle algebras via Lyndon words},
\newblock arXiv:2102.11269.

\bibitem{Pinet}{T. Pinet},
\newblock {\it A functor for constructing $R$-matrices in the category $\mathcal{O}$ of Borel quantum loop algebras}, 
\newblock J. London Math. Soc. to appear, arXiv:2301.10686.

\bibitem{VV}{M. Varagnolo and E. Vasserot},
\newblock {\it 	Critical convolution algebras and quantum loop groups},
\newblock Preprint arXiv:2302.01418.

\bibitem{Wang}{K. Wang},
\newblock {\it $Q\widetilde{Q}$-systems for twisted quantum affine algebras},
\newblock Commun. Math. Phys. {\bf 400} (2023): 1137--1179.

\bibitem{Z1}{H. Zhang},
\newblock {\it Asymptotic representations of quantum affine superalgebras}, 
\newblock SIGMA {\bf 13} (2017), 066, 25 pages (arXiv:1410.0837).

\bibitem{Z3}{H. Zhang},
\newblock {\it Length-two representations of quantum affine superalgebras and Baxter operators}, 
\newblock Commun. Math. Phys. {\bf 358} (2018): 815--862.

\bibitem{Z2}{H. Zhang},
\newblock {\it Yangians and Baxter's relations},
\newblock Lett. Math. Phys. {\bf 110} (2020): 2113--2141.
\end{thebibliography}
\end{document}